\newtheorem{theorem}{Theorem}
\theoremstyle{definition}
\newtheorem{definition}{Definition}
\newtheorem{example}{Example}
\newtheorem{lemma}{Lemma}
\newtheorem{remark}{Remark}
\numberwithin{equation}{section}
\begin{document}
\allowdisplaybreaks

\title{Alexander- and Markov-type theorems for virtual trivalent braids}

\author{Carmen Caprau}
\address{Department of Mathematics, California State University, Fresno, CA 93740, USA}
\email{ccaprau@mail.fresnostate.edu}

\author{Abigayle Dirdak}
\address{Department of Mathematics, California State University, Fresno, CA 93740, USA}
\email{abigayledirdak@mail.fresnostate.edu}

\author{Rita Post}
\address{Department of Mathematics, University of Wisconsin-Eau Claire, WI 54768, USA}
\email{postrl@uwec.edu}

\author{Erica Sawyer}
\address{Department of Mathematics, The Evergreen State College, Olympia, WA 98516, USA}
\email{ericas1026@gmail.com}

\date{}
\subjclass[2010]{57M25, 57M15; 20F36} 
\keywords{Markov-type moves, virtual spatial trivalent graphs, virtual trivalent braids}
\thanks{This work was supported by NSF Grant DMS-1460151, NSF Grant HRD--1302873, and Simons Foundation collaboration grant $\#$ 355640}

\begin{abstract}
We prove Alexander- and Markov-type theorems for virtual spatial trivalent graphs and virtual trivalent braids. We provide two versions for the Markov-type theorem: one uses an algebraic approach similar to the case of classical braids and the other one is based on $L$-moves.
\end{abstract}
\maketitle

\section{Introduction}\label{sec:intro}

The Alexander theorem~\cite{A} and the Markov theorem~\cite{M} are essential in classical braid theory for understanding the relationship between classical braids and knots or links. The Alexander theorem states that every oriented knot or link can be represented as the closure of a braid. The Markov theorem states that two braids yield equivalent knots or links upon the closure operation if and only if the braids are related by braid isotopy and/or a finite sequence of two moves, namely stabilization and conjugation. These two moves for classical braids are now called the Markov moves. In~\cite{L}, S. Lambropoulou provided a one-move Markov-type theorem for classical braids and links by introducing the (classical) L-moves. We also refer the reader to~\cite{LambRour}, where the $L$-move equivalence for classical braids was established. 

Analogous theorems for virtual braids and virtual knots or links were established by S. Kamada~\cite{K} via Gauss data, and by L. Kauffman and S. Lambropoulou in~\cite{KauLamb}, where they introduced the virtual $L$-moves (or $L_v$-moves). Moreover, the $L_v$-moves were extended to virtual singular braids in~\cite{CPM} and were used to provide Markov-type theorems for this class of braids. To our knowledge, the Alexander theorem for oriented spatial graphs was first proved by K. Kanno and K. Taniyama~\cite{KaTa} (also see~\cite{Is}). The $L$-moves for classical braids extend naturally to trivalent braids, as shown in~\cite{CCD}, where the authors prove Alexander and Markov theorems for trivalent braids and oriented spatial trivalent graphs whose vertices are neither sources nor sinks.

In this paper we consider oriented virtual spatial trivalent graphs and virtual trivalent braids and prove Alexander- and Markov-type theorems for this setting. Our approach makes use of the $L_v$-equivalence defined in~\cite{KauLamb}. In proving analogous theorems for virtual spatial trivalent graphs and braids by means of $L_v$-moves, we must consider the topological properties of virtual spatial trivalent graphs. The set of Reidemeister-type moves for virtual spatial trivalent graph diagrams is richer than the corresponding set of moves from classical or virtual knot theory, as well as that for spatial graph theory (see~\cite{Ka1, Ka2, TM}).

We borrow the braiding algorithm described in~\cite{KauLamb} and extend it to account for trivalent vertices. We use this braiding algorithm to prove the Alexander-type theorem for oriented virtual spatial trivalent knots and links. We then show that in extending the set of $L_v$-moves for virtual braids to the class of virtual trivalent braids, we need to introduce new moves involving trivalent vertices; we refer to these new moves as  trivalent $L_v$-moves (or $TL_v$-moves). 

We define the $TL_v$-equivalence and use it to prove our first Markov-type theorem for virtual trivalent braids. Then through proving a one-to-one correspondence between the $TL_v$-equivalence and a certain algebraic equivalence among virtual trivalent braids, we are able to provide an algebraic Markov-type theorem for this type of braids. The latter theorem is similar in spirit to the original Markov theorem~\cite{M}.

The paper is organized as follows: in Section~\ref{sec:VSTG} we provide a brief review about virtual spatial trivalent graphs. We start Section~\ref{sec:VTB} by introducing virtual trivalent braids, and then we proceed to explain the preparation for braiding and the braiding process for oriented diagrams of virtual spatial trivalent graphs. This braiding algorithm braids any such diagram, and therefore, it provides a proof for the Alexander-type theorem for oriented virtual spatial trivalent graphs. In Section~\ref{ssec:TL_v} we establish the $TL_v$-equivalence for virtual trivalent braids and use it in Section~\ref{ssec:MarkovThm1} to prove our first Markov-type theorem. Finally, in Section~\ref{ssec:MarkovThm2} we state and prove an algebraic Markov-type theorem for virtual trivalent braids.


\section{Virtual spatial trivalent graphs}\label{sec:VSTG}

A \textit{virtual spatial trivalent graph diagram} is a trivalent graph immersed into $\mathbb{R}^2$ with finitely many transverse double points, each of which has information of over/under or virtual crossings as indicated in Fig.~\ref{fig:crossings}. The over/under crossings are also known as classical crossings. Virtual crossings are represented by placing a small circle around the point where the two arcs meet transversely.

\begin{figure}[ht]
\[ \raisebox{0cm}{\includegraphics[height=0.5in]{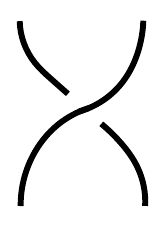}}  \hspace{.5in} \raisebox{0cm}{\includegraphics[height=0.5in]{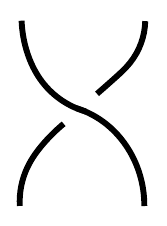}} \hspace{.5in} \raisebox{-.1cm}{\includegraphics[height=0.5in]{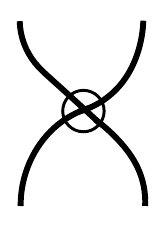}} \]
\caption{Types of Crossings} \label{fig:crossings}
\end{figure}

For brevity, we will refer to a virtual spatial trivalent graph diagram as a virtual STG diagram. Two virtual STG diagrams are called \textit{equivalent} (or \textit{ambient isotopic}) if they are related via a finite sequence of local diagrammatic moves shown in Fig.~\ref{fig:R-moves} (where all possible types of crossings need to be considered) along with planar isotopy. We will collectively refer to these moves as the \textit{extended Reidemeister moves}. For more details on spatial graphs and their topological properties, we refer the reader to~\cite{TM}.

It is easy to see that the extended Reidemeister moves (together with planar isotopy) introduce an equivalence relation on the set of virtual STG diagrams. A \textit{virtual spatial trivalent graph} (virtual STG) is then the \textit{equivalence class} of a virtual STG diagram.

\begin{figure}[ht] 
\begin{center}
  \begin{tabular}{ |l | l | l| }
    \hline
&\\[-5pt]
${\hspace{0.45in}\raisebox{-14pt}{\includegraphics[height=0.4in]{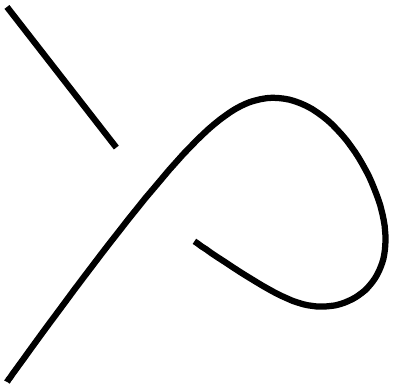}}}\,\, \stackrel{R1}\longleftrightarrow\,\, \raisebox{-13pt}{\includegraphics[height=0.4in]{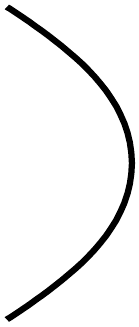}}\,\, \stackrel{R1}\longleftrightarrow\,\,\reflectbox{\raisebox{14pt}{{{\includegraphics[height=0.4in, angle = 180]{poskink}}}}}$

&
${\hspace{0.3in}\raisebox{-13pt}{\includegraphics[height=0.4in]{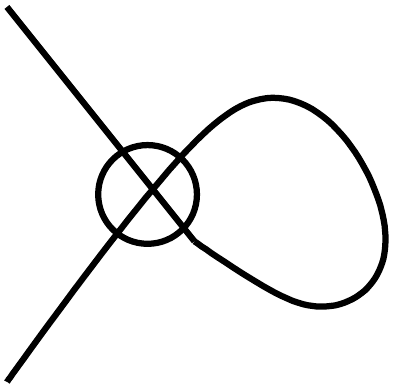}}\,\,  \stackrel{V1}\longleftrightarrow \,\, \raisebox{-13pt}{\includegraphics[height=0.4in]{arc}}}$
\\[20pt]
${\hspace{0.825in}\raisebox{-12pt}{\includegraphics[height=0.4in]{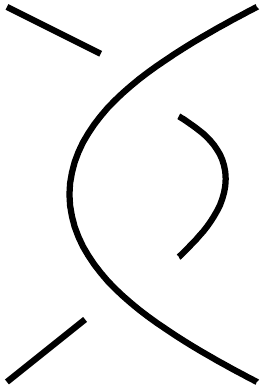}}}\, \stackrel{R2}{\longleftrightarrow}\, \raisebox{-12pt}{\includegraphics[height=0.4in]{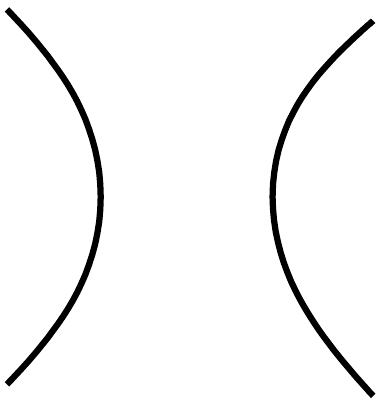}}$
&
${\hspace{0.35in}\raisebox{-13pt}{\includegraphics[height=0.4in]{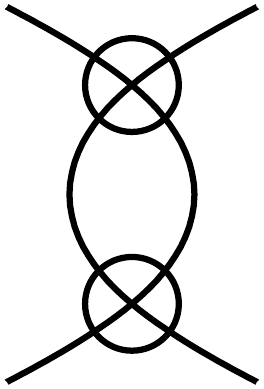}} \,\, \stackrel{V2}{\longleftrightarrow} \,\, \raisebox{-13pt}{\includegraphics[height=0.4in]{A-smoothing}}}$
\\[20pt]
${\hspace{0.55in}\raisebox{-12pt}{\includegraphics[height=0.4in]{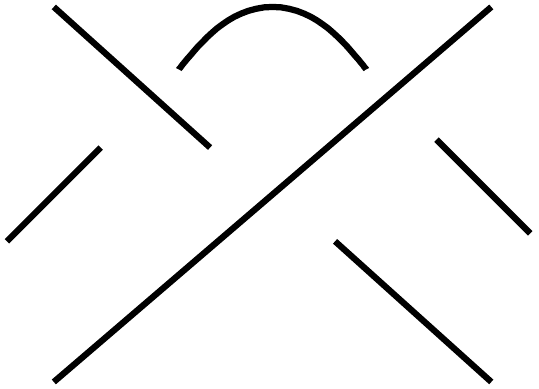}}} \,\,\stackrel{R3}\longleftrightarrow \,\,\raisebox{-12pt}{\includegraphics[height=0.4in]{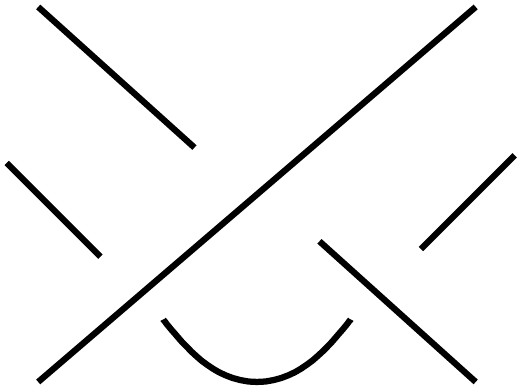}}$
&
${\hspace{0.2cm}\raisebox{-13pt}{\includegraphics[height=0.4in]{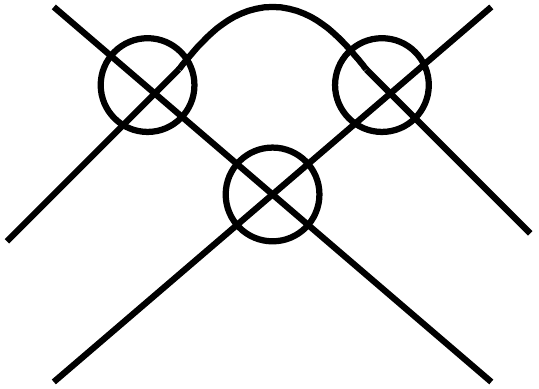}} \,\, \stackrel{V3}{\longleftrightarrow} \,\,   \raisebox{15pt}{\includegraphics[height=0.4in, angle=180]{reid3-virt}}}$
\\[20pt]
${\hspace{0.5in}\raisebox{-13pt}{\includegraphics[height=0.4in]{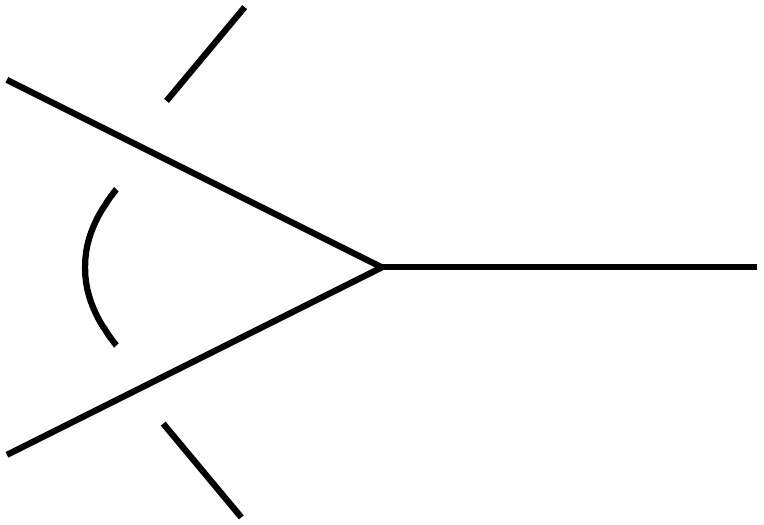}}} \,\, \, \stackrel{R4}\longleftrightarrow \,\,\raisebox{-12pt}{\includegraphics[height=0.4in]{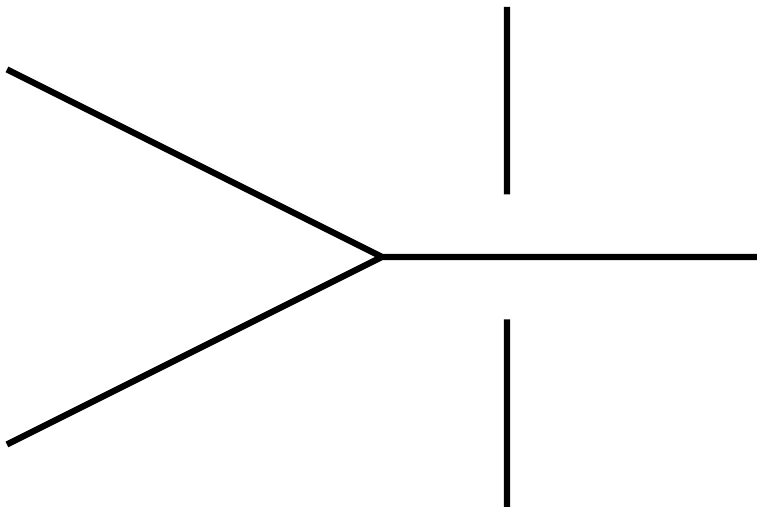}}$
&
${\hspace{0.2cm}\raisebox{-15pt}{\includegraphics[height=0.4in]{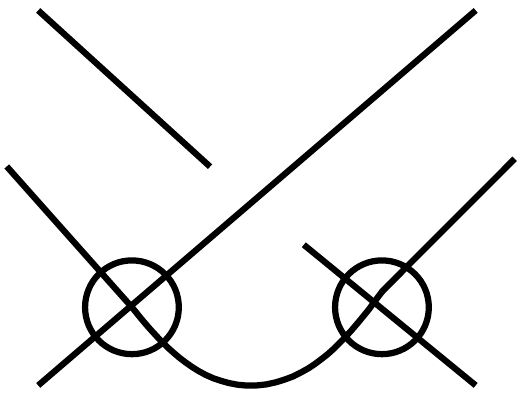}} \hspace{0.2cm}\stackrel{VR3}{\longleftrightarrow} \hspace{0.2cm} \raisebox{15pt}{\includegraphics[height=0.4in, angle=180]{VR3}}}$
\\[20pt]
${\raisebox{-10pt}{\includegraphics[height=0.35in]{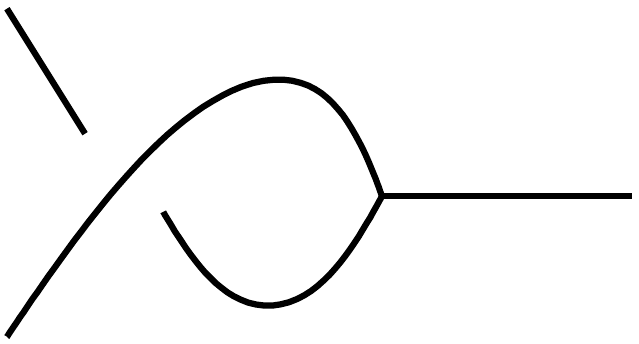}}} \,\,\stackrel{R5}\longleftrightarrow \,\, \raisebox{-10pt}{\includegraphics[height=0.35in]{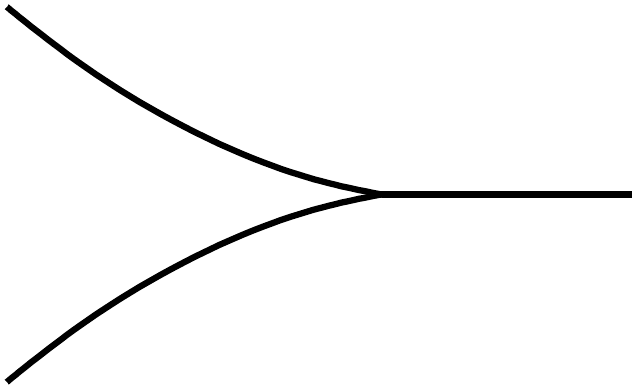}} \,\,\stackrel{R5}\longleftrightarrow \,\,{\raisebox{-12pt}{{{\includegraphics[height=0.35in]{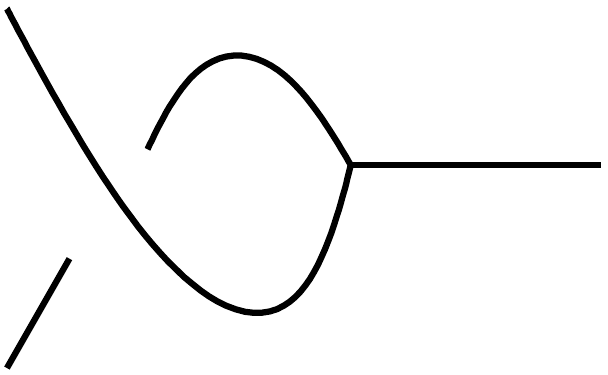}}}}}$
&
$\hspace{0.2cm}{{\raisebox{-15pt}{\includegraphics[height=0.38in]{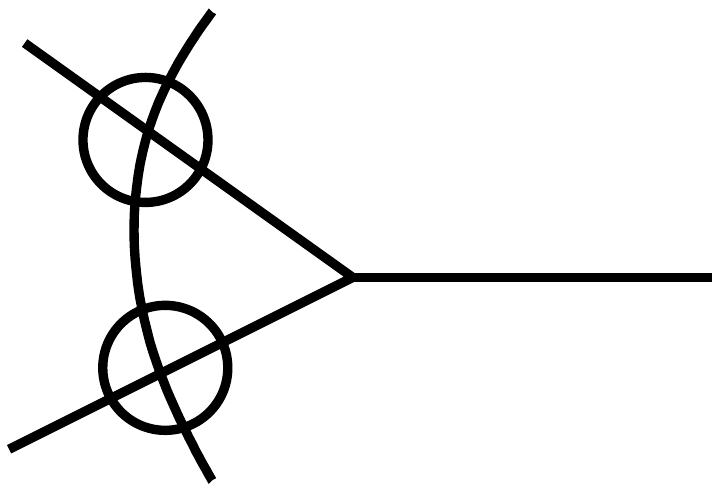}}} \,\,\stackrel{V4}\longleftrightarrow \,\, \raisebox{-15pt}{\includegraphics[height=0.38in]{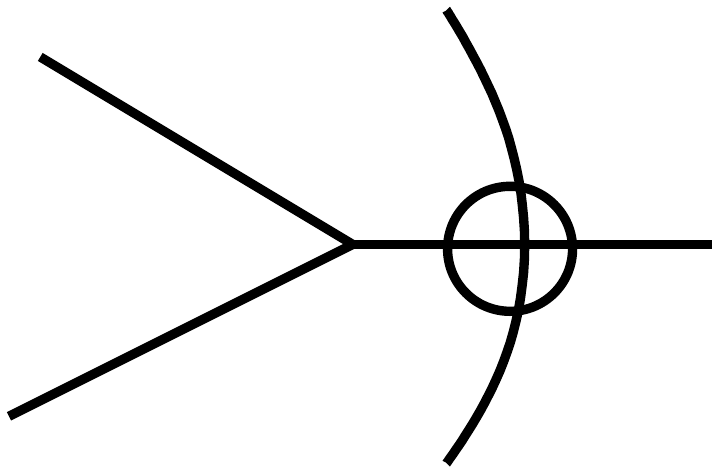}}}$
\\[-10pt] &
          \\ \hline
\end{tabular}
\caption{Extended Reidemeister moves for virtual STG diagrams} \label{fig:R-moves}
\end{center}
\end{figure}

The extended Reidemeister moves involving virtual crossings can be regarded as particular cases of the \textit{detour move} depicted in Fig.~\ref{fig:detour-move}. This move can be applied to any virtual STG diagram without changing its equivalence class, and it consists of taking an arc in a diagram which intersects other edges of the graph only virtually and redrawing that arc to any new location (keeping the endpoints fixed) such that there are virtual crossings everywhere the arc now transversally crosses the diagram (see Fig.~\ref{fig:detour-move}).  Note that the gray box represents any portion of a virtual STG diagram.

\begin{figure}[ht]
\[ \raisebox{-25pt}{\includegraphics[height=0.8in]{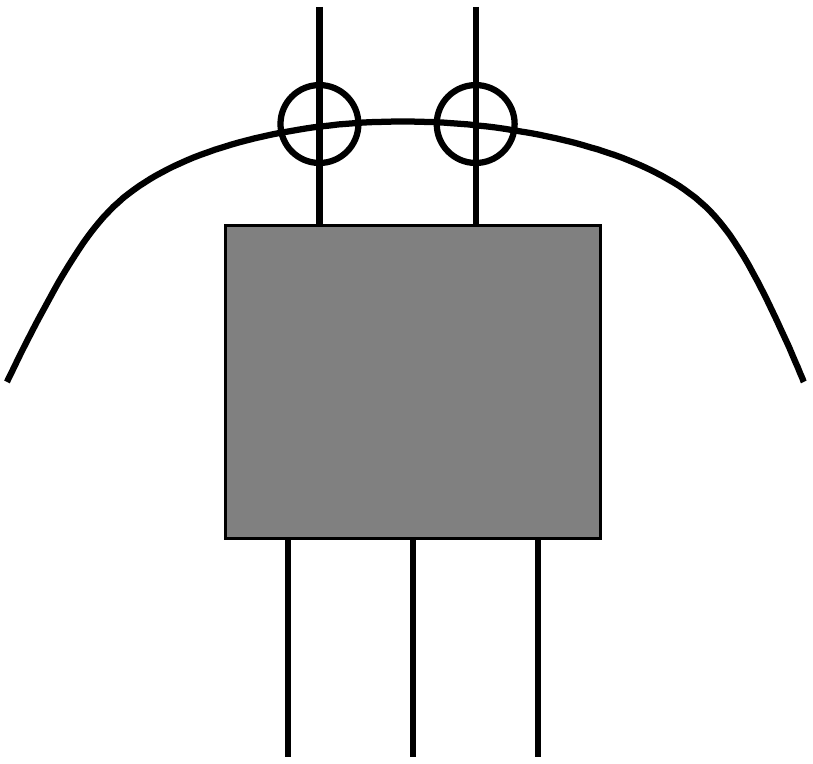}} \hspace{0.3cm}\longleftrightarrow \hspace{0.3cm} \raisebox{-25pt}{\includegraphics[height=0.8in]{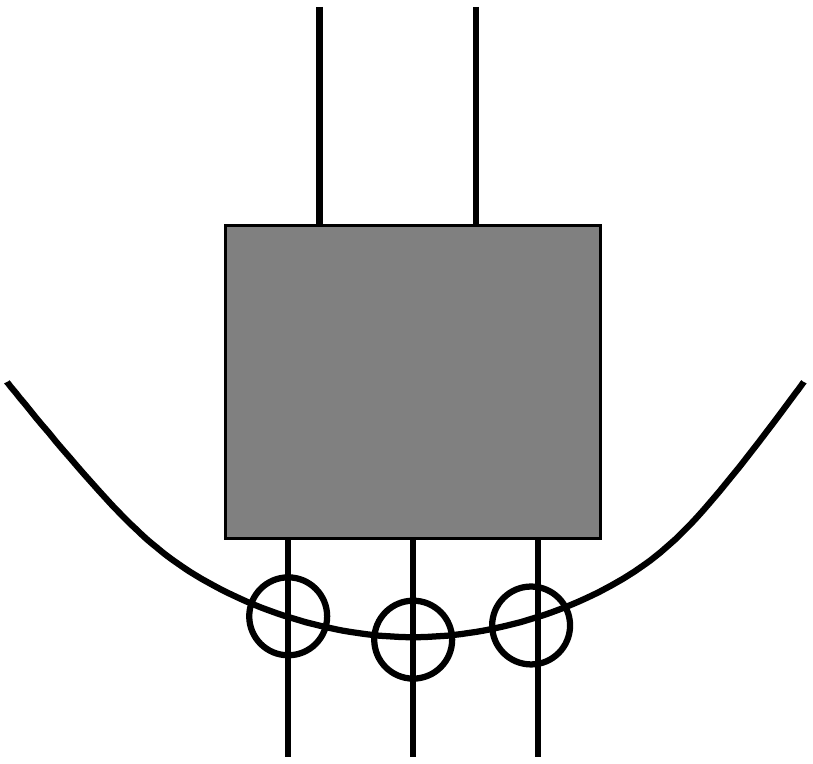}}
\]
\caption{The detour move}\label{fig:detour-move}
\end{figure}

Conversely, the detour move can be derived by applying a finite sequence of the extended Reidemeister moves involving virtual crossings. For more information on the detour move as it applies to virtual knot theory, see~\cite{Ka2}.

We remark that there is a collection of moves resembling the previous moves, which in fact do not represent equivalent virtual STG diagrams;  see Fig.~\ref{fig:bad-moves}. For this reason, these move are called the \textit{forbidden moves} for virtual STG diagrams.   

\begin{figure}[ht]
\[{\raisebox{-17pt}{\includegraphics[height=0.4in, angle=90]{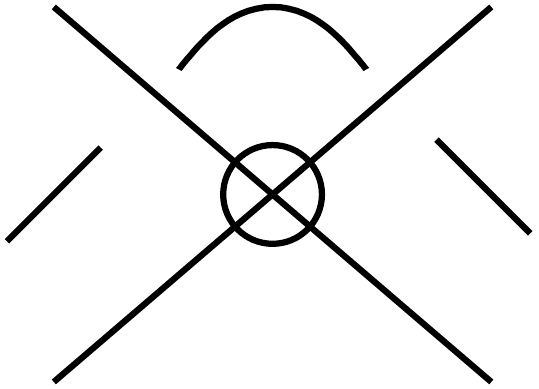}}} \hspace{0.2in} \stackrel{F1}\nleftrightarrow \hspace{0.2in} \raisebox{23pt}{\includegraphics[height=0.4in, angle=270]{fmove1}} \hspace{1.5cm}{\raisebox{-17pt}{\includegraphics[height=0.4in, angle=90]{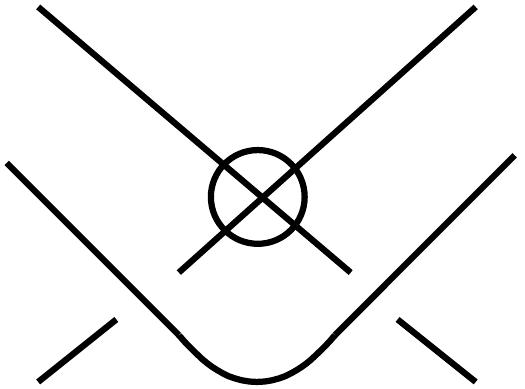}}} \hspace{0.2in} \stackrel{F2}\nleftrightarrow \hspace{0.2in} \raisebox{22pt}{\includegraphics[height=0.4in, angle=270]{fmove2}}
\]
\\[4pt]
\[{\raisebox{-12pt}{\includegraphics[height=0.4in]{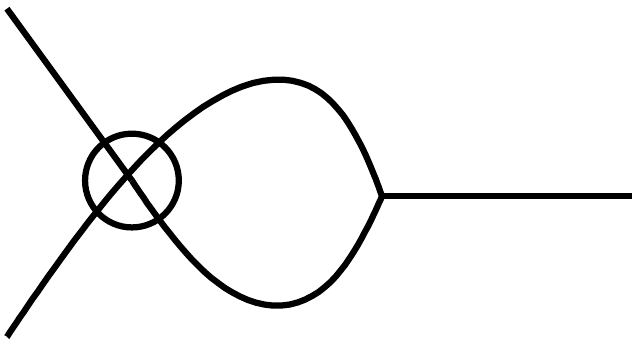}}} \hspace{0.2in} \stackrel{F3}\nleftrightarrow \hspace{0.2in} \raisebox{-12pt}{\includegraphics[height=0.4in]{R5-right}}
\]
\caption{Forbidden moves for virtual STG diagrams} \label{fig:bad-moves}
\end{figure}

We allow a virtual trivalent graph to have no vertices or virtual crossings. That is, we regard the set of classical knots and the set of virtual knots as subsets of the set of virtual spatial trivalent graphs.

In this paper, we work with \textit{well-oriented} virtual spatial trivalent graphs, meaning that all vertices must be either zip or unzip vertices (see Fig.~\ref{fig:orientations}). A \textit{zip vertex} is a trivalent vertex with two edges oriented toward the vertex and the other edge oriented away from the vertex. Similarly, an \textit{unzip vertex} is a trivalent vertex with two edges oriented away from the vertex and one edge oriented toward the vertex.

\begin{figure}[ht]
\[ {\includegraphics[height=0.7in]{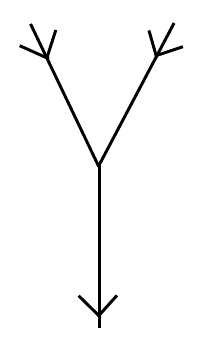}} \hspace{2cm} {\includegraphics[height=0.7in]{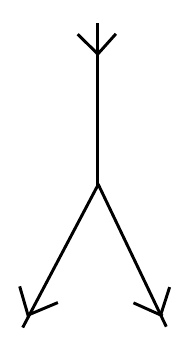}}
\put(-105, -10){\fontsize{7}{7}\text{zip}}
\put(-25, -10){\fontsize{7}{7}\text{unzip}}
\]
\caption{Allowed orientations near a vertex} \label{fig:orientations}
\end{figure}

We remark that any (virtual) spatial trivalent graph has an even number of vertices. Moreover, such a graph can be assigned an orientation so that the graph is well-oriented. Note that, by definition, a well-oriented virtual STG does not contain `sink' or `source' vertices (as depicted in Fig.~\ref{fig:bad-orientations}). 
\begin{figure}[ht]
\[ {\includegraphics[height=0.7in]{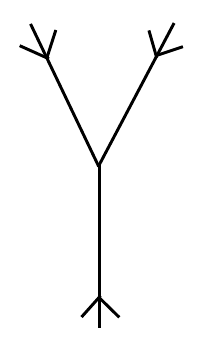}} \hspace{2cm} {\includegraphics[height=0.7in]{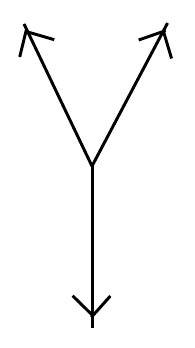}}
\]
\caption{Forbidden orientations near a vertex} \label{fig:bad-orientations}
\end{figure}

Since our graphs are oriented, we work with oriented versions of the extended Reidemeister moves, where the two diagrams in a certain move must have compatible orientations. From here on, all of the virtual spatial graphs graphs considered will be well-oriented.


\section{Virtual trivalent braids} \label{sec:VTB}

Similar to the case of classical knot theory (or virtual knot theory) where one can study classical knots (or virtual knots) by studying classical braids (or virtual braids), we can study virtual spatial trivalent graphs by studying virtual trivalent braids.

 A \textit{virtual trivalent braid} is a braid similar in notion to a classical braid, but may also contain trivalent vertices and virtual crossings. If such a braid has $m$ endpoints on top and $n$ endpoints in the bottom, we refer to it as a virtual trivalent $(m,n)$-braid. We denote by $VTB^{m}_{n}$ the set of all virtual trivalent $(m,n)$-braids. The \textit{identity braid on $n$ strands}, denoted by $1_n$, is the $(n, n)$-braid with $n$ parallel strands (free of vertices and crossings).

The \textit{closure} of a virtual trivalent $(n, n)$-braid is obtained by joining the opposite endpoints of the braid on its plane using non-intersection parallel arcs, as shown in Fig.~\ref{closure}. Note that the closure of a virtual trivalent $(n,n)$-braid is a virtual STG diagram.

\begin{figure}[ht]
\[
\raisebox{-0.8cm}{\includegraphics[height=1.9cm]{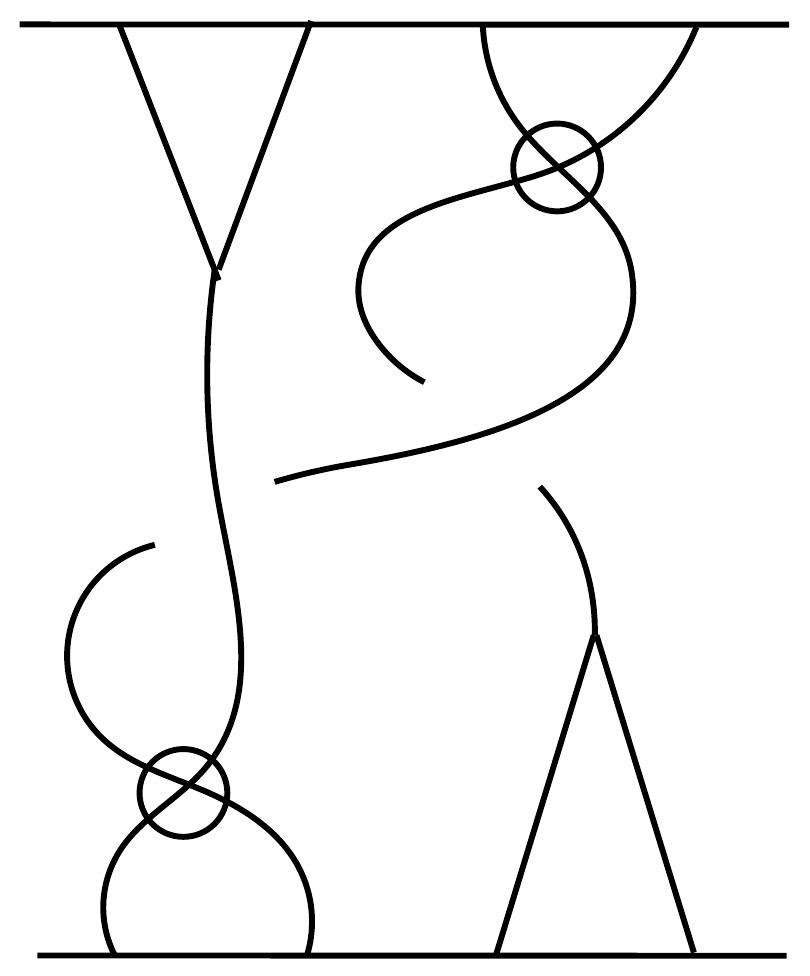}} \hspace{.2cm} \stackrel{\text{closure}}{\longrightarrow}\hspace{.2cm}
\raisebox{-1.75cm}{\includegraphics[height=3.75cm]{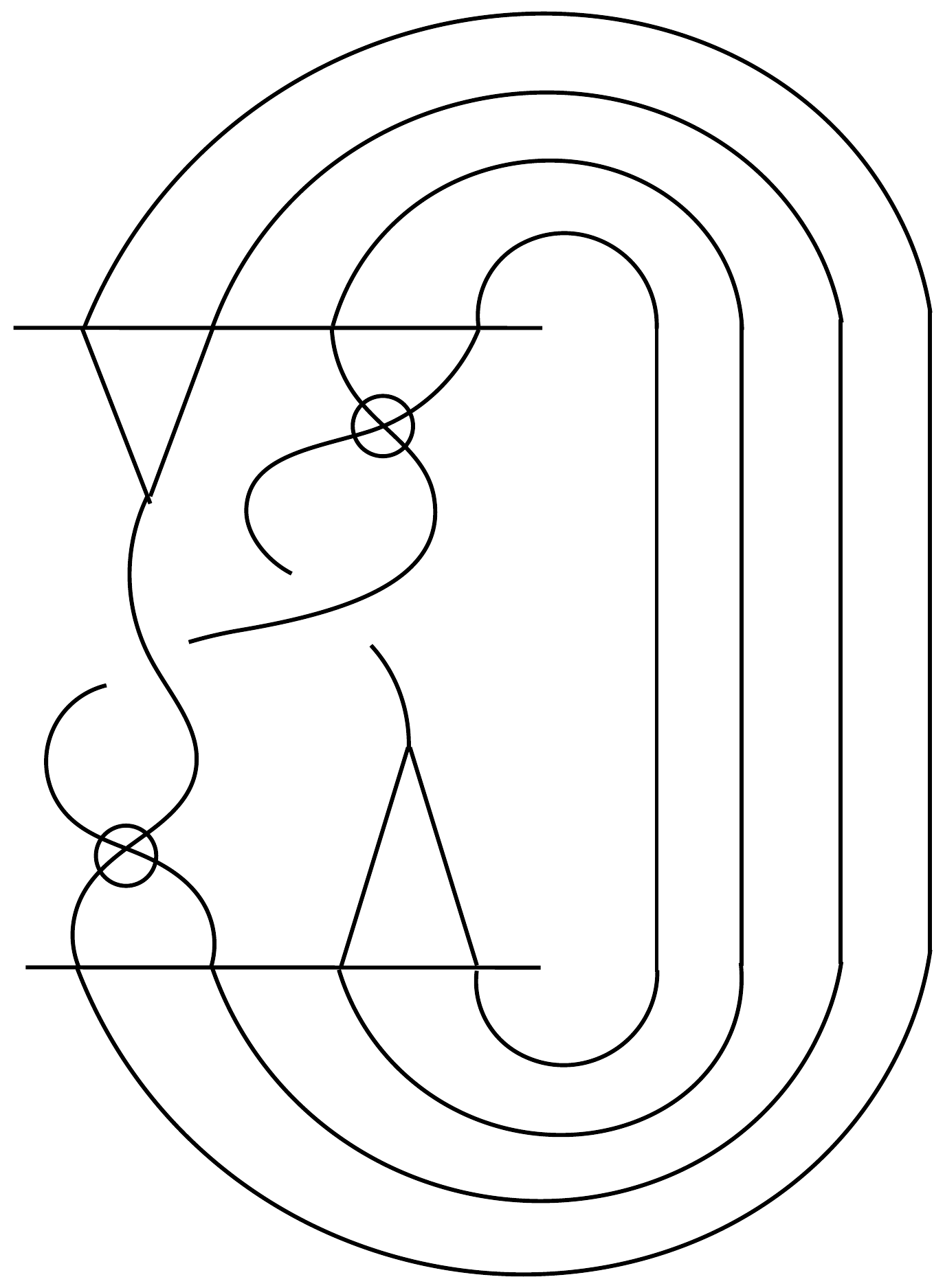}}
\]
\caption{A virtual trivalent braid and its closure}\label{closure}
\end{figure} 

Given two virtual trivalent braids $b_1 \in VTB^m_n$ and $b_2 \in VTB^n_s$, the \textit{composition} $b_1b_2$ is the virtual trivalent braid obtained by placing $b_1$ on top of $b_2$ and connecting the bottom endpoints of $b_1$ with the top endpoints of $b_2$. The resulting braid, $b_1b_2$, is an element of $VTB^m_s$.

The braids $\sigma_i$ , $\sigma^{-1}_i$ and $v_i \in VTB^n_n$ together with $y_i \in VTB^{n}_{n-1}$ and $\lambda_i \in VTB^{n-1}_{n}$ (where $1\leq i \leq n-1$) shown in Fig.~\ref{fig:braidletters} are called \textit{elementary virtual trivalent braids}.

\begin{figure}[ht]
\[  \sigma_i \,\,\, =\,\,\,  \raisebox{-17pt}{\includegraphics[height=.5in]{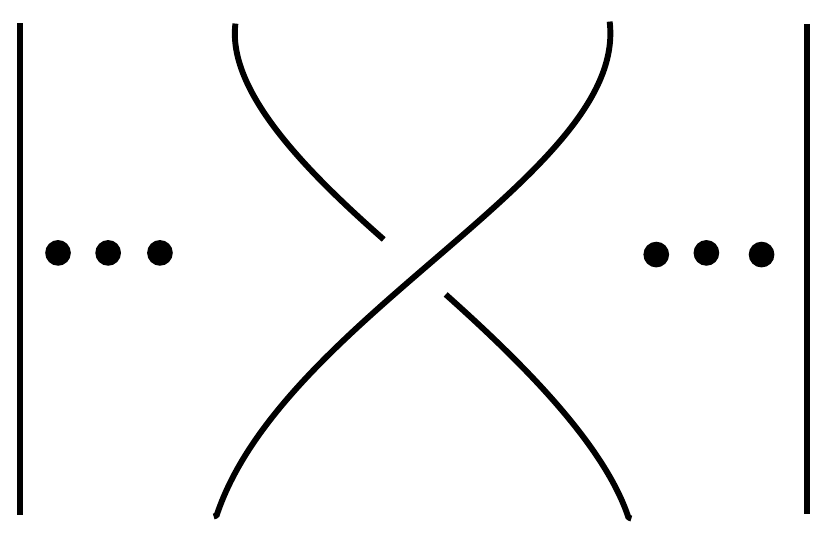}} \hspace{1cm} \sigma_i^{-1} \,\,\,=\,\,\, \raisebox{-17pt}{\includegraphics[height=.5in]{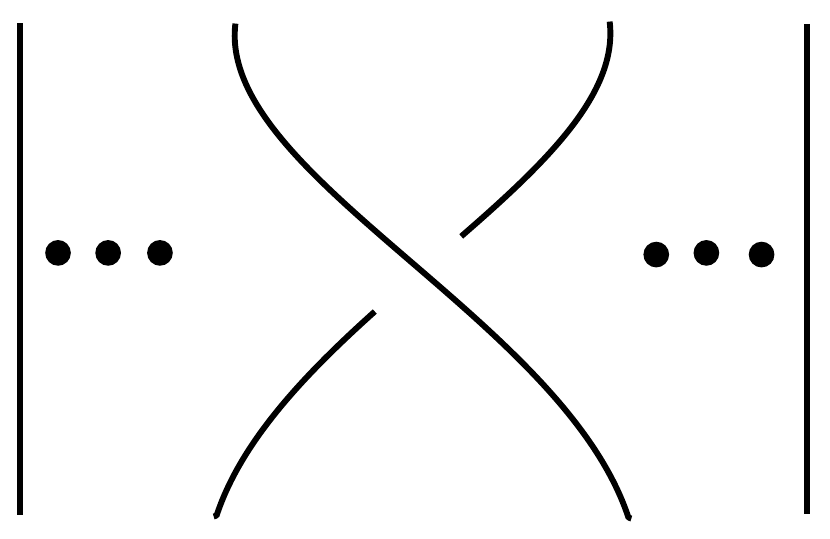}}
\put(-180, 21){\fontsize{7}{7}$1$}
\put(-165, 21){\fontsize{7}{7}$i$}
\put(-150, 21){\fontsize{7}{7}$i+1$}
\put(-127,21){\fontsize{7}{7}$n$}
\put(-58, 21){\fontsize{7}{7}$1$}
\put(-40, 21){\fontsize{7}{7}$i$}
\put(-25, 21){\fontsize{7}{7}$i+1$}
\put(-3,21){\fontsize{7}{7}$n$} \hspace{1cm}
v_i \,\, = \,\,\, \raisebox{-17pt}{\includegraphics[height=.5in]{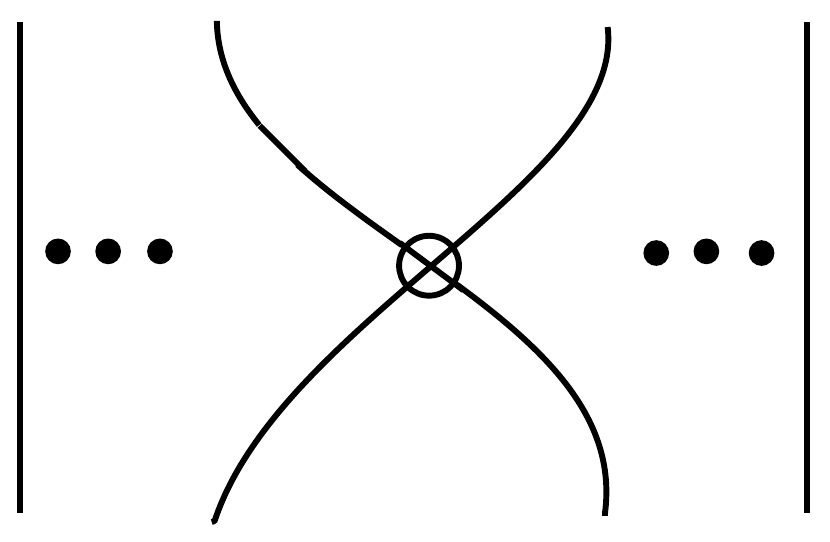}}
\put(-56, 21){\fontsize{7}{7}$1$}
\put(-42, 21){\fontsize{7}{7}$i$}
\put(-25, 21){\fontsize{7}{7}$i+1$}
\put(-3,21){\fontsize{7}{7}$n$}
\]
\[y_i \,\, =\,\,  \raisebox{-17pt}{\includegraphics[height=.5in, width=0.8in]{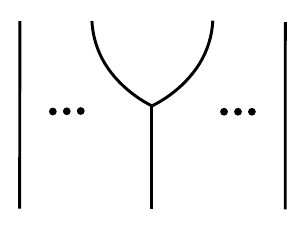}} \hspace{1cm} \lambda_i \,\,\,=\,\,\, \raisebox{-17pt}{\includegraphics[height=.5in, width=0.83in]{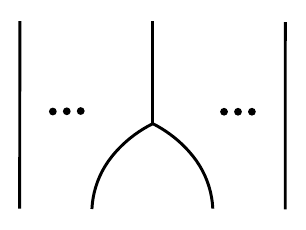}}
\put(-177, 21){\fontsize{7}{7}$1$}
\put(-162, 21){\fontsize{7}{7}$i$}
\put(-146, 21){\fontsize{7}{7}$i+1$}
\put(-125,21){\fontsize{7}{7}$n$}
\put(-58, 21){\fontsize{7}{7}$1$}
\put(-31, 21){\fontsize{7}{7}$i$}
\put(-10,21){\fontsize{7}{7}$n-1$}
\put(-177, -21){\fontsize{7}{7}$1$}
\put(-152, -21){\fontsize{7}{7}$i$}
\put(-130,-21){\fontsize{7}{7}$n-1$}
\put(-58, -21){\fontsize{7}{7}$1$}
\put(-45, -21){\fontsize{7}{7}$i$}
\put(-27, -21){\fontsize{7}{7}$i+1$}
\put(-5,-21){\fontsize{7}{7}$n$}
\]
\caption{Elementary virtual trivalent braids} \label{fig:braidletters}
\end{figure}

Any virtual trivalent braid can be written as a word in terms of elementary virtual trivalent braids using the composition of braids. For example, the braid depicted in Fig.~\ref{closure} can be represented using the following word: $v_3 y_1 \sigma_2 \sigma_1^{-1} \lambda_3 v_1$. (We use the convention that elementary virtual trivalent braids are `read' from top to bottom with respect to a height function on the plane of the braid).

We consider virtual trivalent braids up to isotopy, a notion that we now define. Braid isotopy is reminiscent of the extended Reidemeister moves for virtual STG diagrams, with the exception that the moves $R1$ and $V1$ are not possible in braid form.

\begin{definition}
Two virtual trivalent $(m, n)$-braids are called \textit{braid equivalent} (or \textit{isotopic}) if they are related by a finite sequence of the braid relations (subword replacements) shown in Fig.~\ref{fig:tbrelns}, together with the following \textit{commuting relations}:

\begin{itemize}
\item $\sigma_i b_j = b_j \sigma_i$, \,\, $v_i b_j = b_j v_i$
\item $y_i b_{j-1} = b_j y_i$,\,\, $\lambda_i b_j = b_{j-1} \lambda_i$
\end{itemize}
where $i+1 <j$ and $b_j \in \{\sigma_j, v_j, y_j, \lambda_j \}$.
\end{definition}

We collectively refer to the commuting relations and the relations in Fig.~\ref{fig:tbrelns} as the \textit{braid isotopy relations}.

\begin{figure}[ht] 
\begin{center}
  \begin{tabular}{ |l | l | l| }
    \hline
$\sigma_i\sigma_i^{-1}=\sigma_i^{-1}\sigma_i=1_n$
&
$v_i^2=1_n$
\\
$\hspace{0.3in}\raisebox{-15pt}{\includegraphics[height=0.5in]{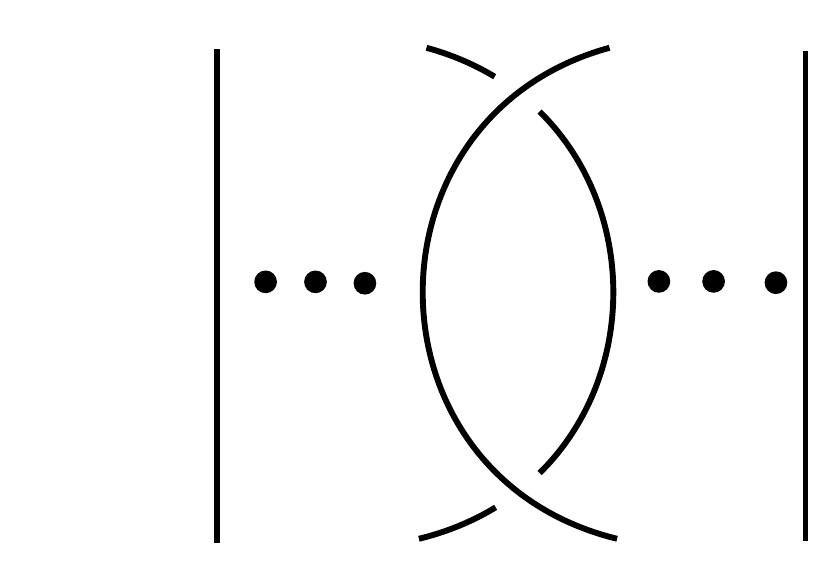}}\,\,  \stackrel{R2}{\sim} \,\, \raisebox{-15pt}{\includegraphics[height=0.5in]{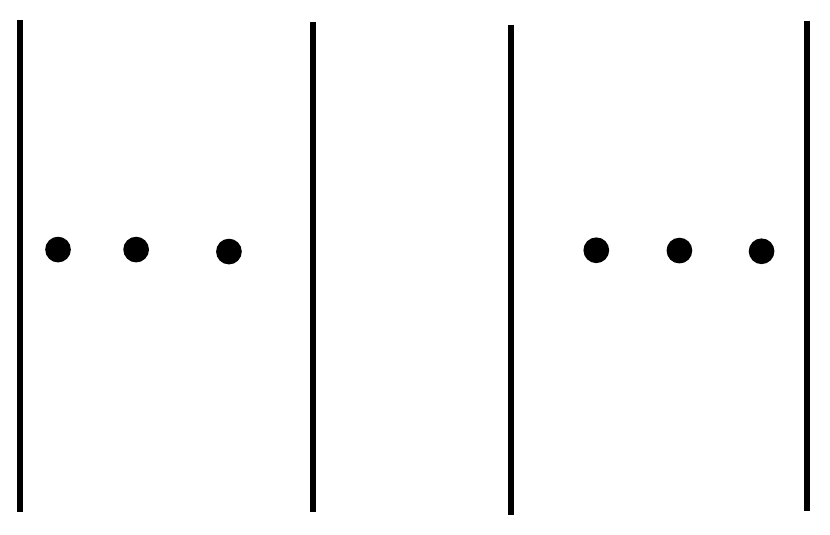}}\hspace{0.2in}$
&
$\hspace{0.2in}\raisebox{-15pt}{\includegraphics[height=0.5in]{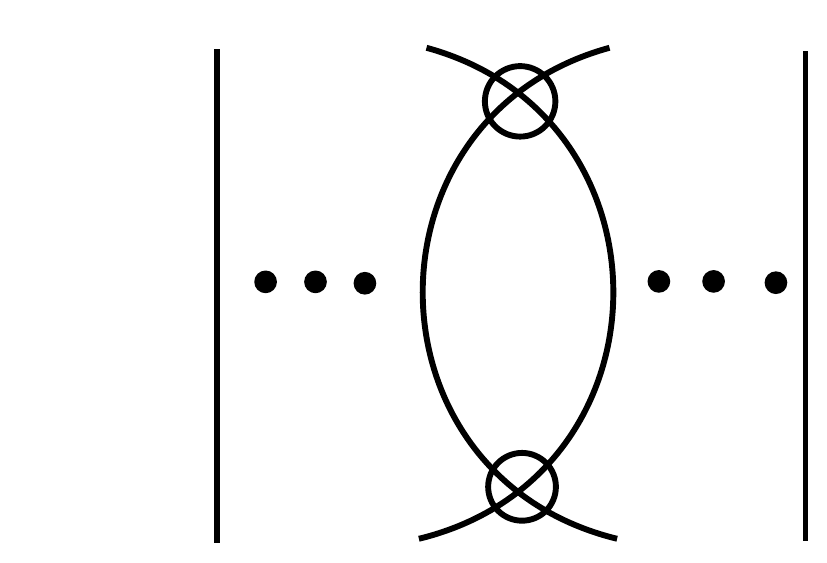}}\,\,  \stackrel{V2}{\sim} \,\, \raisebox{-15pt}{\includegraphics[height=0.5in]{idb}}\hspace{0.2in}$
\\
&
\\
$\sigma_i\sigma_{i+1}\sigma_i = \sigma_{i+1}\sigma_i\sigma_{i+1}$
&
$v_i v_{i+1} v_i = v_{i+1} v_i v_{i+1}$
\\
$\hspace{0.35in}\raisebox{-15pt}{\includegraphics[height=0.5in]{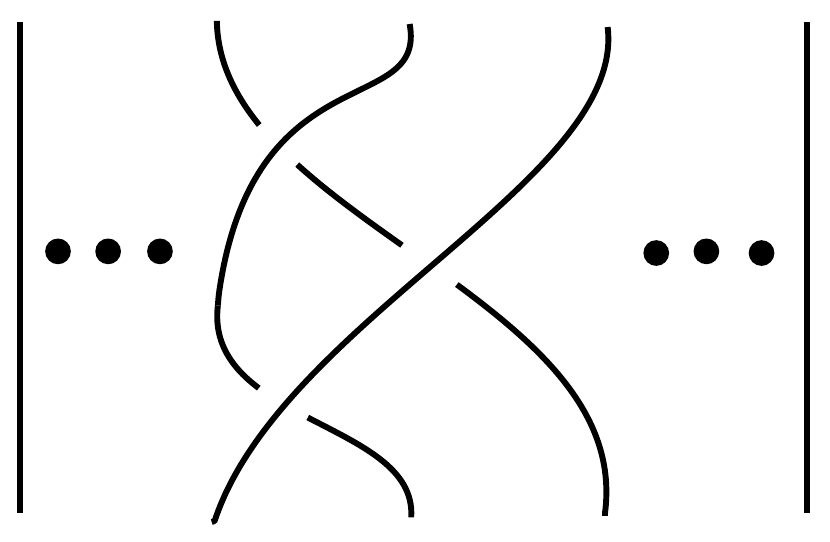}}\,\,  \stackrel{R3}{\sim} \,\, \raisebox{-15pt}{\includegraphics[height=0.5in]{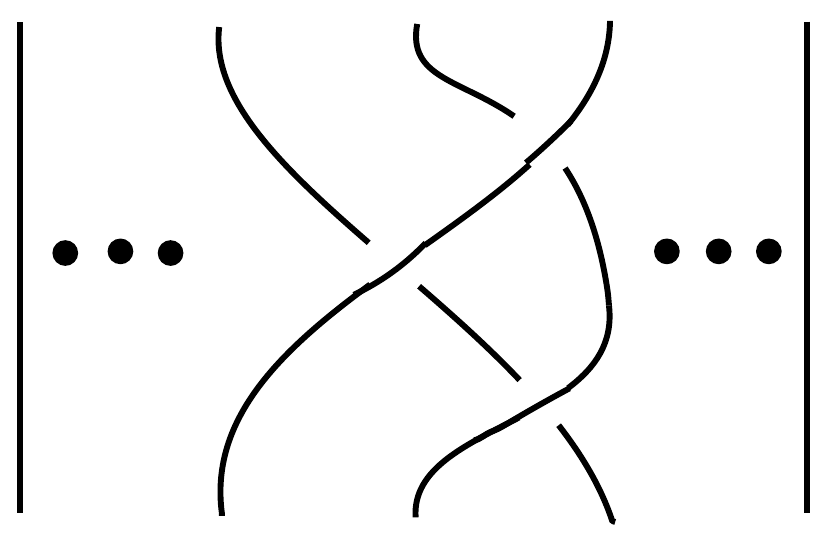}}$
&
$\hspace{0.2in}\raisebox{-15pt}{\includegraphics[height=0.5in]{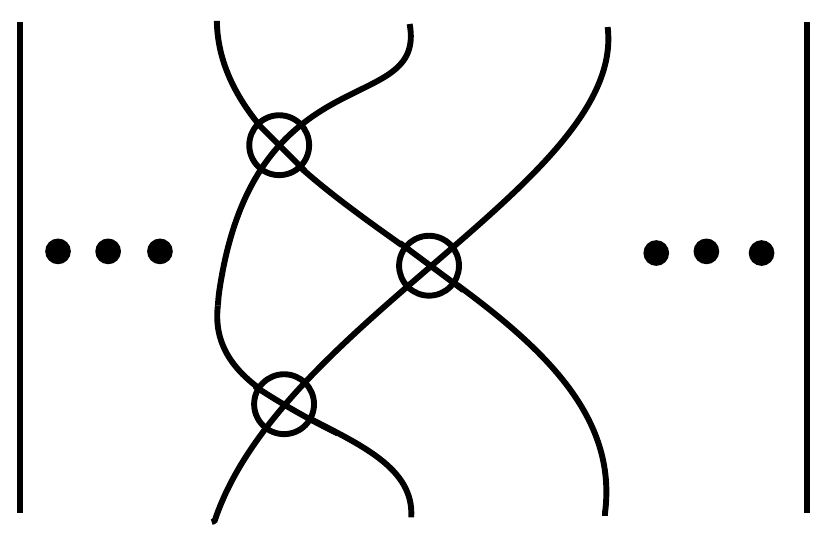}}\,\,  \stackrel{V3}{\sim} \,\, \raisebox{-15pt}{\includegraphics[height=0.5in]{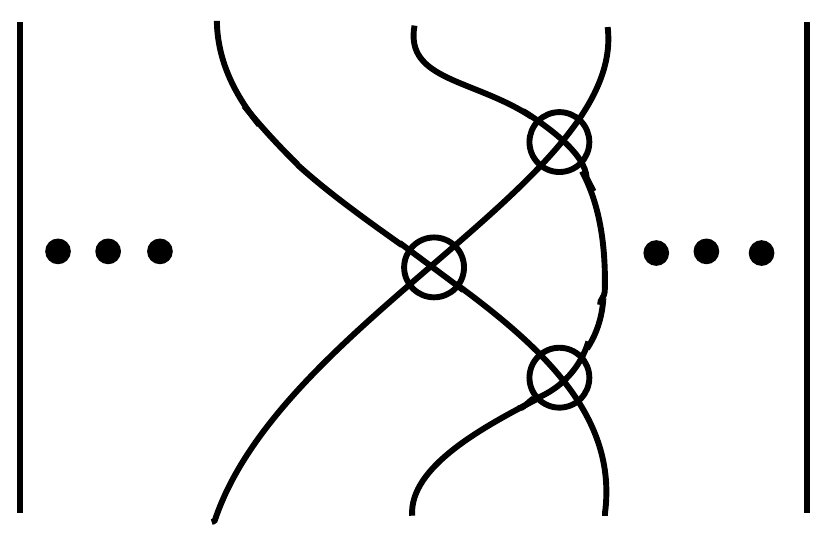}}\hspace{0.2in}$
\\
&
\\
$\sigma_{i+1}\sigma_iy_{i+1}$=$y_i\sigma_i$
&
$v_i\sigma_{i+1}v_i$=$v_{i+1}\sigma_iv_{i+1}$
\\
$\hspace{0.45in}\raisebox{-15pt}{\includegraphics[height=0.5in]{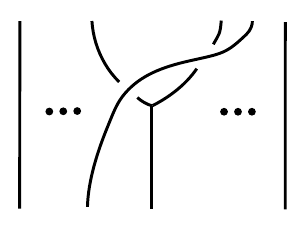}}\,\,  \stackrel{R4}{\sim} \,\, \raisebox{-15pt}{\includegraphics[height=0.5in]{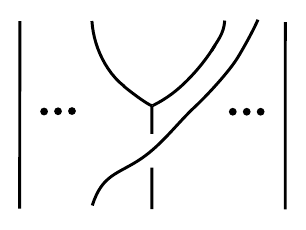}}\hspace{0.2in}$
&
$\hspace{0.2in}\raisebox{-15pt}{\includegraphics[height=0.5in]{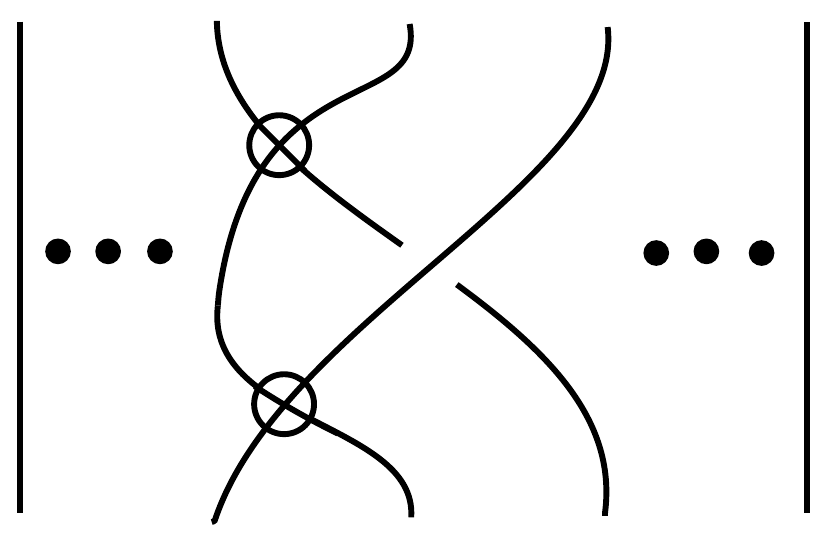}}\,\,  \stackrel{VR3}{\sim} \,\, \raisebox{-15pt}{\includegraphics[height=0.5in]{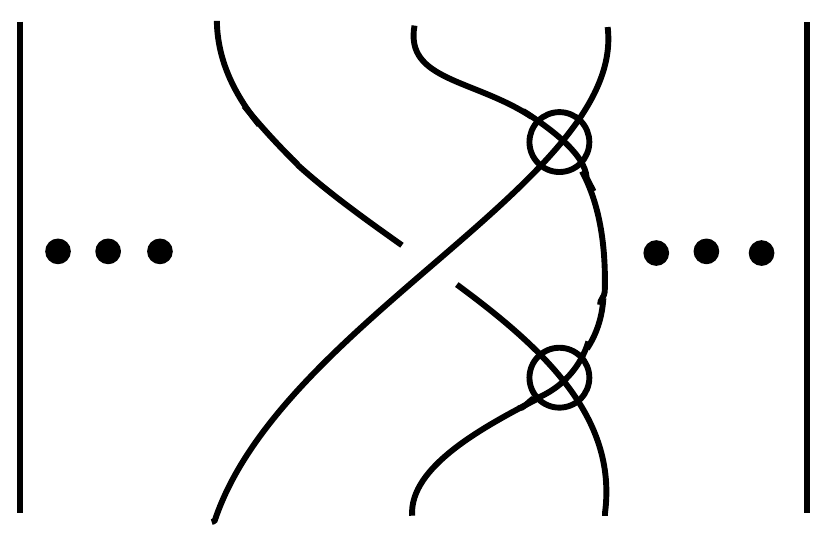}}\hspace{0.2in}$
\\
&
\\
$\sigma_i\lambda_{i+1}=\lambda_i\sigma_{i+1}\sigma_i$
&
$v_{i+1} v_i y_{i+1}=y_i v_i$
\\
$\hspace{0.45in}\raisebox{-15pt}{\includegraphics[height=0.5in]{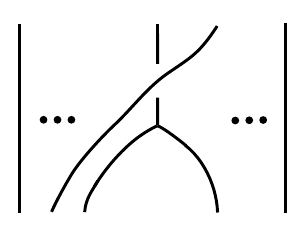}}\,\,  \stackrel{R4}{\sim} \,\, \raisebox{-15pt}{\includegraphics[height=0.5in]{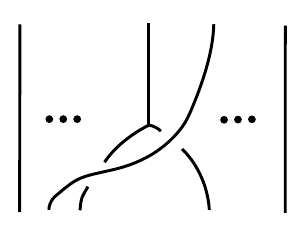}}\hspace{0.2in}$
&
$\hspace{0.3in}\raisebox{-15pt}{\includegraphics[height=0.5in]{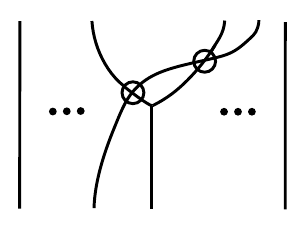}}\,\,  \stackrel{V4}{\sim} \,\, \raisebox{-15pt}{\includegraphics[height=0.5in]{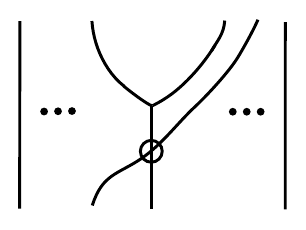}}\hspace{0.2in}$
\\
&
\\
$\sigma_{i}\sigma_{i+1} y_{i}=y_{i+1} \sigma_i$
&
$\lambda_i v_{i+1} v_i$=$v_i \lambda_{i+1}$
\\
$\hspace{0.45in}\raisebox{-15pt}{\includegraphics[height=0.5in]{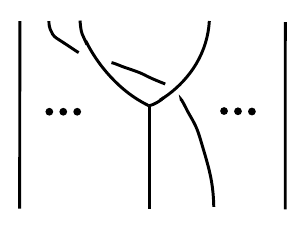}}\,\,  \stackrel{R4}{\sim} \,\, \raisebox{-15pt}{\includegraphics[height=0.5in]{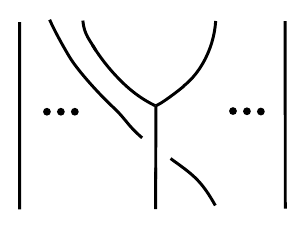}}\hspace{0.2in}$
&
$\hspace{0.3in}\raisebox{-15pt}{\includegraphics[height=0.5in]{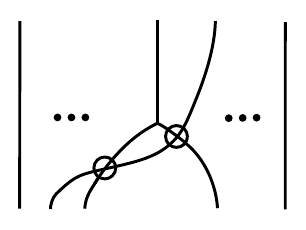}}\,\,  \stackrel{V4}{\sim} \,\, \raisebox{-15pt}{\includegraphics[height=0.5in]{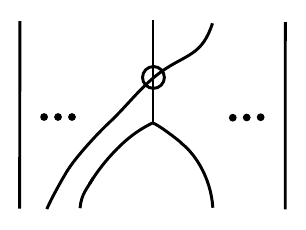}}\hspace{0.2in}$
\\
&
\\
$\sigma_i \lambda_{i}$=$\lambda_{i+1}\sigma_i\sigma_{i+1}$
&
$y_i=\sigma_i y_i$
\\
$\hspace{0.45in}\raisebox{-15pt}{\includegraphics[height=0.5in]{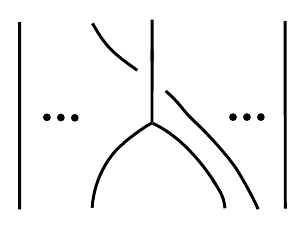}}\,\,  \stackrel{R4}{\sim} \,\, \raisebox{-15pt}{\includegraphics[height=0.5in]{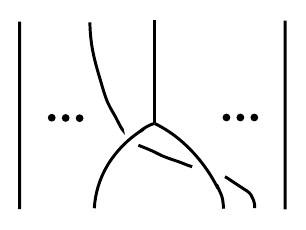}}\hspace{0.2in}$
&
$\hspace{0.45in}\raisebox{-15pt}{\includegraphics[height=0.5in]{y1}}\,\,  \stackrel{R5}{\sim} \,\, \raisebox{-15pt}{\includegraphics[height=0.5in]{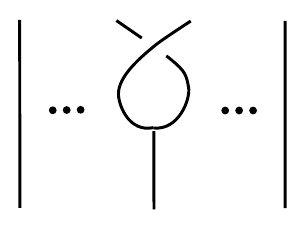}}\hspace{0.2in}$
\\
&
\\
$\lambda_i=\lambda_i \sigma_i$
&
\text{}
\\
$\hspace{0.45in}\raisebox{-15pt}{\includegraphics[height=0.5in]{lambda1}}\,\,  \stackrel{R5}{\sim} \,\, \raisebox{-15pt}{\includegraphics[height=0.5in, width=0.7in]{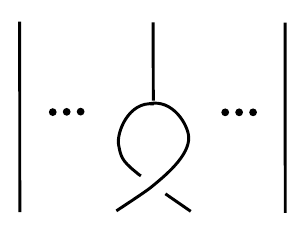}}\hspace{0.2in}$
&
\\
&

         \\ \hline
\end{tabular}
\caption{Braid relations} \label{fig:tbrelns}
\end{center}
\end{figure}


\subsection{Preparation for braiding}

We want to show that every well-oriented virtual spatial trivalent graph can be represented as the closure of some virtual trivalent braid. For this, we follow a braiding process similar to that introduced by Kauffman and Lambropoulou in~\cite{KauLamb}, where they work with virtual braids and oriented virtual knots and links. In order to use this braiding process, several steps must be executed in preparation. First, the vertices in a virtual STG diagram must be in $Y$ or $\lambda$ position. Figure \ref{Vertices} shows both a $Y$-vertex and a $\lambda$-vertex.

\begin{figure}[ht]
\[
\raisebox{-.5cm}{\includegraphics[height=1.5cm]{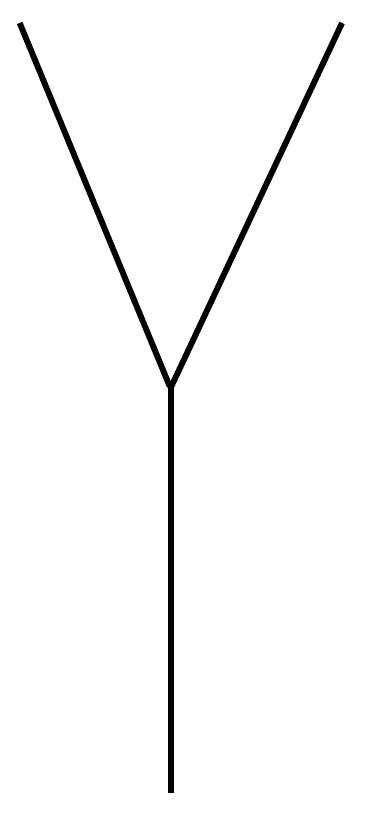}} \hspace{0.5in}
\raisebox{-.5cm}{\includegraphics[height=1.5cm]{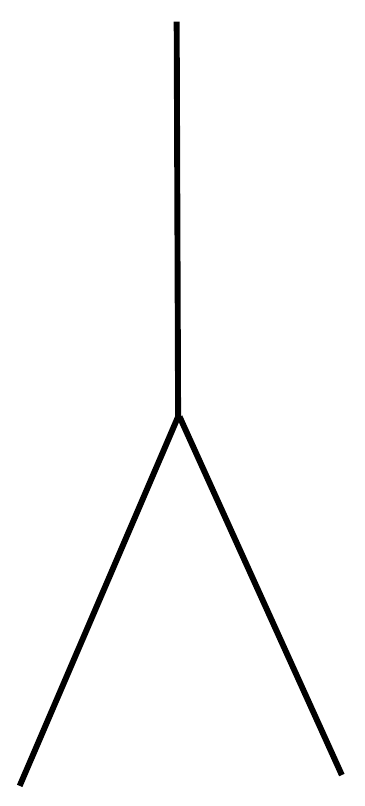}} 
\] 
\caption{$Y$- and $\lambda$-vertices}\label{Vertices}
\end{figure}

If a vertex in a diagram is either a $W$- or $M$-vertex (see Fig.~\ref{wm}), it must be rearranged to be in either the form of a $Y$- or $\lambda$-vertex. Figure~\ref{RearrM} shows how to rearrange a $W$- and $M$-vertex into the form of a $Y$- and $\lambda$-vertex, respectively, with the circled area representing the $Y$-vertex, and respectively, the $\lambda$-vertex. This is done by rotating the vertex counterclockwise while keeping the endpoints fixed.  

\begin{figure}[ht]
\begin{center}
{\includegraphics[height=0.4in, width=.8in]{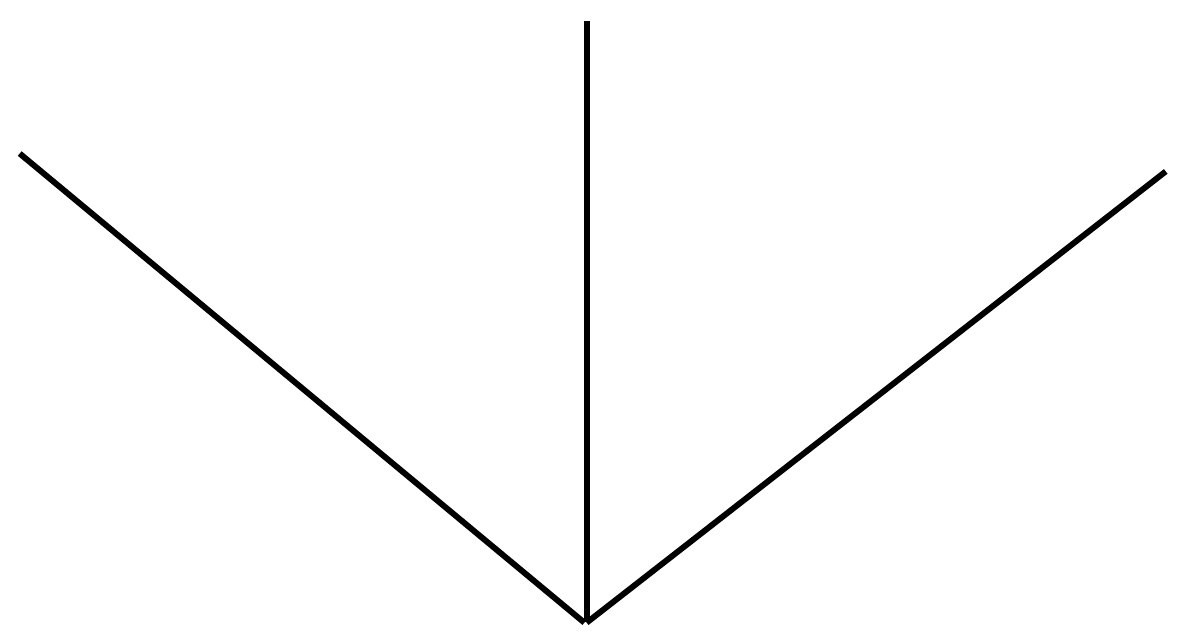}}\,\, \hspace{0.75in} \,\, \, 
\raisebox{1cm}{ \includegraphics[height=.4in, width=.8in, angle=180]{Unoriented-W}}

\end{center}
\caption{$W$- and $M$-vertices}\label{wm}
\end{figure}

\begin{figure}[ht]
\begin{center}

\[ \raisebox{-.2in}{\includegraphics[height=0.5in, width= .8in]{Unoriented-W}}\,\, \rightarrow \,\,  \raisebox{-.2in}{\includegraphics[height=.5in, width=.8in]{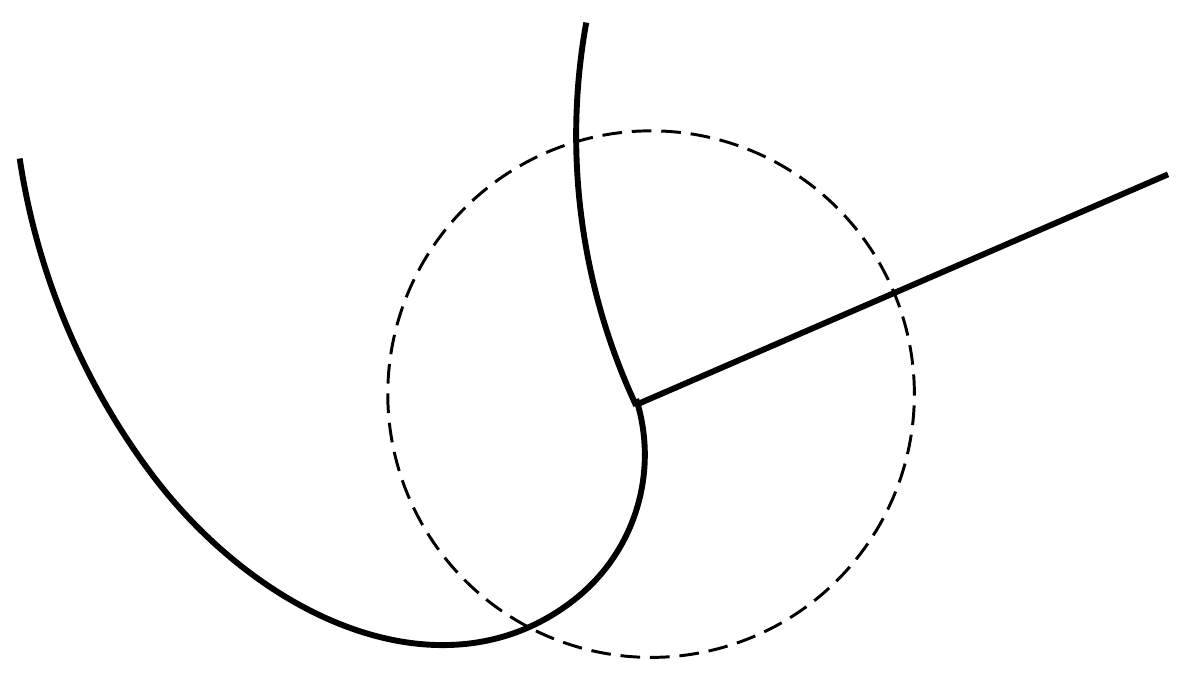}} \hspace{0.75in}
 \raisebox{.2in}{\includegraphics[height=0.5in,width=.8in,  angle=180]{Unoriented-W}}\,\, \rightarrow \,\, 
\raisebox{.2in}{\includegraphics[height=.5in,width=.8in, angle=180]{simplification3}} 
\]

\caption{Rearranging $W$- and $M$-vertices} \label{RearrM}
\end{center}
\end{figure}

Once all vertices are locally arranged into $Y$- or $\lambda$-vertices, the diagram must be put into regular position.

\begin{definition}
A vertex is said to be in \textit {regular position} if it is either a $Y$- or $\lambda$-vertex, and if in a small neighborhood of the vertex, the edges incident with it have downward orientation. A virtual STG diagram is called \textit{regular} if all of its vertices are in regular position. 
\end{definition}

Using swing moves and $R5$ moves, we establish conventions for putting a $Y$- or $\lambda$-vertex into regular position, according to the chart in Fig.~\ref{regpos}. It is important to note that this approach for bringing vertices into regular position does not change the isotopy-type of the original diagram.

\begin{figure}[ht]
\begin{center}
 \begin{tabular}{ |l | l | l| }
    \hline
& &  \\[-.2cm]
$\hspace{.1cm}    \reflectbox{\raisebox{-.2in}{\includegraphics[height=.5in]{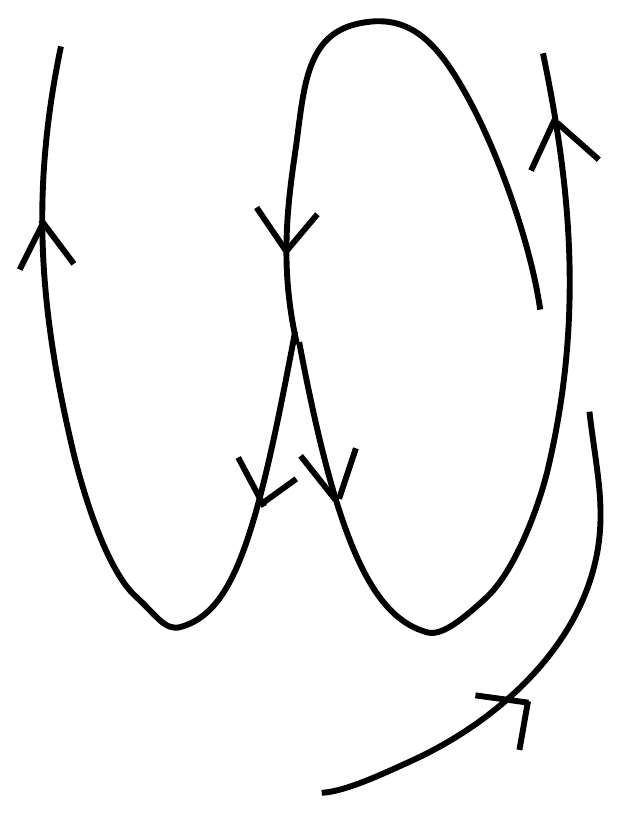}}}  \hspace{.1cm} \,\,\leftarrow \,\, \raisebox{-.2in}{\includegraphics[height=0.5in]{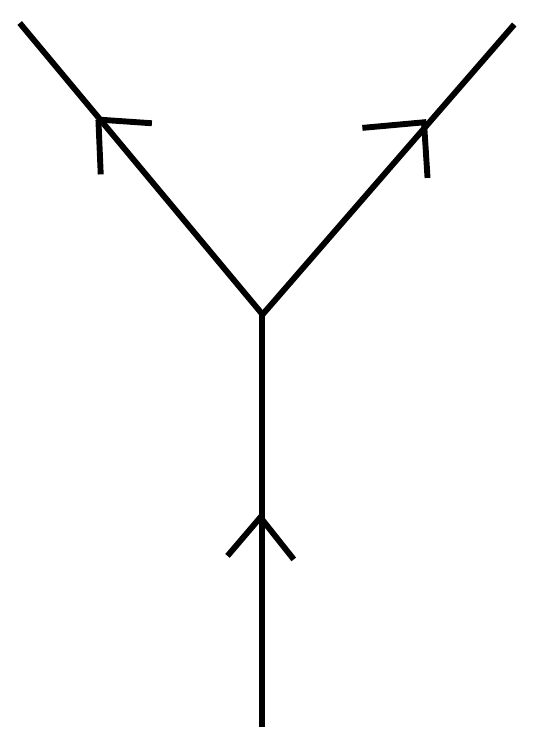}}\,\,  \rightarrow \,\, \raisebox{-.2in}{\includegraphics[height=.5in]{LUUU2b}}$
&
$\raisebox{-.2in}{\includegraphics[height=0.5in]{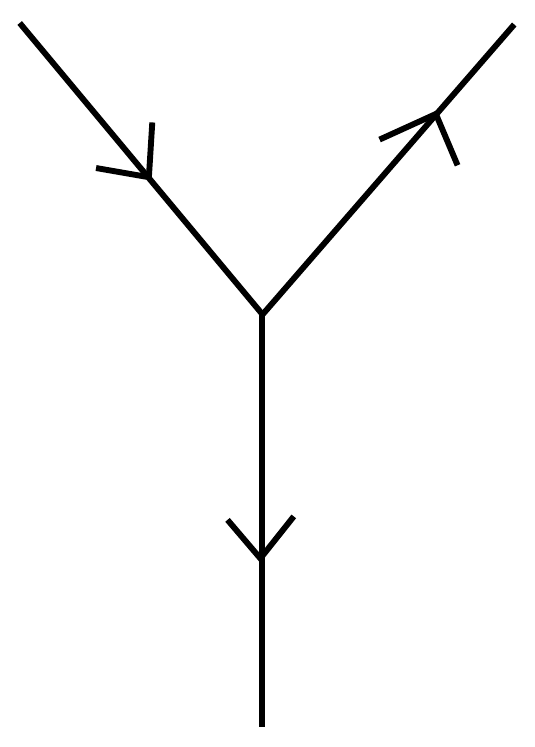}}\,\,  \rightarrow \,\, \raisebox{-.2in}{\includegraphics[height=.5in]{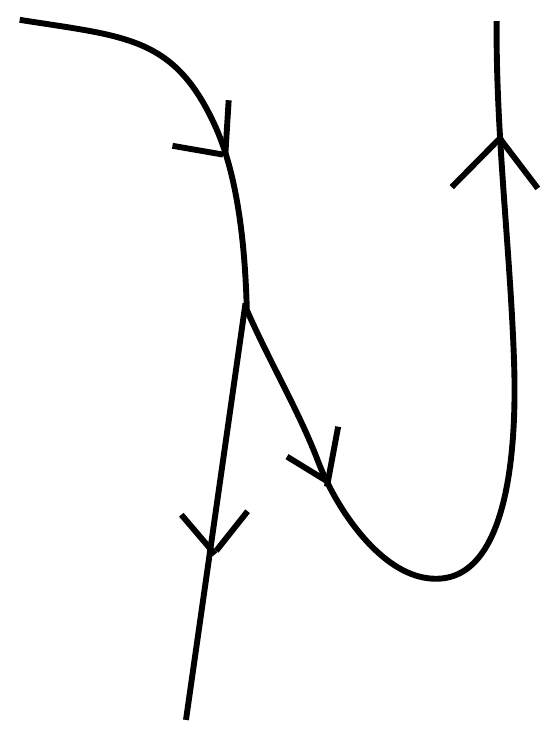}} $
&
$ \raisebox{-.2in}{\includegraphics[height=0.5in]{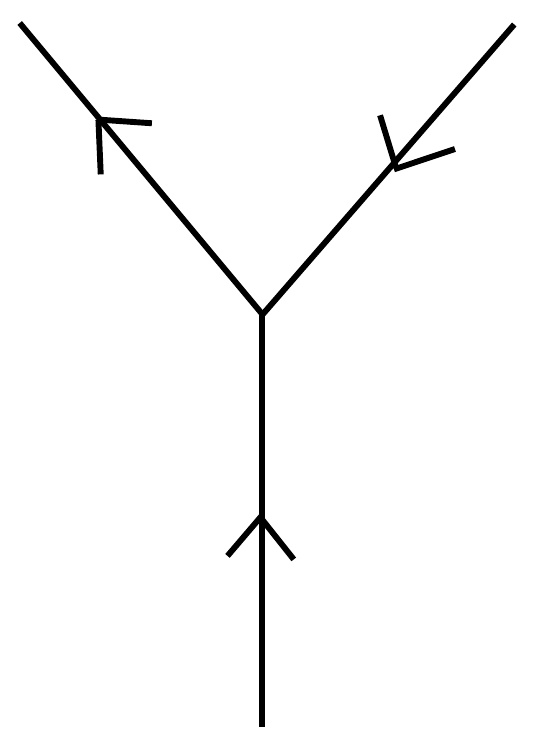}}\,\,  \rightarrow \,\, \raisebox{-.2in}{\includegraphics[height=.5in]{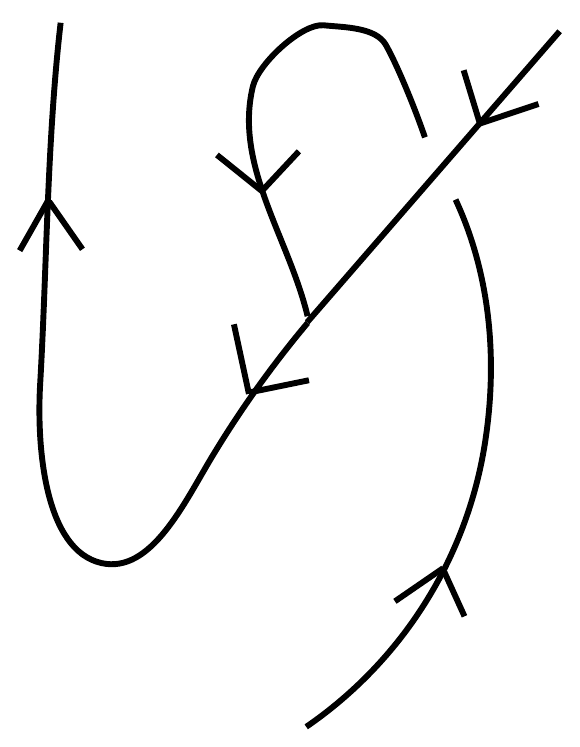}} $
 \\[-.2cm] & & 
 \\ \hline
&  & \\[-.2cm]

$ \hspace{1.2cm}\raisebox{-.2in}{\includegraphics[height=0.5in]{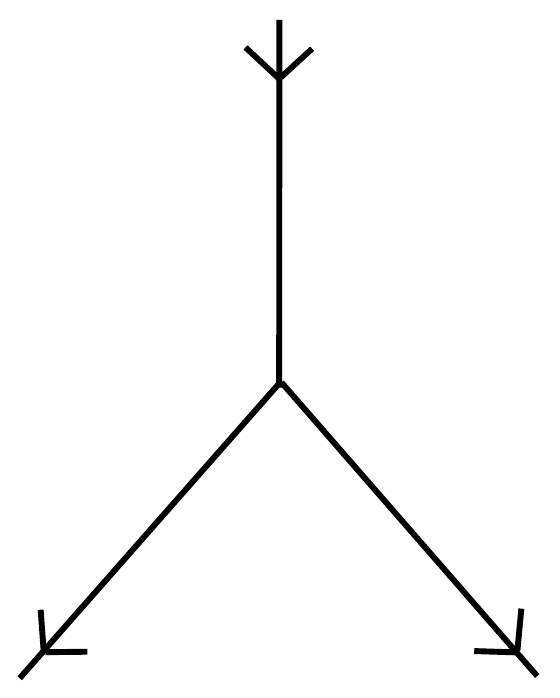}}\,\, \rightarrow \,\, \raisebox{-.2in}{\includegraphics[height=.5in]{LDDD1}} $
&
$\raisebox{-.2in}{\includegraphics[height=0.5in]{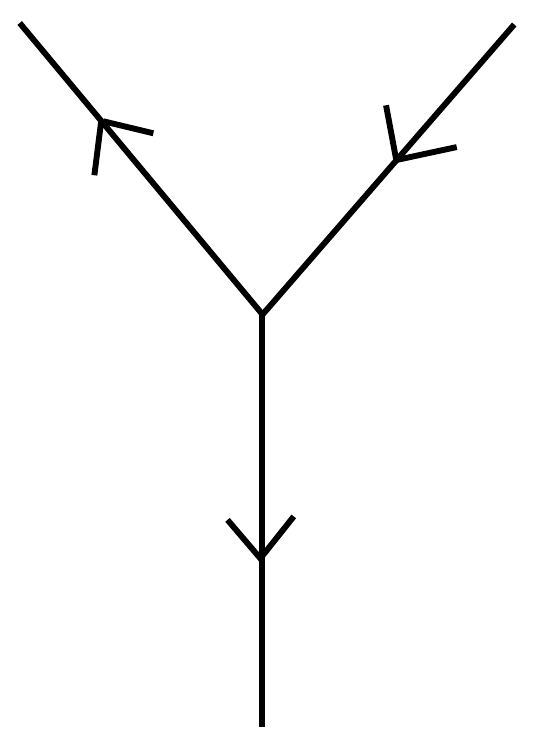}}\,\,  \rightarrow \,\, 
\reflectbox{\raisebox{-.2in}{\includegraphics[height=.5in]{YDUD2}}} $
&
$ \raisebox{-.2in}{\includegraphics[height=0.5in]{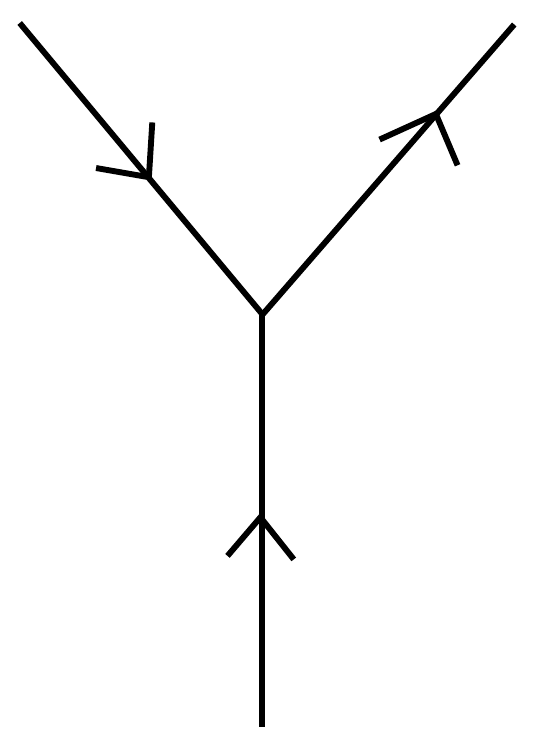}}\,\,  \rightarrow \,\, \raisebox{-.2in}{\includegraphics[height=.5in]{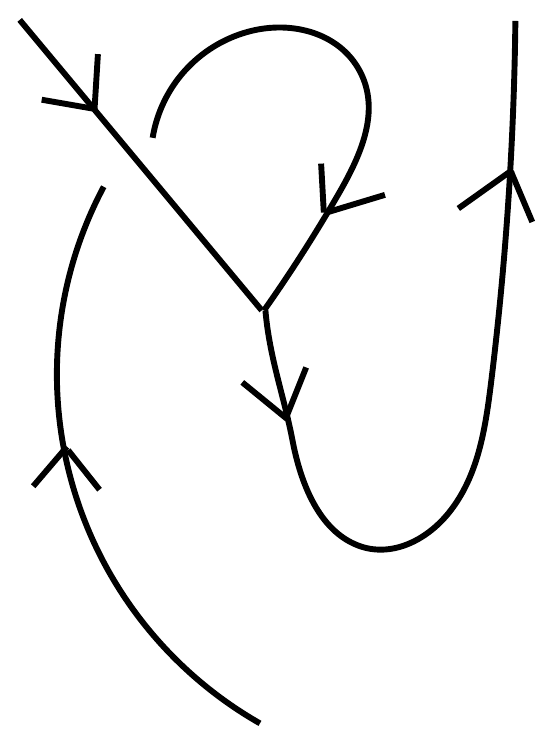}} $

 \\[-.2cm] & & 
 \\ \hline
&  & \\[-.2cm]

$ \hspace{0.1cm}   \reflectbox{\raisebox{-.2in}{\includegraphics[height=.5in]{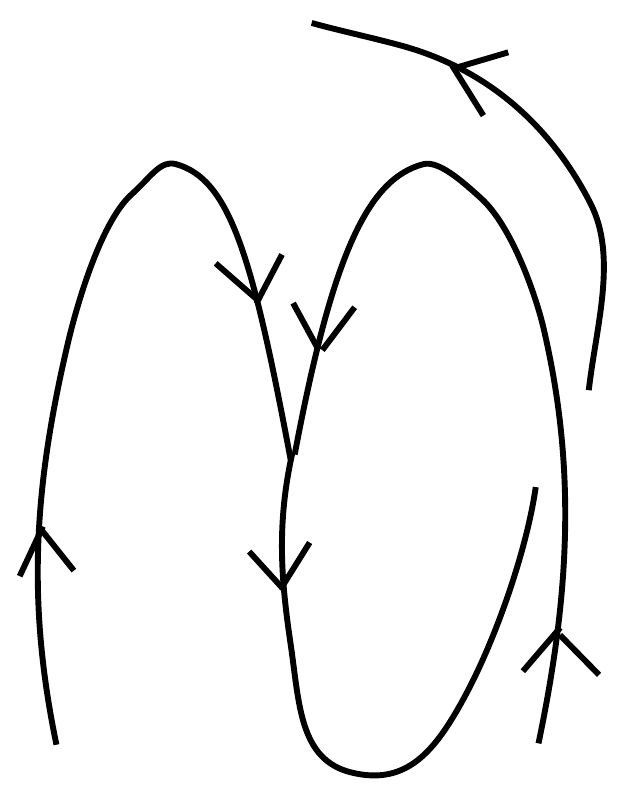}}} \,\, \leftarrow \,\, \raisebox{-.2in}{\includegraphics[height=0.5in]{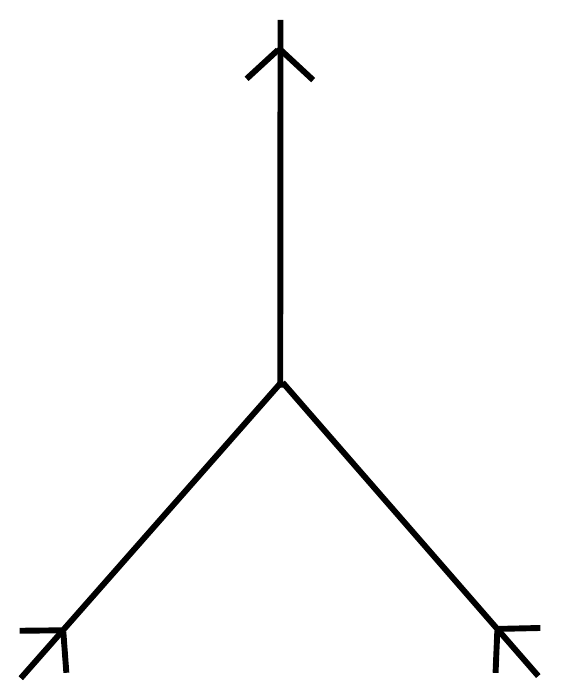}}\,\, \rightarrow \,\, \raisebox{-.2in}{\includegraphics[height=.5in]{YUUU2b}}\hspace{-0.7cm}$
&
$ \raisebox{-.2in}{\includegraphics[height=0.5in]{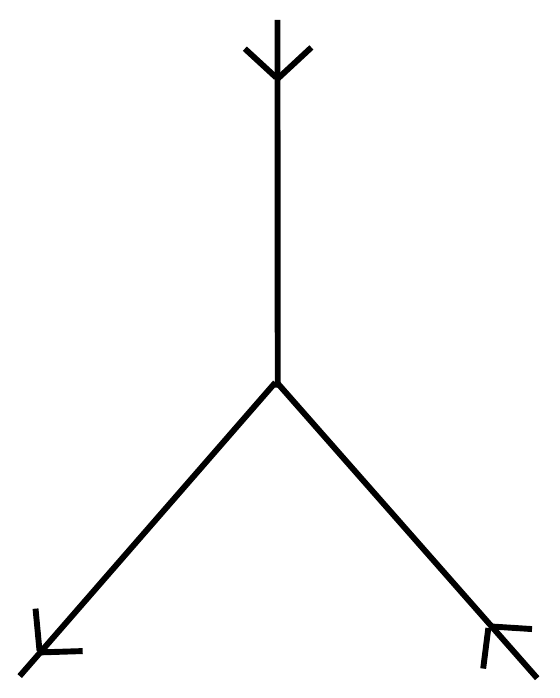}}\,\, \rightarrow \,\, \raisebox{-.2in}{\includegraphics[height=.5in]{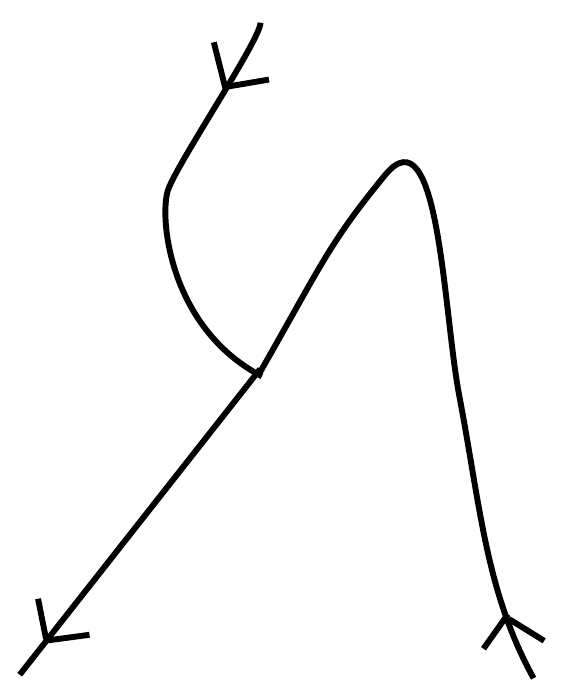}} $
&
$ \raisebox{-.2in}{\includegraphics[height=0.5in]{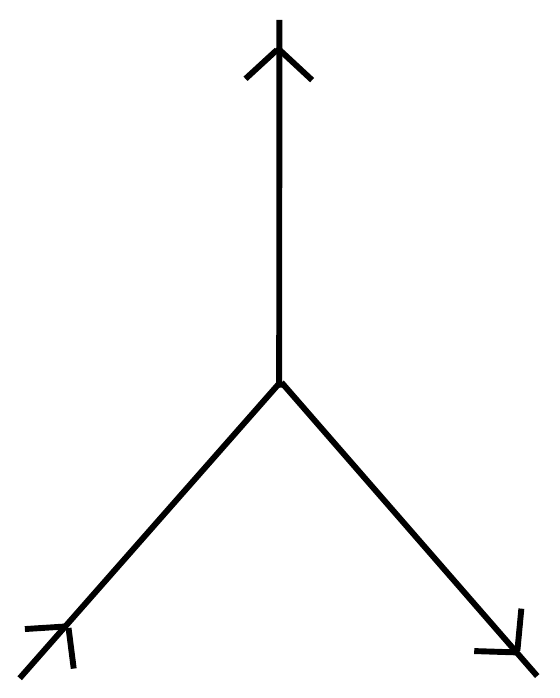}}\,\, \rightarrow \,\, \raisebox{-.2in}{\includegraphics[height=.5in]{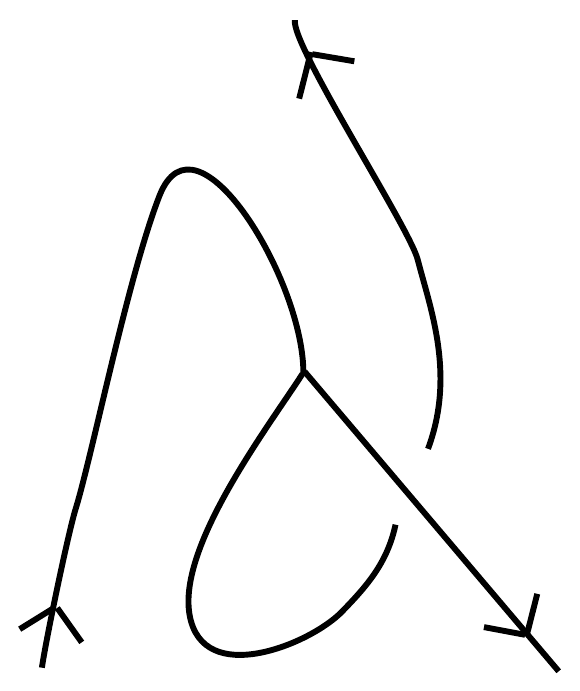}} $
 \\[-.2cm] & & 
 \\ \hline
&  & \\[-.2cm]

$\hspace{1.2cm} \raisebox{-.2in}{\includegraphics[height=0.5in]{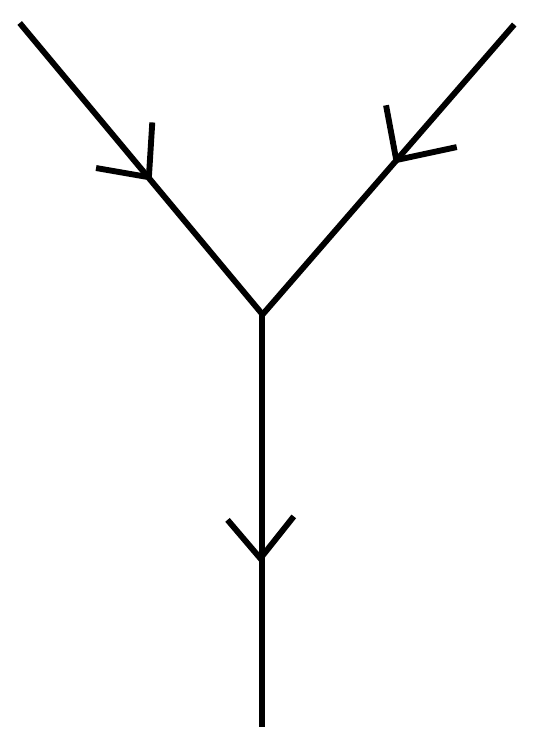}}\,\rightarrow \,\, \raisebox{-.2in}{\includegraphics[height=.5in]{YDDD1}} $

&
$ \raisebox{-.2in}{\includegraphics[height=0.5in]{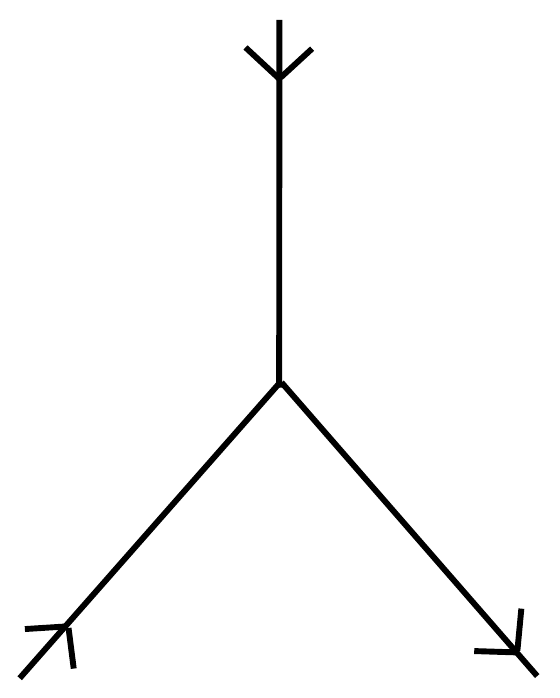}}\,\, \rightarrow \,\, \raisebox{-.2in}{\includegraphics[height=.5in]{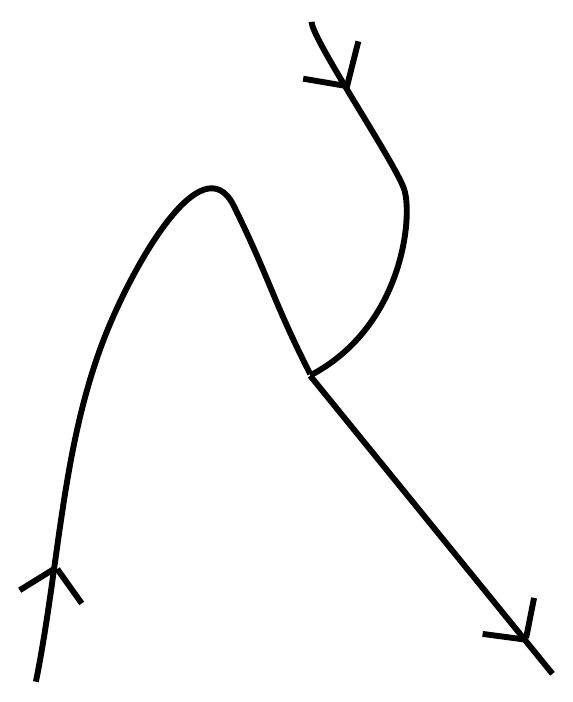}} $
&
$\raisebox{-.2in}{\includegraphics[height=0.5in]{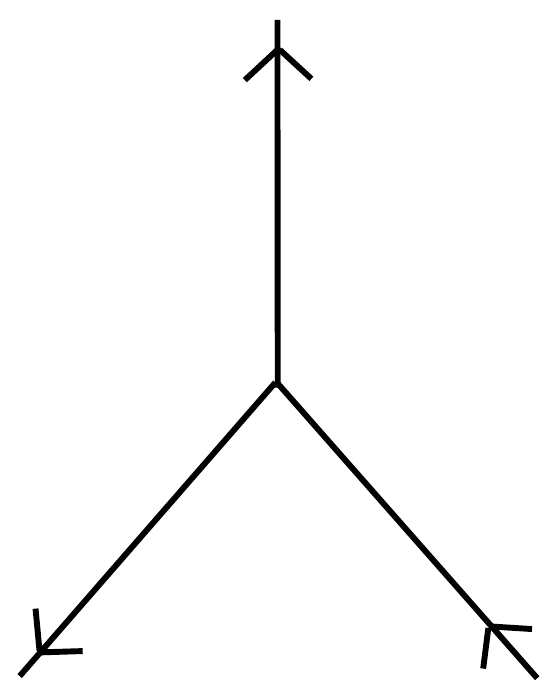}}\,\, \rightarrow \,\, \raisebox{-.2in}{\includegraphics[height=.5in]{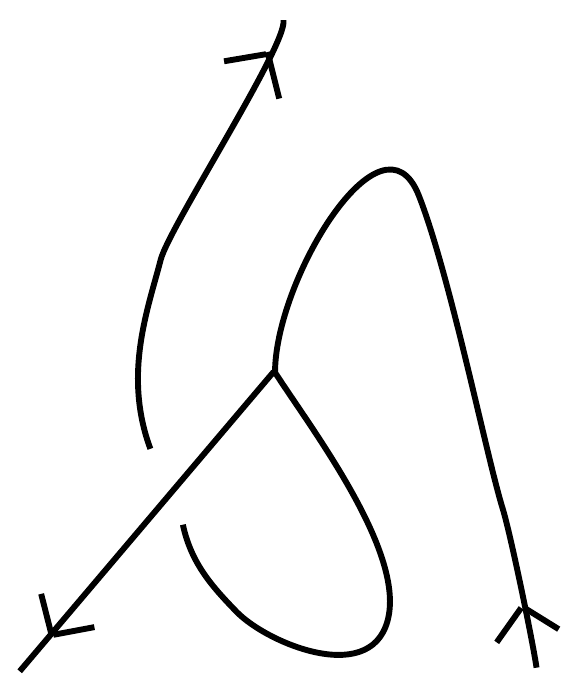}} $
 \\[-.2cm] & & 
 \\ \hline

\end{tabular}
\end{center}
\caption{Arranging $Y$- and $\lambda$-vertices in regular position }\label{regpos}
\end{figure}

We now explain the chart in Fig.~\ref{regpos}.
Note first that if all three edges adjacent to a vertex are oriented downwards, then the vertex is already in regular position and it is therefore left unchanged. If there is exactly one edge with upwards orientation attached to a $Y$- or $\lambda$-vertex, a swing move is applied to that edge to obtain downwards orientation near the vertex. A specific case is shown in Fig.~\ref{excase2}.

\begin{figure}[ht]
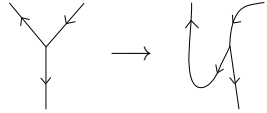

\[
\raisebox{-.7cm}{\includegraphics[height=1.5cm]{YUDD1}} \hspace{.2cm} \longrightarrow \hspace{.3cm} \reflectbox{\raisebox{-.7cm}{\includegraphics[height=1.5cm]{YDUD2}}}
\]
\caption{A swing move near a vertex}\label{excase2}
\end{figure}

If there are two edges with upwards orientation attached to a $Y$- or $\lambda$-vertex, a swing move is applied followed by an $R5$ move (see the third column in Fig.~\ref{regpos}). For a  $Y$-vertex ($\lambda$-vertex), the swing move is applied to the upper edge (lower edge) with upwards orientation; an $R5$ move is then applied between the other edge with upwards orientation and the third edge with downwards orientation. Note that this yields a vertex in regular position. Since there are two possible crossing-types that can be used in an $R5$ move, there are two possible ways to bring this type of vertex to regular position; only one type of possible crossings is shown in each case in Fig.~\ref{regpos}.

If all edges attached to a $Y$- or $\lambda$-vertex have upwards orientation, two swing moves are applied followed by an $R5$ move (see the first column, rows one and three, in Fig.~\ref{regpos}). For a $Y$-vertex ($\lambda$-vertex), the swing moves are applied to the upper (lower) edges attached to the vertex; an $R5$ move is then applied to the bottom (upper) edge with upwards orientation and any of the two upper (bottom) edges attached to the vertex. Since there are two possible sets of edges where the $R5$ move can be applied to and two possible crossing types, there are four possible ways to bring this type of oriented vertex into regular position; only two ways are shown in the chart of Fig.~\ref{regpos} for this type of $Y$- or $\lambda$-vertex.

Once a virtual STG diagram is in regular position, we continue with the preparation for braiding and the braiding algorithm described in~\cite{KauLamb}.\\

A \textit{down-arc} (respectively \textit{up-arc}) in a regular virtual STG diagram is an arc with downward (respectively upward) orientation. A \textit{free up-arc} is an upward-oriented arc that is not part of a crossing and is away from a vertex. 

In order to work with these up-arcs and down-arcs, we subdivide the diagram by marking its arcs with points. The \textit{subdivision} of an arc consists of cutting an arc into smaller arcs, and marking the places where the arc was cut with points, called \textit{subdivision points}. Note that there will be a subdivision point at each local minimum and local maximum in a diagram. Moreover, each crossing with at least one up-arc will be flanked with subdivision points in preparation for braiding. Note that vertices or crossings cannot coincide with subdivision points.

The goal of the braiding algorithm---applied to a regular virtual STG diagram---is to eliminate all up-arcs in the diagram without changing the isotopy-type of the diagram, and such that the resulting diagram is the closure of a virtual trivalent braid (with all braid strands oriented downward). The braiding algorithm requires the diagram to satisfy certain conditions, which are explained in the following definition.\\

\begin{definition}~\label{regular position}
A regular virtual STG diagram is said to be in \textit{general position} if it does not contain any horizontal arcs and no two subdivision points, crossings or vertices are vertically or horizontally aligned. Additionally, no crossing is coincident with a local maximum or minimum.
\end{definition}

Crossings cannot be coincident with a local maximum or minimum since there is a subdivision point at each local maximum and minimum but crossings cannot be subdivision points. In addition,  we require in the definition of regular position that there are no horizontally aligned crossings or vertices so that crossings and vertices lie on different horizontal levels in the corresponding braid obtained at the end of the braiding algorithm.  The reason for requiring that there are no vertically aligned subdivision points, crossings or vertices will become clear once we explain our braiding algorithm.

By applying small planar shifts, if necessary, any regular virtual STG diagram can be transformed into a diagram in general position. These local shifts were called \textit{direction sensitive moves} in~\cite{KauLamb}. Some of these moves allow us to shift a crossing, vertex or subdivision point with respect to the horizontal or vertical direction. Other moves, like the swing moves near crossings and depicted in Fig.~\ref{fig:swing moves}, ensure that no local maximum/minimum point and crossing (classical and virtual) are vertically aligned.

\begin{figure}[ht]
\[ \raisebox{-10pt}{\includegraphics[height=0.4in]{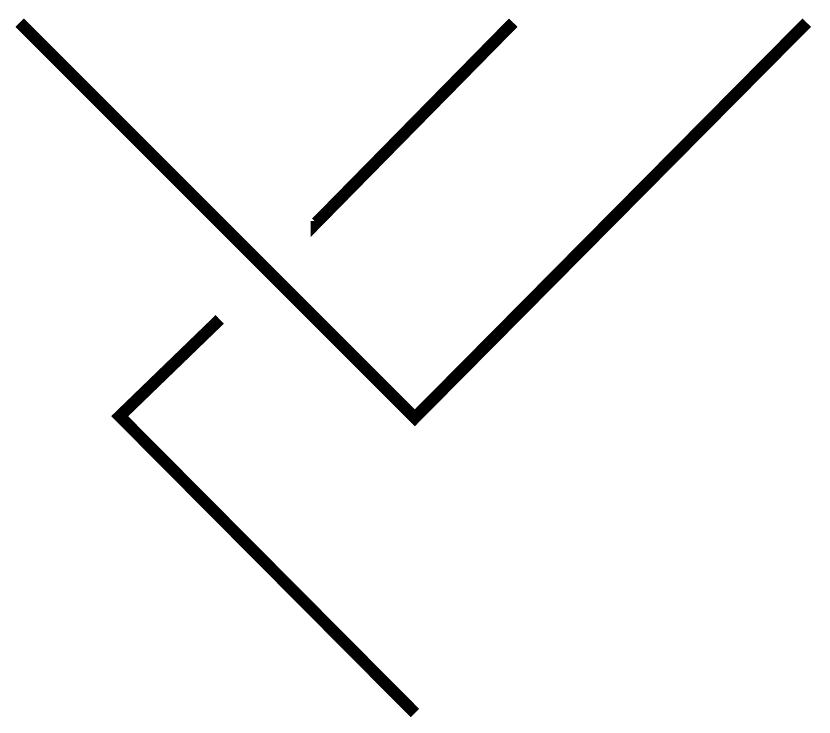}}\,\, \longleftrightarrow \,\, \raisebox{-10pt}{\includegraphics[height=0.4in]{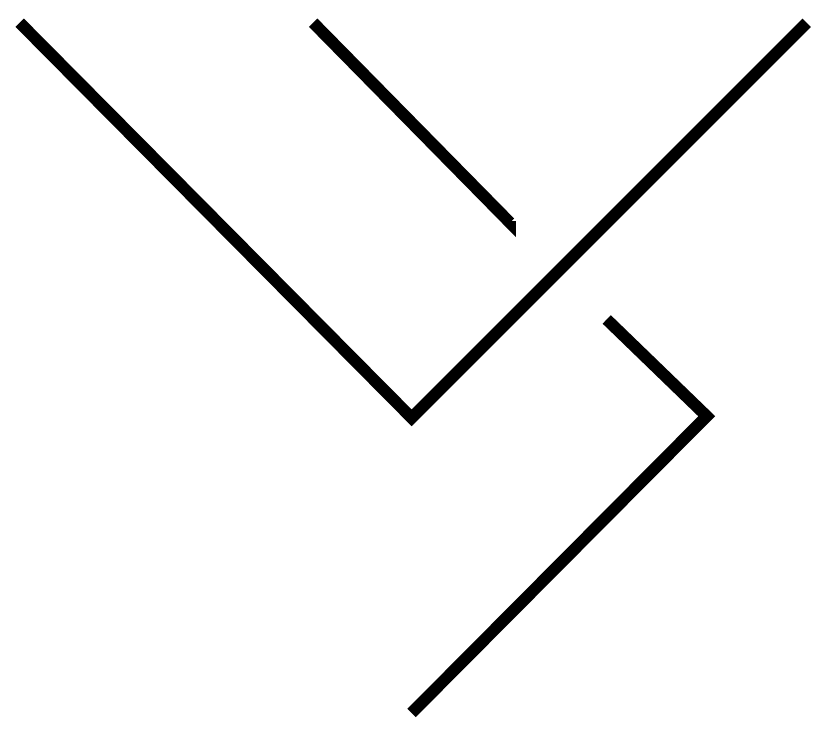}}
 \hspace{0.5cm} 
 \raisebox{-10pt}{\includegraphics[height=0.42in]{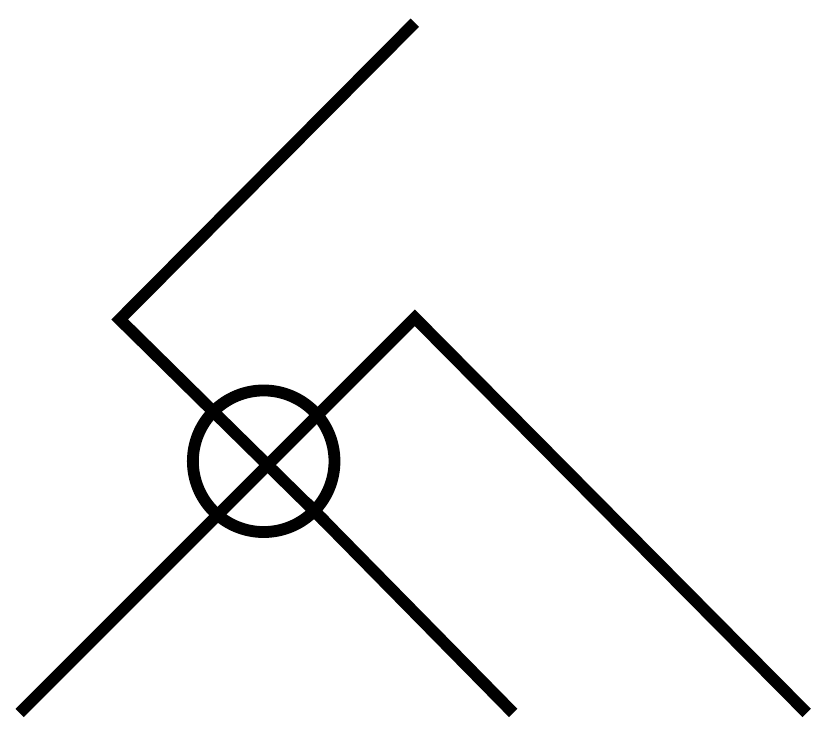}}\,\,\longleftrightarrow \,\, \raisebox{-10pt}{\includegraphics[height=0.42in]{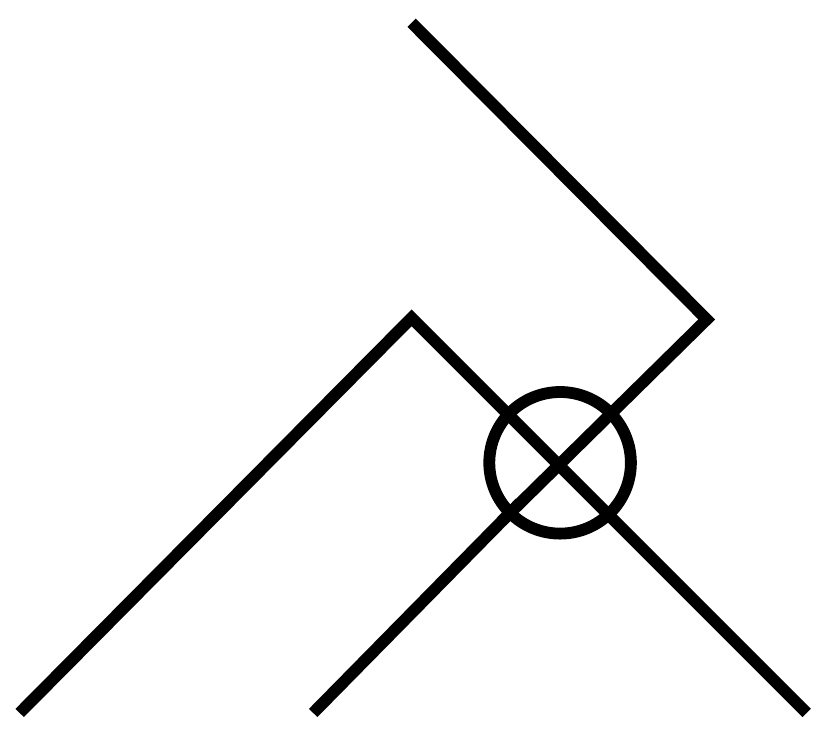}} 
\]
\caption{The swing moves near a crossing}\label{fig:swing moves}
\end{figure}

From here on, all diagrams will be assumed to be in general position. We are now ready to describe our approach to braiding a regular virtual STG diagram in general position.

\subsection{A braiding algorithm and an Alexander-type theorem}

We start with a diagram in regular position and we eliminate the up-arcs while keeping the down-arcs. If a crossing contains at least one up-arc, we braid the crossing as a whole, using
a \textit{crossing box}---a small rectangular box whose diagonals are the arcs of the crossing. This box must be narrow enough so that the vertical regions above and below the box do not intersect with the region of another crossing in the diagram. The free up-arcs are arcs that connect crossing boxes. We braid each crossing containing an up-arc according to the chart in Fig.~\ref{BraidingChart} (see~\cite[Fig. 7]{KauLamb}). When braiding a crossing, a new pair of braid strands are created; these new strands are drawn so that outside of the crossing box they are vertically aligned and so that they intersect virtually any other part of the diagram; this is depicted by placing abstract virtual crossings at the ends of the braid strands.

\begin{figure}[ht] 
\begin{center}
 \begin{tabular}{ |l | l | l| }
    \hline
    $\reflectbox{ \raisebox{5pt}{\includegraphics[height=0.4in]{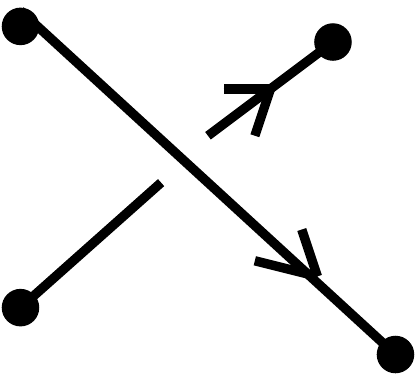}} }\,\, \raisebox{20pt}{$\longrightarrow$} \,\, \reflectbox{\raisebox{-30pt}{\includegraphics[height=1.2in]{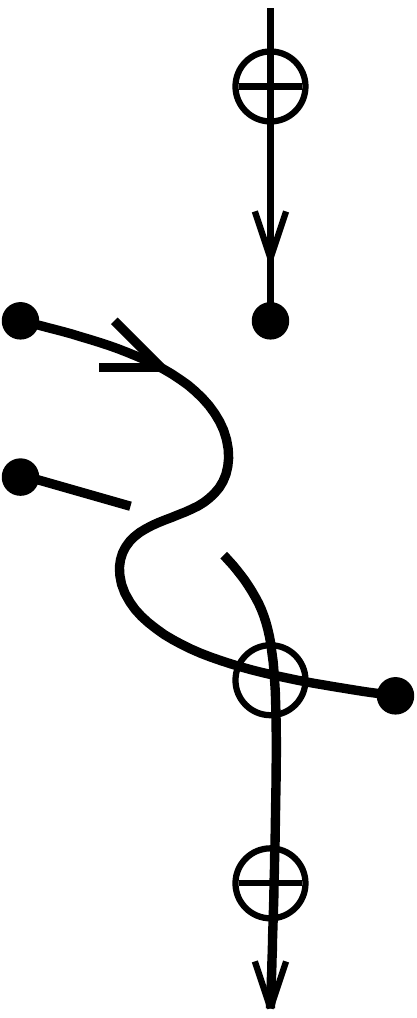}}} $
 &
 $ \reflectbox{\raisebox{5pt}{\includegraphics[height=0.4in]{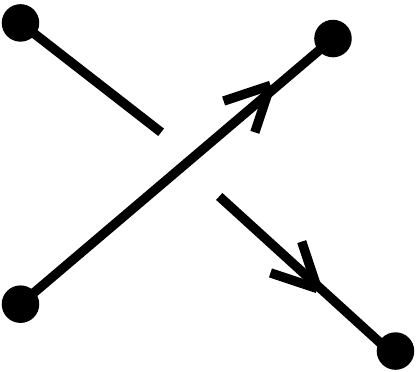}}}  \,\, \raisebox{20pt}{$\longrightarrow$} \,\,  \reflectbox{\raisebox{-30pt}{\includegraphics[height=1.2in]{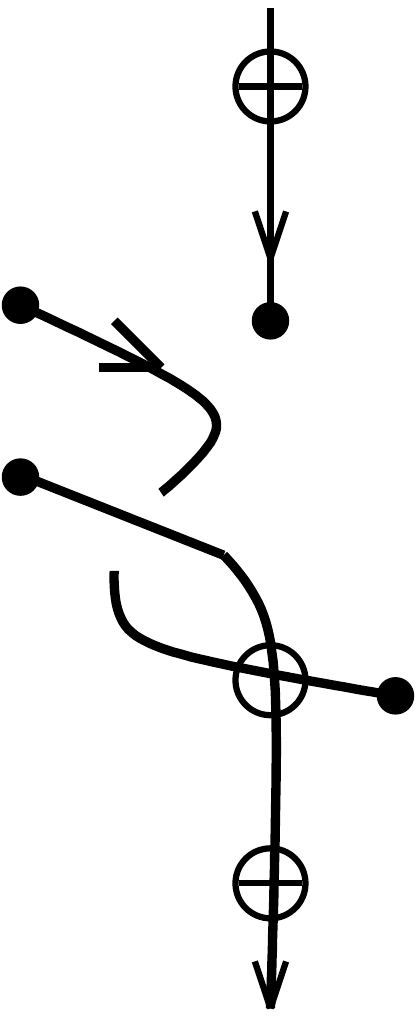}}} $ 
&
 $ \reflectbox{\raisebox{5pt}{\includegraphics[height=0.4in]{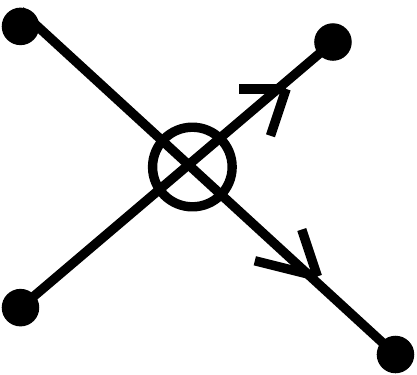}}}  \,\, \raisebox{20pt}{$\longrightarrow$} \,\,  \reflectbox{\raisebox{-25pt}{\includegraphics[height=1.1in]{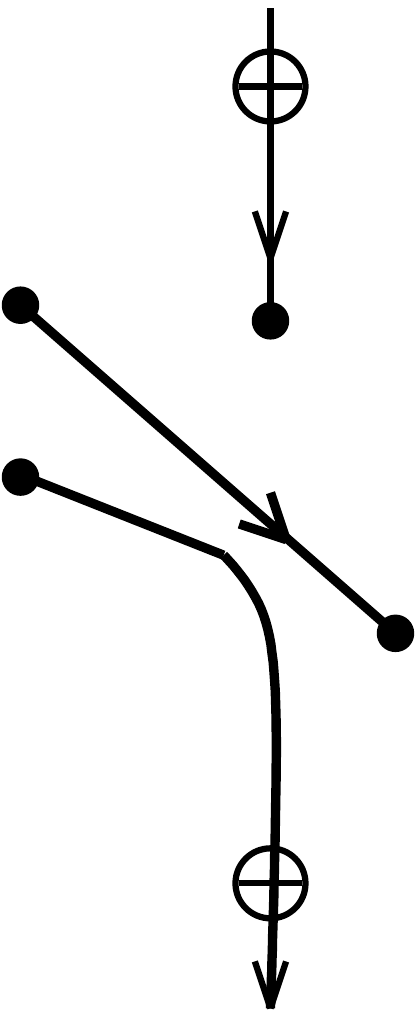}} }$ \\ \hline 

$ \raisebox{5pt}{\includegraphics[height=0.4in]{Virtual2A-new}} \,\, \raisebox{20pt}{$\longrightarrow$} \,\,\raisebox{-25pt}{\includegraphics[height=1.1in]{Virtual2B-new}} $ 
&
$ \raisebox{5pt}{\includegraphics[height=0.4in]{Classical3A-new}}  \,\, \raisebox{20pt}{$\longrightarrow$} \,\, \raisebox{-30pt}{\includegraphics[height=1.2in]{Classical3B-new}} $ 
&
$ \raisebox{5pt}{\includegraphics[height=0.4in]{Classical4A-new}} \,\, \raisebox{20pt}{$\longrightarrow$} \,\, \raisebox{-30pt}{\includegraphics[height=1.2in]{Classical4B-new}} $ \\ 
 \hline
 $ \raisebox{10pt}{\includegraphics[height=0.4in]{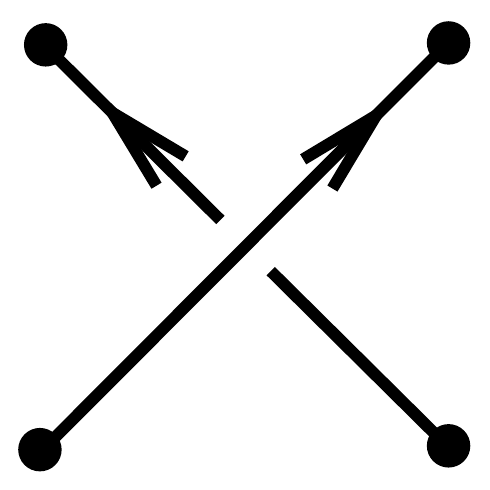}} \,\, \raisebox{25pt}{$\longrightarrow$} \,\, \raisebox{-40pt}{\includegraphics[height=1.4in]{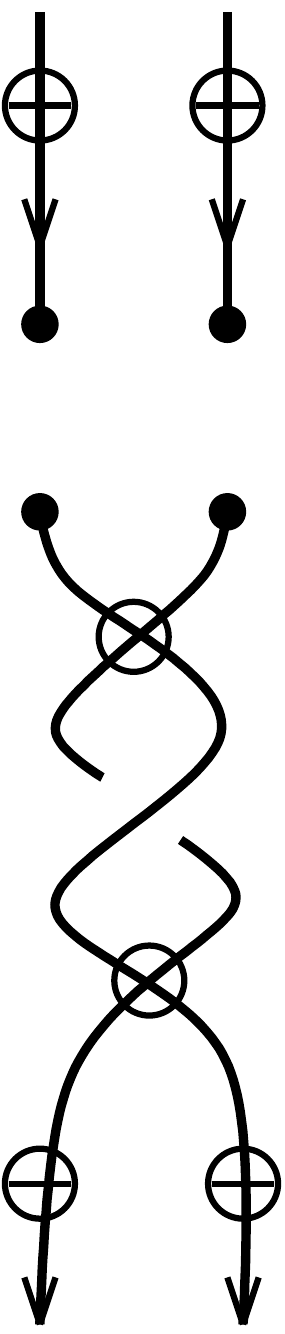}}  $ 
&
  $ \reflectbox{\raisebox{10pt}{\includegraphics[height=0.4in]{Classical5A}}}  \,\, \raisebox{25pt}{$\longrightarrow$} \,\,  \reflectbox{\raisebox{-40pt}{\includegraphics[height=1.4in]{Classical5B-new}} }$  
&
 $ \raisebox{10pt}{\includegraphics[height=0.37in, angle=90]{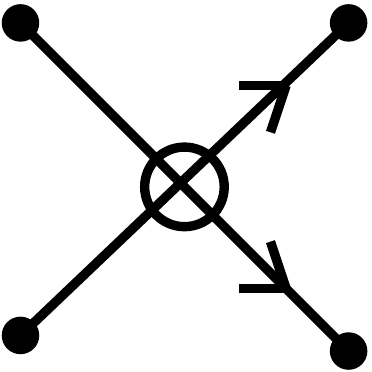}} \,\, \raisebox{20pt}{$\longrightarrow$} \,\, \raisebox{-30pt}{\includegraphics[height=1.2in]{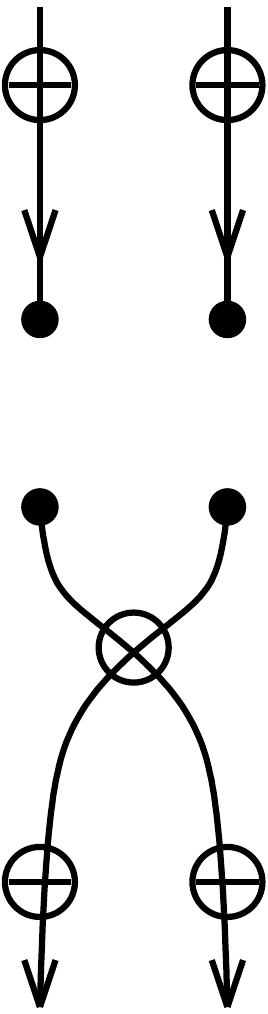}}$ 
          \\ \hline
\end{tabular}
\end{center}
\caption{The braiding chart for crossings}\label{BraidingChart} 
\end{figure} 

It is important to note that connecting the corresponding braid strands (outside of the diagram and around the braid axis) results in a tangle diagram that is isotopic to the original tangle diagram in the crossing box. The narrow condition for crossing boxes ensures that new pairs of braid strands obtained as a result of braiding different crossings do not intersect with each other, and that the order in which we braid the crossings does not matter. 

Once all crossings with at least one up-arc have been braided, we braid the free up-arcs using the \textit{basic braiding move} shown in Fig.~\ref{fig:basic-braiding} (see~\cite[Fig. 9]{KauLamb}). The basic braiding move involves cutting the up-arc at a point and pulling the top endpoint of the cut upward and the bottom endpoint downward so that the new pair of strands are vertically aligned with the upper subdivision point of the original up-arc. Moreover, the new pair of braid strands are extended in such a manner that they cross only virtually with any other arcs in the original diagram. As before, this is abstractly illustrated by marking  the ends of the new braid strands with virtual crossings.

 \begin{figure}[ht]
\[ \raisebox{-15pt}{\includegraphics[height=.5in]{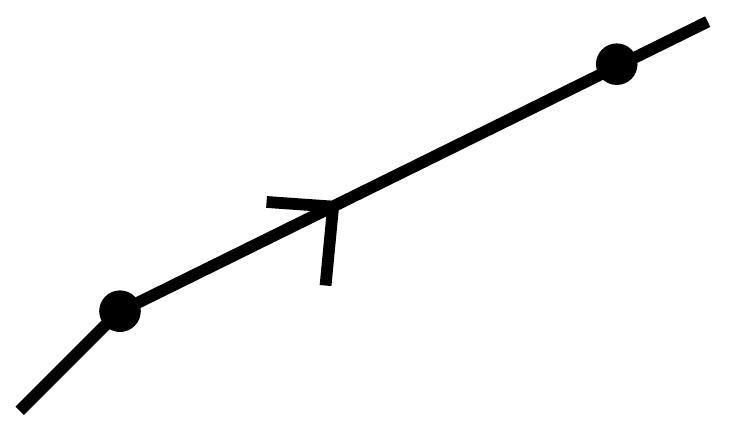}}\,\,{\longrightarrow} \,\,\raisebox{-40pt}{\includegraphics[height=1.2in]{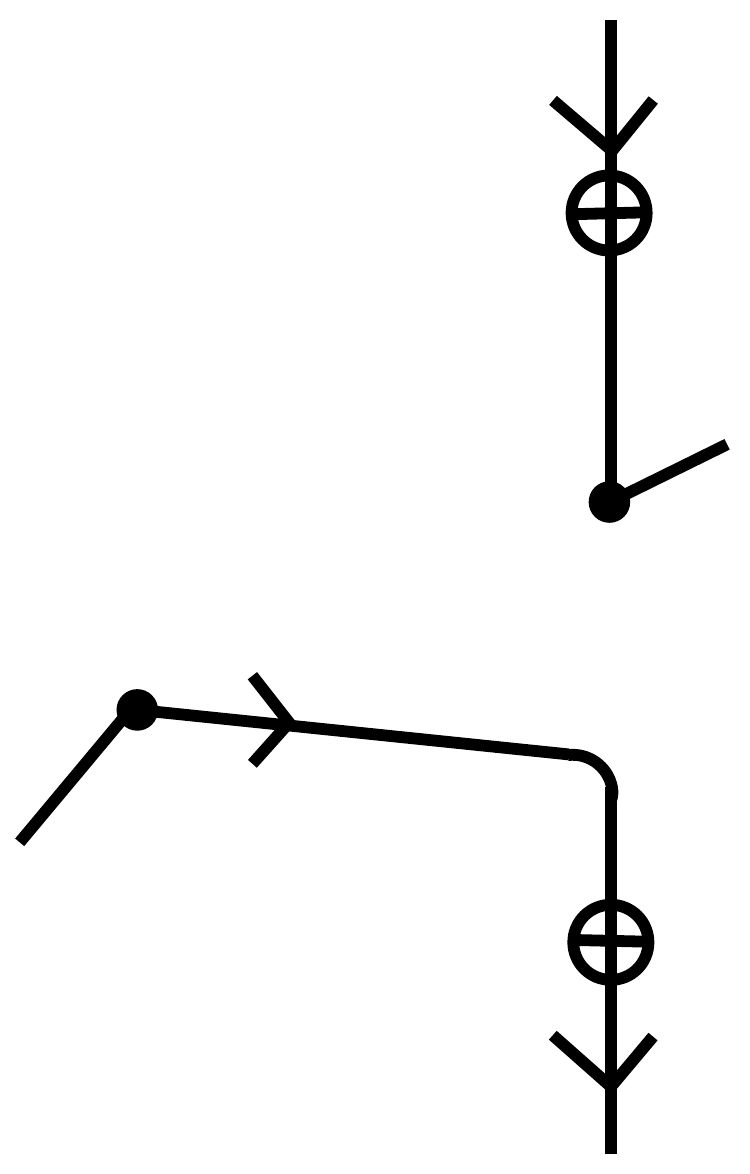}}\,\, \]
\caption{A basic braiding move}\label{fig:basic-braiding}
\end{figure}

It should be clear that connecting the new braid strands yields a virtual trivalent tangle diagram isotopic with the original diagram, via the detour move. Specifically, the basic braiding move creates a loop that is stretched around the axis of the virtual trivalent braid.

We remark that since the original diagram is assumed to be in general position, a subdivision point cannot coincide with a vertex. If a subdivision point on an up-arc was to coincide with a vertex, a basic braiding move (as defined here) would not be allowed. This is because in that case, connecting the corresponding pair of braid strands would result in a diagram that differs from the original diagram by a forbidden move.

We can now explain the reason behind the requirement in the definition of regular position that no subdivision points, crossings or vertices are vertically aligned. If two subdivision points, crossings or vertices are vertically aligned, then the braiding algorithm may result in triple points (a point where three arcs intersect) or braid strands going through a vertex. 

\begin{example} \label{Example1}
We now illustrate the braiding algorithm by applying it to the virtual STG diagram below (see Fig.~\ref{fig:braiding_example}). The notation RP stands for transforming the diagram into regular position and the circled areas represent the crossing or up-arc to be braided in the following step. We use the braiding convention for crossings established in the braiding chart given in Fig.~\ref{BraidingChart}.
\begin{figure}[ht]
\[
\raisebox{-.35in}{\includegraphics[height=.9in]{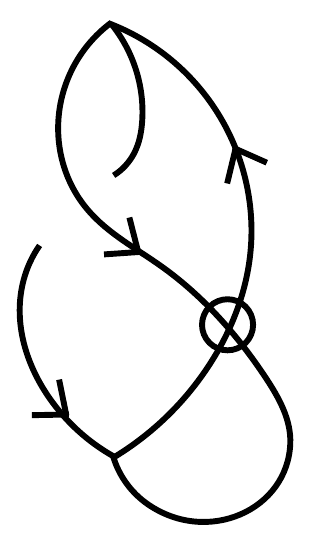}} 
\hspace{0.1in} \stackrel{\text{RP}} {\longrightarrow} \hspace{0.1in} 
\raisebox{-.35in}{\includegraphics[height=.9in]{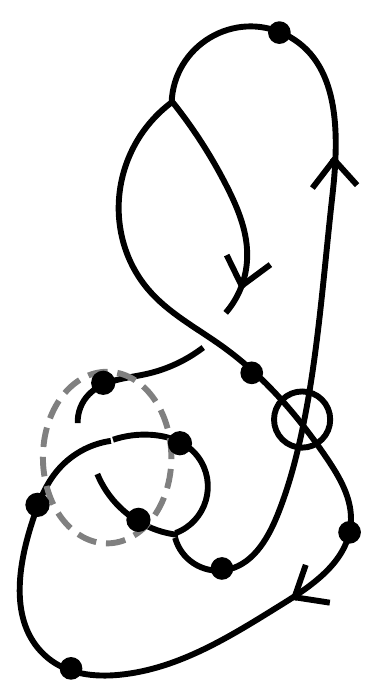}} 
\hspace{0.1in}
\hspace{0.1in}  \stackrel{\text{braiding}}{\longrightarrow}  \hspace{0.1in} 
\raisebox{-.6in}{\includegraphics[height=1.65in]{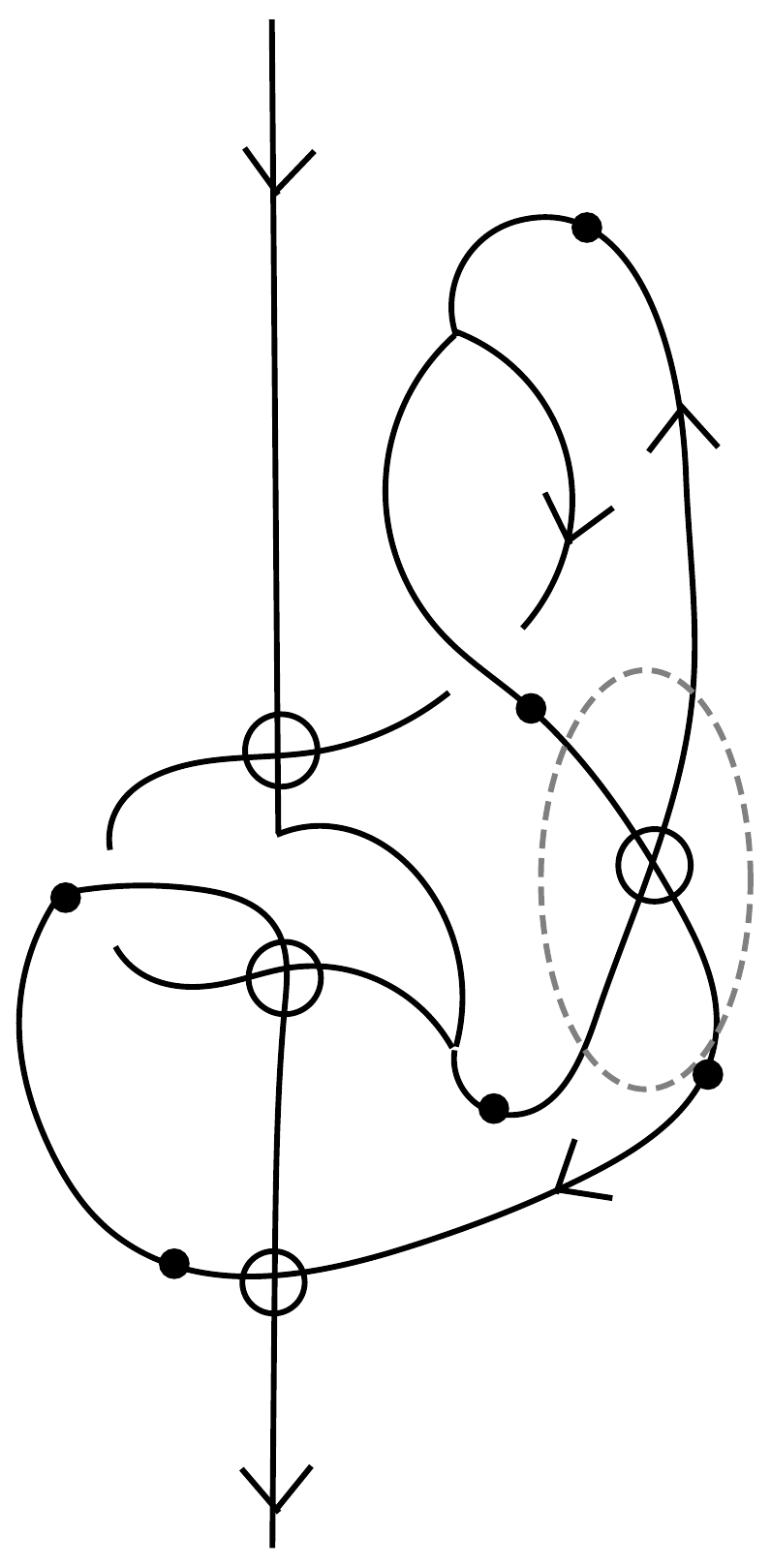}} \hspace{0.1in} 
\hspace{0.1in}  \stackrel{\text{braiding}}{\longrightarrow}  \hspace{0.1in} 
\raisebox{-.6in}{\includegraphics[height=1.7in]{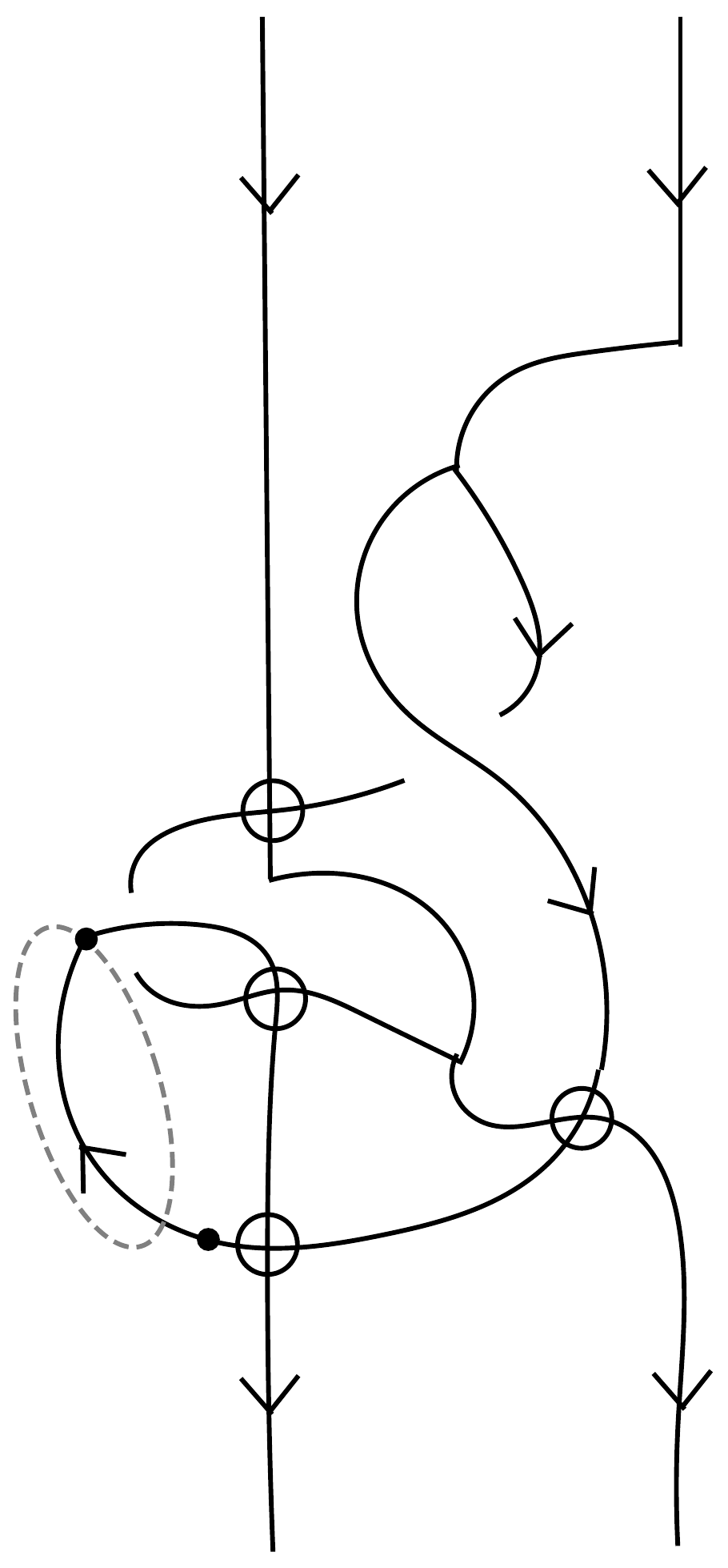}} \hspace{0.1in} 
\]
\[
\hspace{0.1in}  \stackrel{\text{braiding}}{\longrightarrow}  \hspace{0.1in} 
\raisebox{-.6in}{\includegraphics[height=1.7in]{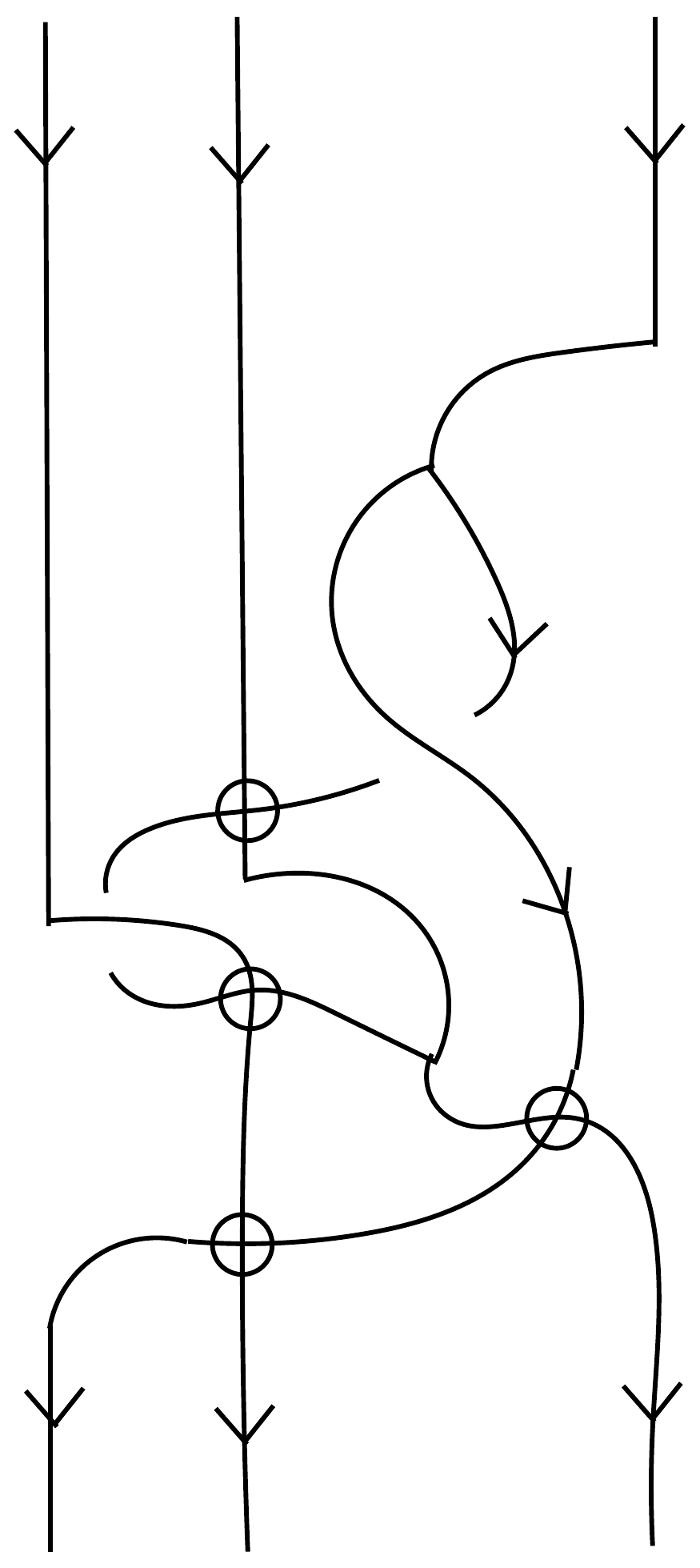}} \hspace{0.1in} 
\hspace{0.1in}  \underset{\text{isotopy}} {\overset{\text{planar}}{\longrightarrow}}
\hspace{0.1in}
\raisebox{-.45in}{\includegraphics[height=1.2in]{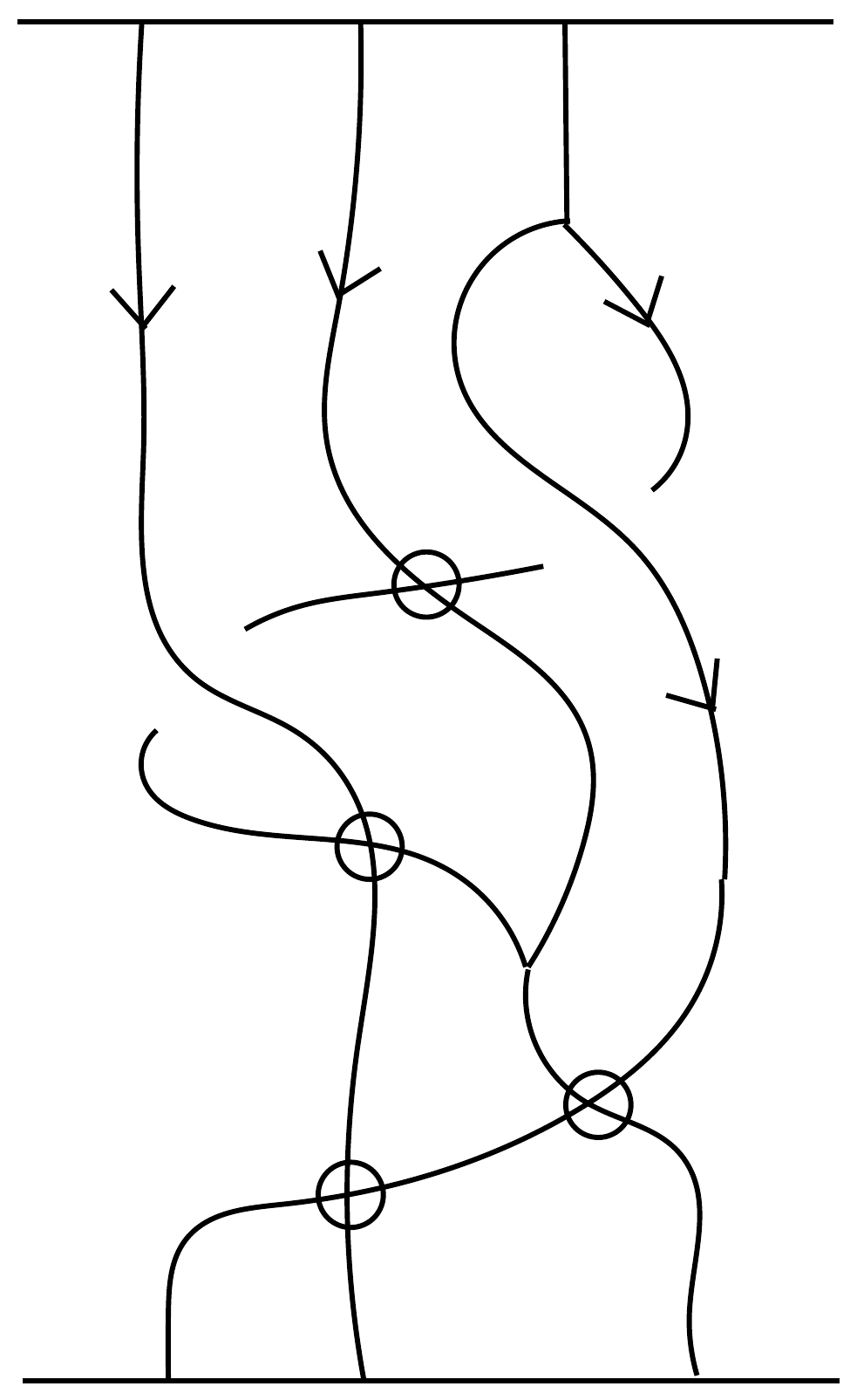}} \hspace{0.1in}
= \lambda_3 \sigma_3^{-1} v_2 \sigma_1^{-1} v_1 y_2 v_2 v_1
\]
\caption{An example of the braiding algorithm}\label{fig:braiding_example}
\end{figure}

The closure of the resulting braid above is a virtual STG diagram that is isotopy equivalent to the original diagram. 
\end{example}

The braiding algorithm described here braids any well-oriented virtual STG diagram in general position, producing a virtual trivalent braid whose closure is isotopic to the original diagram. Therefore, the following theorem holds.\\

\begin{theorem}[Alexander-type Theorem for virtual STGs]~\label{Alexander-type Theorem} 
Every well-oriented virtual spatial trivalent graph can be represented as the closure of a virtual trivalent braid.
\end{theorem}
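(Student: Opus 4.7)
The plan is to prove the theorem constructively: given any well-oriented virtual spatial trivalent graph, I exhibit a virtual trivalent braid whose closure represents the same equivalence class. Essentially all of the machinery is already in place in the preceding subsections, so the task is to assemble the steps and to check that each of them preserves the isotopy class of the diagram.

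First I would fix any diagram $D$ of the given graph. Using the rotations in Fig.~\ref{RearrM}, I rearrange every $W$- or $M$-vertex into a $Y$- or $\lambda$-vertex, a planar isotopy. I then apply the conventions in Fig.~\ref{regpos}---a combination of swing moves and $R5$ moves---so that every vertex becomes regular, with all three incident edges locally oriented downward. All of these moves belong to the extended Reidemeister repertoire, so the equivalence class of $D$ is preserved. Next, small direction-sensitive planar perturbations (including the swing moves near crossings of Fig.~\ref{fig:swing moves}) bring $D$ into general position in the sense of Definition~\ref{regular position}, after which I insert subdivision points at every local maximum and minimum and flank every crossing containing an up-arc with subdivision points.

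With the diagram prepared, the braiding algorithm proceeds as described: for each crossing containing at least one up-arc I replace the crossing box by the corresponding pattern from the chart of Fig.~\ref{BraidingChart}, and for each remaining free up-arc I perform the basic braiding move of Fig.~\ref{fig:basic-braiding}. Both operations eliminate up-arcs without introducing new ones, so after processing all such features the resulting diagram has only downward-oriented strands and is therefore the closure of some virtual trivalent braid $\beta$.

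The main obstacle is verifying that the closure of $\beta$ is isotopic to $D$ in the presence of trivalent vertices. Each chart replacement inside a narrow crossing box is isotopic to the original crossing via the detour move together with the extended Reidemeister moves; the narrowness guarantees that strand pairs attached to distinct crossings do not interfere, so the order of braiding is irrelevant. Each basic braiding move is isotopic via the detour move, because the new pair of strands meets the rest of $D$ only in virtual crossings. The delicate point is ensuring that no operation creates a configuration forbidden by Fig.~\ref{fig:bad-moves}: regular position guarantees that no up-arc can be incident to a vertex, so no basic braiding move attempts to drag a strand through a trivalent vertex (which would amount to the forbidden move $F3$), and general position guarantees that subdivision points never coincide with vertices. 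These two conditions rule out all forbidden configurations near the vertices, so the closure of $\beta$ represents the same virtual STG as $D$, which is what we needed.
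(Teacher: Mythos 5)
Your proposal is correct and follows essentially the same route as the paper: prepare the diagram (rearrange $W$/$M$-vertices, bring vertices to regular position, pass to general position with subdivision points), braid crossings via the chart and free up-arcs via basic braiding moves, and justify isotopy of the closure through detour moves while noting that regular/general position prevents the forbidden configurations near trivalent vertices. The paper's proof is exactly this assembly of the braiding algorithm, so no further comment is needed.
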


\section{$TL_v$-equivalence for virtual trivalent braids}

Two non-equivalent virtual trivalent braids may yield isotopic  virtual STG diagrams via the closure operation. Therefore, the next natural consideration is to characterize virtual trivalent braids that have isotopic closures. For this purpose, we introduce an equivalence relation on virtual trivalent braids, $TL_v$-equivalence, which is an extension of the $L_v$-equivalence for virtual braids introduced in~\cite{KauLamb}.

\subsection{Virtual and trivalent $L_v$-moves} \label{ssec:TL_v}

In this section, all of the drawn diagrams are virtual trivalent braids. We start by reviewing a few definitions from~\cite{KauLamb}, (these are the next four definitions), and apply them in the context of virtual trivalent braids.

\begin{definition} \label{basic Lv-move}
A \textit{basic $L_v$-move} on a virtual trivalent braid involves cutting an arc and extending the cut-points such that the upper cut-point is extended downward and the lower cut-point is extended upward. The new pair of braid strands are vertically aligned and intersect virtually with the rest of the braid. As before, we represent this by marking with virtual crossings the locations where the new braid strands intersect the border of the braid box (see Fig.~\ref{basic-Lv}). We regard this move as being able to be performed going the opposite direction, as well.
\end{definition}

\begin{figure}[ht]
\[\raisebox{-.2in}{\includegraphics[height=.7in]{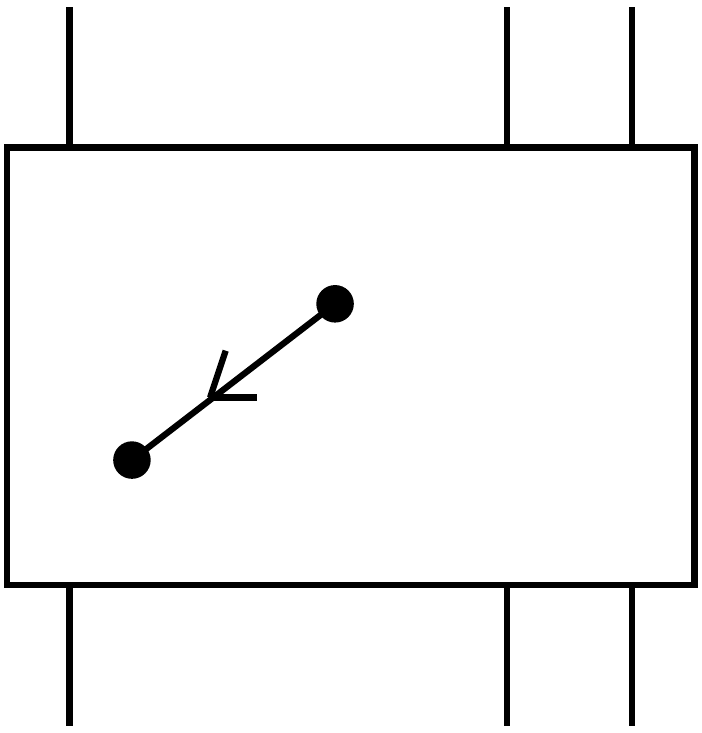}}\,\, 
\stackrel{\text{basic} \, L_v-\text{move}}{\longleftrightarrow} \,\, \raisebox{-.4in}{\includegraphics[height=1in]{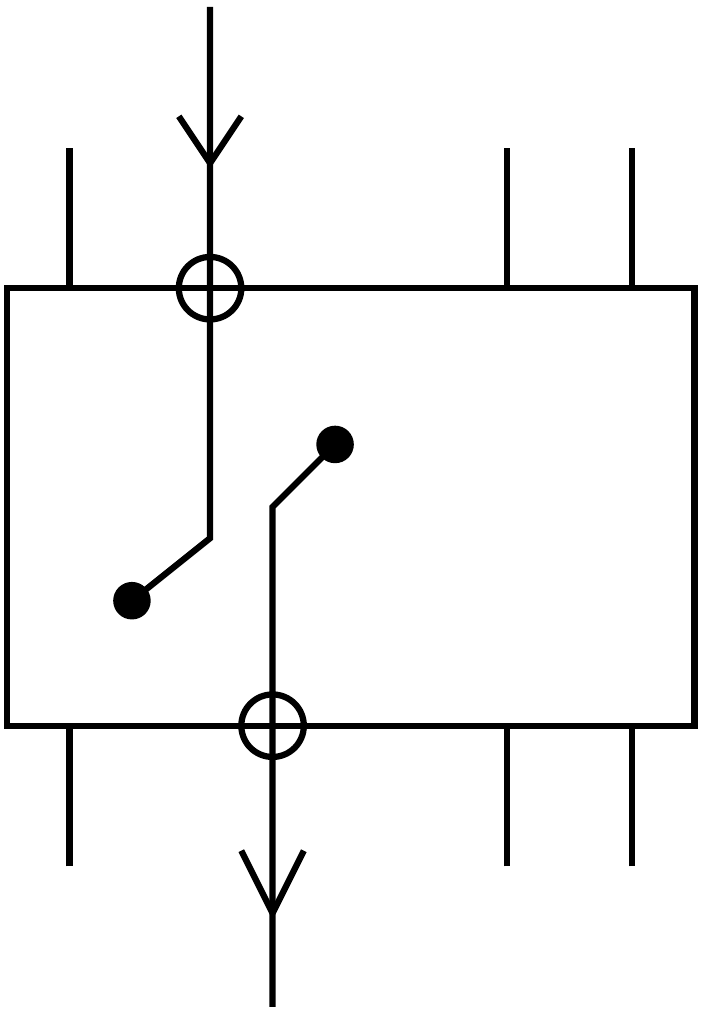}}
\vspace{-.5cm}\]
\caption{Basic $L_v$-move}~\label{basic-Lv}
\end{figure}

Note that the closures of the two braids involved in a basic $L_v$-move differ by a detour move and thus they are isotopic virtual STG diagrams.

Beyond the basic $L_v$-move, we can make other choices when extending the new braid strands obtained by cutting an arc of the braid; these choices allow for the creation of a virtual or real crossing and give rise to the following two types of $L_v$-moves.

\begin{definition}
A \textit{real $L_v$-move} on a virtual trivalent braid is similar to a basic $L_v$-move, except that the new move introduces a classical crossing (positive or negative). Since the crossing can be placed on either right or left side of the original strand, we  refer to these as \textit{right} or \textit{left real $L_v$-moves}. As before, the new braid strands intersect virtually the rest of the braid diagram. Likewise, this move can also be performed going in the opposite direction.  Figure~\ref{real Lv image} shows both left and right real $L_v$-moves.
\end{definition}

\begin{definition}
A \textit{virtual $L_v$-move} (or $vL_v$-move) is similar to the real $L_v$-move only that in this case, the new crossing is virtual. There are two versions of the $vL_v$-move, namely \textit{left} and \textit{right $vL_v$-moves} (see Fig.~\ref{virt Lv}). In addition, the move can also be performed going in the opposite direction.
\end{definition}

\begin{figure}[ht]
\[
\raisebox{-.9cm}{\includegraphics[height=.9in]{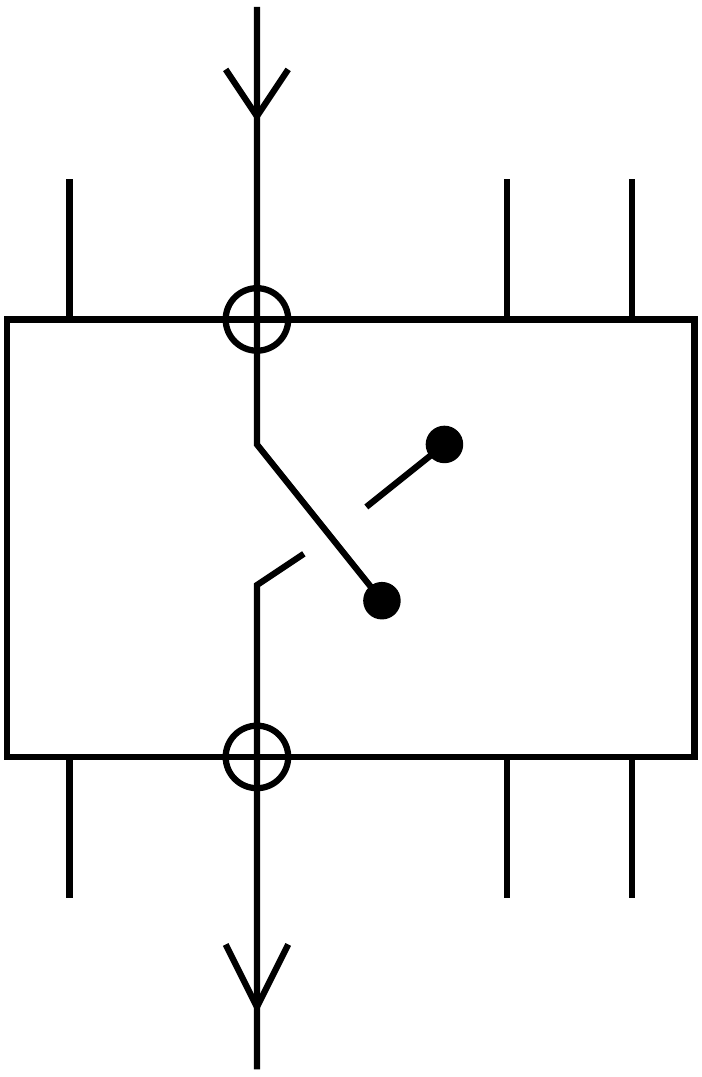}}\,\,
\underset{\text{$L_v$-move}}{\overset{\text{left real}}{\longleftrightarrow}}  \,\,
\raisebox{-.75cm}{\includegraphics[height=.7in]{basic-L1}}\,\,
 \underset{\text{$L_v$-move}}{\overset{\text{right real}}{\longleftrightarrow}} \,\, \raisebox{-.9cm}{\includegraphics[height=0.9in]{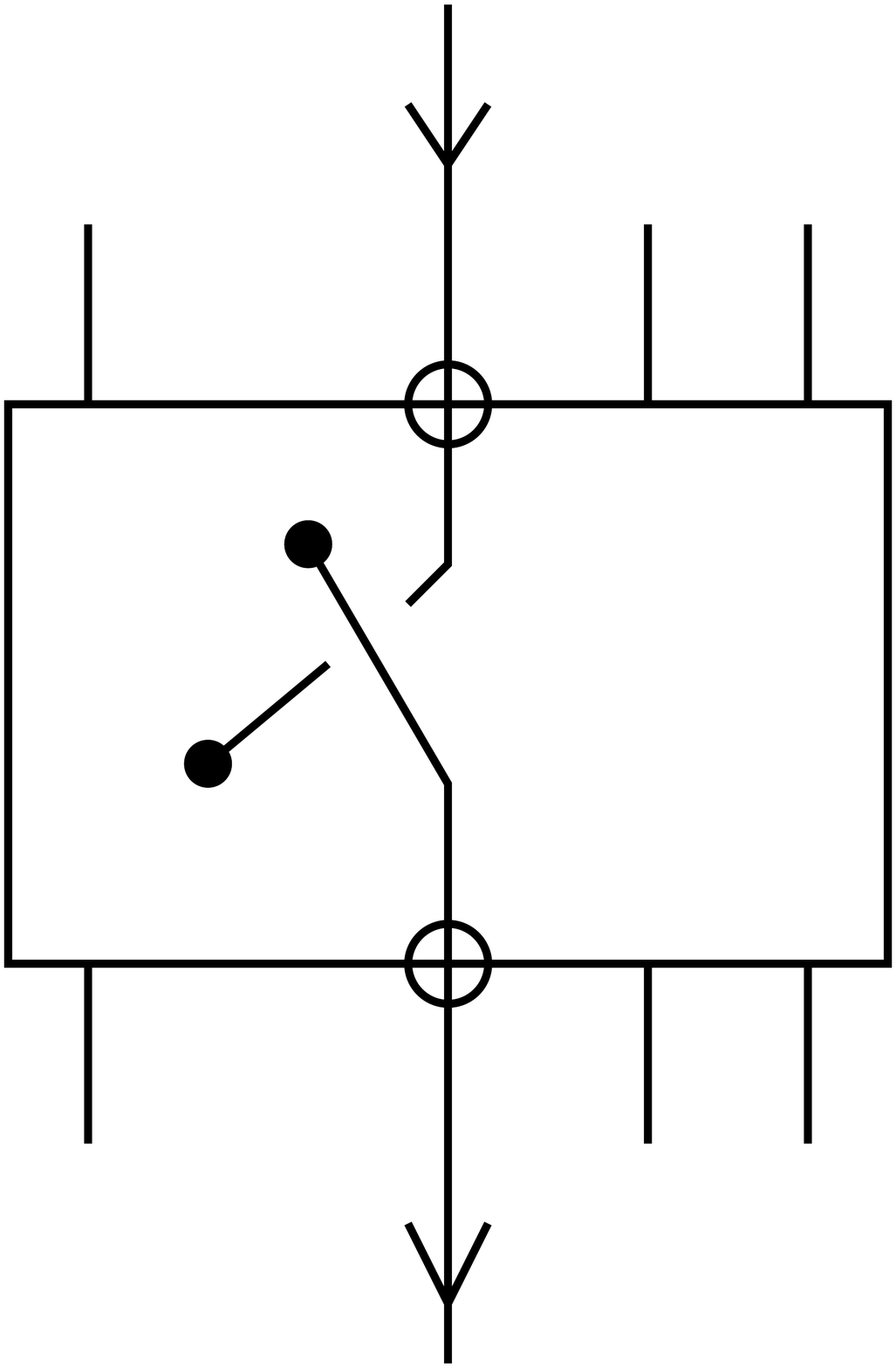}}
\]
\caption{Left and right real $L_v$-moves}\label{real Lv image}
\end{figure}

\begin{figure}[ht]
\[ \raisebox{-32pt}{\includegraphics[height=1in]{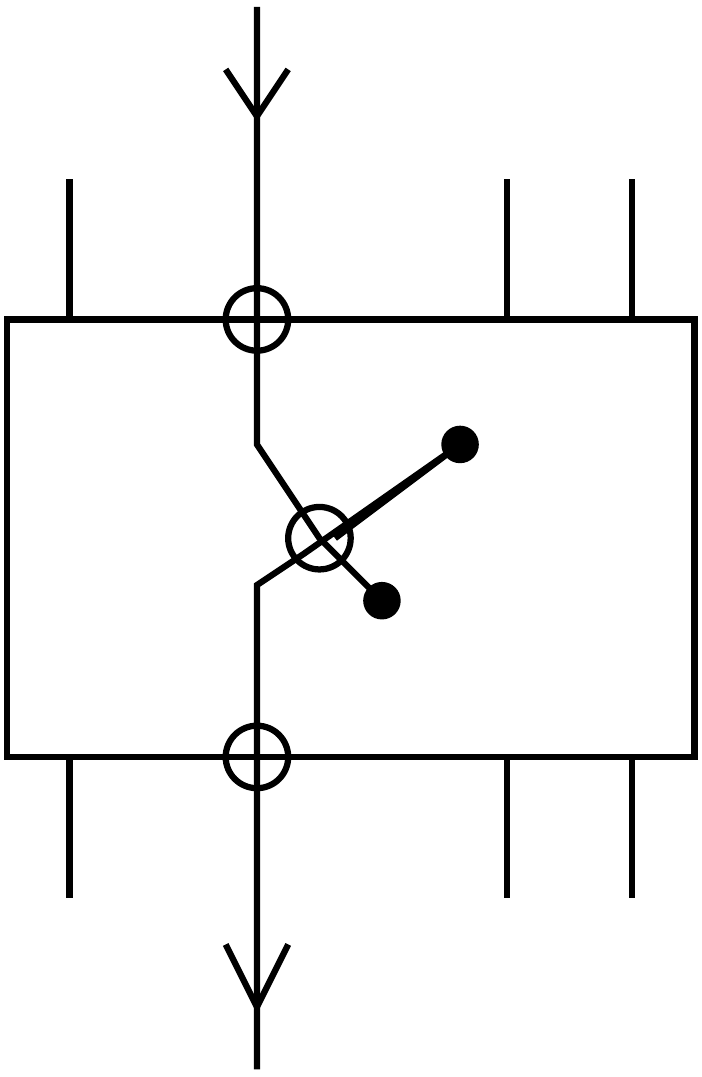}} 
\underset{\text{$L_v$-move}}{\overset{\text{left virtual}}{\longleftrightarrow}}  
 \,\,\raisebox{-22pt}{\includegraphics[height=.75in]{basic-L1}}
\,\,
\underset{\text{$L_v$-move}}{\overset{\text{right virtual}}{\longleftrightarrow}}
\raisebox{-32pt}{\includegraphics[height=1in]{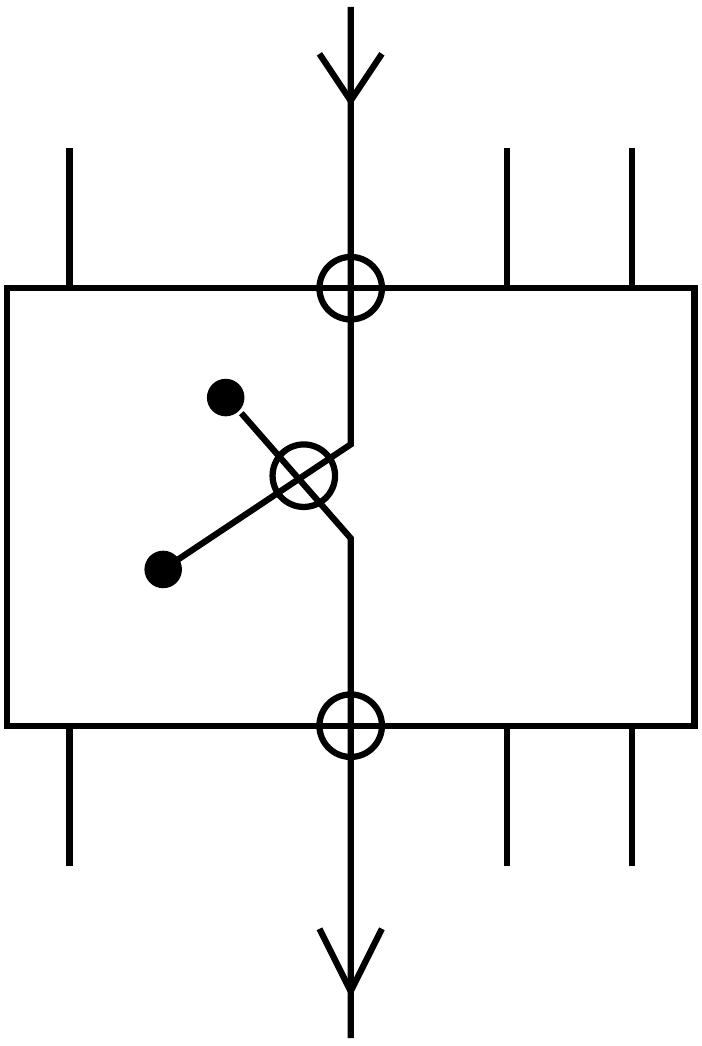}}
\]
\caption{Left and right virtual $L_v$-move}\label{virt Lv}
\end{figure}

Note that the closures of the braids involved in a real or virtual $L_v$-move are isotopic diagrams via a detour move and an $R1$ move or $V1$ move, respectively.

\begin{definition}
A \textit{threaded $L_v$-move} on a virtual trivalent braid consists of a virtual $L_v$-move in which the two new braids are pulled over or under a third strand, called the \textit{thread}. The move can be performed either on the left or on the right of the arc of the braid, giving rise to the \textit{left} and \textit{right} versions of the move. This process is called threading and is equivalent to performing an $R2$ move before cutting the arc of the braid and stretching the endpoints as explained earlier. Depending whether the virtual kink is pulled over or under the thread, the move is called an \textit{over-threaded $L_v$-move} or an \textit{under-threaded $L_v$-move}. We allow these moves to be performed in the opposite direction, as well.
\end{definition}

Figure~\ref{fig:threaded-move} shows both left and right under-threaded $L_v$-moves. Note that a threaded move does not involve isotopic braids, due to the forbidden moves. However, the closures of the two braids involved in the move are isotopic. 

In addition, if more than one thread is used in the move, we create a \textit{multi-threaded $L_v$-move}. (See~\cite[Fig. 14]{KauLamb}.)

\begin{figure}[ht]
\[
\reflectbox{\raisebox{-35pt}{\includegraphics[height=1in]{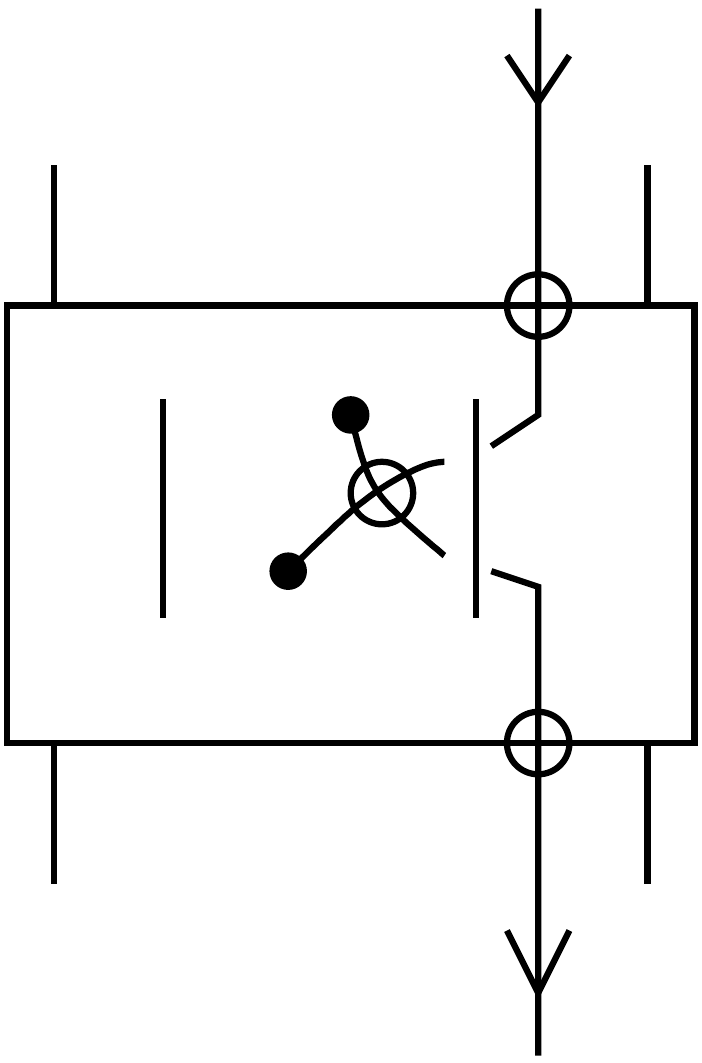}}}\,\,\,\, \underset{\text{$L_v$-move}}{\overset{\text{left under-threaded}}{\longleftrightarrow}} \,\,\,\, \raisebox{-25pt}{\includegraphics[height=.75in]{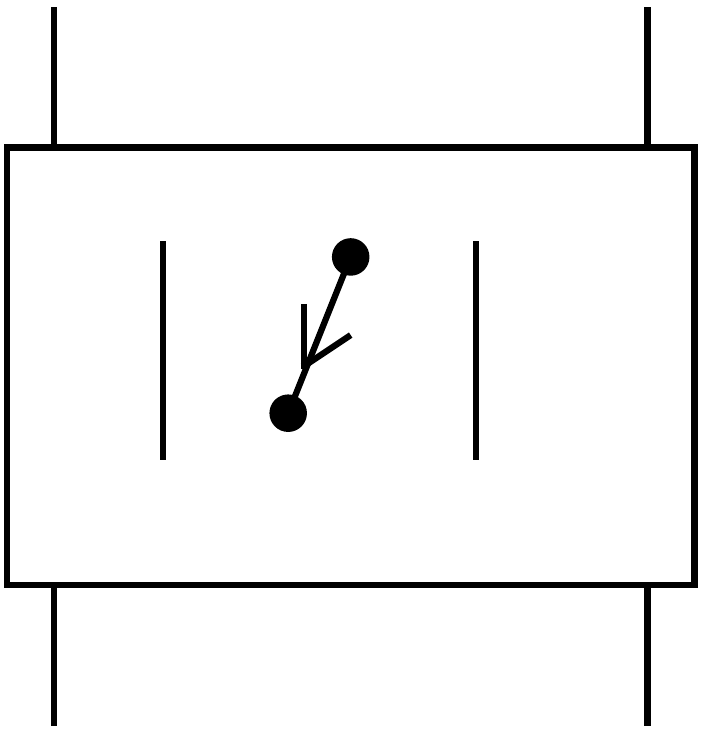}}\,\,\,\, \underset{\text{$L_v$-move}}{\overset{\text{right under-threaded}}{\longleftrightarrow}} \,\, \,\, \raisebox{-35pt}{\includegraphics[height=1in]{right-threaded-move}}
\]
\caption{Left and right under-threaded $L_v$-moves}\label{fig:threaded-move}
\end{figure}

Through working in the setting of virtual trivalent braids, we must extend the set of previously established $L_v$-moves to enclose additional moves near a trivalent vertex and further define an equivalence on the set of virtual trivalent braids.

\begin{definition} 
The \textit{trivalent $L_v$-move} (or $TL_v$-move) is performed near a $\lambda$-vertex and involves a braid strand that first crosses virtually one of the lower legs of the vertex and then classically the other lower leg. The classical crossings on the two sides of the move are of opposite type. This move comes in two versions, namely \textit{right} (Fig.~\ref{right TL}) and \textit{left} (Fig.~\ref{left TL}). 
\end{definition}

\begin{figure}
\[
\raisebox{-0.75in}{\includegraphics[height=1.6in]{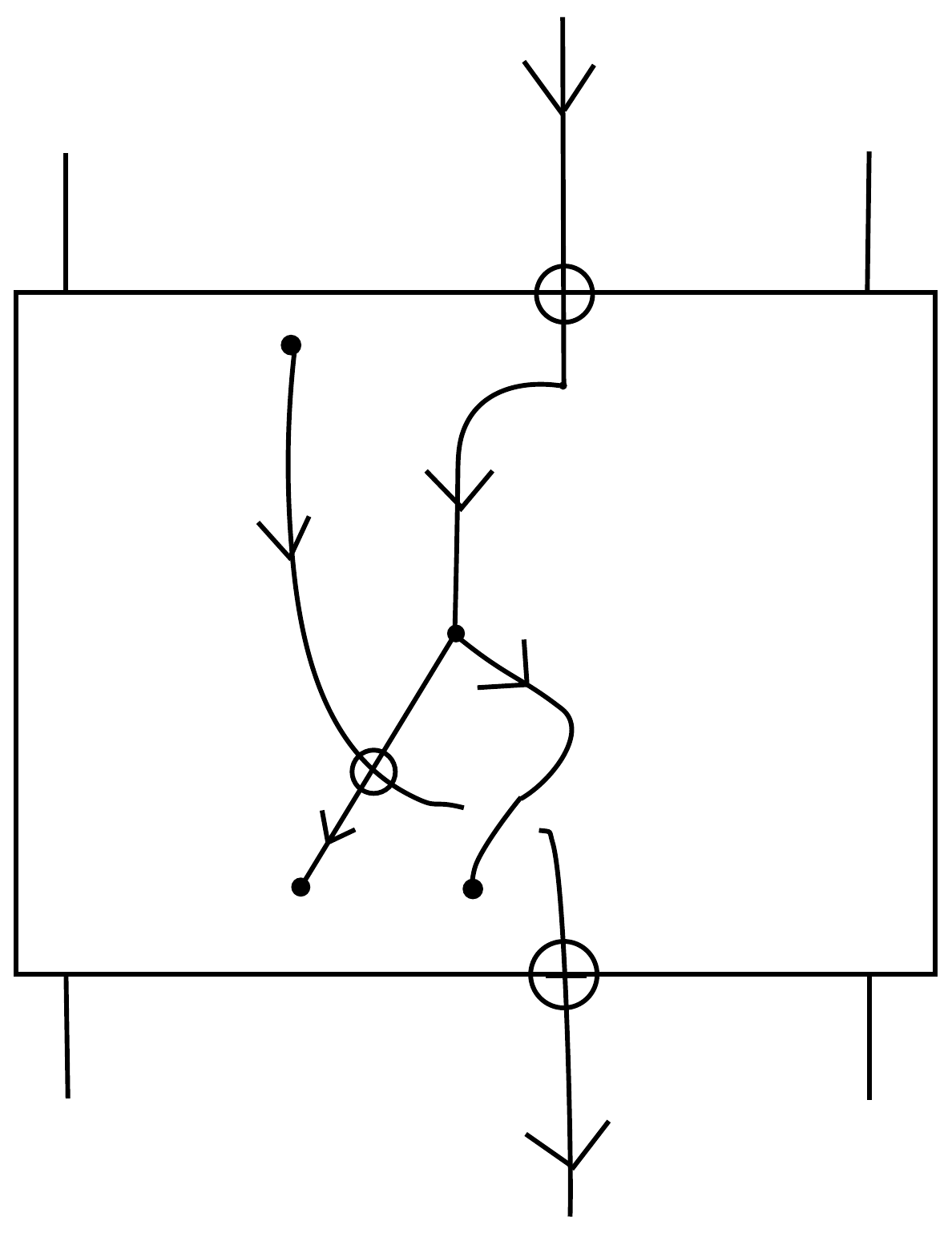}}\hspace{0.5cm} \underset{\text{$TL_v$-move}}{\overset{\text{right}}{\longleftrightarrow}} \hspace{0.5cm} \raisebox{-0.75in}{\includegraphics[height=1.6in]{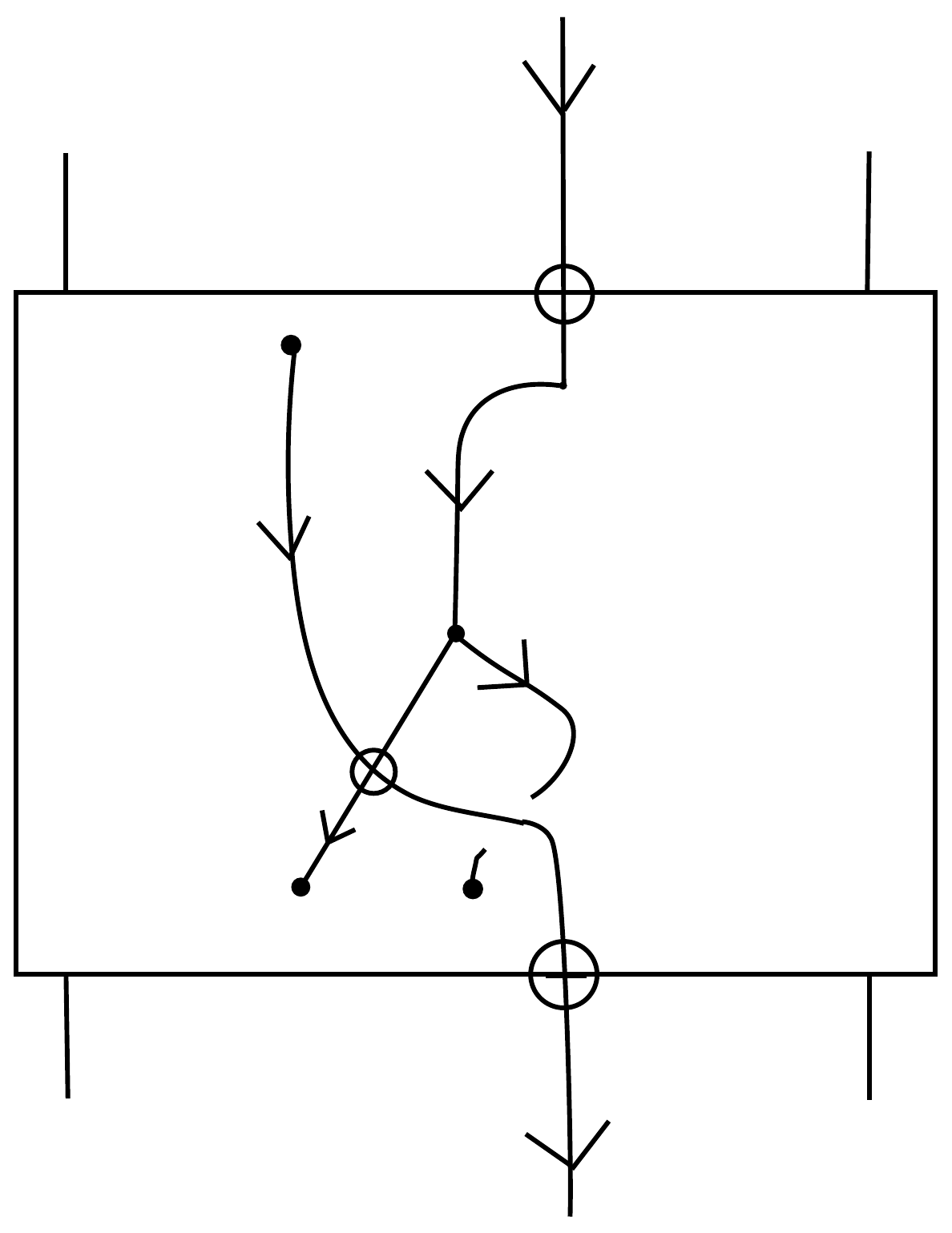}}
\]
\caption{Right trivalent $L_v$-move} \label{right TL}
\end{figure}
\begin{figure}
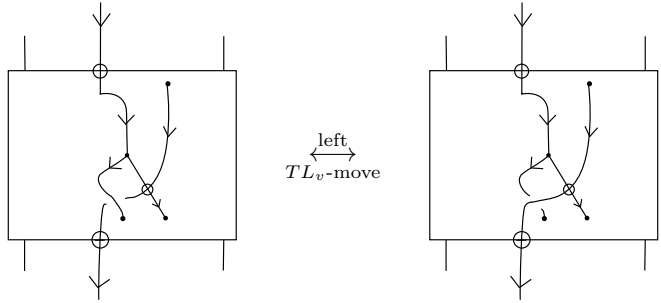

\[
\raisebox{-0.75in}{\reflectbox{\includegraphics[height=1.6in]{boxTL4rn-3b}}}\hspace{0.5cm} \underset{\text{$TL_v$-move}}{\overset{\text{left}}{\longleftrightarrow}} \hspace{0.5cm} \raisebox{-0.75in}{\reflectbox{\includegraphics[height=1.6in]{boxTL4rp-3b}}}
\]
\caption{Left trivalent $L_v$-move} \label{left TL}
\end{figure}

Note that the trivalent $L_v$-moves cannot be applied in a braid, since they do not involve isotopic braids. However, the closures of the two braids involved in such move are isotopic virtual STG diagrams.

\begin{definition}
Given a virtual trivalent braid $\omega \in VTB_n^n$, we say that the braids $\omega \sigma _i^{\pm 1} \sim \sigma_i^{\pm 1} \omega$, for $1\leq i \leq n-1$, are related by \textit{real conjugation}. Similarly, the braids $\omega v_i \sim v_i \omega$ are related by \textit{virtual conjugation}, where $1\leq i \leq n-1$ (see Fig. ~\ref{Conj}). 
\end{definition}

\begin{figure}[ht]
\[ 
\raisebox{-35pt}{\includegraphics[height=1in]{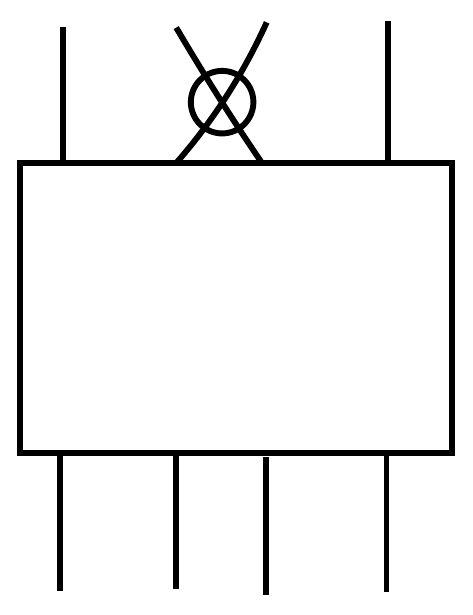}}\,\, \sim \,\, \raisebox{-35pt}{\includegraphics[height=1 in]{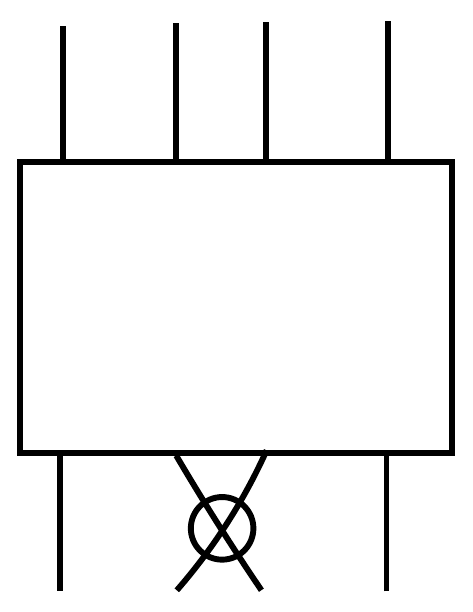}}\hspace{1cm}
\raisebox{-35pt}{\includegraphics[height=1 in]{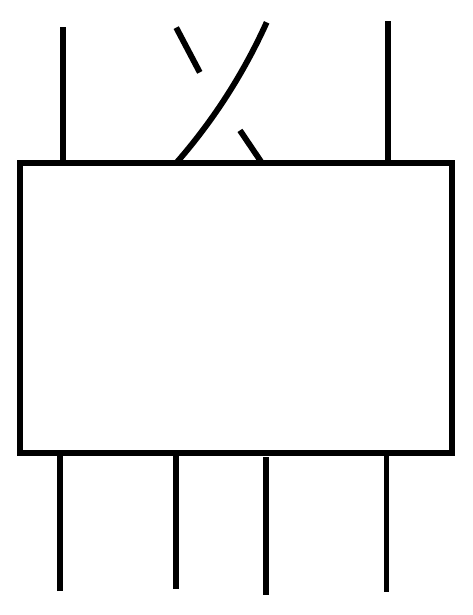}}\,\,\sim \,\, \raisebox{-35pt}{\includegraphics[height=1in]{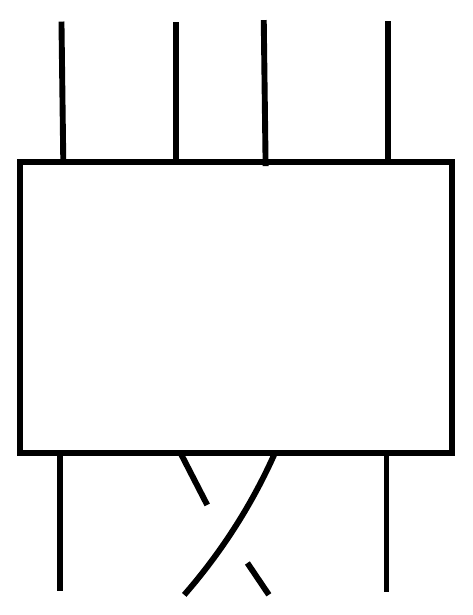}}
 \put(-265, 0){\fontsize{9}{9}$\alpha$}
 \put(-190, 0){\fontsize{9}{9}$\alpha$}
  \put(-105, 0){\fontsize{9}{9}$\alpha$}
  \put(-30, 0){\fontsize{9}{9}$\alpha$}
   \]
  \caption{Virtual and real conjugation}\label{Conj} 
\end{figure}

Note that since $v_i$ is its own inverse by the braid version of $V2$, virtual conjugation is equivalent to $\omega \sim v_i \omega v_i$ while the real conjugation takes the equivalent form $\omega \sim \sigma_i^{\mp 1} \omega \sigma_i^{\pm 1}$. In addition, it is easy to see that the closures of two braids related by conjugation are isotopic virtual STG diagrams through an $R2$ or $V2$ move.

\subsection{$TL_v$-equivalence and Markov-type theorem based on $L_v$-moves} \label{ssec:MarkovThm1}
To establish our first Markov-type theorem, we seek an equivalence relation on virtual trivalent braids which involves the minimum number of moves.


\begin{remark}
The following results have been proven in ~\cite{KauLamb}:
\begin{itemize}
\item Virtual conjugation follows from basic and virtual $L_v$-moves together with braid isotopy (specifically, braid detour)~\cite[Lemma 1]{KauLamb}.
\item Basic and left virtual $L_v$-moves follow from braid isotopy and right virtual $L_v$-moves~\cite[Lemma 2]{KauLamb}.
\item Over-threaded $L_v$-moves can be obtained through a sequence of real conjugation, right real $L_v$-moves, right virtual $L_v$-moves, and right and left under-threaded $L_v$-moves~\cite[Lemma 4]{KauLamb}.

\end{itemize}
\label{remark1}
\end{remark}

\begin{lemma}
The left $TL_v$-move follows from virtual trivalent braid isotopy, the right $TL_v$-move, and $L_v$-moves.
\end{lemma}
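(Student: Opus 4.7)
The plan is to simulate a left $TL_v$-move by first virtually sliding the active strand from the left of the $\lambda$-vertex to its right side, then applying a right $TL_v$-move, and finally virtually sliding the strand back. The left and right $TL_v$-moves are mirror images of each other across a vertical axis through the $\lambda$-vertex; since virtual crossings can be freely introduced and removed by $L_v$-moves together with braid isotopy, they give us enough flexibility to effectively reflect the local configuration without touching any classical crossings except the one prescribed by the trivalent move itself.

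Concretely, I would begin with the left-hand side of the left $TL_v$-move. Using the virtual braid detour (derivable from basic and virtual $L_v$-moves together with braid isotopy, by Remark~\ref{remark1}), I would introduce virtual crossings that transport the active strand from the left side of the $\lambda$-vertex over to its right side. After a sequence of $V3$, $V4$, $VR3$, and commuting relations, arranged so that the $\lambda$-vertex ends up between the introduced virtual crossings, the resulting diagram contains a sub-braid that is precisely the left-hand side of a right $TL_v$-move. I would then apply the right $TL_v$-move to switch the sign of the classical crossing onto the opposite leg. Finally, I would undo the virtual detour --- again using braid isotopy and virtual $L_v$-moves in the opposite direction --- transporting the active strand back to the left side of the vertex. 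The net effect is exactly the right-hand side of the original left $TL_v$-move.

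The main obstacle will be verifying that the virtual detour past the $\lambda$-vertex can actually be carried out without inadvertently invoking a forbidden move $F1$, $F2$, or $F3$. The key braid relation enabling the virtual crossings to pass a trivalent vertex is $V4$ (in its two forms $v_{i+1} v_i y_{i+1} = y_i v_i$ and $\lambda_i v_{i+1} v_i = v_i \lambda_{i+1}$); repeated application of $V4$, together with $V2$, $V3$, $VR3$, and the commuting relations, threads the virtual crossings from one side of the vertex to the other while keeping the classical crossing intact. A secondary bookkeeping concern is that the right $TL_v$-move reverses the sign of the classical crossing, so one must check that after the reverse detour the resulting sign matches the sign prescribed on the right-hand side of the left $TL_v$-move; this follows because the virtual detour does not interact with the sign. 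Modulo these bookkeeping details, the left $TL_v$-move is realized as a composition of two virtual-detour sequences sandwiching a single right $TL_v$-move, and the lemma follows.
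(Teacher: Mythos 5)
Your overall architecture --- conjugate the local picture into the shape of a right $TL_v$-move, apply that move once, and undo the conjugation --- matches the paper's strategy in spirit, but the mechanism you propose for the conjugation step does not work, and this is where the real content of the lemma lives. You want to ``reflect'' the left configuration into the right one using only virtual tools: the detour move, $V2$, $V3$, $V4$, $VR3$, and commuting relations. The obstruction is that the left and right $TL_v$-moves are mirror images, so they differ in \emph{which lower leg of the $\lambda$-vertex carries the classical crossing} (and in its sign). Every move in your toolkit preserves the set of classically-crossing strand pairs and the signs of all classical crossings: the detour move only applies to arcs that meet the rest of the diagram purely virtually, $V4$ (in both forms $v_{i+1}v_iy_{i+1}=y_iv_i$ and $\lambda_i v_{i+1}v_i = v_i\lambda_{i+1}$) moves only virtual crossings past the vertex, and $VR3$ slides a classical crossing past a virtual one without changing which strands it involves. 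So after your ``virtual detour'' the classical crossing is still attached to the same leg of the vertex as before, and the resulting sub-braid is \emph{not} the left-hand side of a right $TL_v$-move. Transferring a classical crossing to the other leg purely virtually is essentially a forbidden move; indeed, if the left $TL_v$-move were obtainable from the right one by virtual conjugation alone, the two would hardly need to be distinguished.

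The paper's proof (Fig.~\ref{left TLmove}) gets around exactly this point by introducing \emph{auxiliary classical crossings}: it opens with a right real $L_v$-move, uses the braid $R4$ relations (which require a pair of classical crossings, e.g.\ $\sigma_{i+1}\sigma_i y_{i+1}=y_i\sigma_i$) to carry classical crossings past the vertex, and inserts under- and over-threaded $vL_v$-moves to thread the new strands through with classical crossings of cancelling types; only then does the configuration match a right $TL_v$-move. Note that these real and threaded $L_v$-moves are available to you --- the lemma's hypotheses allow all $L_v$-moves, not just basic and virtual ones --- but your argument explicitly restricts the conjugation step to virtual moves, citing Remark~\ref{remark1}. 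To repair the proof you would need to replace the virtual detour with a sequence that creates classical crossings (real or threaded $L_v$-moves) and shuttles them past the vertex via the braid $R4$ relations of Fig.~\ref{fig:tbrelns}; the sign bookkeeping you flag as a secondary concern is then genuinely delicate, since both the mirror reflection and the right $TL_v$-move flip the sign of the classical crossing.
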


\begin{proof}
The proof is given in Fig.~\ref{left TLmove}.
\end{proof}

\begin{figure}[ht]
\[\raisebox{-.7in}{\includegraphics[height=1.5in]{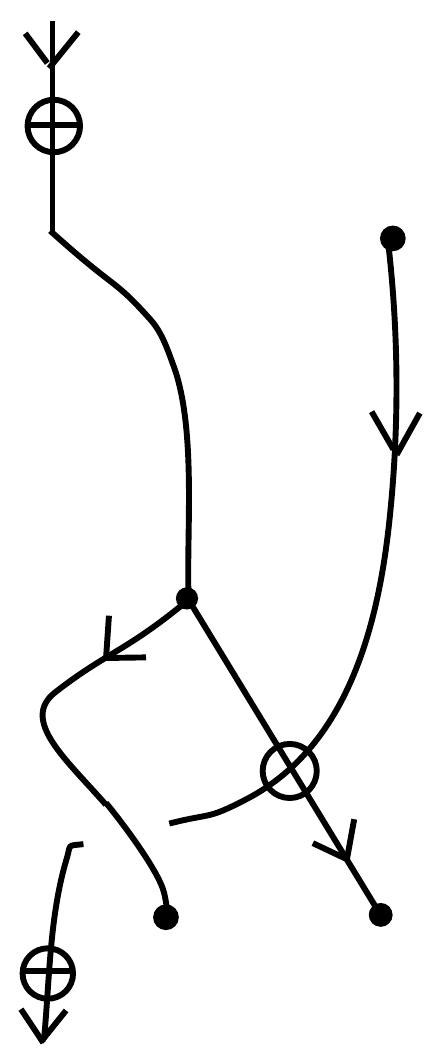}}
\hspace{0.05in} \underset{\text{$L_v$-move}}{\overset{\text{right real}}{\longleftrightarrow}}
\hspace{0.05in}
\raisebox{-.7in}{\includegraphics[height=1.5in]{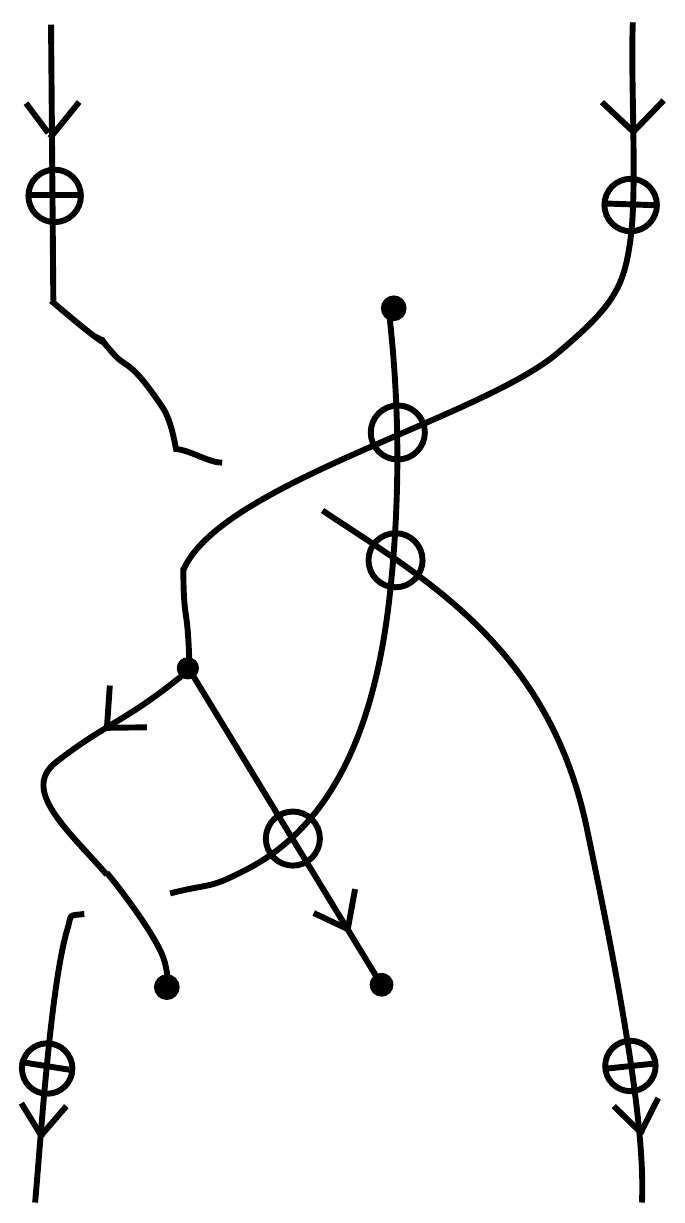}}
\hspace{0.05in}
\underset{\text{$R4$}}{\overset{\text{braid}}{\longleftrightarrow}} 
\hspace{0.05in}
\raisebox{-.7in}{\includegraphics[height=1.5in]{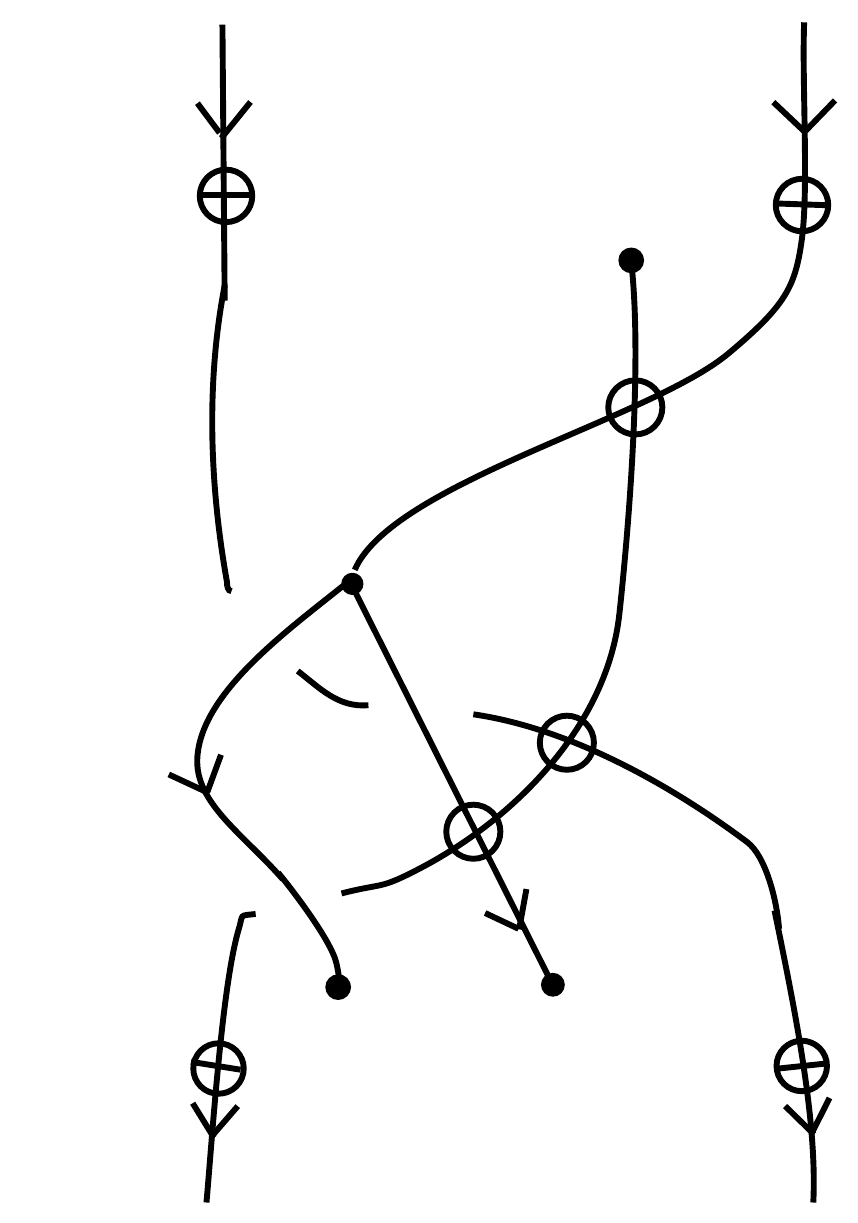}} \hspace{0.05in}\underset{\text{$VR3$}}{\overset{\text{braid}}{\longleftrightarrow}}\]
\\
\[\raisebox{-.7in}{\includegraphics[height=1.5in]{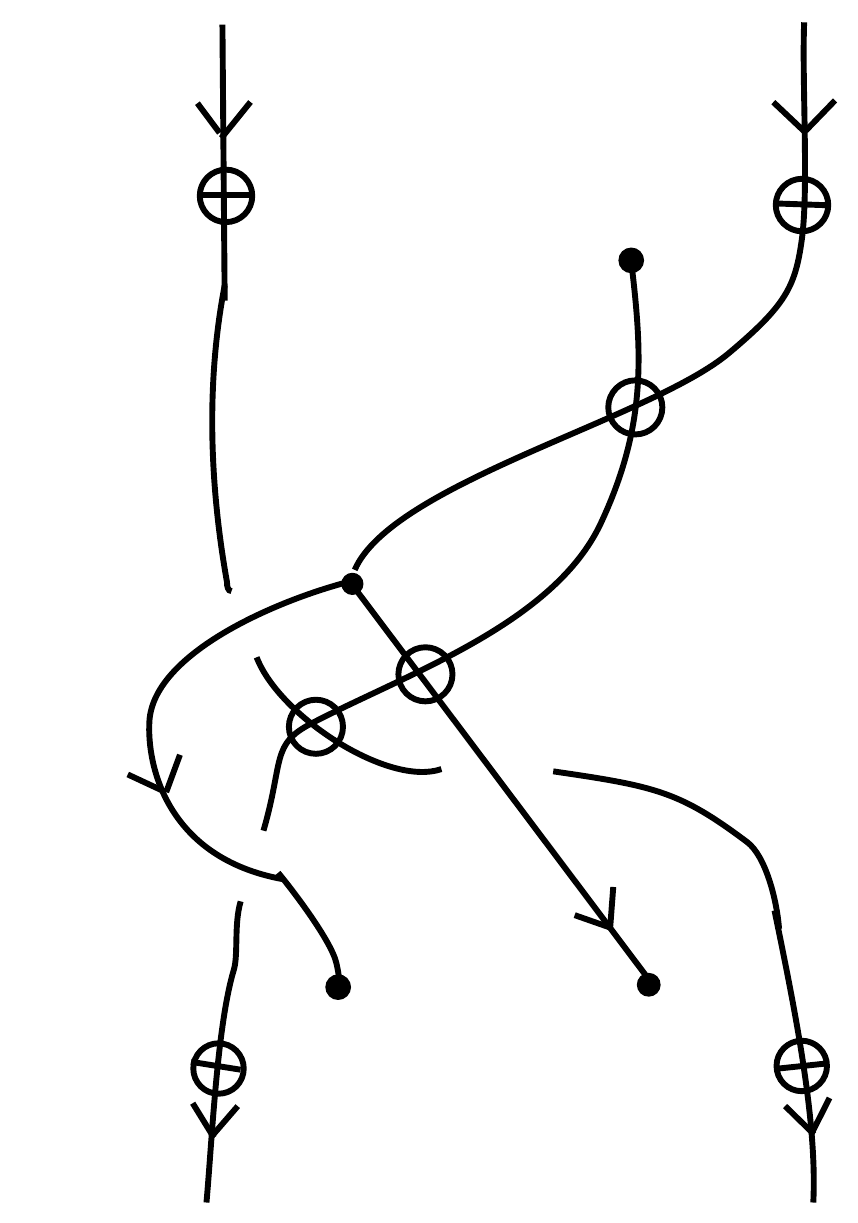}}\hspace{0.05in}\underset{\text{$vL_v$-move}}{\overset{\text{under-threaded}}{\longleftrightarrow}}\hspace{0.05in}\raisebox{-.7in}{\includegraphics[height=1.5in]{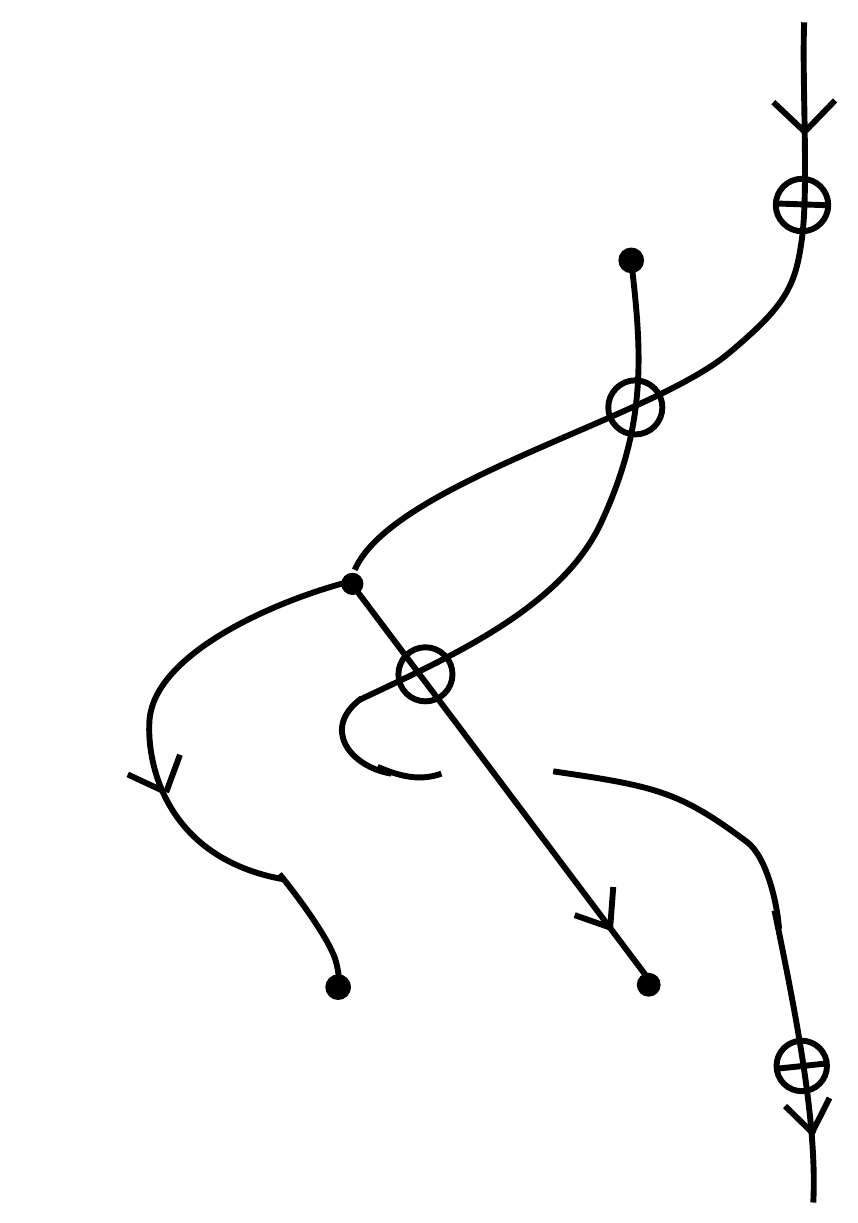}}
\hspace{0.05in} \underset{\text{$R4$}}{\overset{\text{braid}}{\longleftrightarrow}}\hspace{0.05in}
\raisebox{-.7in}{\includegraphics[height=1.5in]{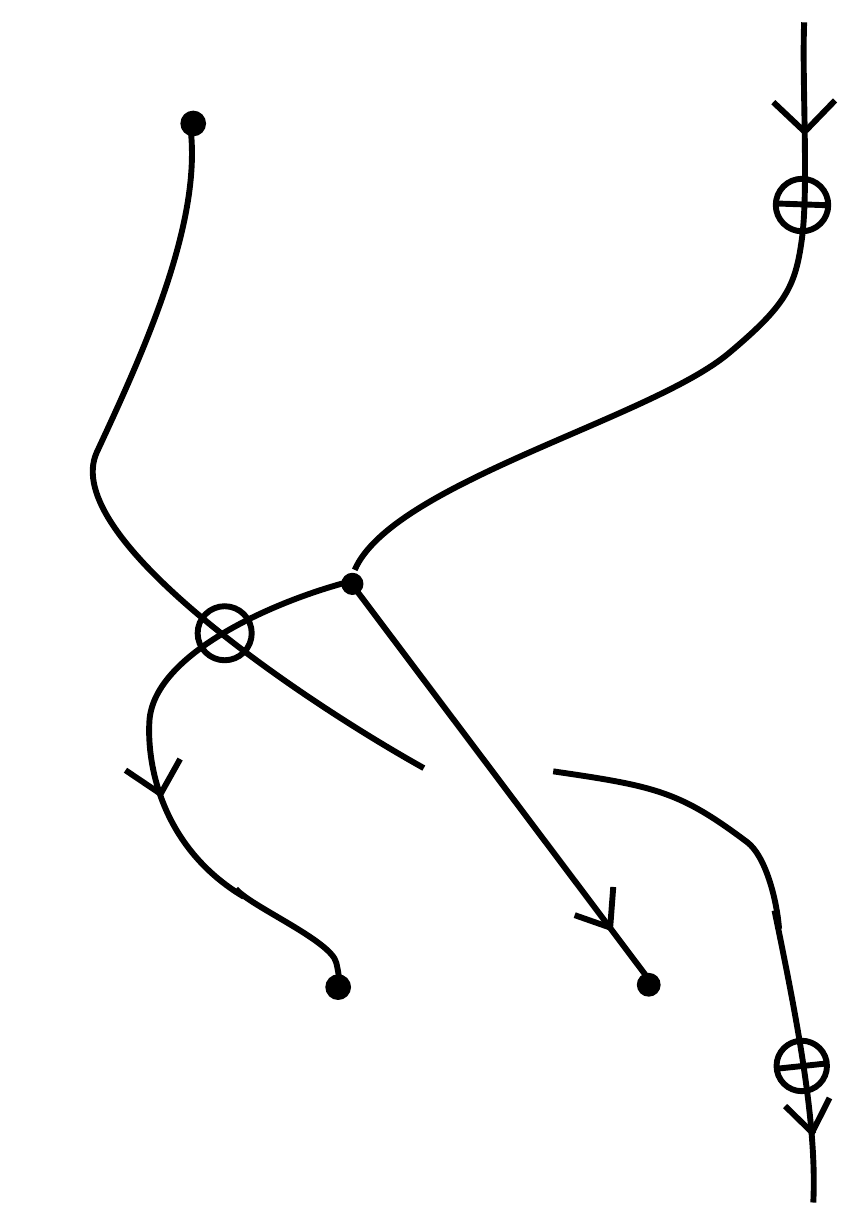}}\hspace{0.05in}
\underset{\text{$TL_v$-move}}{\overset{\text{right}}{\longleftrightarrow}}
\]
\\
\[\raisebox{-.7in}{\includegraphics[height=1.5in]{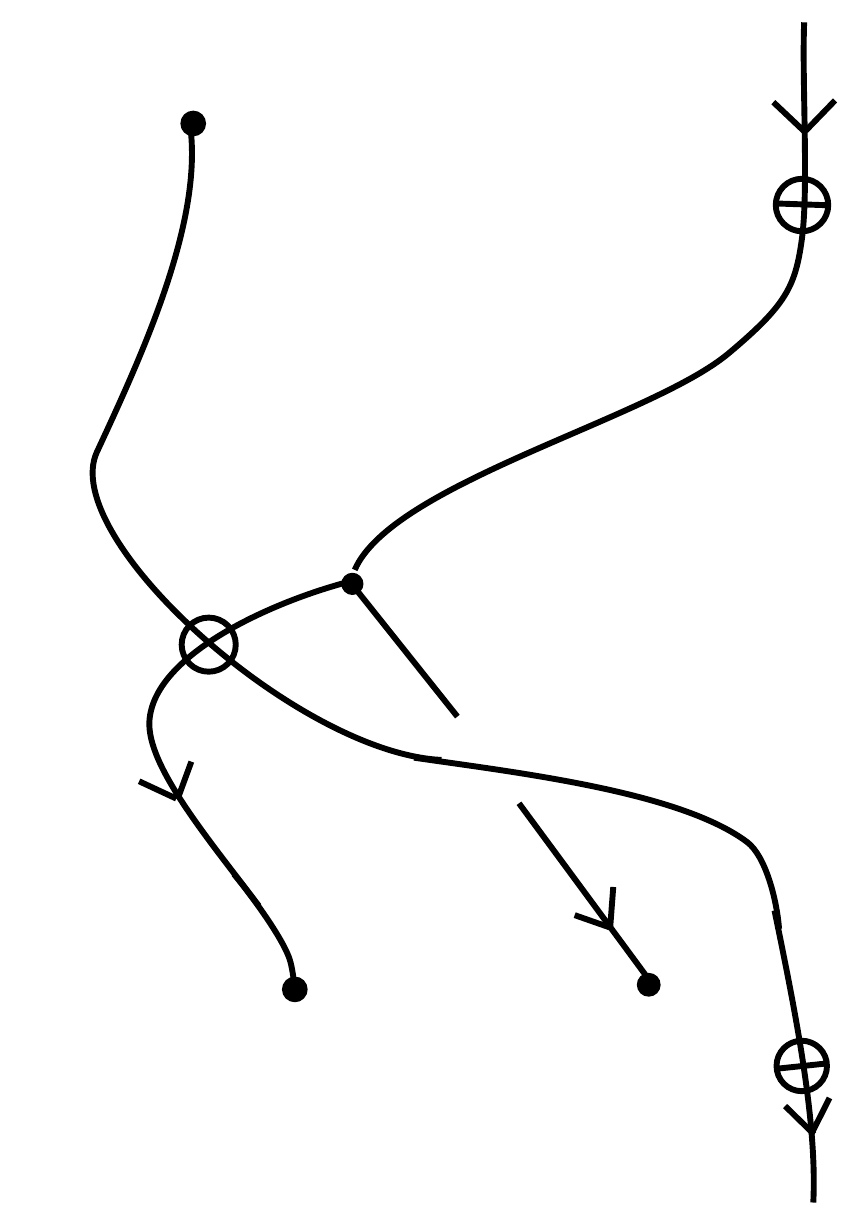}} \hspace{0.05in}\underset{\text{$R4$}}{\overset{\text{braid}}{\longleftrightarrow}} \hspace{0.05in}
\raisebox{-.7in}{\includegraphics[height=1.5in]{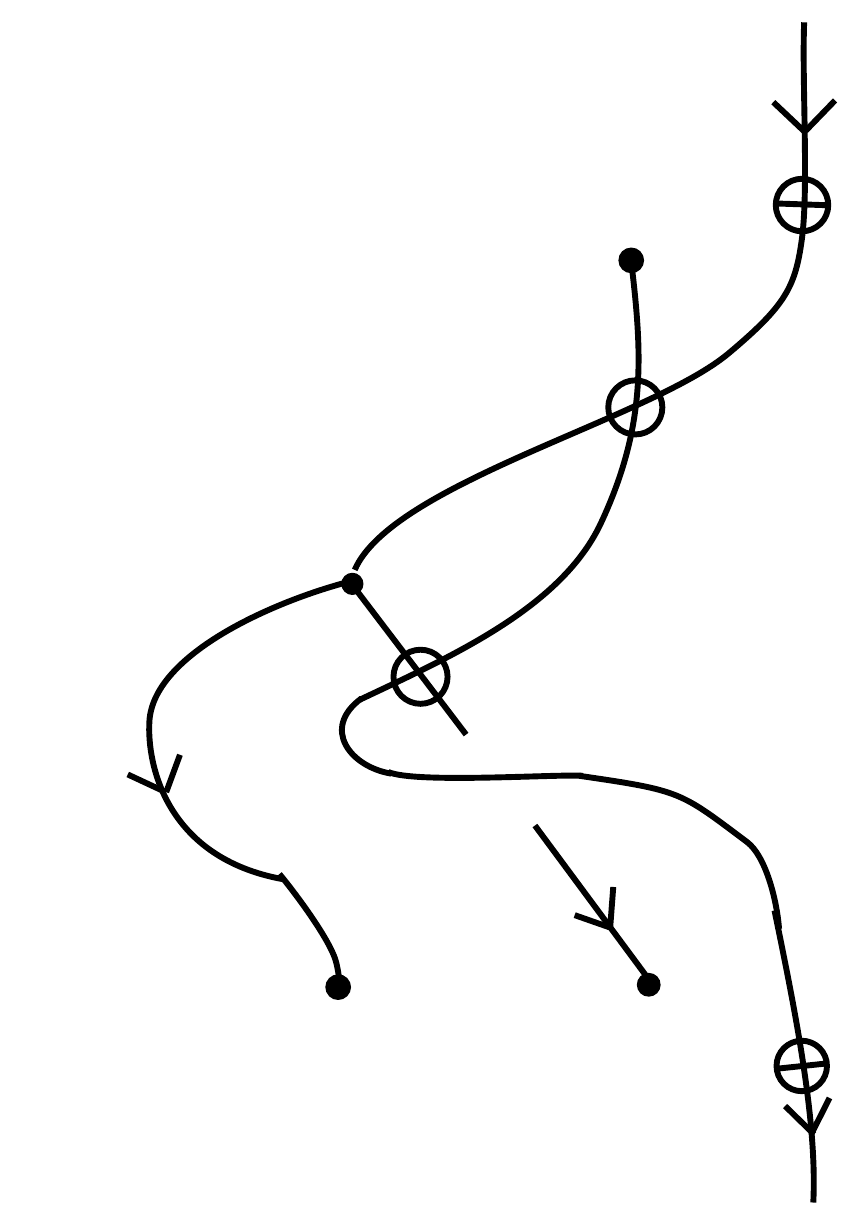}}\hspace{0.05in}
\underset{\text{$vL_v$-move}}{\overset{\text{over-threaded}}{\longleftrightarrow}} \hspace{0.05in}
\raisebox{-.7in}{\includegraphics[height=1.5in]{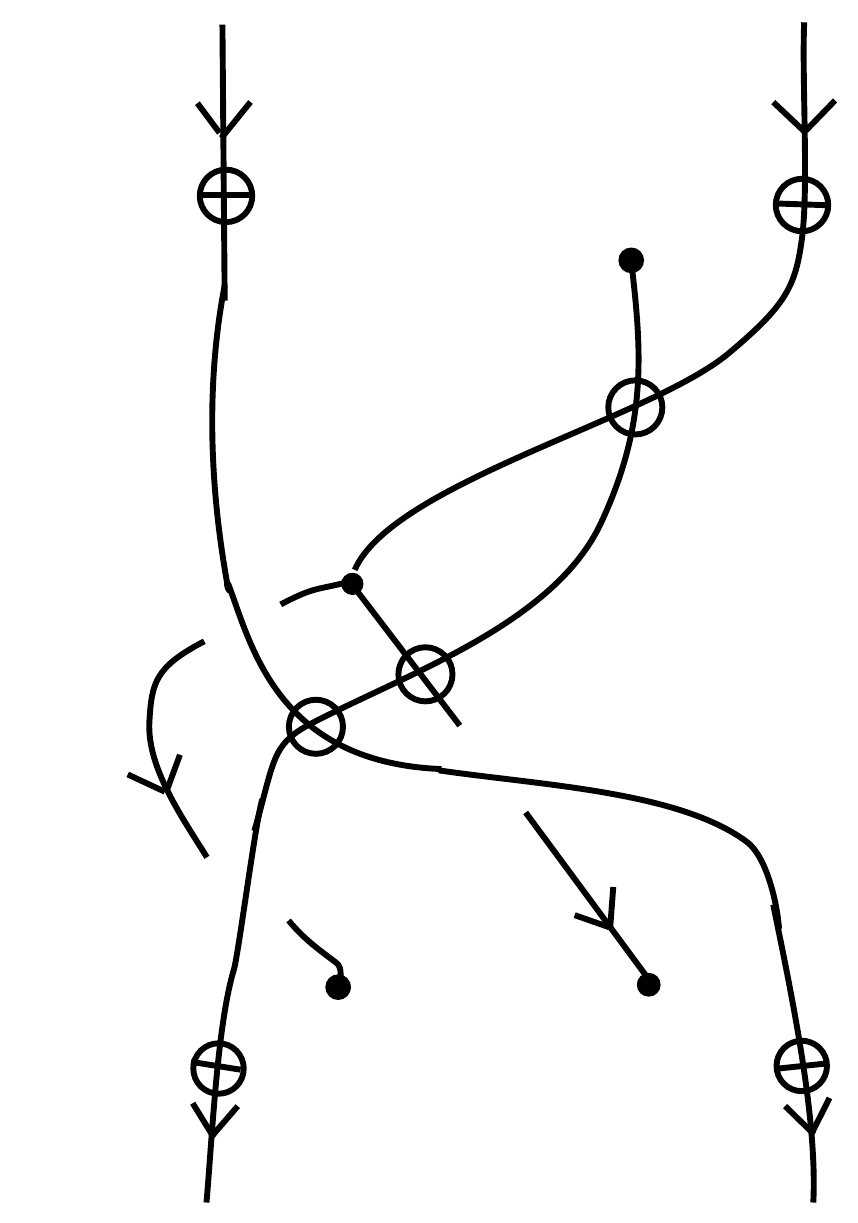}}
\hspace{0.05in} \underset{\text{$VR3$}}{\overset{\text{braid}}{\longleftrightarrow}}\]
\\
\[
\raisebox{-.7in}{\includegraphics[height=1.5in]{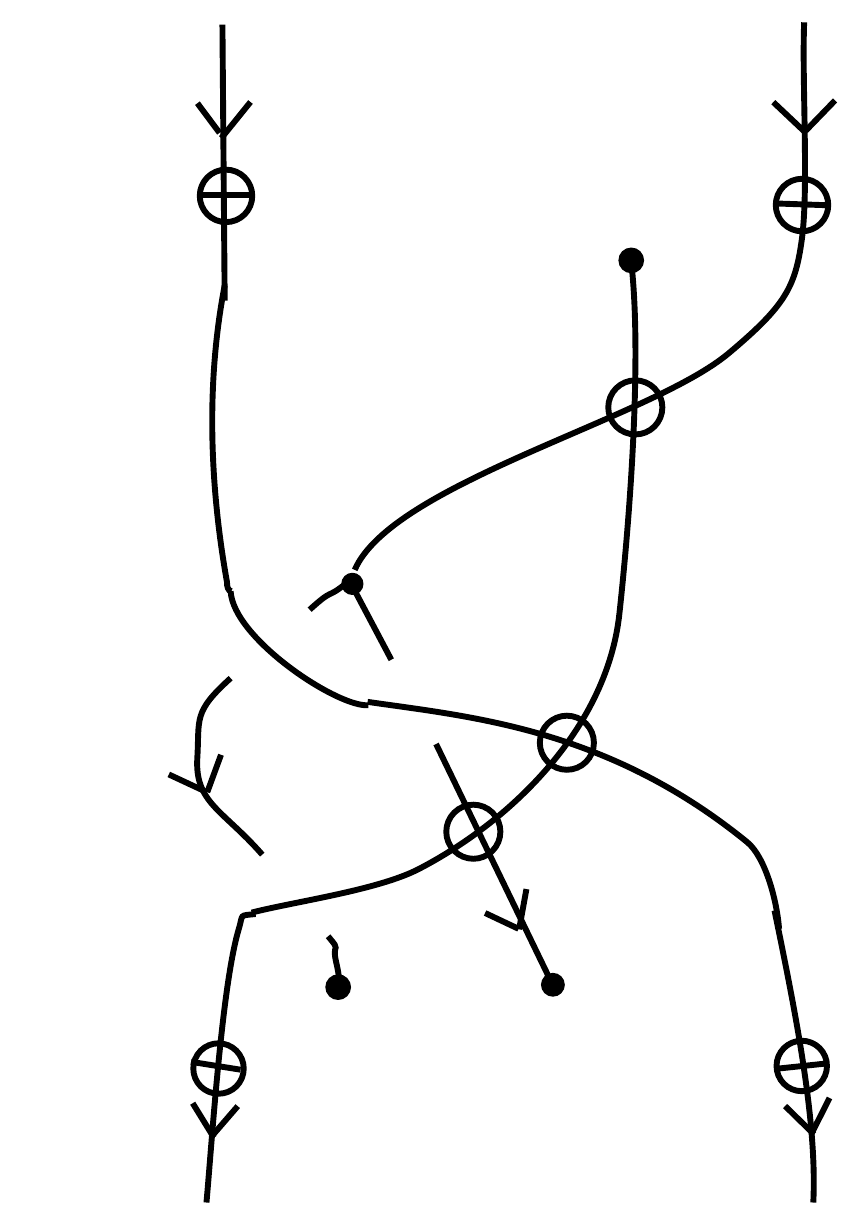}}\hspace{0.05in}
\underset{\text{$R4$}}{\overset{\text{braid}}{\longleftrightarrow}} \hspace{0.05in}
\raisebox{-.7in}{\includegraphics[height=1.5in]{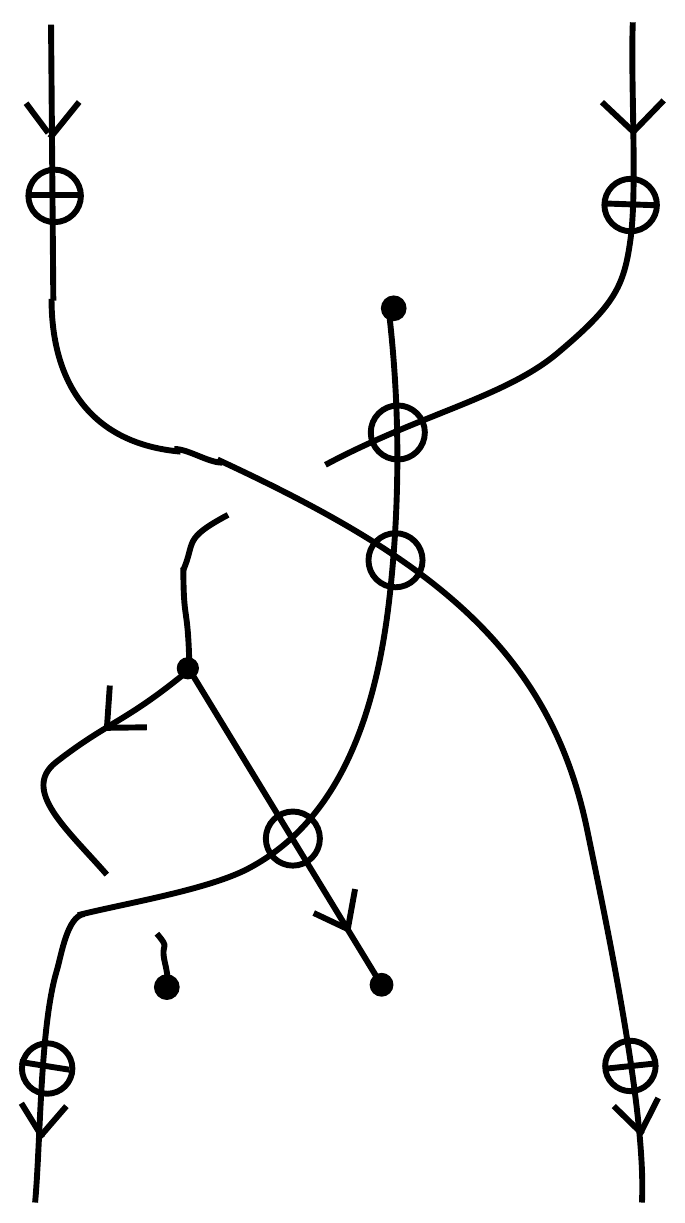}} \hspace{0.05in}\underset{\text{$L_v$-move}}{\overset{\text{right real}}{\longleftrightarrow}} \hspace{0.05in}
\raisebox{-.7in}{\includegraphics[height=1.5in]{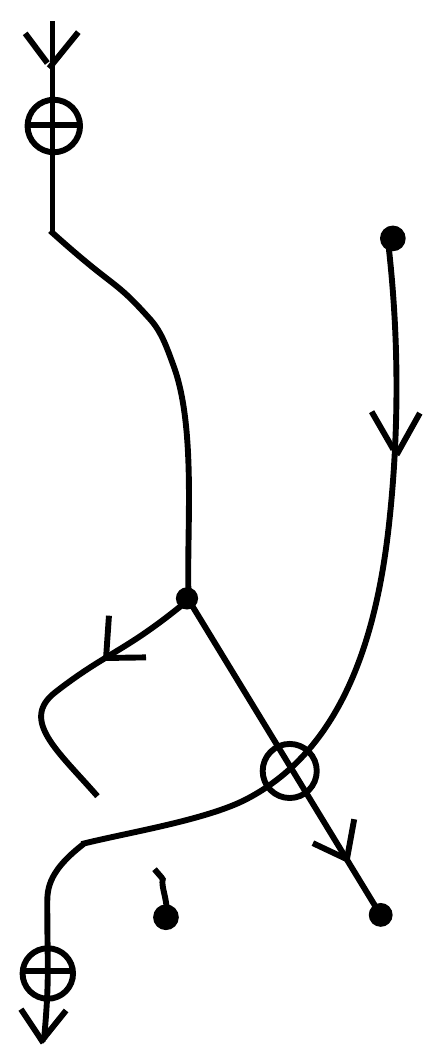}}\]
\caption{Left $TL_v$-move in terms of the right $TL_v$-move} \label{left TLmove}
\end{figure}

We are now ready to define an equivalence on the set of virtual trivalent braids that will be used to prove a Markov-type theorem for virtual trivalent braids and virtual spatial trivalent graphs.

\begin{definition}
We say that two virtual trivalent braids are \textit{$TL_v$-equivalent} if they are related by virtual trivalent braid isotopy and a finite sequence of the following moves:
\begin{enumerate}[(i)]
\item Real conjugation
\item Right virtual  and right real $L_v$-moves
\item Right and left under-threaded $L_v$-moves
\item Right trivalent $L_v$-moves ($TL_v$-moves)
\end{enumerate}
\end{definition}

We are now ready to state and prove our first Markov-type theorem.

\begin{theorem}[$L$-move Markov-type Theorem] \label{Markov-typeThrm}
Two well-oriented virtual spatial trivalent graph diagrams are isotopic if and only if any two of their corresponding virtual trivalent braids are $TL_v$-equivalent.
\end{theorem}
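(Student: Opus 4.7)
The statement is a biconditional, and I will handle each direction separately. The $(\Leftarrow)$ direction is the easier one: I would verify that each move in the definition of $TL_v$-equivalence preserves the isotopy class of the closure as a virtual STG diagram. This has essentially been recorded as side remarks after each definition in Section~\ref{ssec:TL_v}: the closures of braids related by a basic, real, or virtual $L_v$-move differ by a detour move together with possibly $R1$ or $V1$; under-threaded $L_v$-move closures differ by a detour together with $R2$; real and virtual conjugations give closures that differ by $R2$ or $V2$; and by construction the right $TL_v$-move was designed so that its closure is absorbed by a detour move together with $R2$ and an $R4$/$R5$ move near the vertex. Hence chaining these observations shows that any $TL_v$-equivalence of braids closes to an isotopy of virtual STGs.

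For the $(\Rightarrow)$ direction, I would follow the template established by Kauffman--Lambropoulou~\cite{KauLamb} in the virtual (non-trivalent) setting. It suffices to prove two things: (a) the virtual trivalent braid produced by the braiding algorithm of Section~\ref{sec:VTB} is well defined up to $TL_v$-equivalence, independent of the choices made during the algorithm; and (b) if two diagrams $D$ and $D'$ differ by a single move from Fig.~\ref{fig:R-moves} (together with the local rearrangements used in preparation for braiding), then their braidings are $TL_v$-equivalent.

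For (a), the choices to control are the subdivisions on each arc, the order in which crossings and free up-arcs are braided, the direction-sensitive shifts that put the diagram in general position, and the way a non-regular vertex is rotated into $Y$- or $\lambda$-position as allowed by the chart of Fig.~\ref{regpos}. The first three are handled exactly as in~\cite{KauLamb}: a finer subdivision amounts to a basic $L_v$-move, and reordering the braiding of two far-apart features amounts to braid isotopy; by Remark~\ref{remark1} and the preceding lemma the relevant basic and left $L_v$-moves are already contained in the $TL_v$-equivalence. The genuinely new ingredient is the choice of vertex rearrangement, where two possible outputs in Fig.~\ref{regpos} differ by an $R5$ move applied with the opposite crossing; this I would absorb using the right $TL_v$-move together with braid isotopy.

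For (b) I would go move by move through Fig.~\ref{fig:R-moves}. The moves $R1$, $R2$, $R3$, $V1$, $V2$, $V3$, $VR3$ involve no vertices and the arguments of~\cite{KauLamb} transfer essentially verbatim. The $V4$ move involves a trivalent vertex but only virtual crossings, and is handled by the detour move together with braid isotopy via the braid version of $V4$ in Fig.~\ref{fig:tbrelns}. The genuinely new cases are $R4$ and $R5$: for each, I would enumerate the orientation subcases consistent with zip/unzip vertices, braid both sides of the move with the algorithm, and compare the resulting braid words using the braid $R4$/$R5$ relations of Fig.~\ref{fig:tbrelns}, real/virtual conjugation, right real and virtual $L_v$-moves, under-threaded $L_v$-moves, and the right $TL_v$-move. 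Symmetry (reflections and orientation reversals that preserve the zip/unzip condition) should cut the number of independent subcases down substantially.

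The main obstacle will be step (b) for $R4$ and $R5$. There the trivalent $L_v$-moves were specifically designed to realize an arc slipping past the lower legs of a $\lambda$-vertex, and I expect each $R4$/$R5$ subcase to reduce, after normalizing by braid isotopy, to exactly the local picture of Fig.~\ref{right TL} or its $L_v$-consequence. A secondary bookkeeping obstacle is verifying that the swing moves of Fig.~\ref{fig:swing moves} and the other direction-sensitive shifts never change the $TL_v$-class of the braid output; these should follow by the same basic-$L_v$-and-braid-isotopy arguments used in~\cite{KauLamb}, with trivalent vertices passing through by commuting relations.
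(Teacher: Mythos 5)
Your plan follows essentially the same route as the paper's proof: the same three-part decomposition (choices within the braiding algorithm, choices in reaching regular/general position including the opposite-crossing ``switch'' at a vertex, and a case-by-case check of the extended Reidemeister moves $R4$, $V4$, $R5$ by orientation), with the right $TL_v$-move correctly identified as the new ingredient needed for the $\lambda$-vertex switch case. The only deviation is your shortcut of handling $V4$ purely by the detour move, where the paper instead mirrors its $R4$ analysis with $V2$, $VR3$, $V4$ in place of $R2$, $R3$, $R4$; both are viable.
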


\begin{proof}
It is easy to see that $TL_v$-equivalent virtual trivalent braids have isotopic closures.

We have to show the converse. For this, we need to compare virtual trivalent braids obtained from making different choices when applying the braiding algorithm to a given virtual STG diagram. In addition, we need to compare virtual trivalent braids corresponding to diagrams that are related by different choices made when bringing a diagram to regular position and then to general position. Finally, we need to compare the virtual trivalent braids that correspond to any two isotopic virtual STG diagrams. Therefore, we split the proof into three parts.

\textit{Part I.} First, we show that all of the different choices that are made during the braiding process result in virtual trivalent braids that are $TL_v$-equivalent. These different choices involve both the order in which the local areas are braided, as well as how the subdivision points are chosen. 

Except for the trivalent $L_v$-moves ($TL_v$-moves) all of the other $L_v$-moves are applied away from trivalent vertices. Then, applying a similar argument as in~\cite[Remark 1]{KauLamb}, we have that the order in which the free up-arcs are braided does not affect the isotopy type of the resulting diagram; this is due to the detour move. Also, the order in which the crossings are braided is irrelevant, because of the narrow zone condition for the crossings. 

Using a similar approach to subdivision points as in~\cite[Corollary 2]{KauLamb}, it can be seen that given different subdivisions of a virtual STG diagram, the corresponding virtual trivalent braids are $TL_v$-equivalent.

\textit{Part II.} Second, we show that all of the different choices made when bringing a diagram into general position result in virtual trivalent braids that are $TL_v$-equivalent. We focus only on local moves on a diagram and their corresponding braided diagrams, because we assume that the two braids are identical except in a neighborhood where the move is applied.

The first consideration for this step is to check the possible choices in bringing a diagram to regular position, namely those cases when we make use of an $R5$ move to put a vertex in regular position. There are two versions of $R5$ moves, depending on the sign of the classical crossing involved in the move. Moreover, according to the chart in Fig.~\ref{regpos}, for a $Y$- or $\lambda$-vertex near which the three edges have upwards orientation, there are four choices for putting that vertex in regular position (see rows 1 and 3 in the first column of the chart in Fig.~\ref{regpos}). 

 When two diagrams in regular position differ by the type of classical crossing (that resulted by the application of the $R5$ move), we call this change of crossing a \textit{switch move.} We show that if two virtual STG diagrams in regular position differ by a switch move, then their corresponding braids are $TL_v$-equivalent. There are six cases to consider. 

Figure~\ref{fig:RegPosBraidR5} shows the first cases of a switch move on a $\lambda$-vertex in regular position. We proceed to show that the braids corresponding to the two diagrams on the left side of Fig.~\ref{fig:RegPosBraidR5} are $TL_v$-equivalent. The case of the switch move on the right side of Fig.~\ref{fig:RegPosBraidR5} is verified in a similar manner, and thus it is omitted here.
 
\begin{figure}[ht]
\[
\raisebox{-.2in}{\includegraphics[height=0.5in]{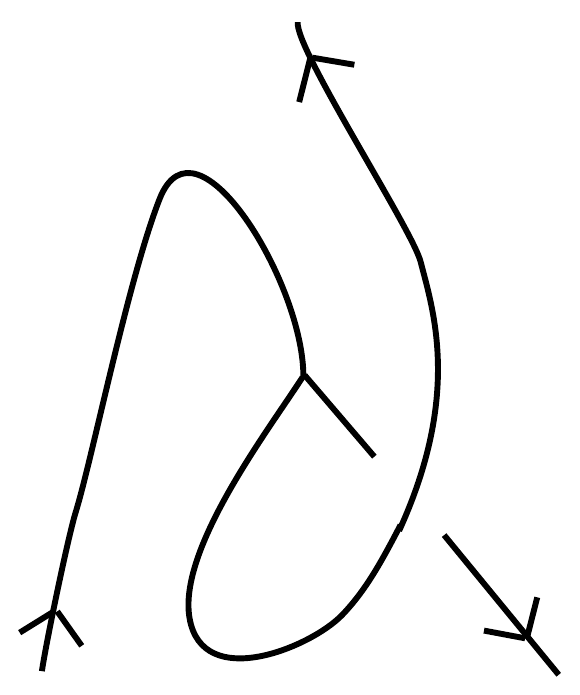}}\,\, \hspace{.3cm} \overset{\text{switch}}{\underset{\text{move}} \longleftrightarrow} \,\, \hspace{.3cm}
\raisebox{-.2in}{\includegraphics[height=.5in]{LUDU2}}
\hspace{1.5cm}
\reflectbox{\raisebox{-.2in}{\includegraphics[height=0.5in]{LUDU2p}}}\,\, \hspace{.3cm} \overset{\text{switch}}{\underset{\text{move}} \longleftrightarrow} \,\, \hspace{.3cm}
\reflectbox{\raisebox{-.2in}{\includegraphics[height=.5in]{LUDU2}}}
\]
\caption{Switch moves on a $\lambda$-vertex in regular position} \label{fig:RegPosBraidR5}
\end{figure}

Since these diagrams differ only by the classical crossing we can ignore all free up-arcs that are identical in both diagrams. We refer to this process as the `segmenting process' (see Fig.~\ref{segmenting}). We will assume from this point on that any other diagrams in this part of the proof will be segmented first to simplify the braiding and avoid identical regions in the resulting diagrams.

\begin{figure}[ht]
\[
\raisebox{-.2in}{\includegraphics[height=0.75in]{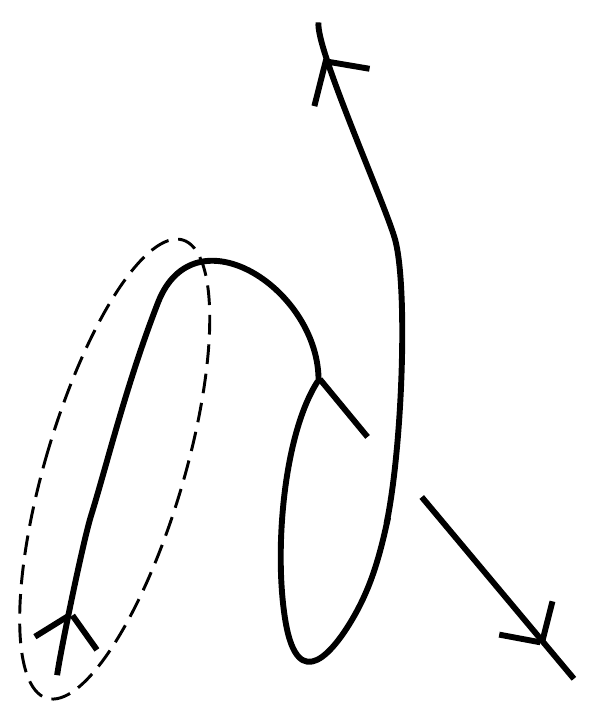}}
\longrightarrow
\raisebox{-.2in}{\includegraphics[height=0.75in]{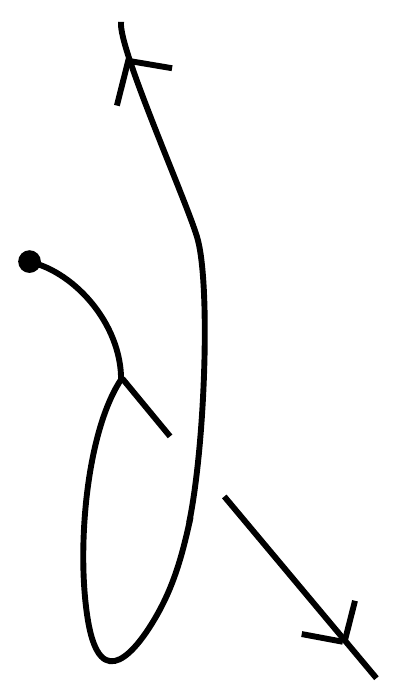}}
\]
\caption{Segmenting a diagram} \label{segmenting}
\end{figure}

We then apply the braiding algorithm to both sides of the segmented diagrams. Since the resulting braids differ by an $R5$ move in braid form--as shown in Fig.~\ref{fig:braidR5}--it follows that these braids are $TL_v$-equivalent. The dotted light gray ovals in the first and last diagrams of Fig.~\ref{fig:braidR5} show the local regions where the braiding algorithm is applied.

\begin{figure}[ht]
\[
\raisebox{-.4in}{\includegraphics[height=2cm]{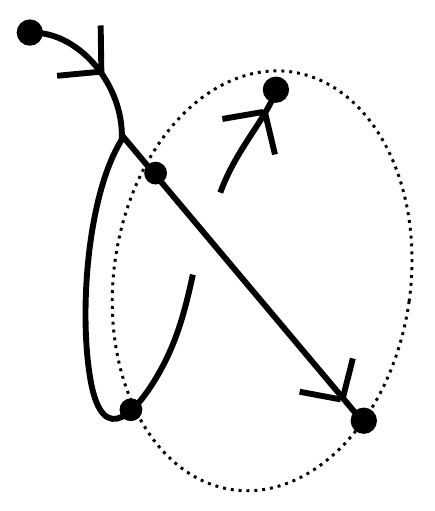}}
\hspace{0.1in} \stackrel{\text{braiding}}{\longrightarrow} \hspace{0.1in}%
\raisebox{-.4in}{\includegraphics[height=3.25cm]{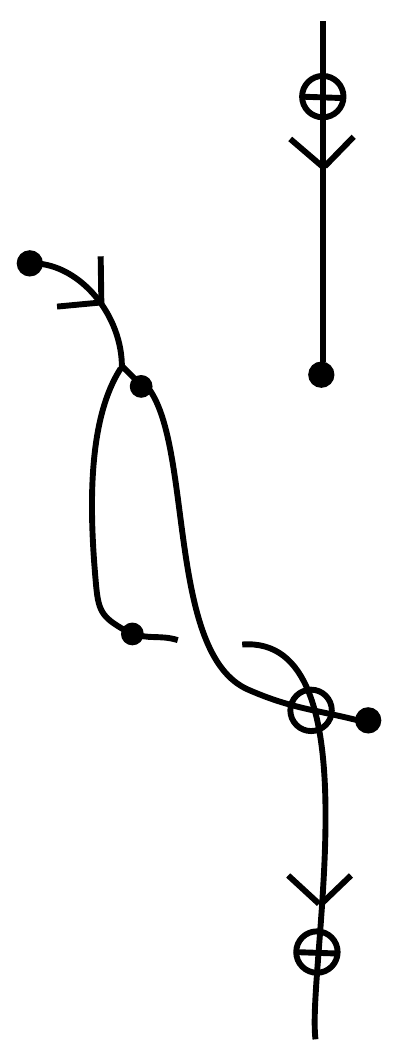}}
\hspace{0.1in} \stackrel{\text{braid R5}}{\longleftrightarrow} \hspace{0.1in}
\raisebox{-.4in}{\includegraphics[height=3.25cm]{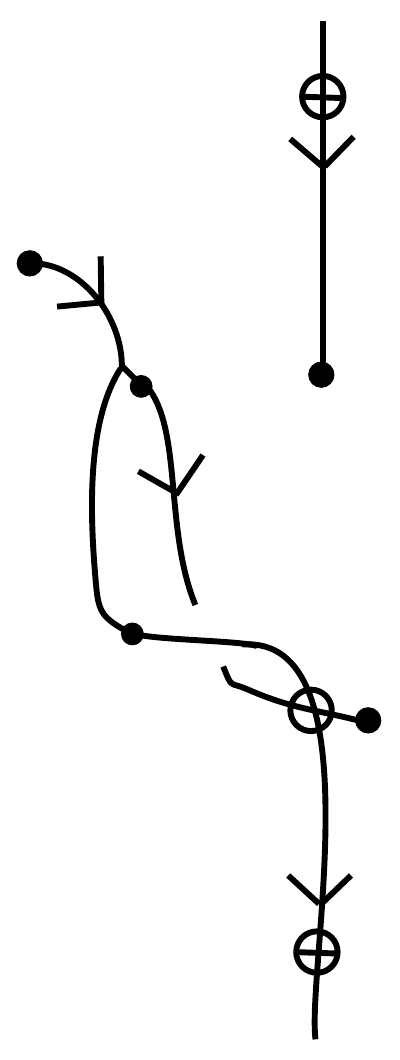}}
\hspace{0.1in} \stackrel{\text{braiding}}{\longleftarrow} \hspace{0.1in}
\raisebox{-.4in}{\includegraphics[height=2cm]{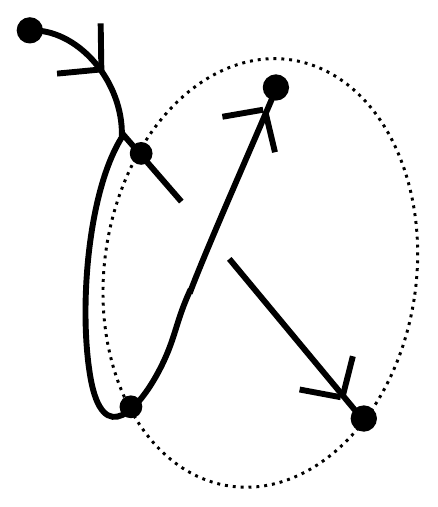}}
\]
\caption{A switch move on a $\lambda$-vertex corresponds to braids that differ by a braid $R5$ move} \label{fig:braidR5}
\end{figure}

The next case involves a $Y$-vertex differing by a switch move as shown in Fig.~\ref{fig:RegPosL}. We verify again the move on the left, since the move on the right follows similarly. After applying the braiding algorithm to the segmented diagrams in both sides of the move on the left, the resulting braids differ by a basic $L_v$-move and braid isotopy, as shown in Fig.~\ref{fig:Lmove}. We remark that the basic $L_v$-move is applied on the small arc bounded by the two subdivision points on the third and, respectively, fifth diagram. Moreover, the dotted light gray ovals in the first and last diagrams show the local region where the braiding algorithm is applied.

\begin{figure}[ht]
\[\raisebox{-.2in}{\includegraphics[height=0.5in]{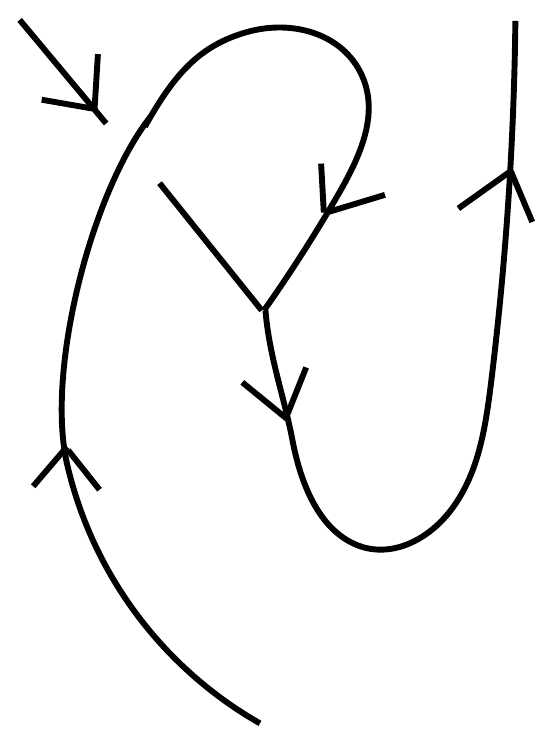}} \hspace{.3cm} \overset{\text{switch}}{\underset{\text{move}}\longleftrightarrow} \hspace{.3cm}
\raisebox{-.2in}{\includegraphics[height=.5in]{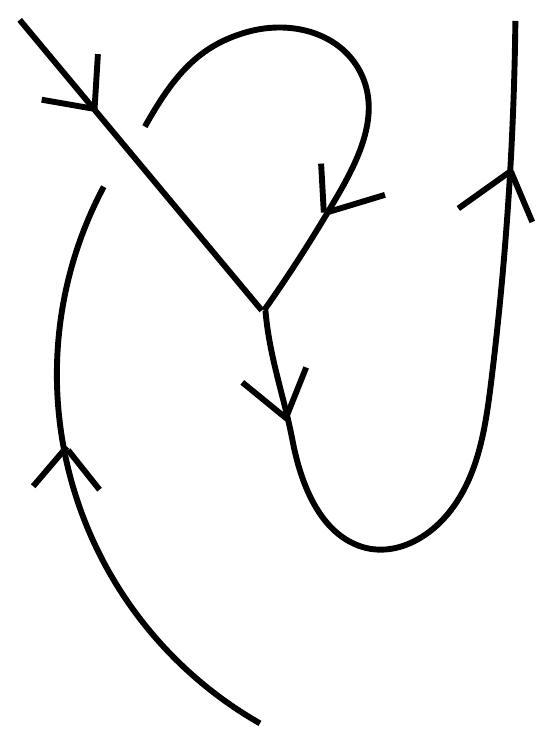}}
\hspace{1.5cm}
\reflectbox{\raisebox{-.2in}{\includegraphics[height=0.5in]{TL1p}}} \hspace{.3cm} \overset{\text{switch}}{\underset{\text{move}}\longleftrightarrow} \hspace{.3cm}
\reflectbox{\raisebox{-.2in}{\includegraphics[height=.5in]{TL1n}}}
\]
\caption{Switch moves on a $Y$-vertex in regular position} \label{fig:RegPosL}
\end{figure}

\begin{figure}[ht]
\[\raisebox{-.4in}{\includegraphics[height=2.5cm]{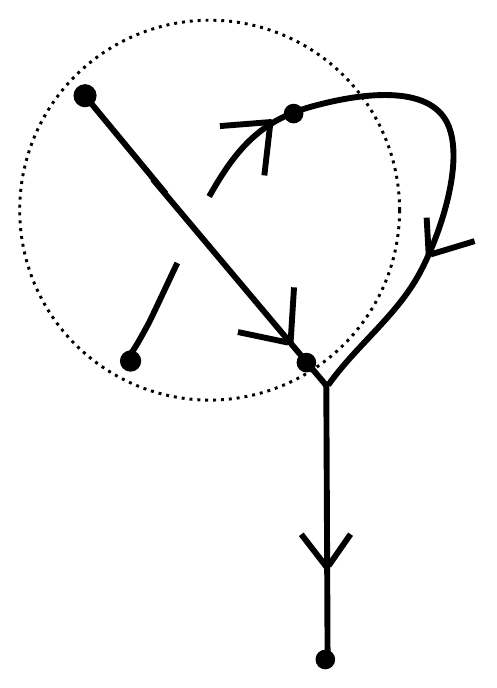}}
\hspace{0.1in} \stackrel{\text{braiding}}{\longrightarrow} \hspace{0.1in}
\raisebox{-.75in}{\includegraphics[height=4.5cm]{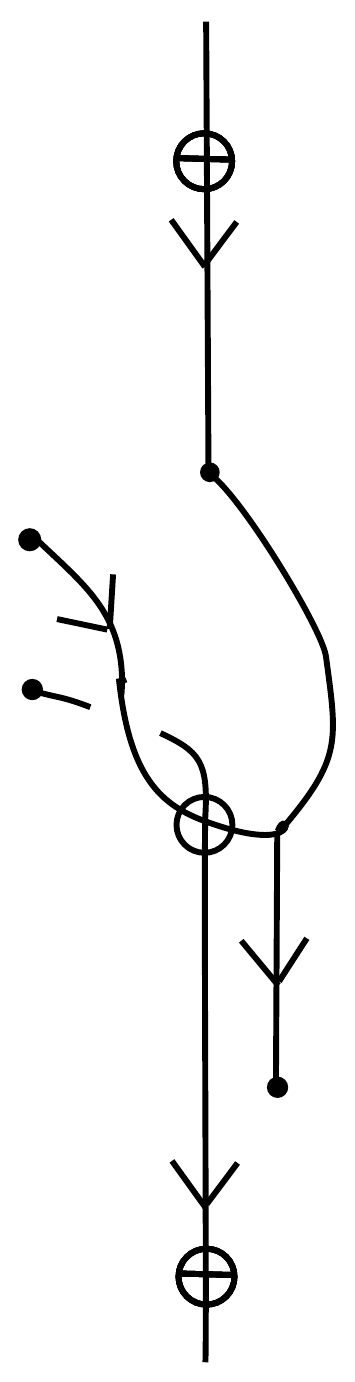}}\hspace{0.1in}
\underset{\text{$L_v$-move}}{\overset{\text{basic}}{\longleftrightarrow}} 
\raisebox{-.4in}{\includegraphics[height=2.5cm]{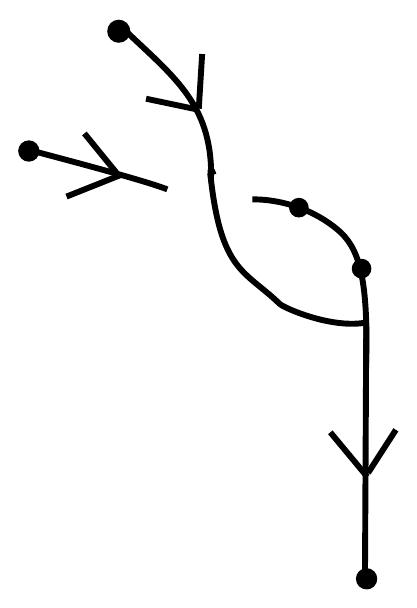}} \stackrel{\text{braid R5}}{\longleftrightarrow} \hspace{0.1in}
\raisebox{-.4in}{\includegraphics[height=2.5cm]{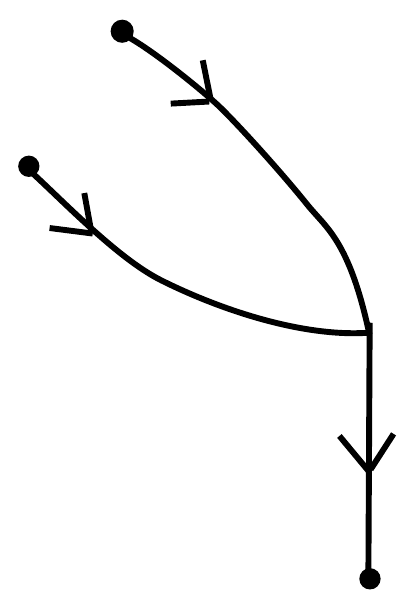}}\]
\\

\[\stackrel{\text{braid R5}}{\longleftrightarrow} \hspace{0.1in}
\raisebox{-.4in}{\includegraphics[height=2.5cm]{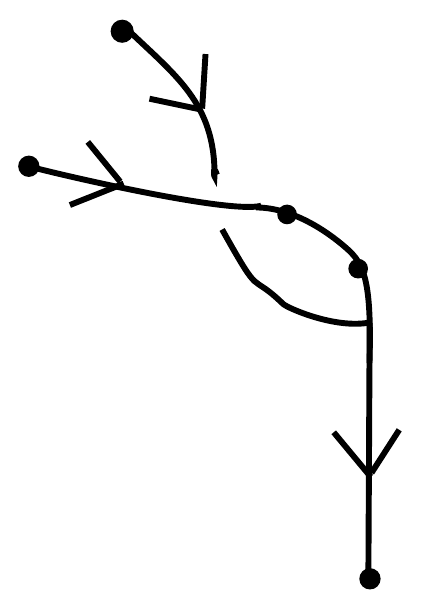}} \hspace{0.1in}
\underset{\text{$L_v$-move}}{\overset{\text{basic}}{\longleftrightarrow}} 
\hspace{0.1in}
\raisebox{-.75in}{\includegraphics[height=4.5cm]{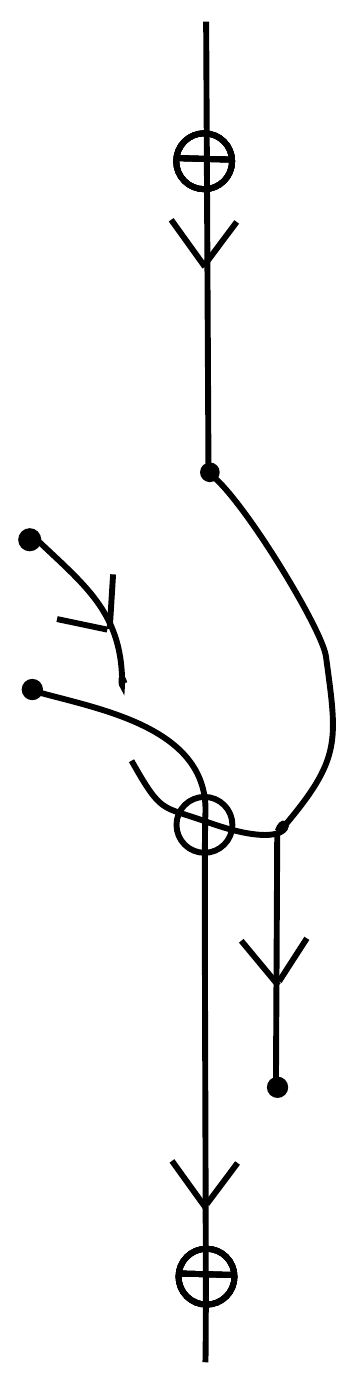}}
\hspace{0.1in} \stackrel{\text{braiding}}{\longleftarrow} \hspace{0.1in}
\raisebox{-.4in}{\includegraphics[height=2.5cm]{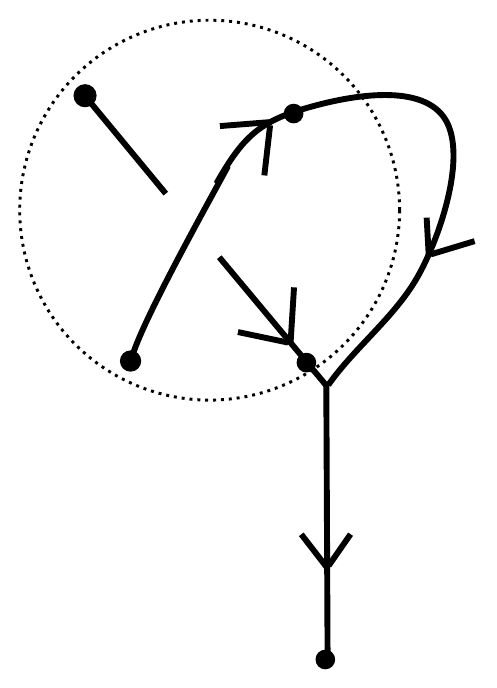}}
\]

\caption{A switch move on a $Y$-vertex corresponds to braids that differ by $L_v$-moves and a braid $R5$ move} \label{fig:Lmove}
\end{figure}

Another case of a switch move involving a $Y$-vertex is shown in Fig.~\ref{TL2switch}. By applying the braiding algorithm to the segmented diagrams corresponding to the two sides of the move, the resulting braids are $TL_v$-equivalent, as shown in Fig.~\ref{TL2equ}. The first basic $L_v$-move applied to the second diagram in Fig.~\ref{TL2equ} takes place near the bottom right part of the diagram where the rightmost strand is extended to the bottom (using braid isotopy); then we extend the top and bottom of the rightmost strands of the resulting trivalent braid and apply virtual conjugation to arrive at the third diagram in the first row. Similarly, the get the third diagram from the second diagram in the last row of Fig.~\ref{TL2equ}, we apply first virtual conjugation followed by a basic $L_v$-move on an arc near the bottom right of the diagram.

\begin{figure}[ht]
\[\raisebox{-.2in}{\includegraphics[height=0.5in]{YUUU2b}}\hspace{.3cm}  \overset{\text{switch}}{\underset{\text{move}}\longleftrightarrow} \hspace{.3cm}
\raisebox{-.2in}{\includegraphics[height=.5in]{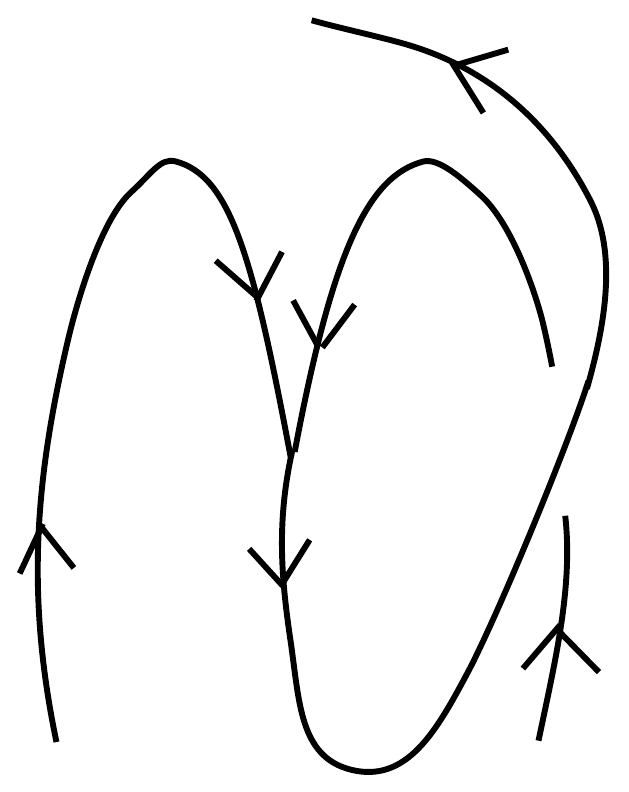}}
\]
\caption{A switch move on a $Y$-vertex in regular position} \label{TL2switch} 
\end{figure}

\begin{figure}[ht]
\[\raisebox{-.3in}{\includegraphics[height=2.5cm]{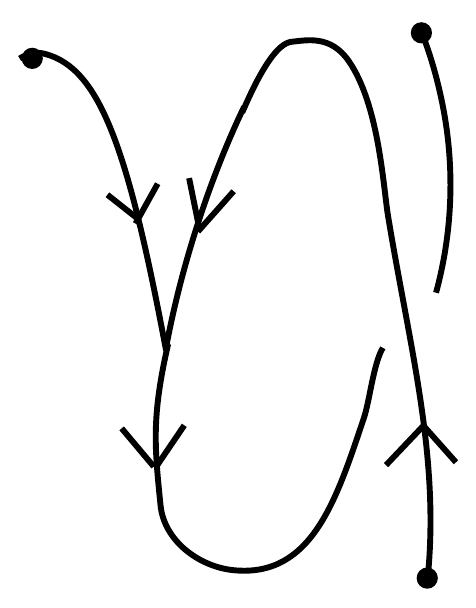}}
\hspace{0.05in} \stackrel{\text{braiding}}{\longrightarrow} \hspace{0.05in}
\raisebox{-.75in}{\includegraphics[height=4.5cm]{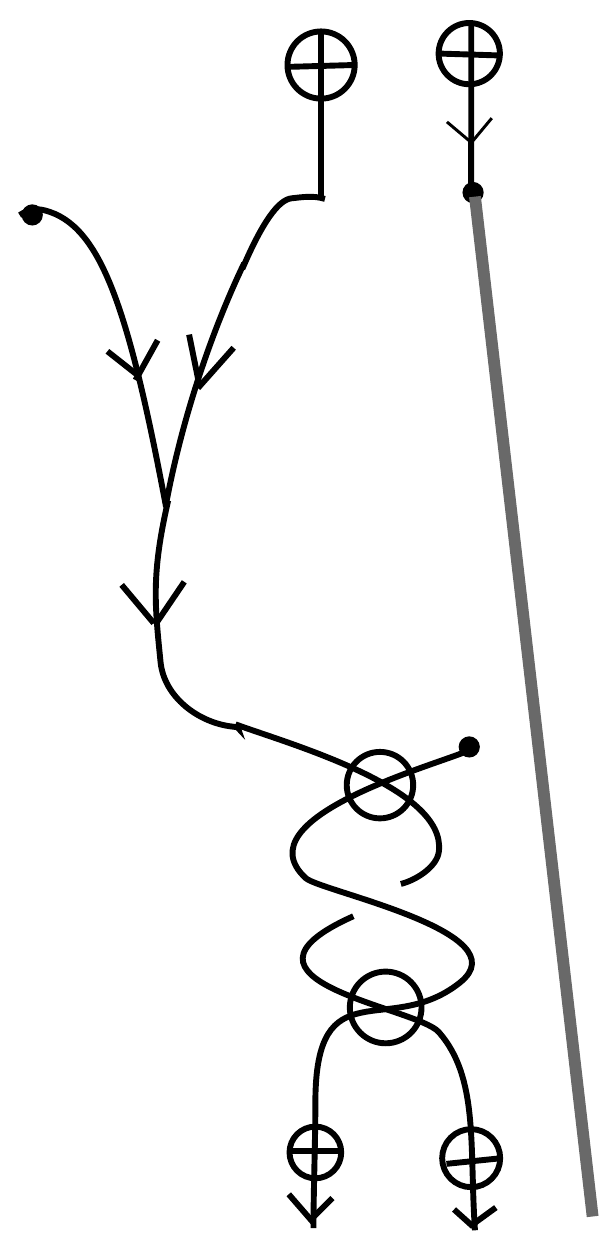}}
\hspace{0.05in} 
\underset{\text{virt. conj.}}{\overset{\text{basic $L_v$-move}}{\longleftrightarrow}} \hspace{0.05in}
\raisebox{-.75in}{\includegraphics[height=4.5cm]{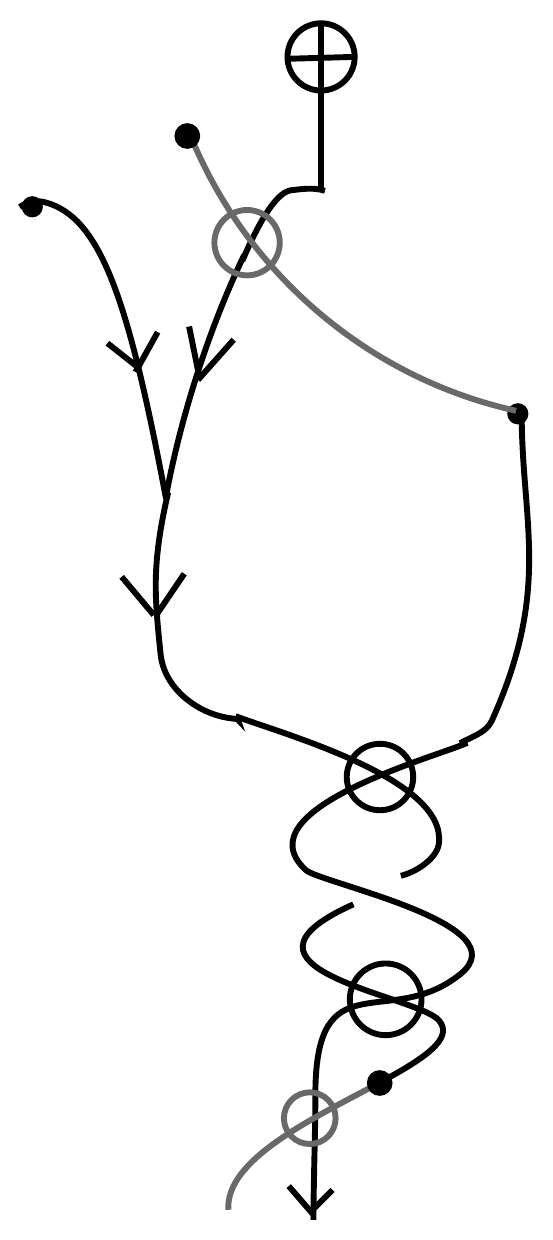}}
\underset{\text{V4, V2}}{\overset{\text{braid}}{\longleftrightarrow}} \hspace{0.05in}
\raisebox{-.75in}{\includegraphics[height=4.5cm]{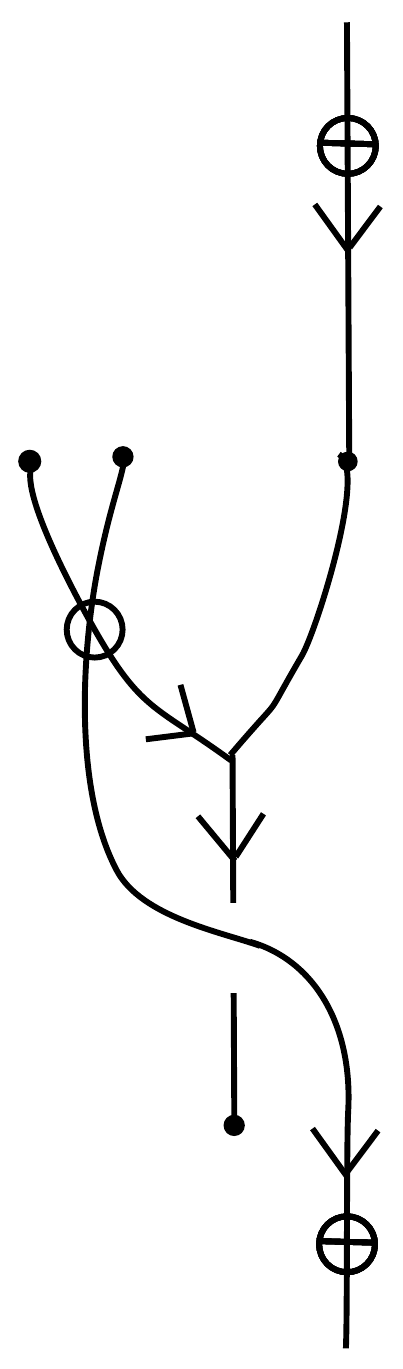}}
\]
\[
\underset{\text{R4}}{\overset{\text{braid}}{\longleftrightarrow}} \hspace{0.05in}
\raisebox{-.75in}{\includegraphics[height=4.5cm]{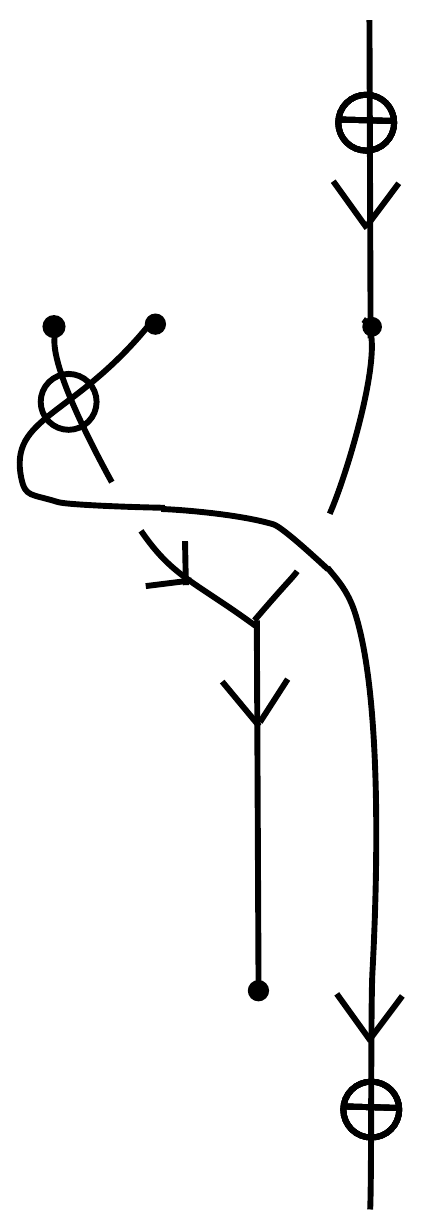}}
\hspace{0.05in}  
\underset{\text{$L_v$-move}}{\overset{\text{right real}}{\longleftrightarrow}} \hspace{0.05in}
\raisebox{-.4in}{\includegraphics[height=3cm]{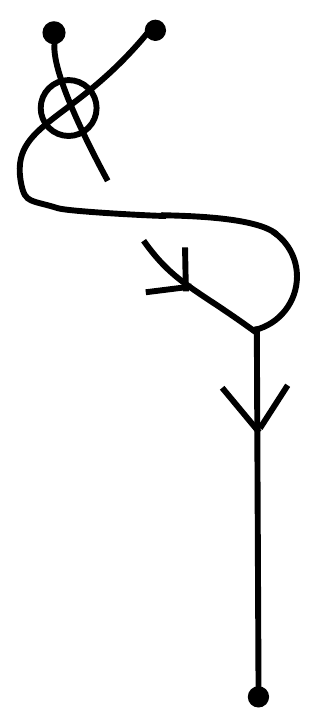}}
\hspace{0.05in} 
\underset{\text{R5}}{\overset{\text{braid}}{\longleftrightarrow}} \hspace{0.05in}
\raisebox{-.4in}{\includegraphics[height=3cm]{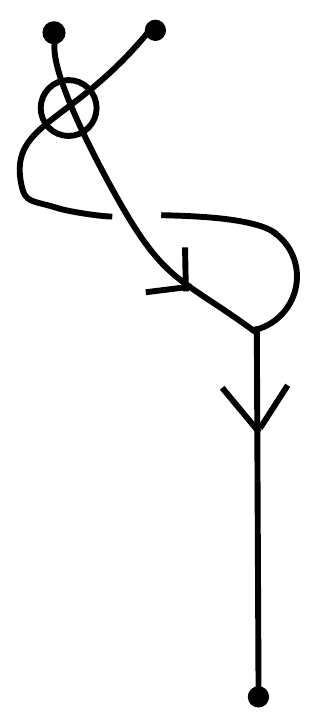}}
\hspace{0.05in} 
\underset{\text{ $L_v$-move}}{\overset{\text{right real}}{\longleftrightarrow}}
\hspace{0.05in}
\raisebox{-.75in}{\includegraphics[height=4.5cm]{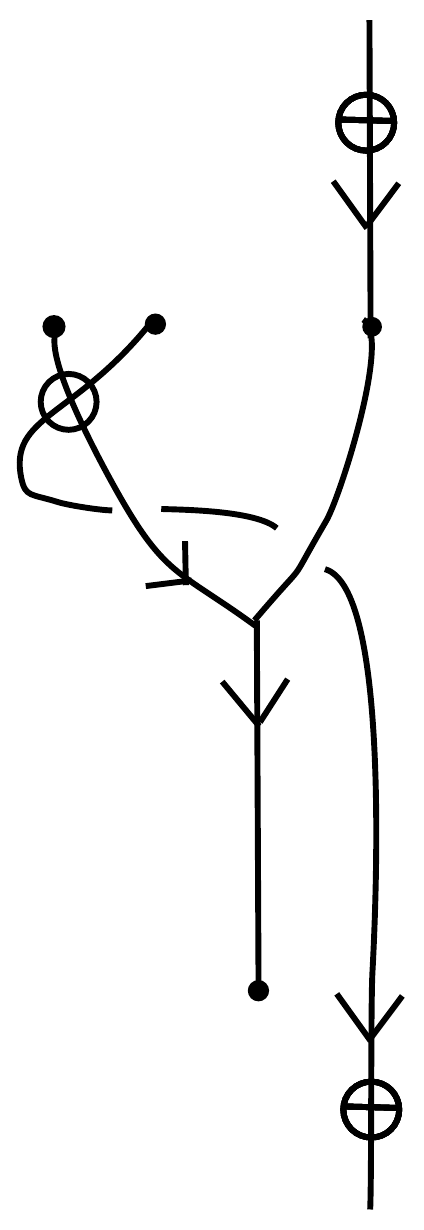}}
\underset{\text{R4}}{\overset{\text{braid}}{\longleftrightarrow}} \hspace{0.05in}
\]
\[
\raisebox{-.75in}{\includegraphics[height=4.5cm]{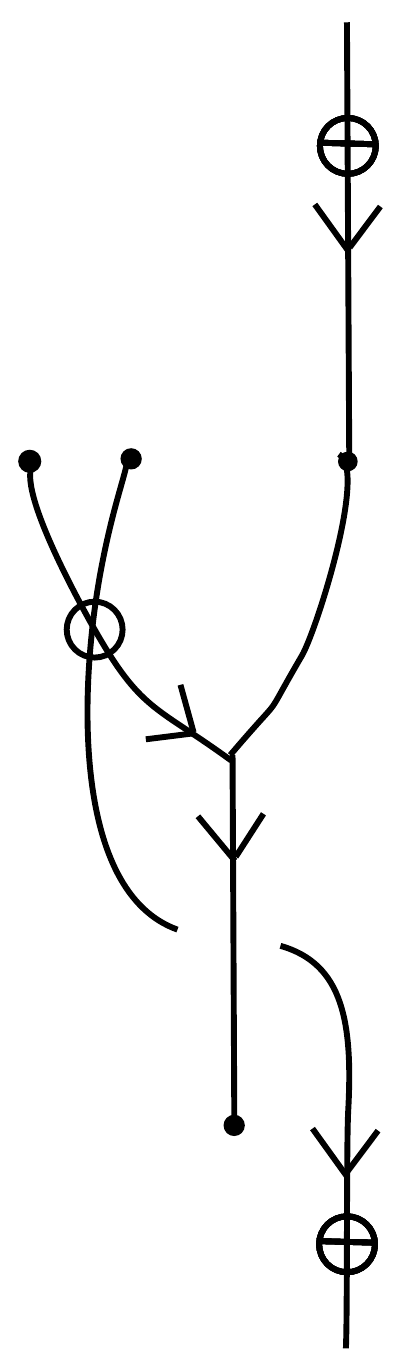}}
\underset{\text{V4, V2}}{\overset{\text{braid}}{\longleftrightarrow}} \hspace{0.05in}
\raisebox{-.75in}{\includegraphics[height=4.5cm]{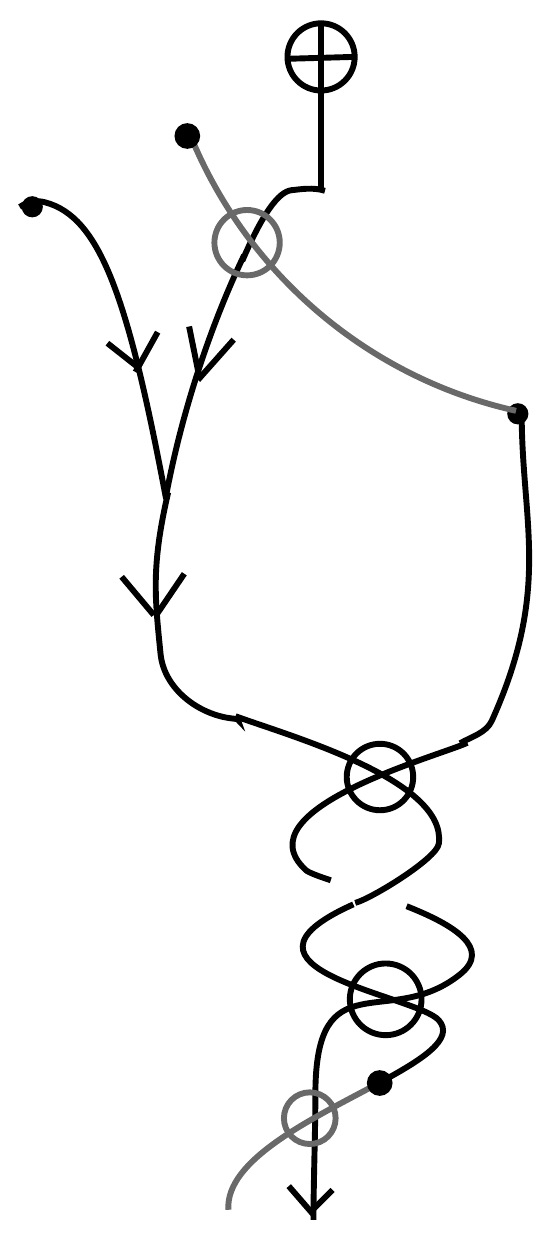}}
\underset{\text{basic L-move}}{\overset{\text{virt. conj.}}{\longleftrightarrow}} \hspace{0.05in}
\raisebox{-.75in}{\includegraphics[height=4.5cm]{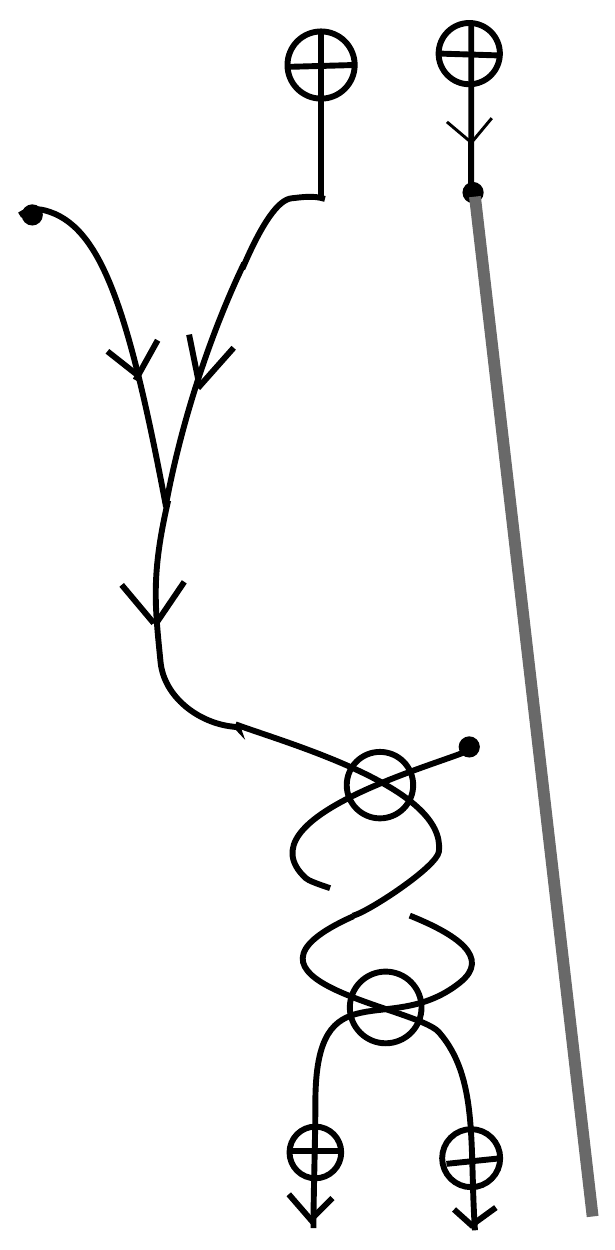}}
\hspace{0.05in} \stackrel{\text{braiding}}{\longleftarrow} \hspace{0.05in}
\raisebox{-.3in}{\includegraphics[height=2.5cm]{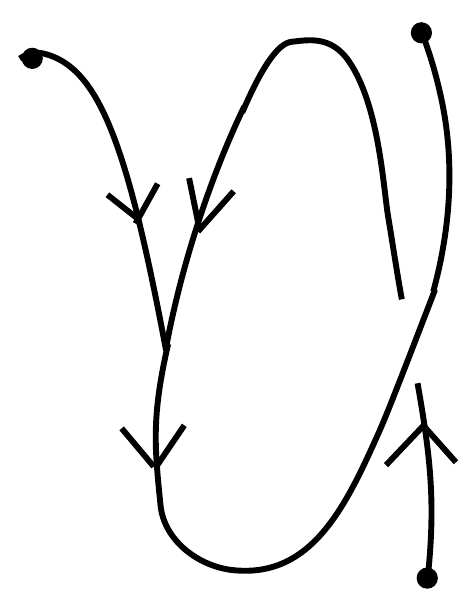}}
\]
\caption{Checking a switch move on a $Y$-vertex} \label{TL2equ}
\end{figure}

Lastly, consider the switch move on a $\lambda$-vertex in regular position, as shown in Fig.~\ref{piece} (where the dotted regions represent the arcs to be segmented). The corresponding braids for the associated segmented diagrams are shown in Fig.~\ref{fig:TL}. Note that these two braids differ by basic $L_v$-moves, braid isotopy and a right $TL_v$-move, and hence are $TL_v$-equivalent. The basic $L_v$-move and virtual conjugation applied to the second diagram to get the third diagram (in both rows of Fig.~\ref{fig:TL}) are performed in a similar manner as in Fig.~\ref{TL2equ}.

\begin{figure}[ht] 
\[
\raisebox{-.4in}{\includegraphics[height=1.75cm]{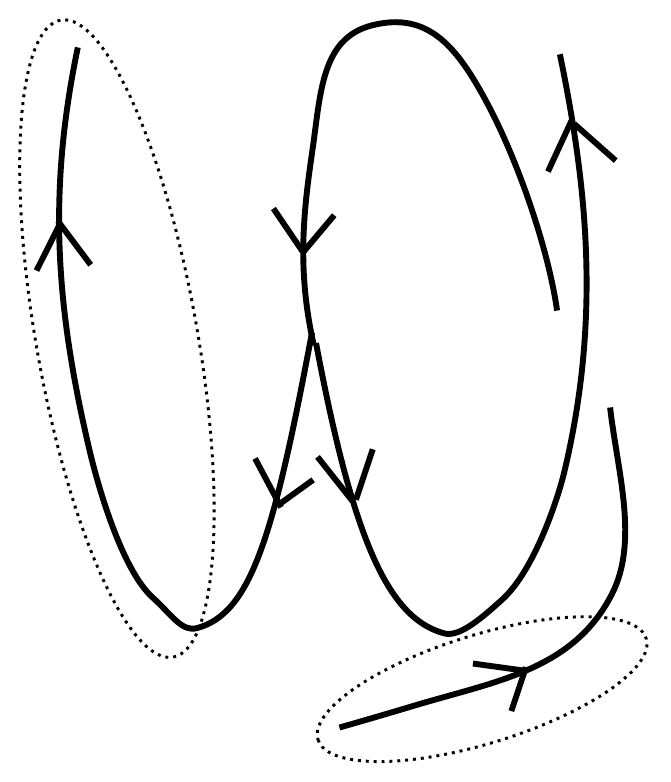}}
\hspace{0.3cm} 
\overset{\text{switch}}{\underset{\text{move}} \longleftrightarrow} \hspace{0.3cm}
\raisebox{-.4in}{\includegraphics[height=1.75cm]{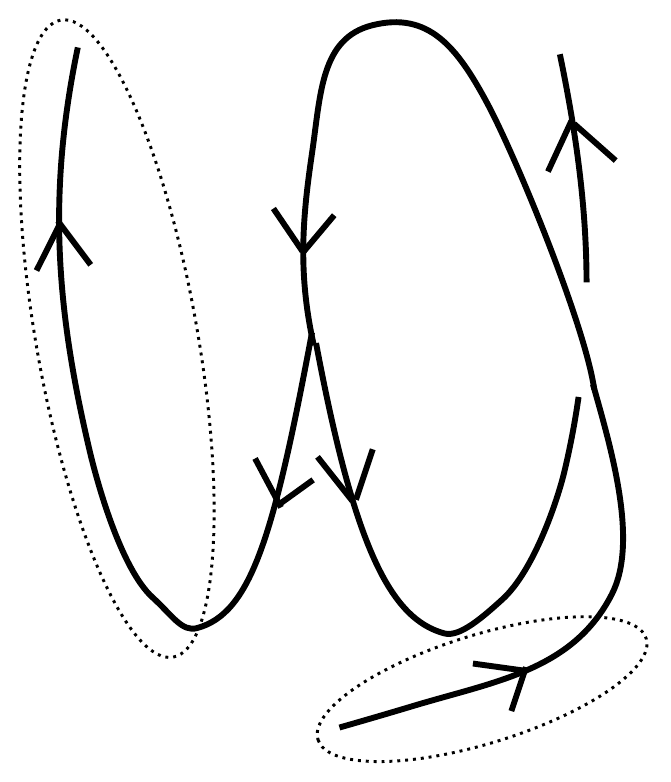}}
\]
\caption{A switch move on  a $\lambda$-vertex in regular position} \label{piece}
\end{figure}

\begin{figure}[ht]
\[
\raisebox{-.4in}{\includegraphics[height=2cm]{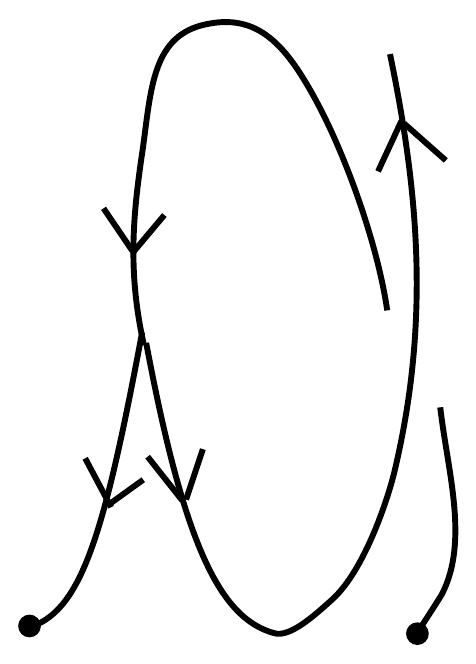}}
\hspace{0.1in} \stackrel{\text{braiding}}{\longrightarrow} \hspace{0.1in}
\raisebox{-.75in}{\includegraphics[height=4.5cm]{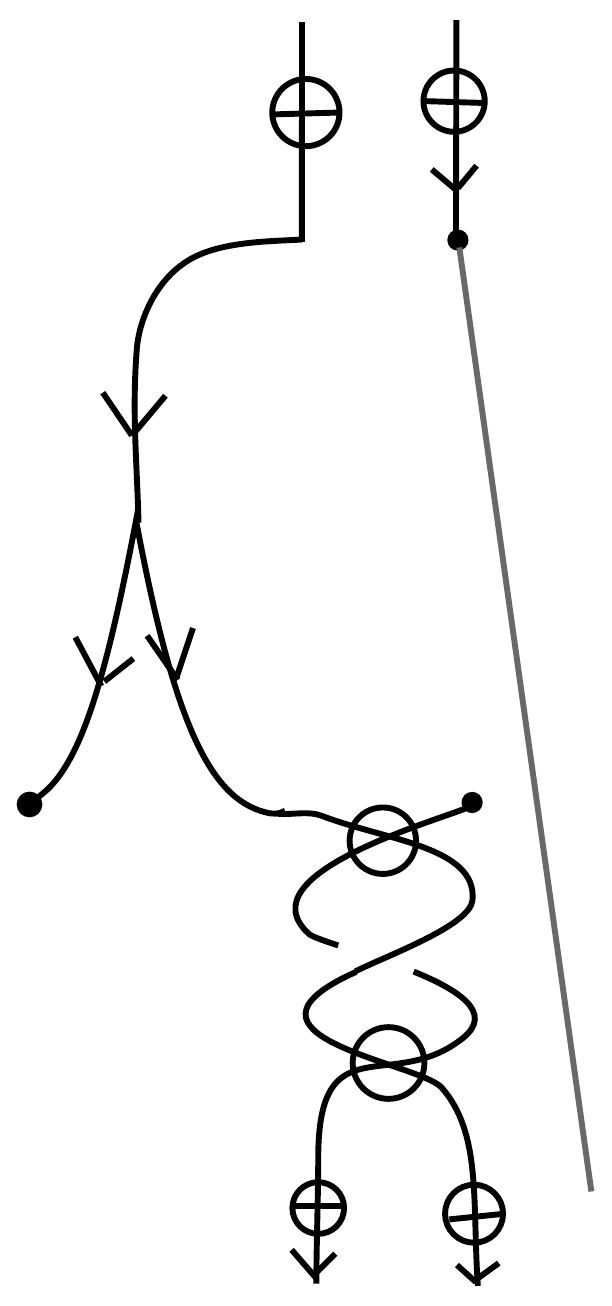}}
\hspace{0.1in} \underset{\text{virt. conj.}}{\overset{\text{basic $L_v$-move}}{\longleftrightarrow}}
\hspace{0.1in}
\raisebox{-.75in}{\includegraphics[height=4.35cm]{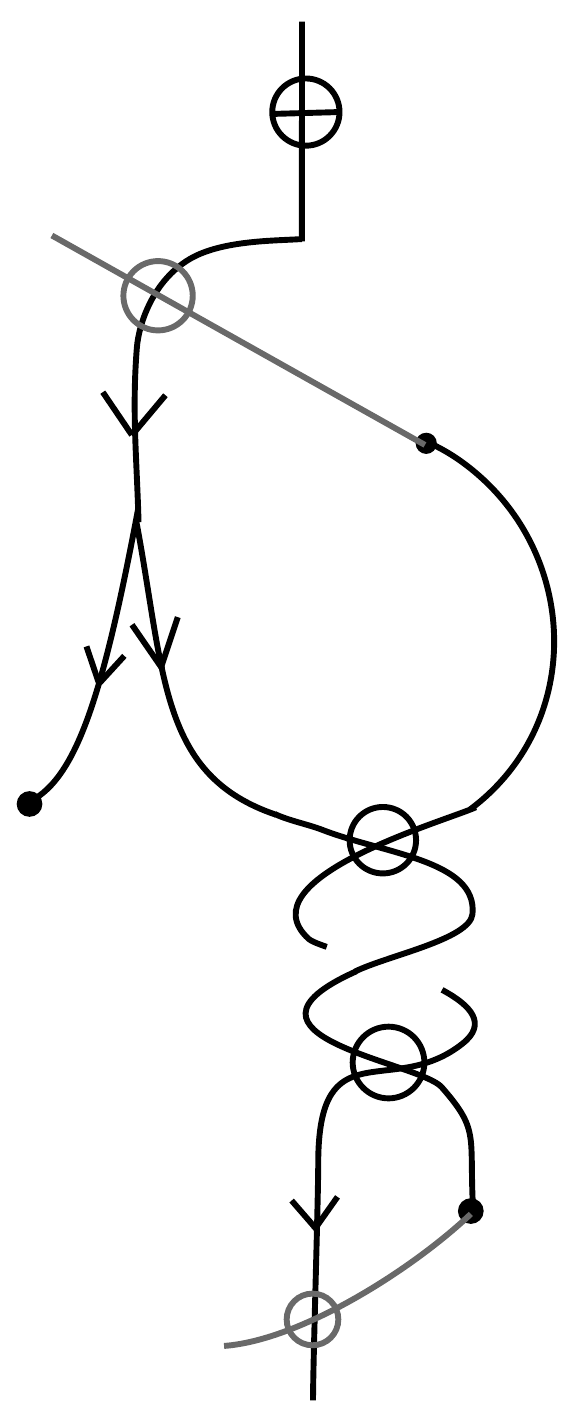}}
\hspace{0.1in} \underset{V4, V2} {\overset{\text{braid}}{\longleftrightarrow}} \hspace{0.1in}
\raisebox{-.7in}{\includegraphics[height=4.1cm]{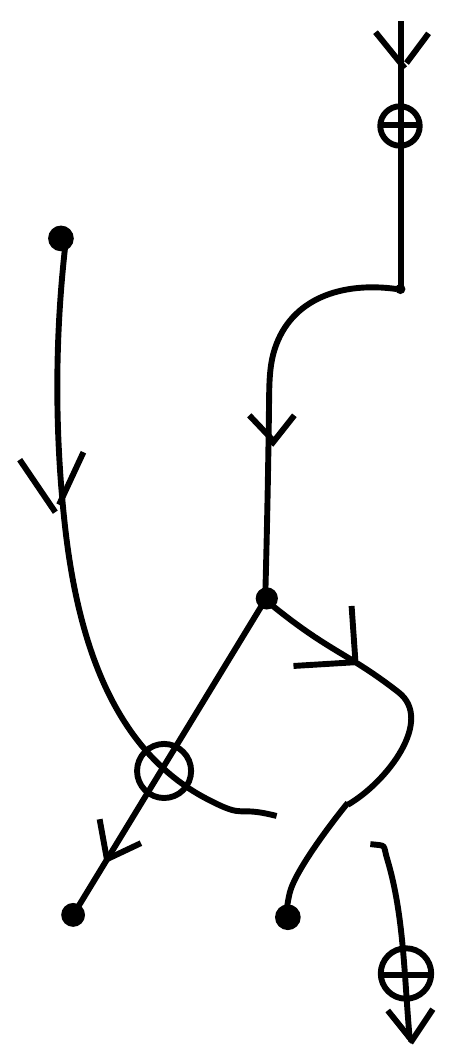}}
\]
\[
\raisebox{-.4in}{\includegraphics[height=2cm]{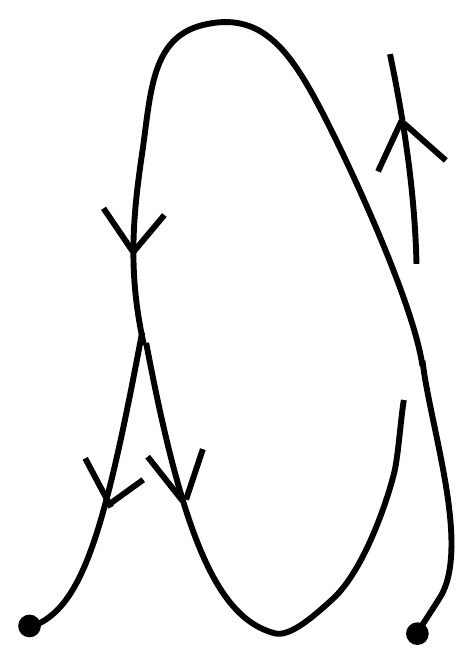}}
\hspace{0.1in} \stackrel{\text{braiding}}{\longrightarrow} \hspace{0.1in}
\raisebox{-.75in}{\includegraphics[height=4.35cm]{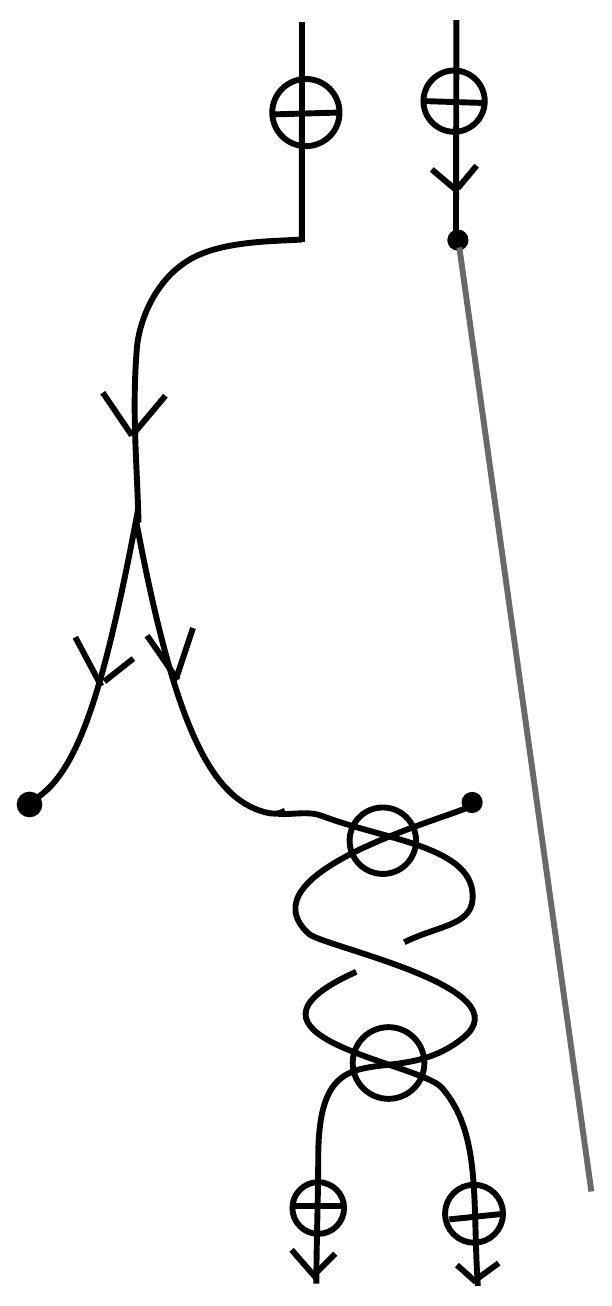}}
\hspace{0.1in} \underset{\text{virt. conj.}}{\overset{\text{basic $L_v$-move}}{\longleftrightarrow}}
\hspace{0.1in}
\raisebox{-.75in}{\includegraphics[height=4.35cm]{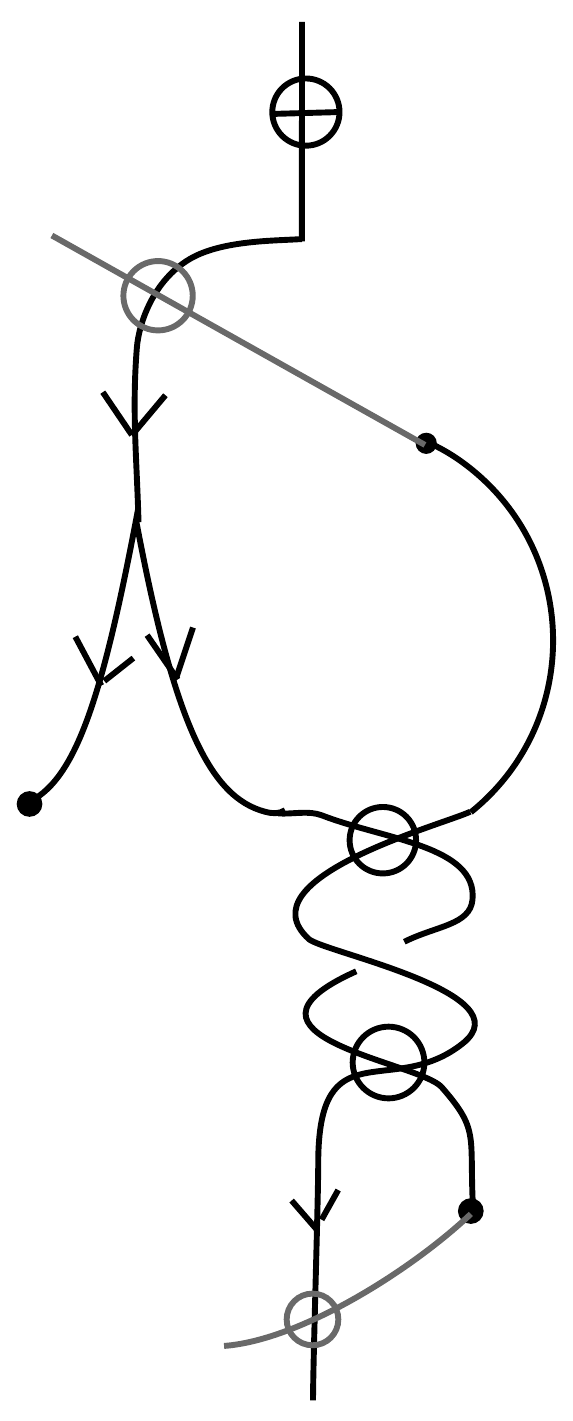}}
\hspace{0.1in} \underset{V4, V2} {\overset{\text{braid}}{\longleftrightarrow}}
\hspace{0.1in}
\raisebox{-.7in}{\includegraphics[height=4.1cm]{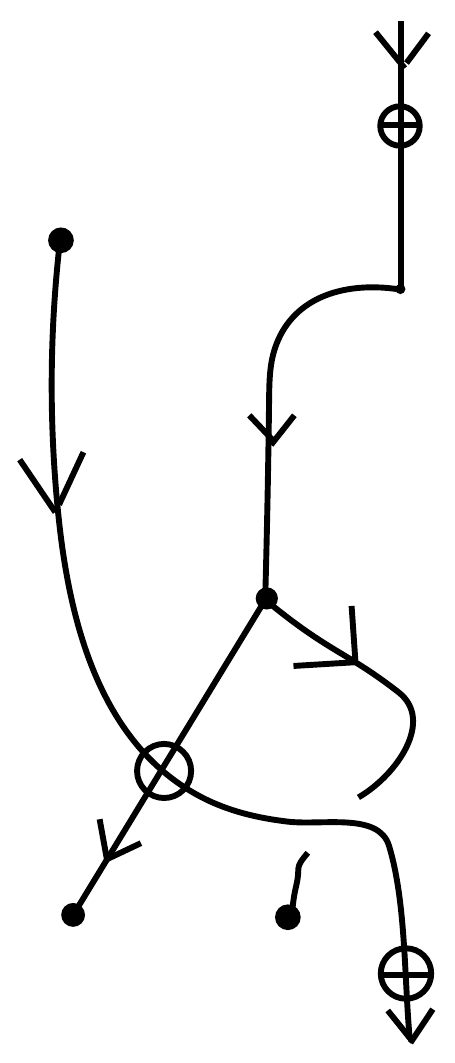}}
\]

\caption{A case of the switch move on a $\lambda$-vertex via the right $TL_v$-move}\label{fig:TL}
\end{figure}

This concludes the part of the proof involving a switch move.

Once the diagram is in regular position, it must be put into general position.  We need to show that all of the different choices that can be made when bringing a regular diagram into general position result in virtual trivalent braids that are $TL_v$-equivalent. It can be easily verified using a similar approach as in~\cite[Lemma 7]{KauLamb} that applying the braiding algorithm (away from the vertices) to diagrams that differ by direction sensitive moves results in braids that are $TL_v$-equivalent. 

\textit{Part III.} Next we need to prove that two well-oriented virtual STG diagrams in general position that differ by extended Reidemeister moves for virtual STG diagrams (given in Fig.~\ref{fig:R-moves}) correspond to closures of virtual trivalent braids that are $TL_v$-equivalent. The extended Reidemeister moves in the virtual spatial trivalent graph setting encapsulate the virtual isotopy moves in the virtual knot setting, and by~\cite[Lemma 8]{KauLamb} we know that the virtual isotopy moves correspond to virtual braids that differ by braid isotopy, real conjugation and $L_v$-moves. Hence we only need to consider the extended Reidemeister moves involving trivalent vertices, specifically the moves $R4, V4$ and $R5$. We will also need to consider all of the possible orientations of the strands involved in the moves. When checking the moves $R4, V4$ and $R5$, we will assume that the other extended Reidemeister moves correspond to virtual trivalent braids that are $TL_v$-equivalent.

We consider first the $R4$ move with the free strand sliding under the vertex. The $R4$ move with the  strand sliding over the vertex is verified similarly, and thus it is omitted to avoid repetition.

The case of the move in which all the strands are oriented downward (see for example Fig.~\ref{DownY1}) is a virtual trivalent braid relation, thus there is nothing to verify in this case.

\begin{figure}[ht]
\[
\raisebox{-.5cm}{\includegraphics[height=1.5cm]{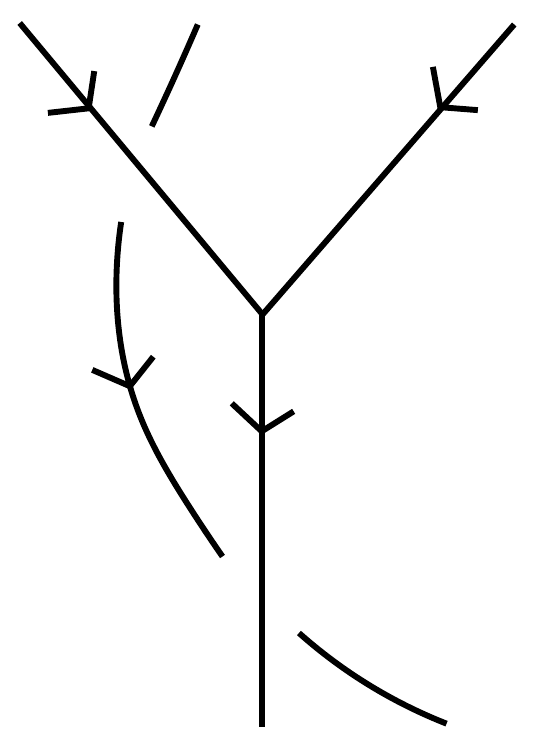}} 
\underset{\text{isotopy}}{\overset{\text{braid}}{\sim}} 
\raisebox{-.5cm}{\includegraphics[height=1.5cm]{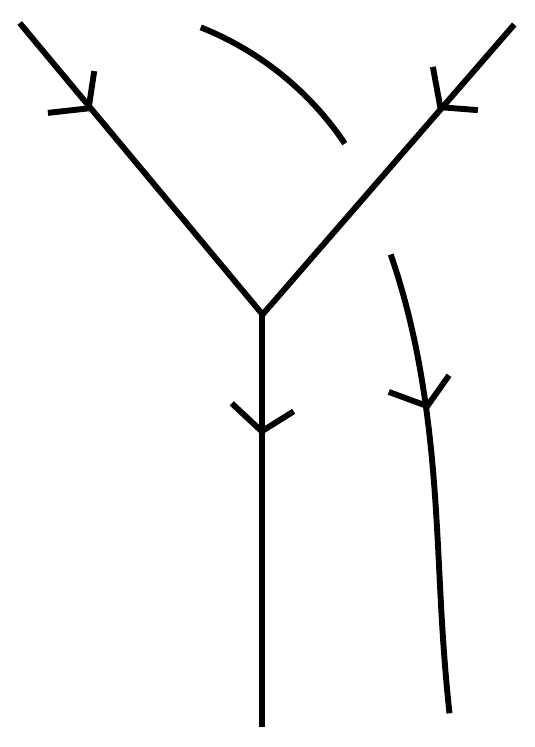}}
\] 
\caption{An $R4$ move in braid form}\label{DownY1}
\end{figure}

We will restrict next to the version of the move in which the free strand is an up-arc; the other case is checked similarly.

The version of the $R4$ move in which the free strand has upward orientation and the edges meeting at the vertex have downward orientation can be obtained using $R2$ moves along with the $R4$ move in braid form, as shown in Figs.~\ref{DownYUp} and~\ref{fig:DDDUL}. 

\begin{figure}[ht]
\[
\raisebox{-.5cm}{\includegraphics[height=1.75cm]{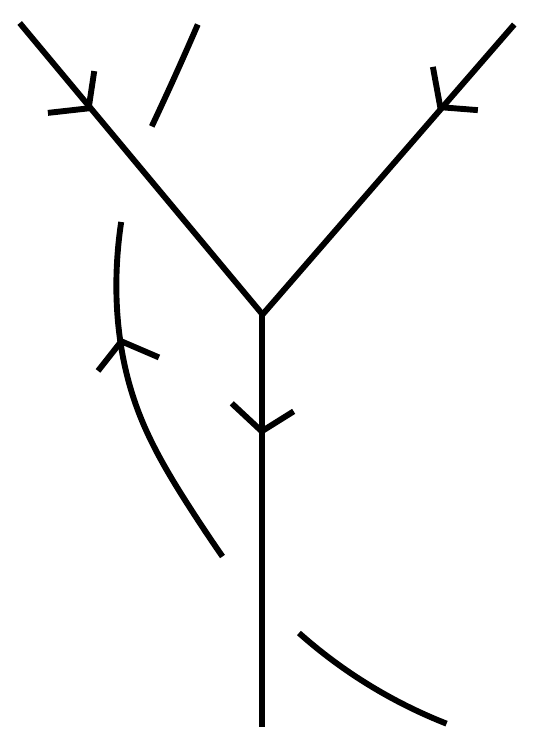}}  \hspace{.05in} \stackrel{\text{R2}}{\longleftrightarrow} \hspace{.05in}
\raisebox{-.5cm}{\includegraphics[height=1.75cm]{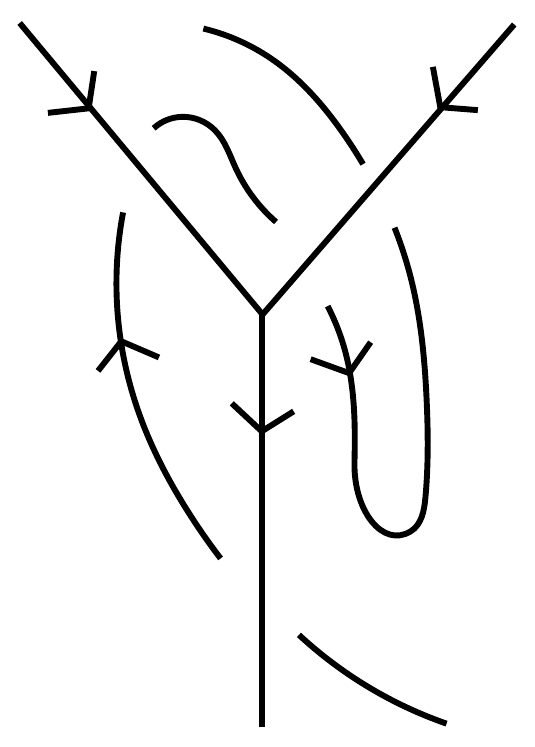}} \hspace{.05in}
\underset{\text{R4}}{\overset{\text{braid}}{\longleftrightarrow}} \hspace{.05in}
\raisebox{-.5cm}{\includegraphics[height=1.75cm]{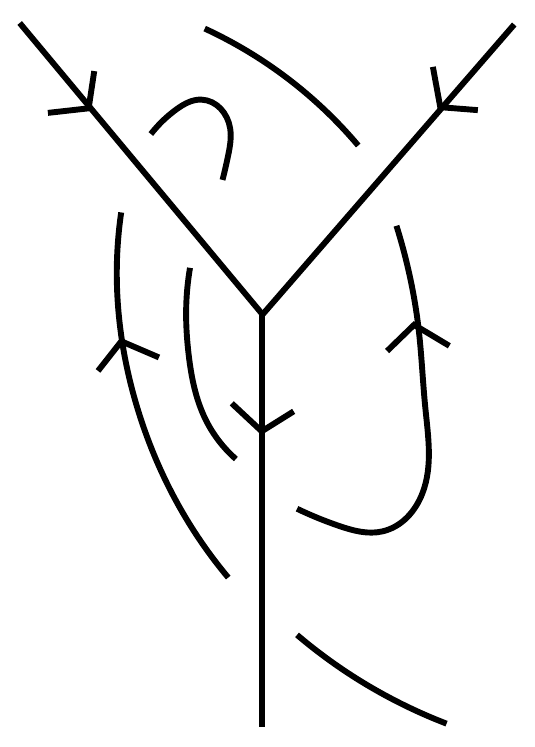}}\hspace{.05in}
\stackrel{\text{R2}}{\longleftrightarrow}\hspace{.05in}
\raisebox{-.5cm}{\includegraphics[height=1.75cm]{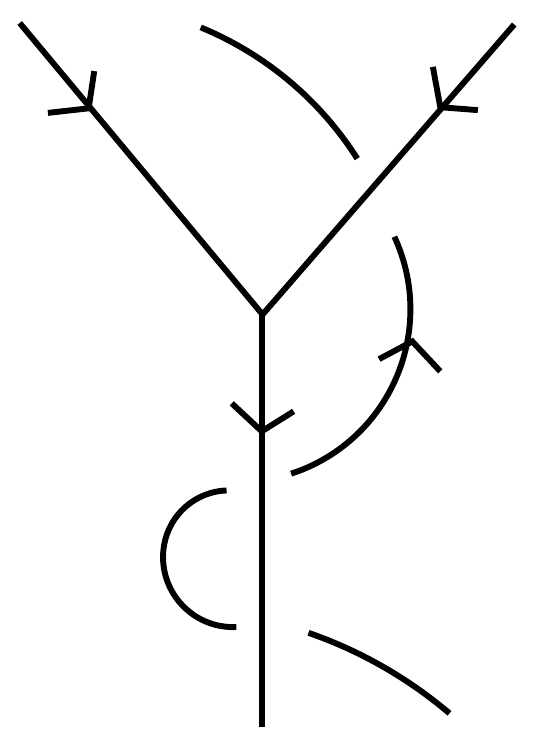}}
 \stackrel{\text{R2}}{\longleftrightarrow}\hspace{.05in}
\raisebox{-.5cm}{\includegraphics[height=1.75cm]{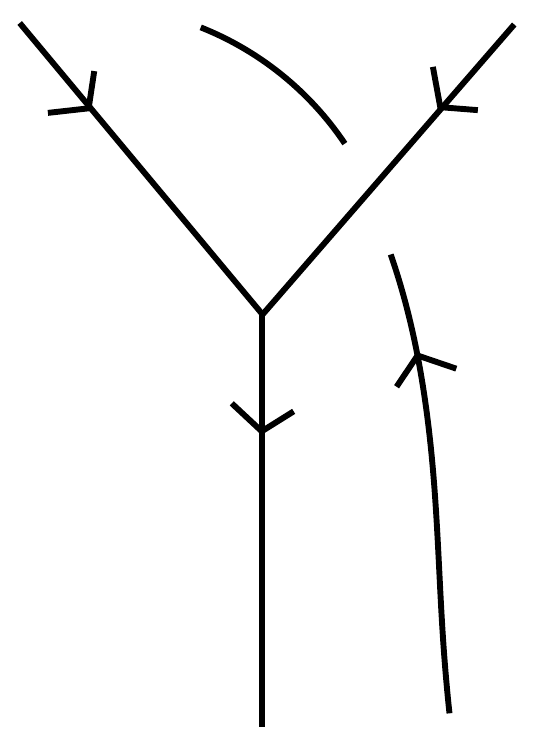}}
\]
\caption{An $R4$ move on a $Y$-vertex with 3 down arcs}\label{DownYUp}
\end{figure}

\begin{figure}[ht] 
\[
\raisebox{-.5cm}{\includegraphics[height=1.775cm]{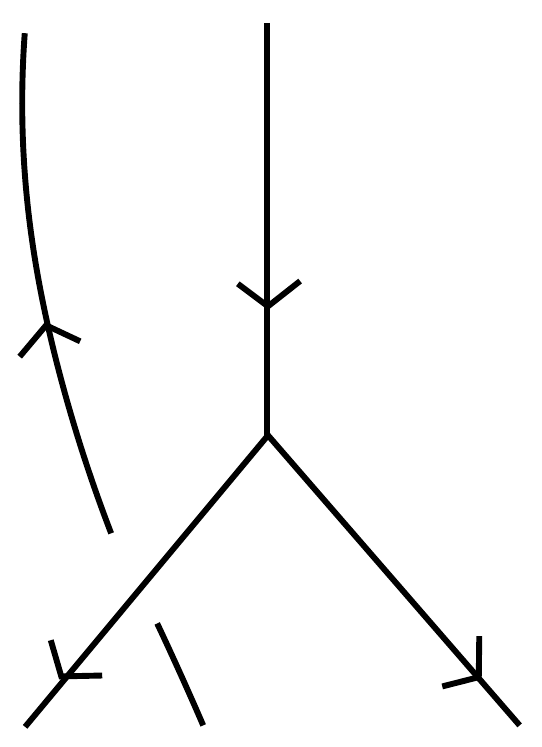}}  \hspace{.05in} \stackrel{\text{R2}}{\longleftrightarrow} \hspace{.05in}
\raisebox{-.5cm}{\includegraphics[height=1.75cm]{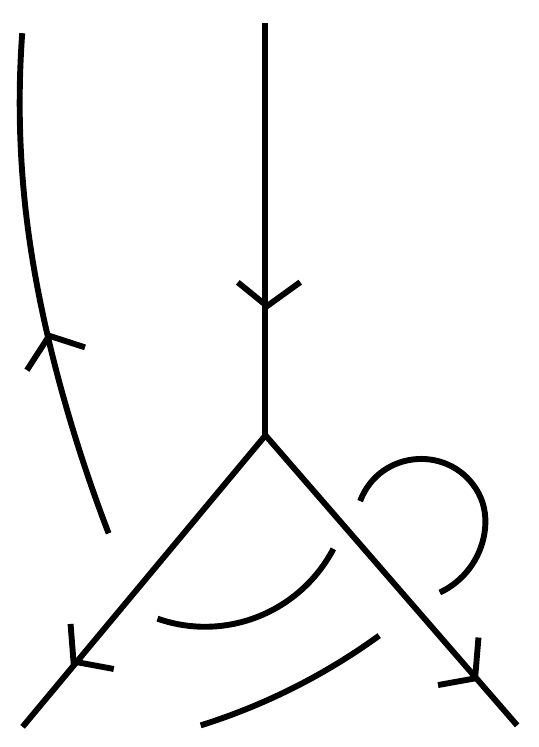}} \hspace{.05in}
\underset{\text{R4}}{\overset{\text{braid}}{\longleftrightarrow}} \hspace{.05in}
\raisebox{-.5cm}{\includegraphics[height=1.75cm]{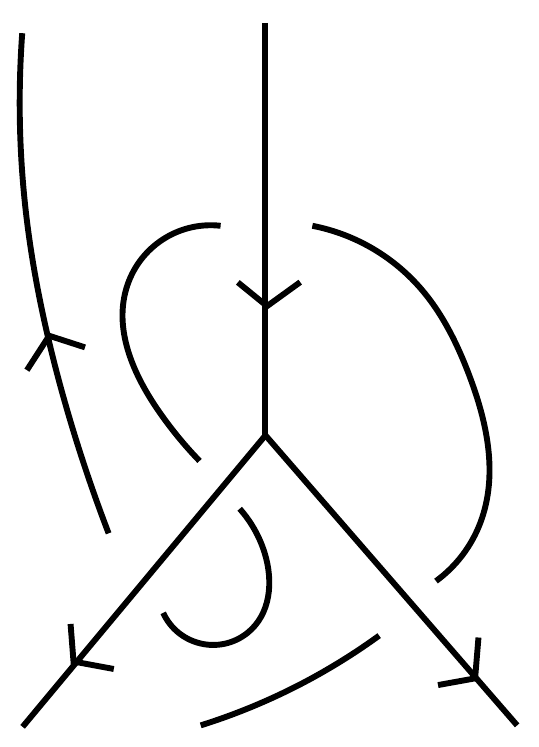}}\hspace{.05in}
 \stackrel{\text{R2}}{\longleftrightarrow}\hspace{.05in}
\raisebox{-.5cm}{\includegraphics[height=1.75cm]{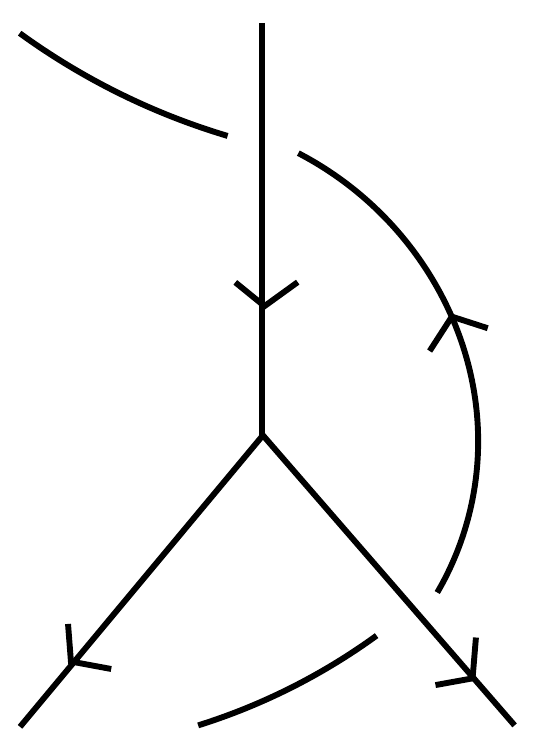}}
\] 
\caption{$R4$ move on a $\lambda$-vertex with down-arcs}\label{fig:DDDUL}
\end{figure}

The next case we consider is an $R4$ move applied locally to a $Y$-vertex with one up-arc. We demonstrate one such case in Fig.~\ref{R42Up}. We must first put the vertex in regular position and then we apply the $R4$ move on a $\lambda$-vertex with three down-arcs (which was checked in the previous case).  

\begin{figure}[ht]
\[
\raisebox{-.5cm}{\includegraphics[height=1.75cm]{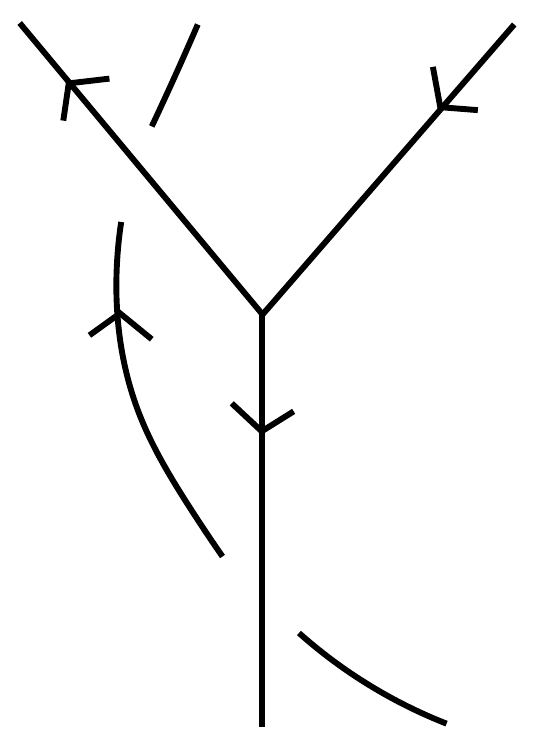}}  \hspace{.05in} \stackrel{\text{RP}}{\longrightarrow} \hspace{.05in}
\raisebox{-.5cm}{\includegraphics[height=1.75cm]{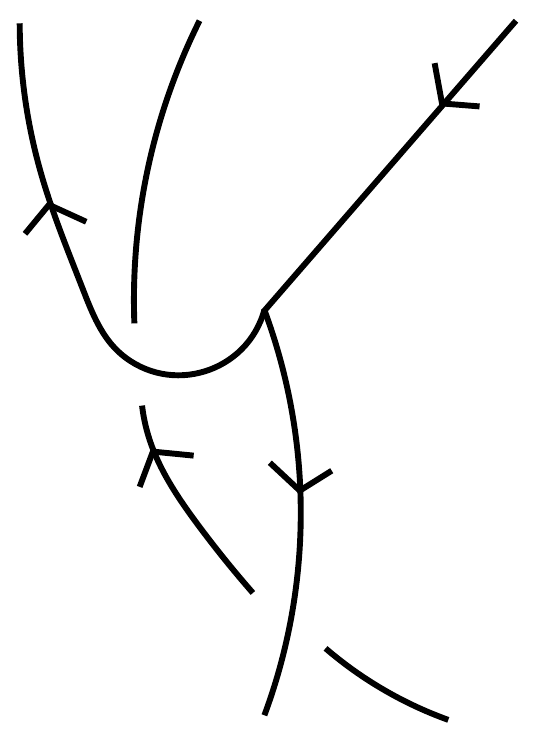}} \hspace{.05in}
\stackrel{\text{R4}}{\longleftrightarrow}\hspace{.05in}
\raisebox{-.5cm}{\includegraphics[height=1.75cm]{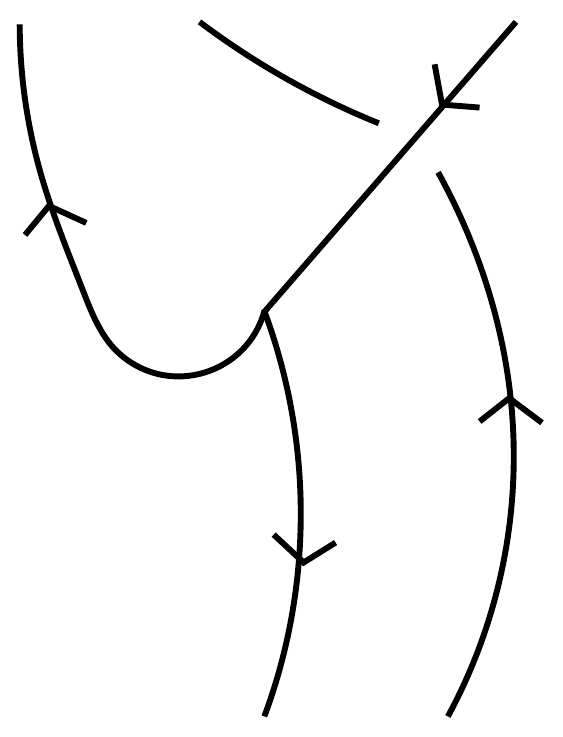}}
\hspace{.05in} \stackrel{\text{RP}}{\longleftarrow} \hspace{.05in}
\raisebox{-.5cm}{\includegraphics[height=1.75cm]{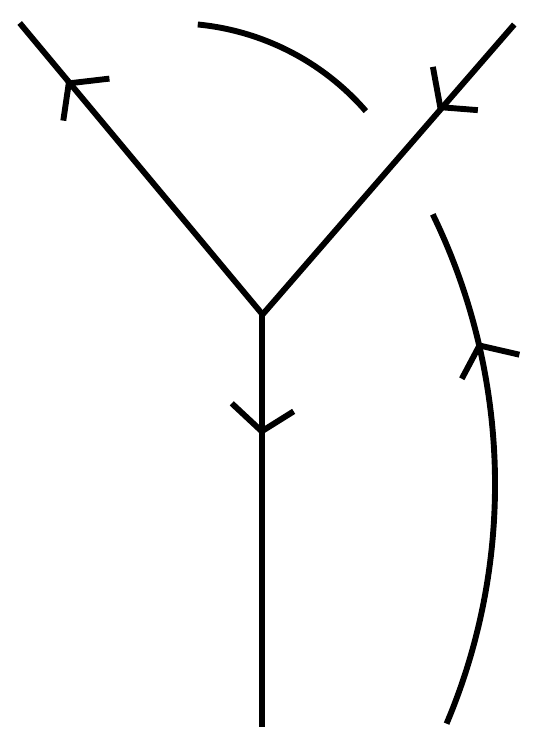}}
\]
\caption{An $R4$ move on a $Y$-vertex with one up-arc}\label{R42Up}
\end{figure}

We now consider the case of an $R4$ move applied locally to a $Y$-vertex with two up-arcs. We demonstrate a particular case in Fig.~\ref{R42Up2}. After the vertex is put in regular position, the move can be performed using $R2$ and $R3$ moves together with an $R4$ move involving a $Y$-vertex with downward oriented arcs (which has been checked). 

\begin{figure}[ht]
\[
\raisebox{-.5cm}{\includegraphics[height=1.75cm]{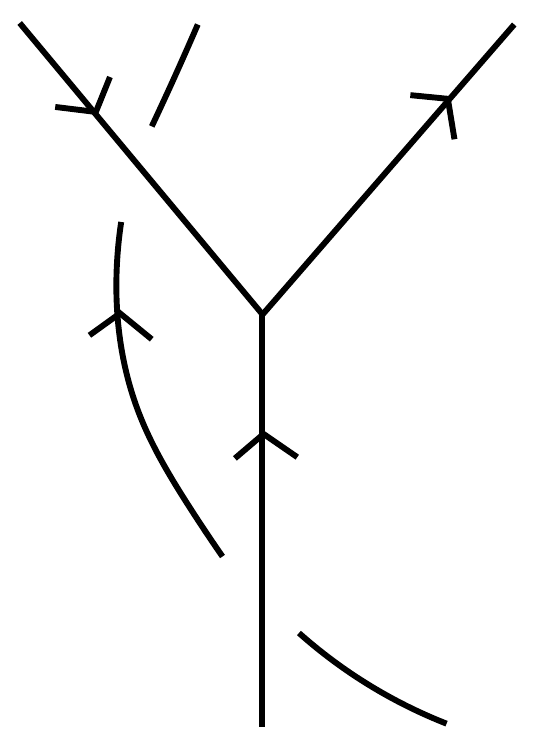}}  \hspace{.05in} \stackrel{\text{RP}}{\longrightarrow} \hspace{.05in}
\raisebox{-.5cm}{\includegraphics[height=1.75cm]{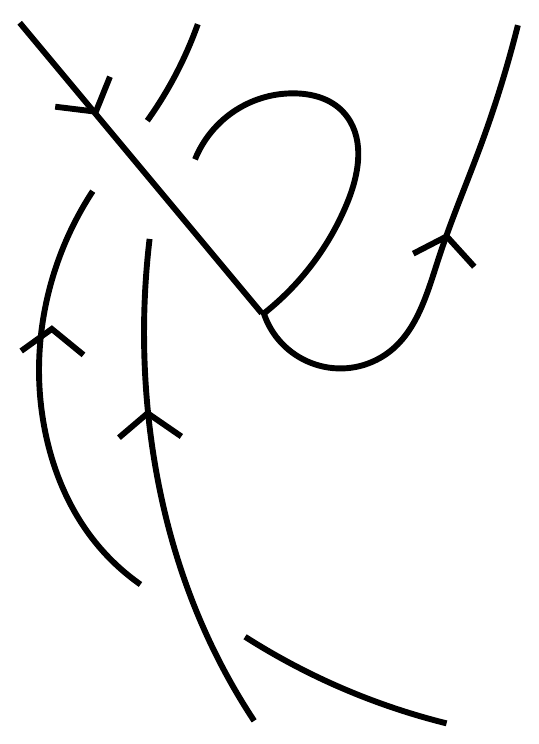}} \hspace{.05in}
 \stackrel{\text{R2, R3}}{\longleftrightarrow}\hspace{.05in}
\raisebox{-.5cm}{\includegraphics[height=1.75cm]{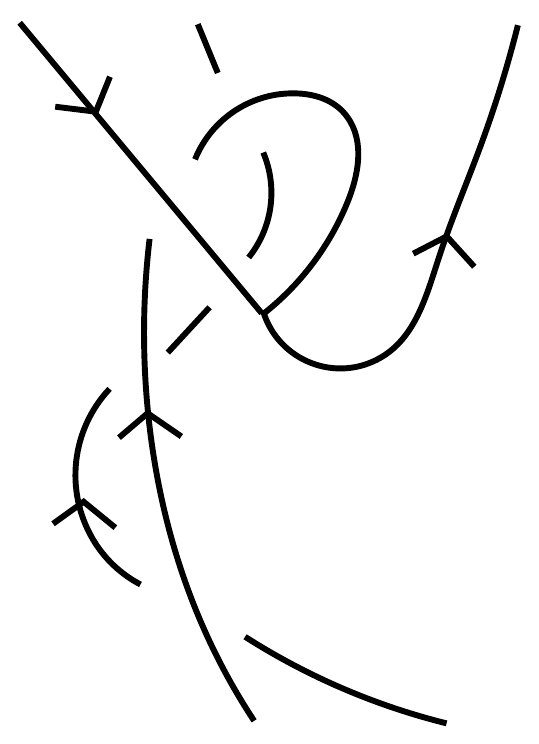}}\hspace{.05in}
\stackrel{\text{R4}}{\longleftrightarrow}\hspace{.05in}
\raisebox{-.5cm}{\includegraphics[height=1.75cm]{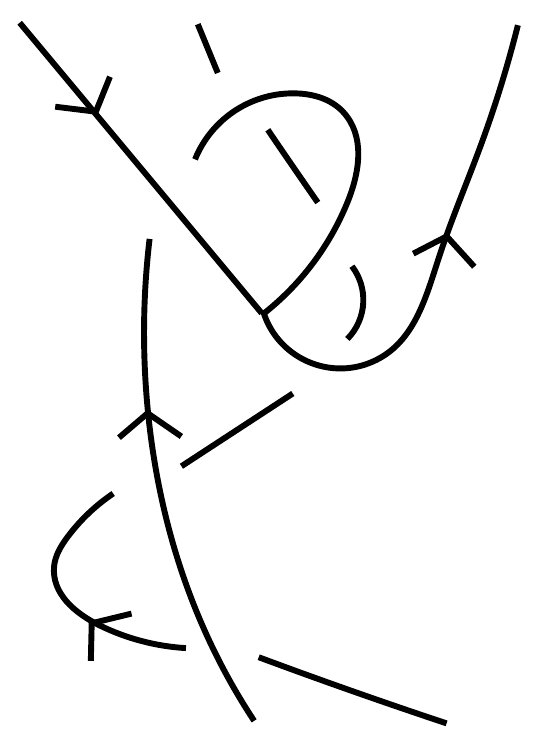}}
 \stackrel{\text{R2}}{\longleftrightarrow}\hspace{.05in}
\raisebox{-.5cm}{\includegraphics[height=1.75cm]{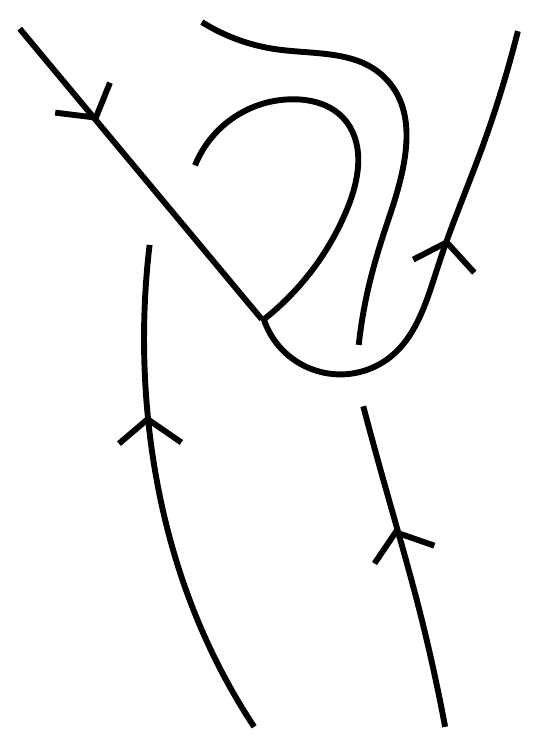}}
 \stackrel{\text{RP}}{\longleftarrow}\hspace{.05in}
\raisebox{-.5cm}{\includegraphics[height=1.75cm]{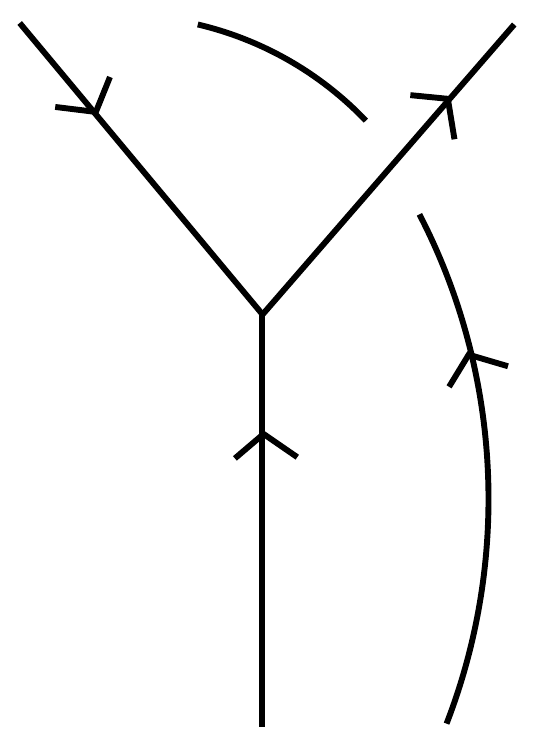}}
\]
\caption{An $R4$ move on a $Y$-vertex with two up-arcs}\label{R42Up2}
\end{figure}

Next we check an $R4$ move involving a $Y$-vertex with three up-arcs. As in the previous case, after putting the vertex in regular position, we see that this version of the move reduces to an $R4$ move on a $\lambda$-vertex with down-arcs, together with the moves $R2$ and $R3$. This is demonstrated in Fig.~\ref{AllUp}.

\begin{figure}[ht]
\[
\raisebox{-.5cm}{\includegraphics[height=1.75cm]{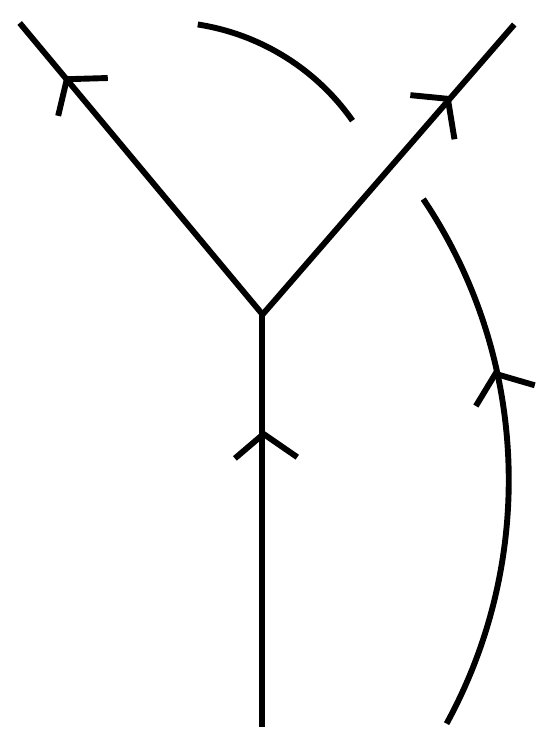}}  \hspace{.03in} \stackrel{\text{RP}}{\longrightarrow} \hspace{.03in}
\raisebox{-.5cm}{\includegraphics[height=1.75cm]{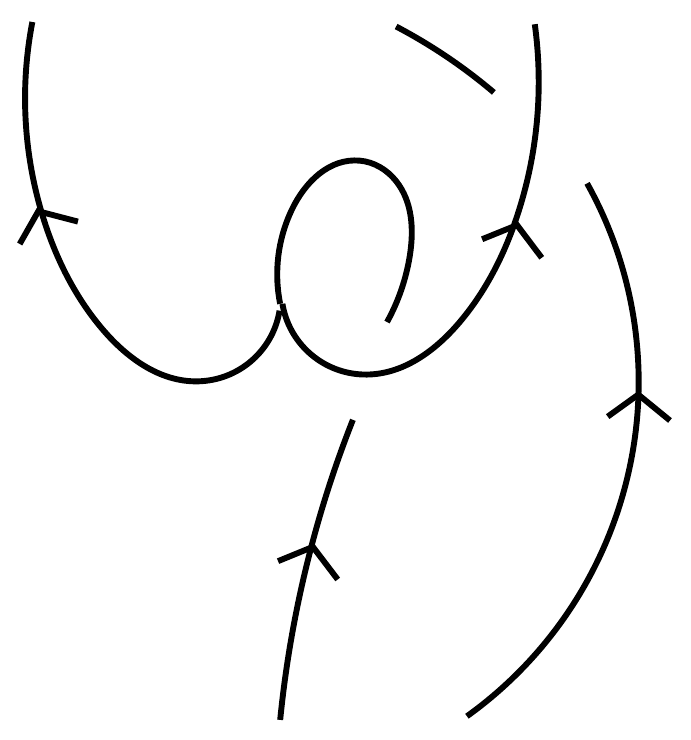}} \hspace{.03in}
\stackrel{\text{R2}}{\longleftrightarrow}\hspace{.05in}
\raisebox{-.5cm}{\includegraphics[height=1.75cm]{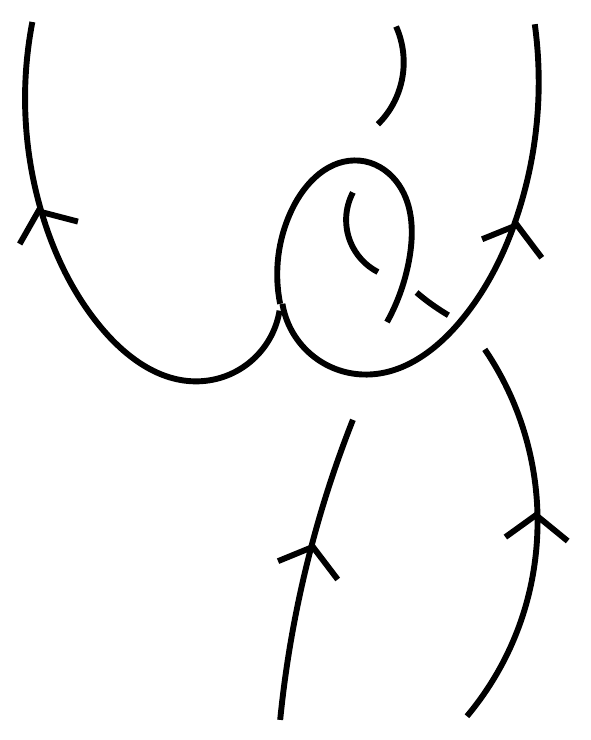}}\hspace{.03in}
 \stackrel{\text{R3}}{\longleftrightarrow}
 \hspace{.03in}
\raisebox{-.5cm}{\includegraphics[height=1.75cm]{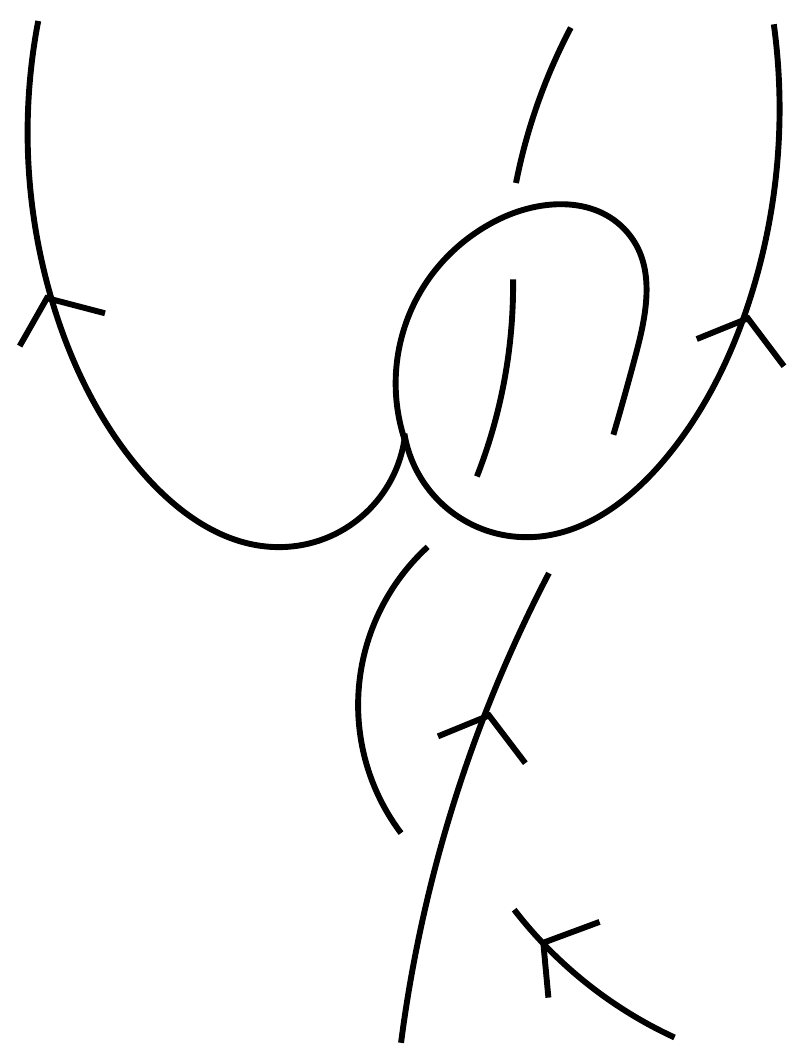}}
 \stackrel{\text{R4}}{\longleftrightarrow}
 \hspace{.03in}
\raisebox{-.5cm}{\includegraphics[height=1.75cm]{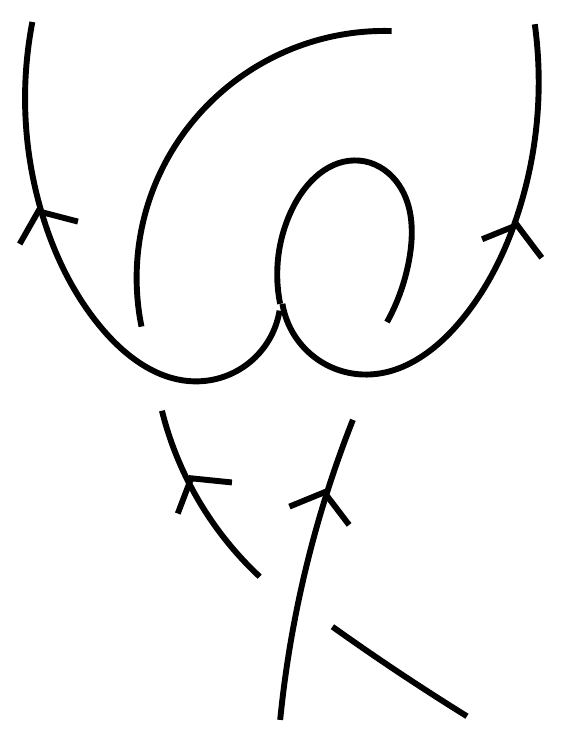}}
 \stackrel{\text{RP}}{\longleftarrow}
 \hspace{.03in}
\raisebox{-.5cm}{\includegraphics[height=1.75cm]{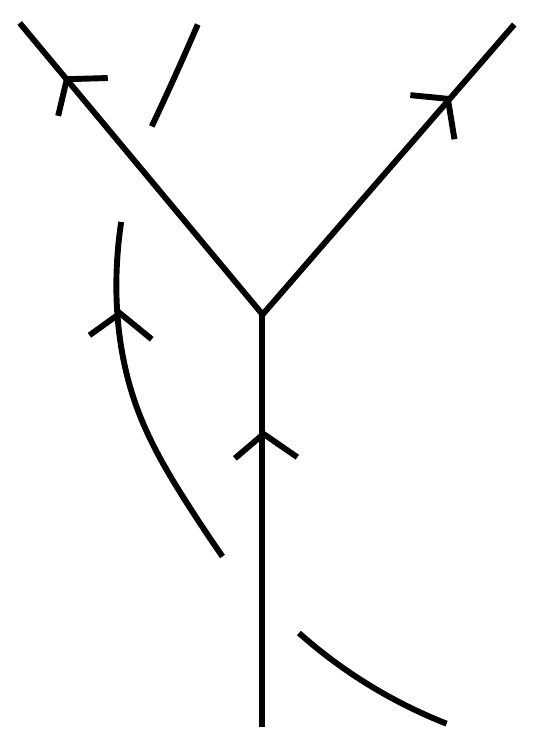}}
\]
\caption{An $R4$ move on a $Y$-vertex with three up-arcs}\label{AllUp}
\end{figure}

Finally, in Fig.~\ref{fig:UUUUL}, we verify an $R4$ move on a $\lambda$-vertex with three up-arcs. After putting the vertex in regular position, the move reduces to an $R4$ move on a $Y$-vertex with three down-arcs. 
\begin{figure}[ht]
\[
\raisebox{-.5cm}{\includegraphics[height=1.775cm]{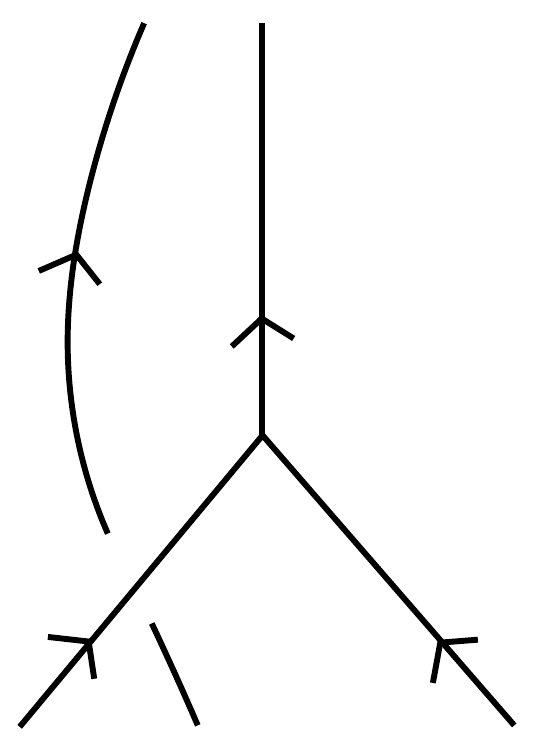}}  \hspace{.05in} \stackrel{\text{RP}}{\longrightarrow} \hspace{.05in}
\raisebox{-.5cm}{\includegraphics[height=1.75cm]{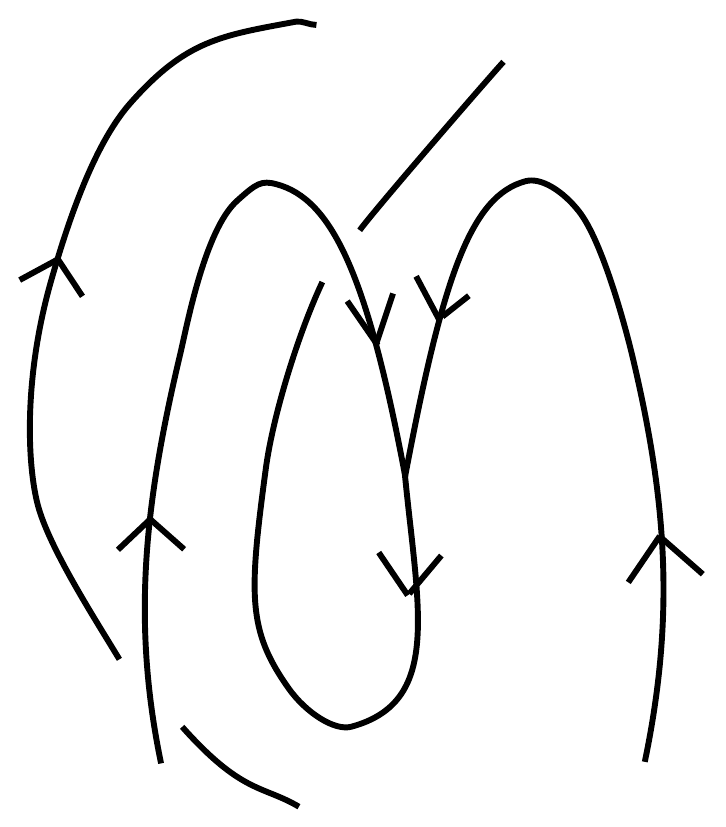}} \hspace{.05in}
 \stackrel{\text{R2, R3}}{\longleftrightarrow}\hspace{.05in}
\raisebox{-.5cm}{\includegraphics[height=1.75cm]{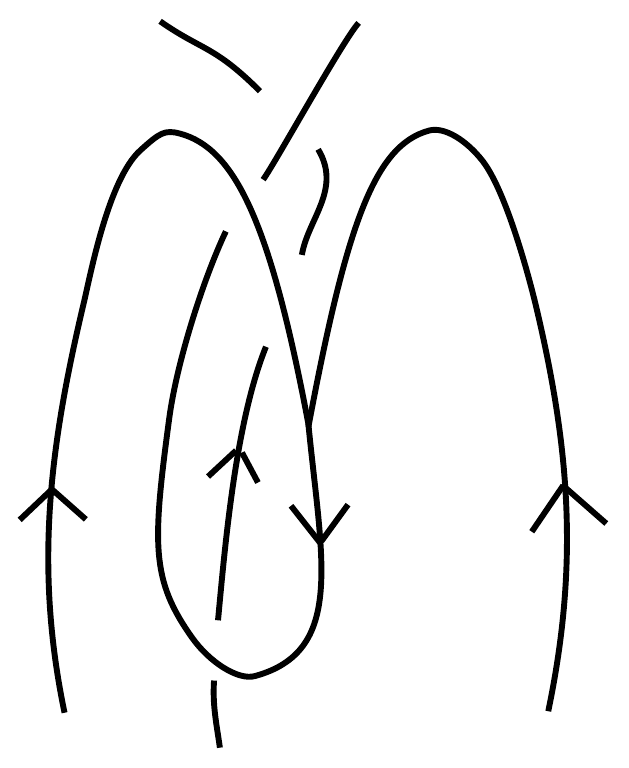}}\hspace{.05in}
 \stackrel{\text{R4, R2}}{\longleftrightarrow}\hspace{.05in}
\raisebox{-.5cm}{\includegraphics[height=1.75cm]{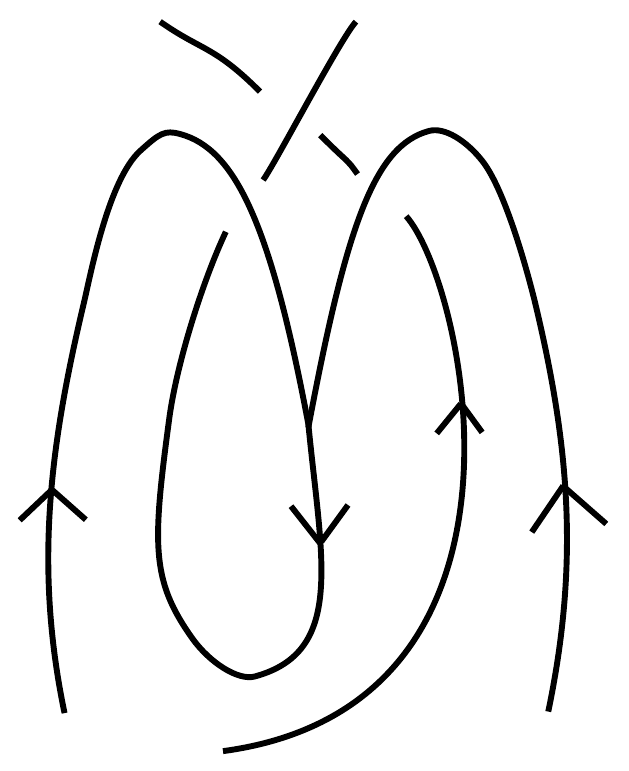}}
 \hspace{.05in} \stackrel{\text{RP}}{\longleftarrow} \hspace{.05in}
\raisebox{-.5cm}{\includegraphics[height=1.75cm]{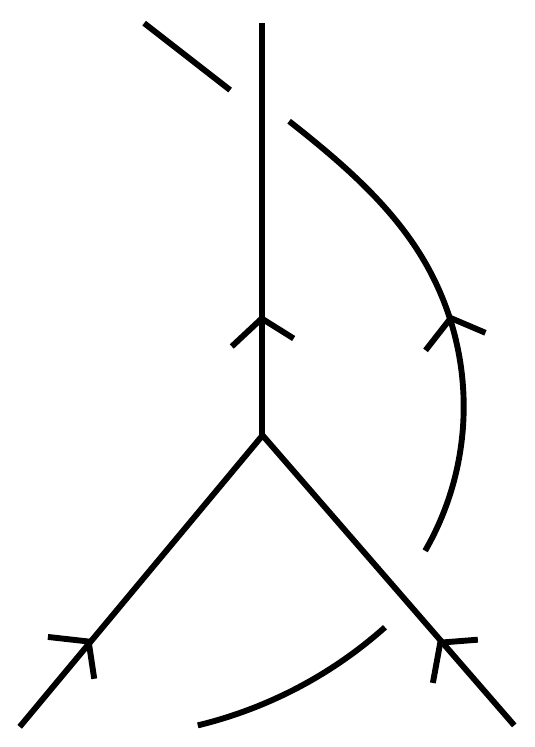}}
\]
\caption{An $R4$ move on a $\lambda$-vertex with three up-arcs} \label{fig:UUUUL}
\end{figure}

The other versions of the $R4$ move are checked in a similar fashion, and follow from cases verified above.

The proof that the versions of the $V4$ move correspond to virtual trivalent braids that are $TL_v$-equivalent is done in a similar manner to the proof for the versions of the $R4$ move, with the main difference that $R2, R3$ and $R4$ moves are replaced by $V2, VR3$ and $V4$ moves, respectively. In Fig.~\ref{AllUp-virt}, we demonstrate a $V4$ move on a $Y$-vertex with three up-arcs.

\begin{figure}[ht]
\[
\raisebox{-.5cm}{\includegraphics[height=1.75cm]{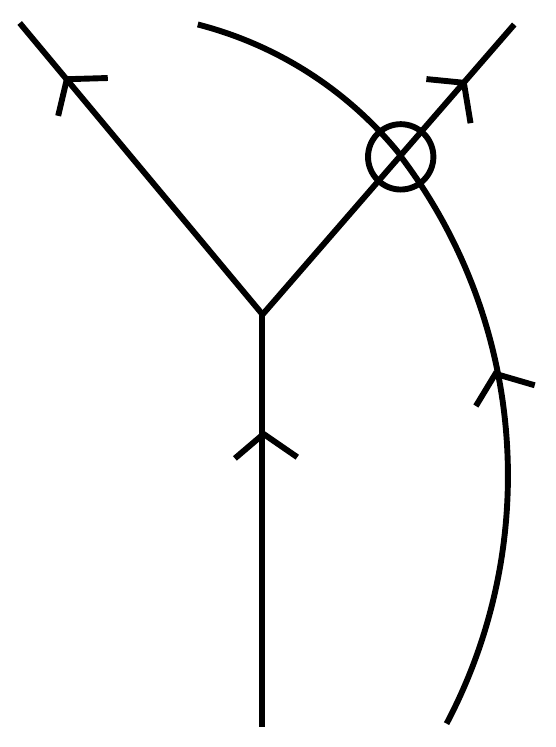}}  \hspace{.03in} \stackrel{\text{RP}}{\longrightarrow} \hspace{.03in}
\raisebox{-.5cm}{\includegraphics[height=1.75cm]{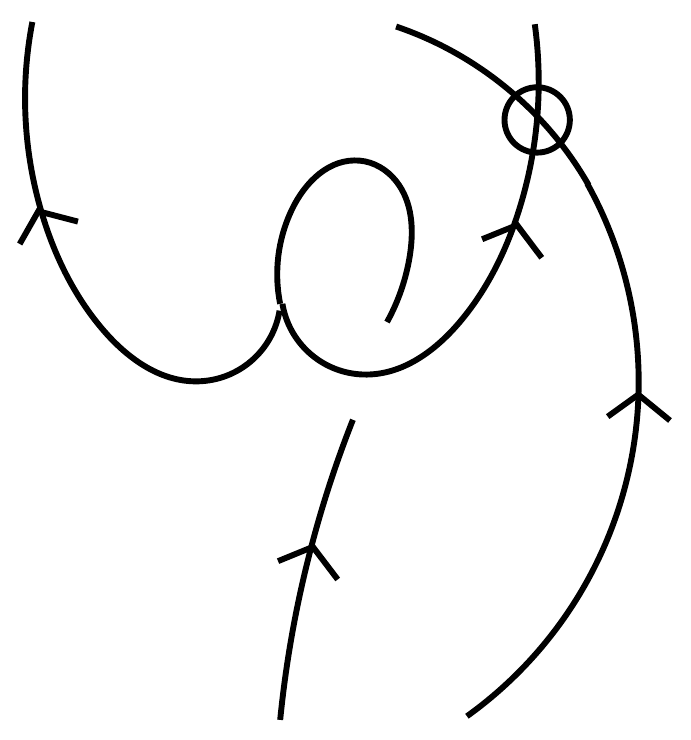}} \hspace{.03in}
\stackrel{\text{V2}}{\longleftrightarrow}\hspace{.05in}
\raisebox{-.5cm}{\includegraphics[height=1.75cm]{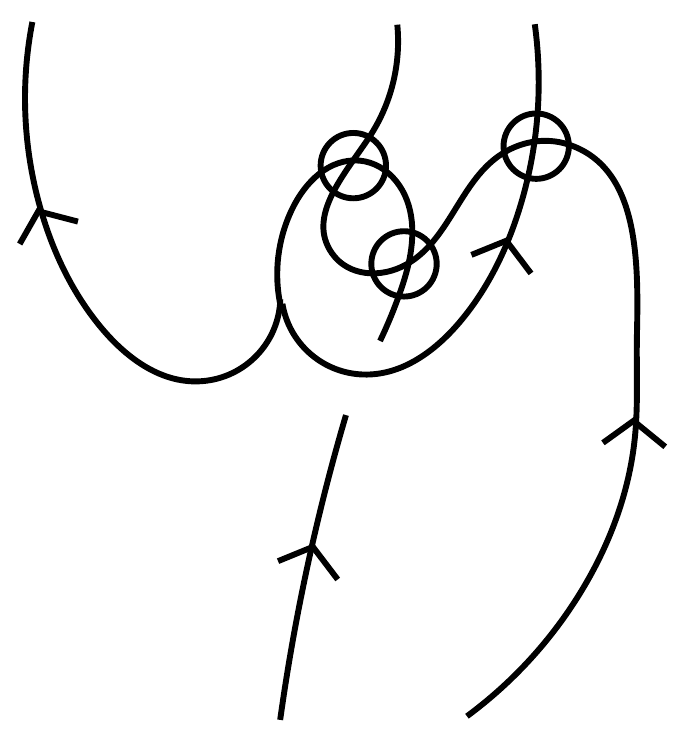}}\hspace{.03in}
 \stackrel{\text{VR3}}{\longleftrightarrow}
 \hspace{.03in}
\raisebox{-.5cm}{\includegraphics[height=1.75cm]{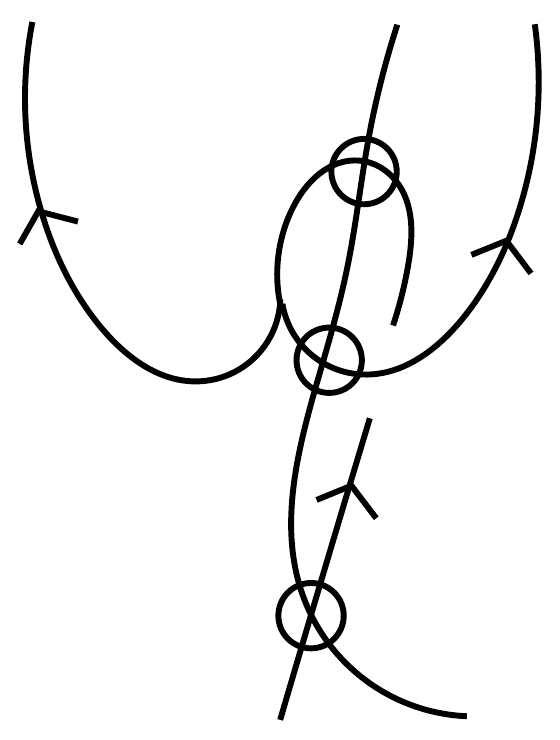}}
 \stackrel{\text{R4}}{\longleftrightarrow}
 \hspace{.03in}
\raisebox{-.5cm}{\includegraphics[height=1.75cm]{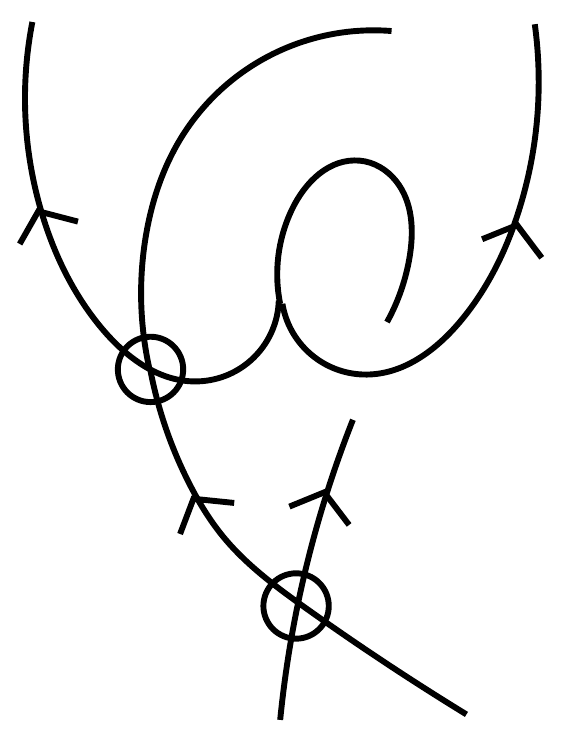}}
 \stackrel{\text{RP}}{\longleftarrow}
 \hspace{.03in}
\raisebox{-.5cm}{\includegraphics[height=1.75cm]{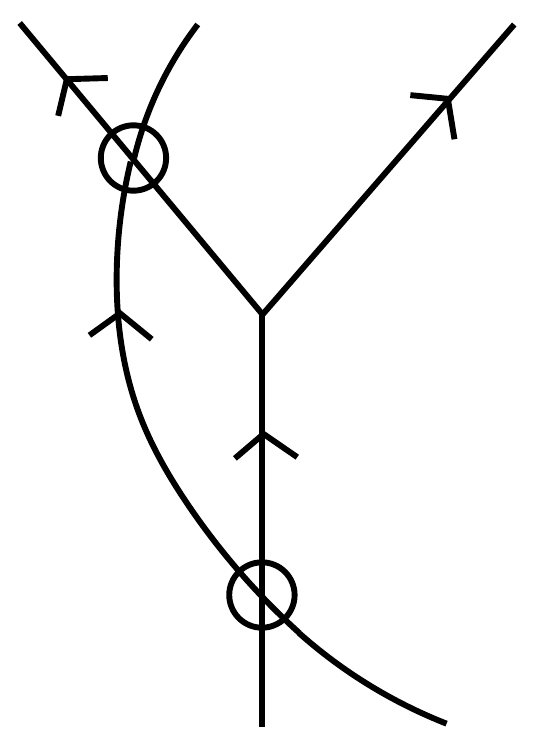}}
\]
\caption{A $V4$ move on a $Y$-vertex with three up-arcs}\label{AllUp-virt}
\end{figure}


Lastly, we consider the $R5$ move. The simplest cases where the three edges meeting at the $Y$-vertex or $\lambda$-vertex are oriented downward are, in fact, braid relations.

We first consider versions of the $R5$ move on  $Y$-vertex and involving the two upper edges meeting at the vertex. A particular case when the $Y$-vertex has one up-arc is shown in Fig.~\ref{fig:YUDD_up}. After putting the vertex in regular position, the move can be performed using $R4$ and $R1$ moves (which have been verified), together with an $R5$ move in braid form. All other cases of the $R5$ move between the two upper edges of a $Y$-vertex are checked in a similar manner.

\begin{figure}[ht]
\[
\raisebox{-.5cm}{\includegraphics[height=1.775cm]{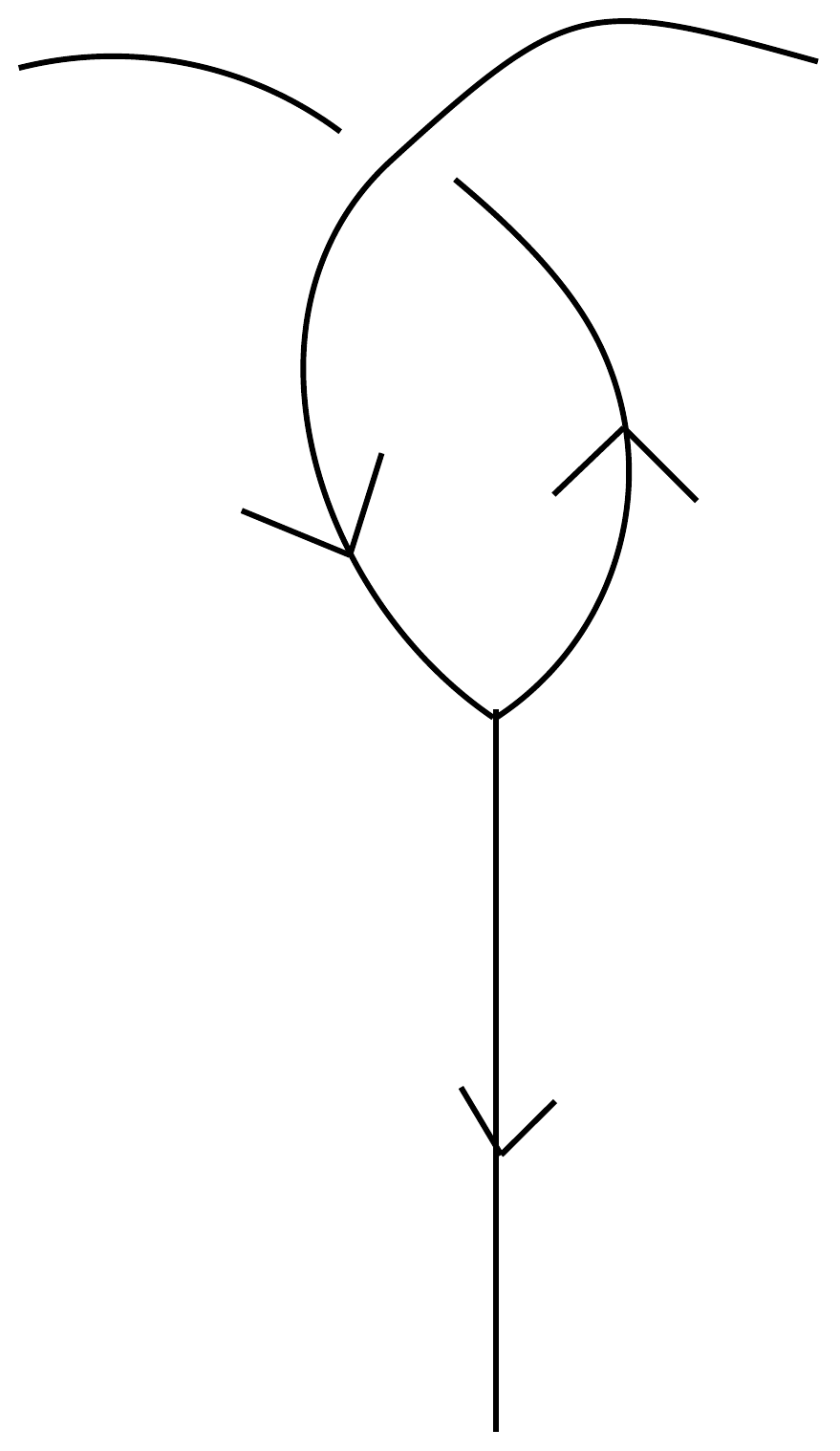}}  \hspace{.05in} \stackrel{\text{RP}}{\longrightarrow} \hspace{.05in}
\raisebox{-.5cm}{\includegraphics[height=1.775cm]{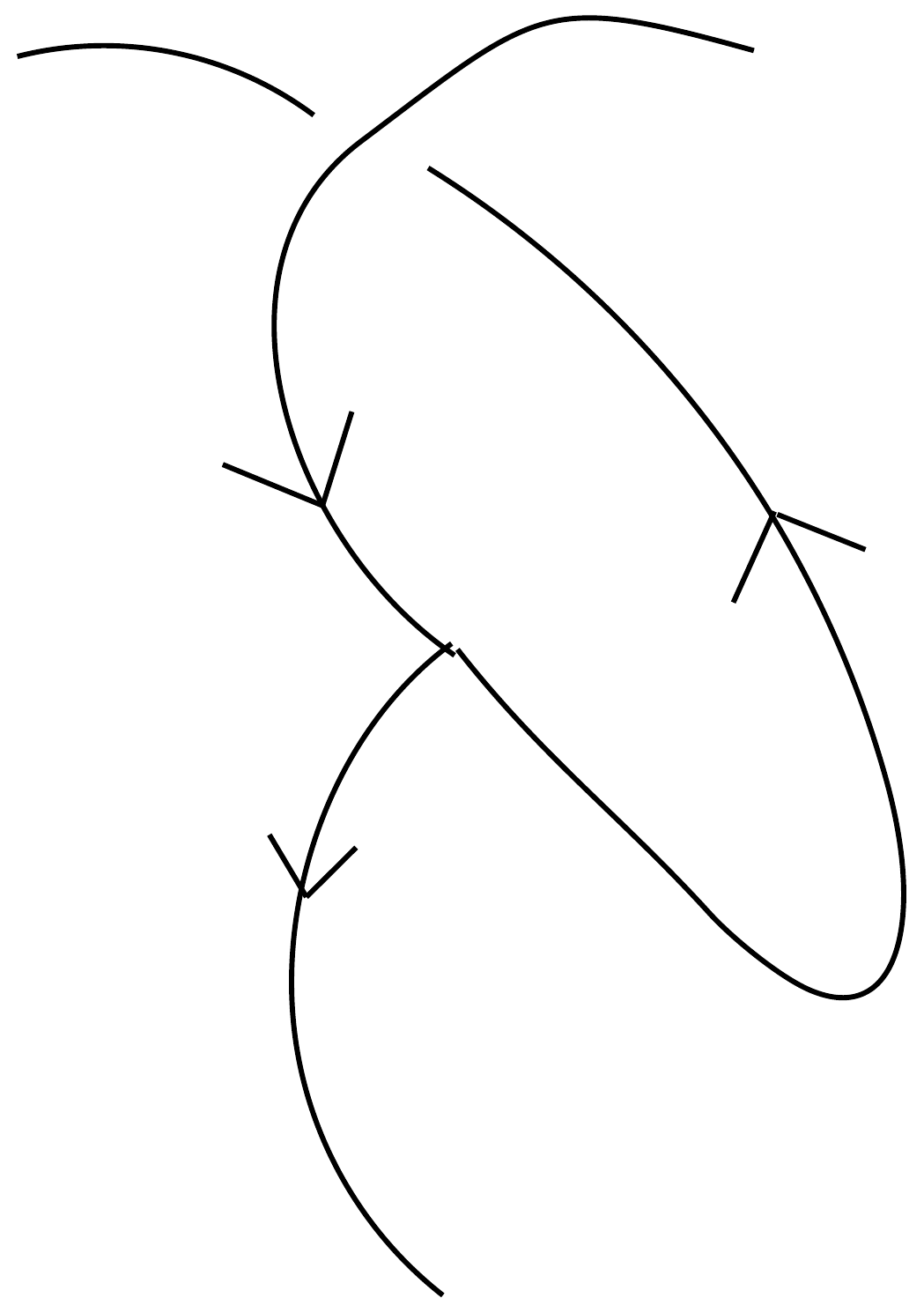}}  \hspace{.05in} \stackrel{\text{R4}}{\longleftrightarrow} \hspace{.05in}
\raisebox{-.5cm}{\includegraphics[height=1.775cm]{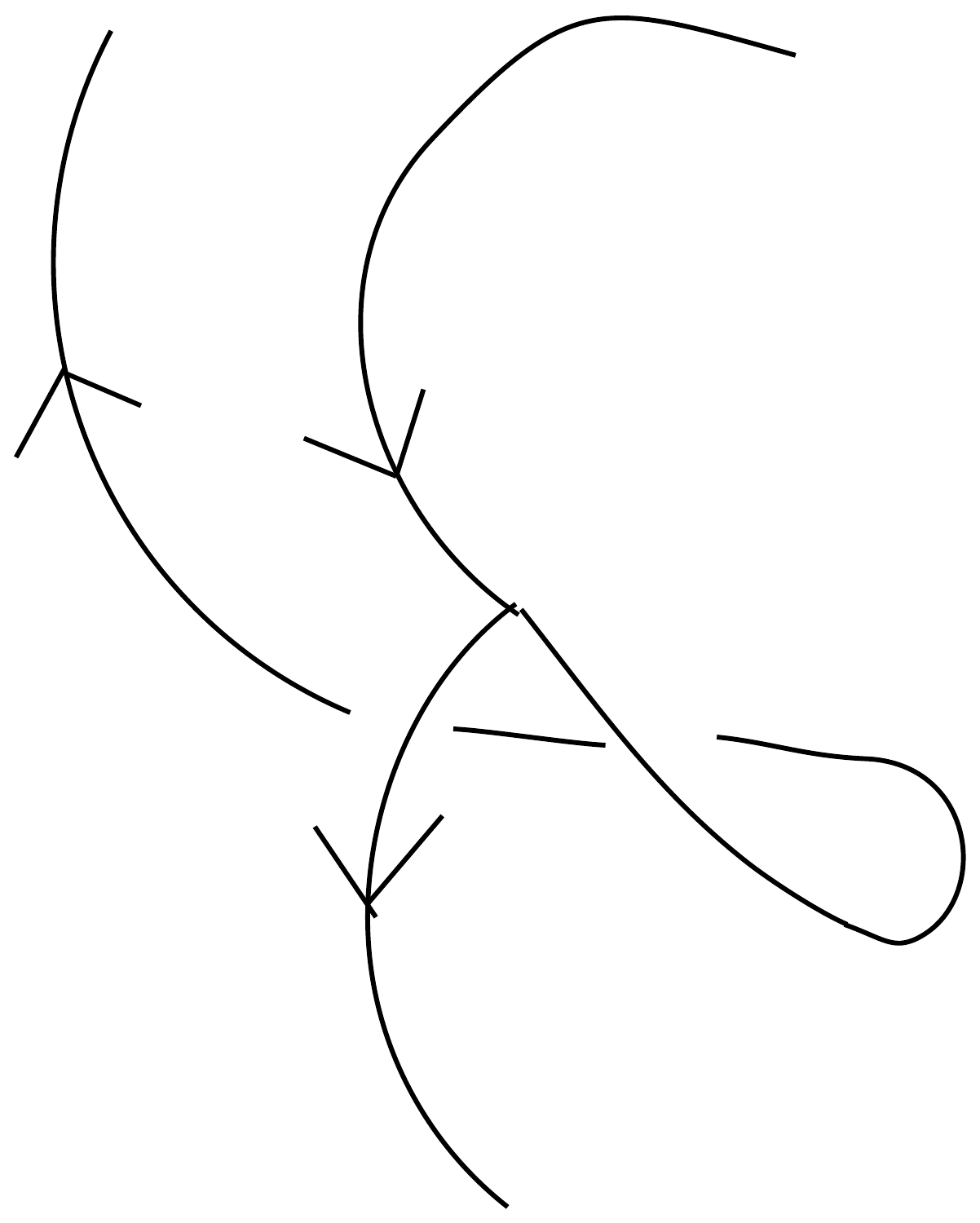}}  \hspace{.05in} \stackrel{\text{R1}}{\longleftrightarrow} \hspace{.05in}
\raisebox{-.5cm}{\includegraphics[height=1.775cm]{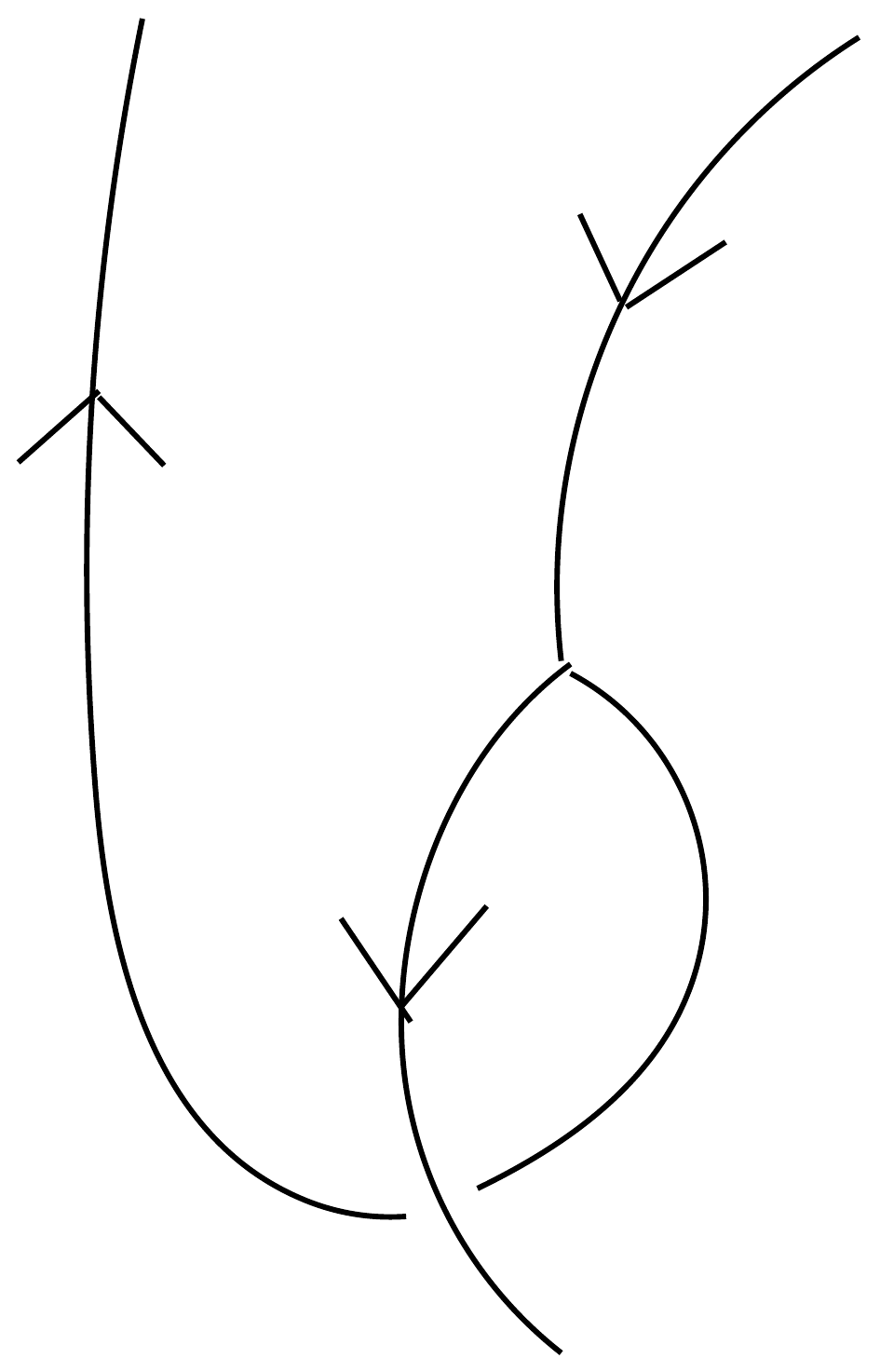}}  \hspace{.05in} \underset{\text{R5}}{\overset{\text{braid}}{\longleftrightarrow}} \hspace{.05in}
\raisebox{-.5cm}{\includegraphics[height=1.775cm]{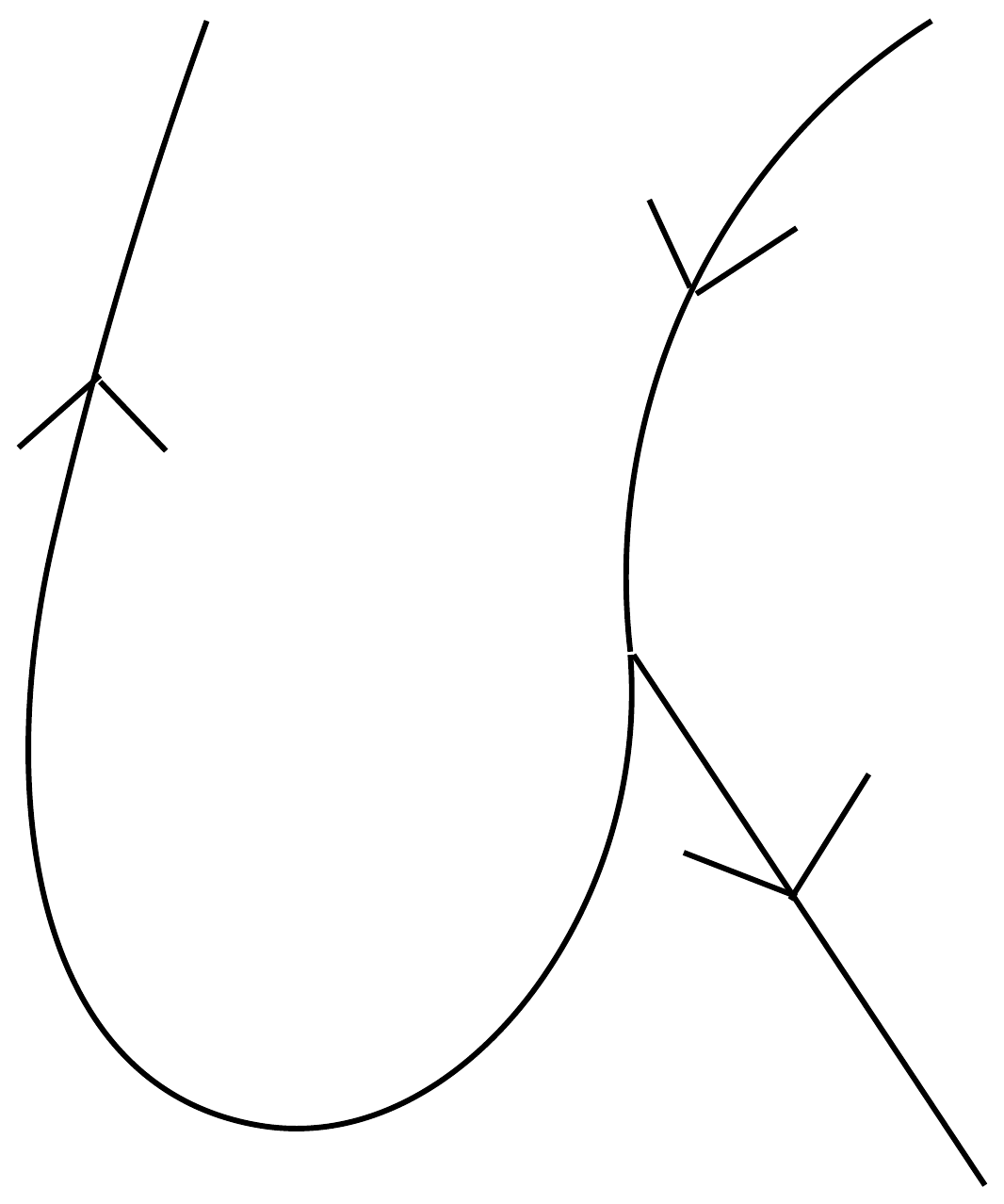}}  \hspace{.05in} \stackrel{\text{RP}}{\longleftarrow} \hspace{.05in}
\raisebox{-.5cm}{\includegraphics[height=1.775cm]{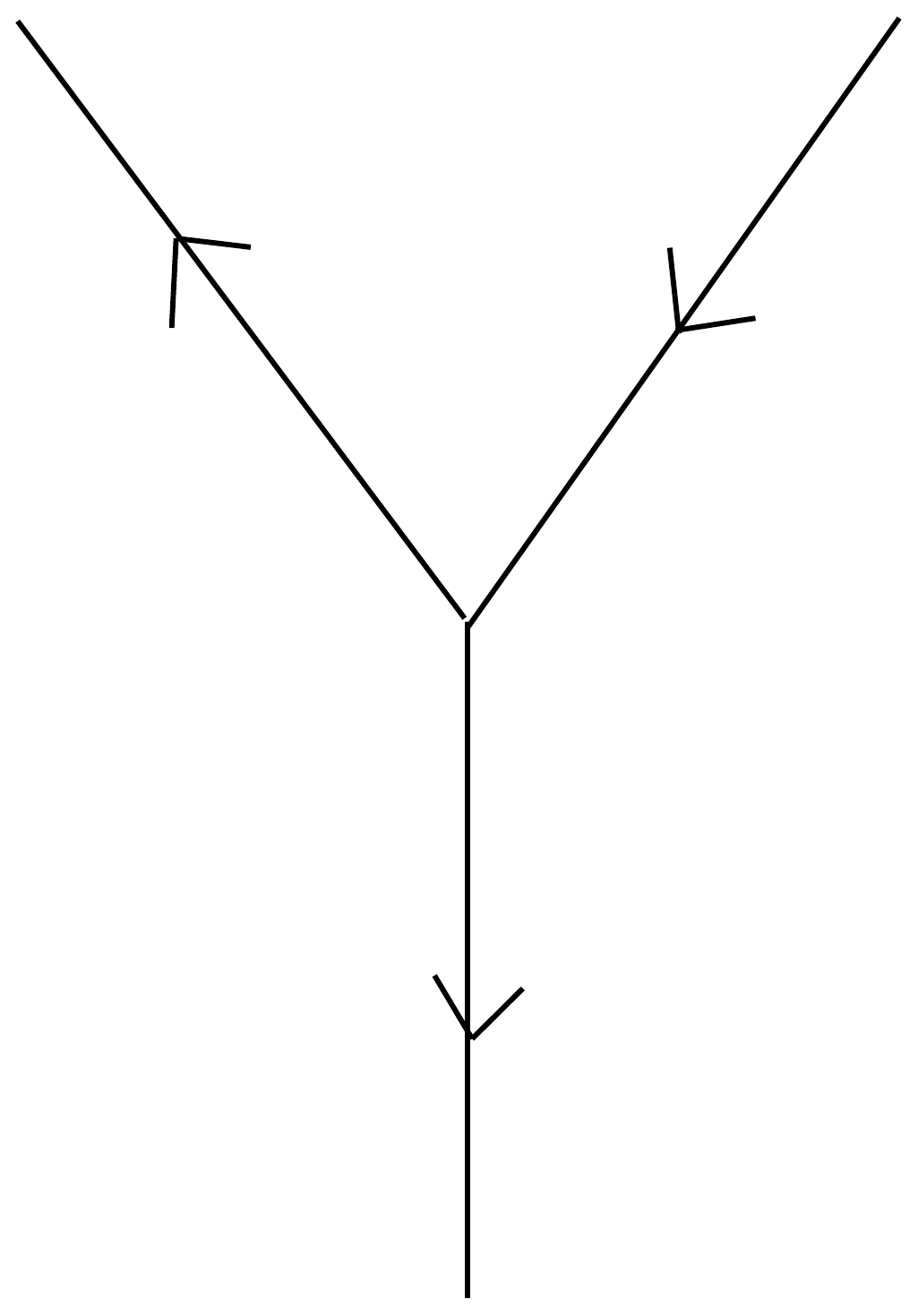}}
\]
\caption{An $R5$ move with one up-arc}\label{fig:YUDD_up}
\end{figure}
We consider next an $R5$ move with a twist between the right upper edge and the lower edge incident with a $Y$-vertex. Figure \ref{fig:YUDU_right} shows a particular case of the move on a $Y$-vertex with two up-arcs. All of the other cases involving different orientations follow similarly, from extended Reidemeister move that have been checked before. 

\begin{figure}[ht]
\[
\raisebox{-.5cm}{\includegraphics[height=1.775cm]{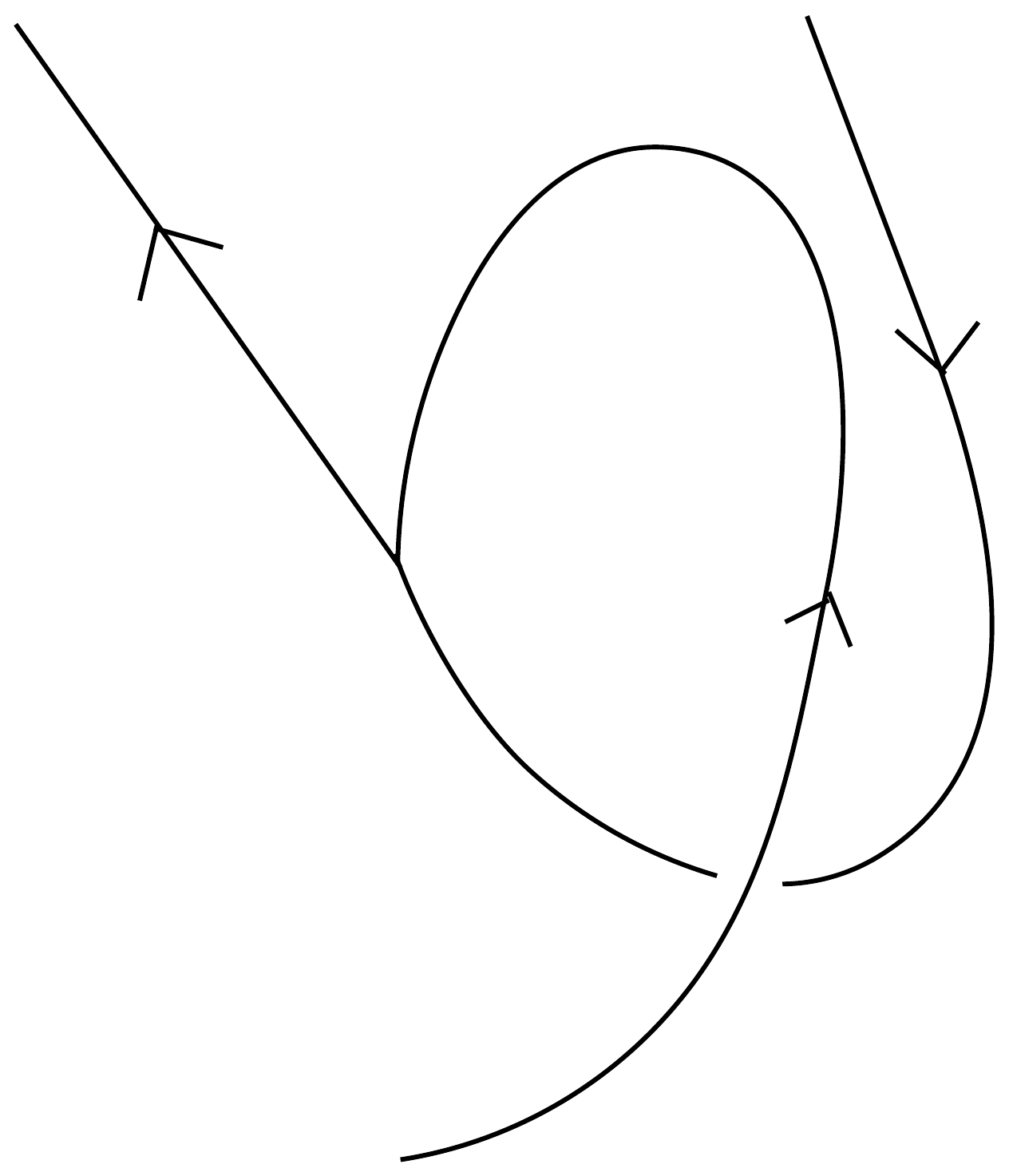}}  \hspace{.05in} \stackrel{\text{RP}}{\longrightarrow} \hspace{.05in}
\raisebox{-.5cm}{\includegraphics[height=1.775cm]{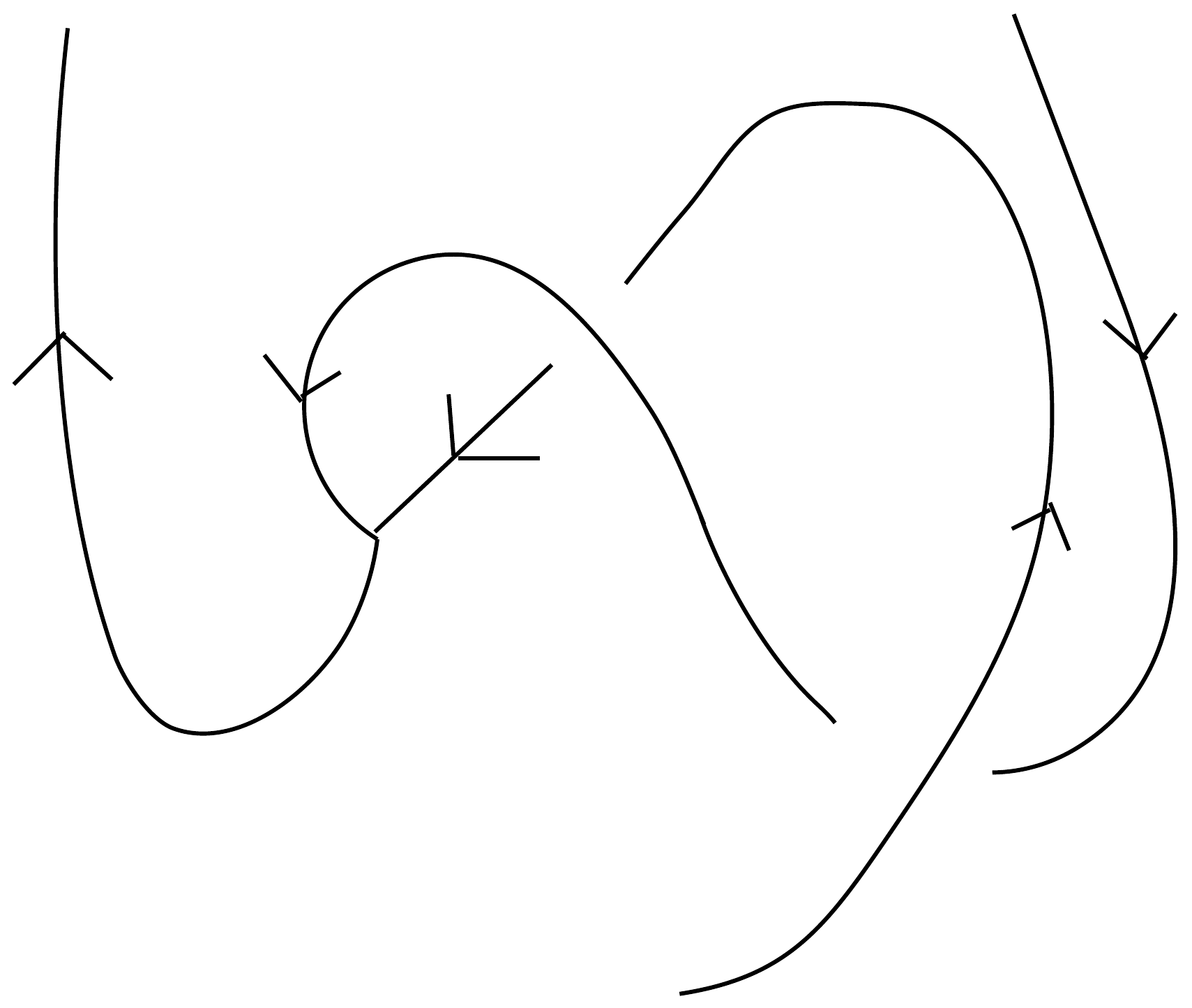}}  \hspace{.05in} \underset{\text{R5}}{\overset{\text{braid}}{\longleftrightarrow}} \hspace{.05in}
\raisebox{-.5cm}{\includegraphics[height=1.775cm]{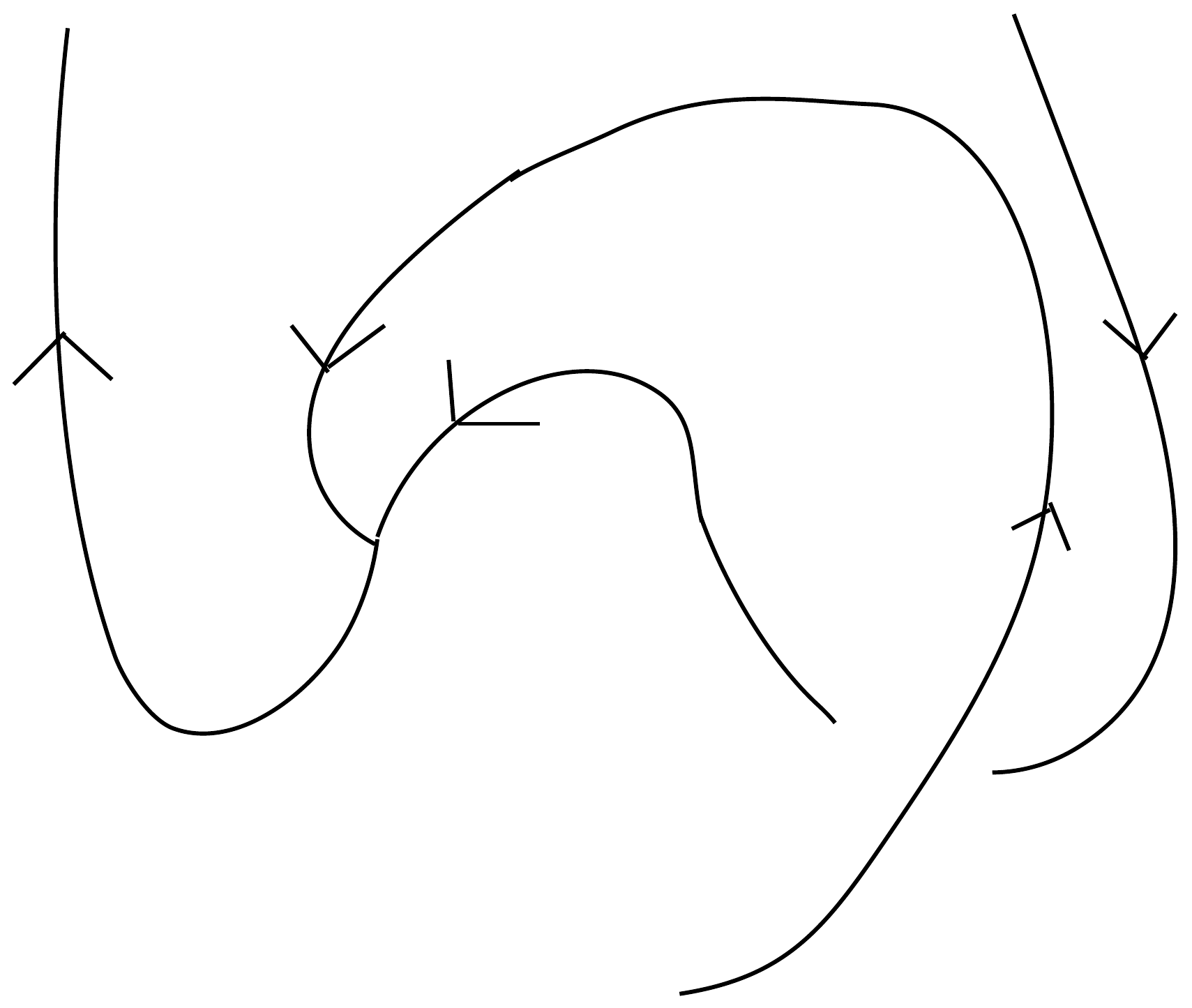}}  \hspace{.05in} \underset{\text{move}}{\overset{\text{swing}}{\longleftrightarrow}} \hspace{.05in}
\raisebox{-.5cm}{\includegraphics[height=1.775cm]{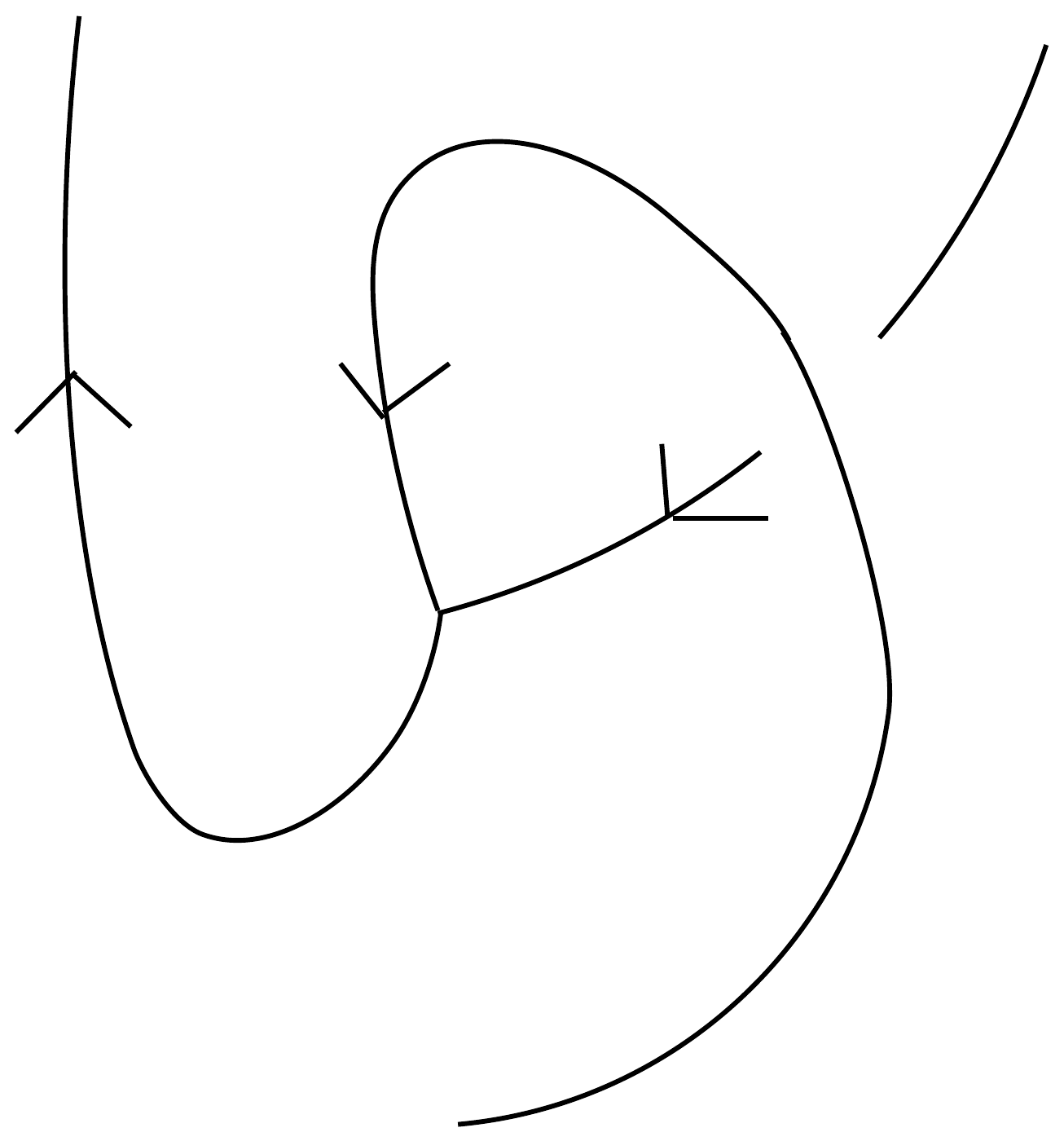}}  \hspace{.05in} \stackrel{\text{RP}}{\longleftarrow} \hspace{.05in}
\raisebox{-.5cm}{\includegraphics[height=1.775cm]{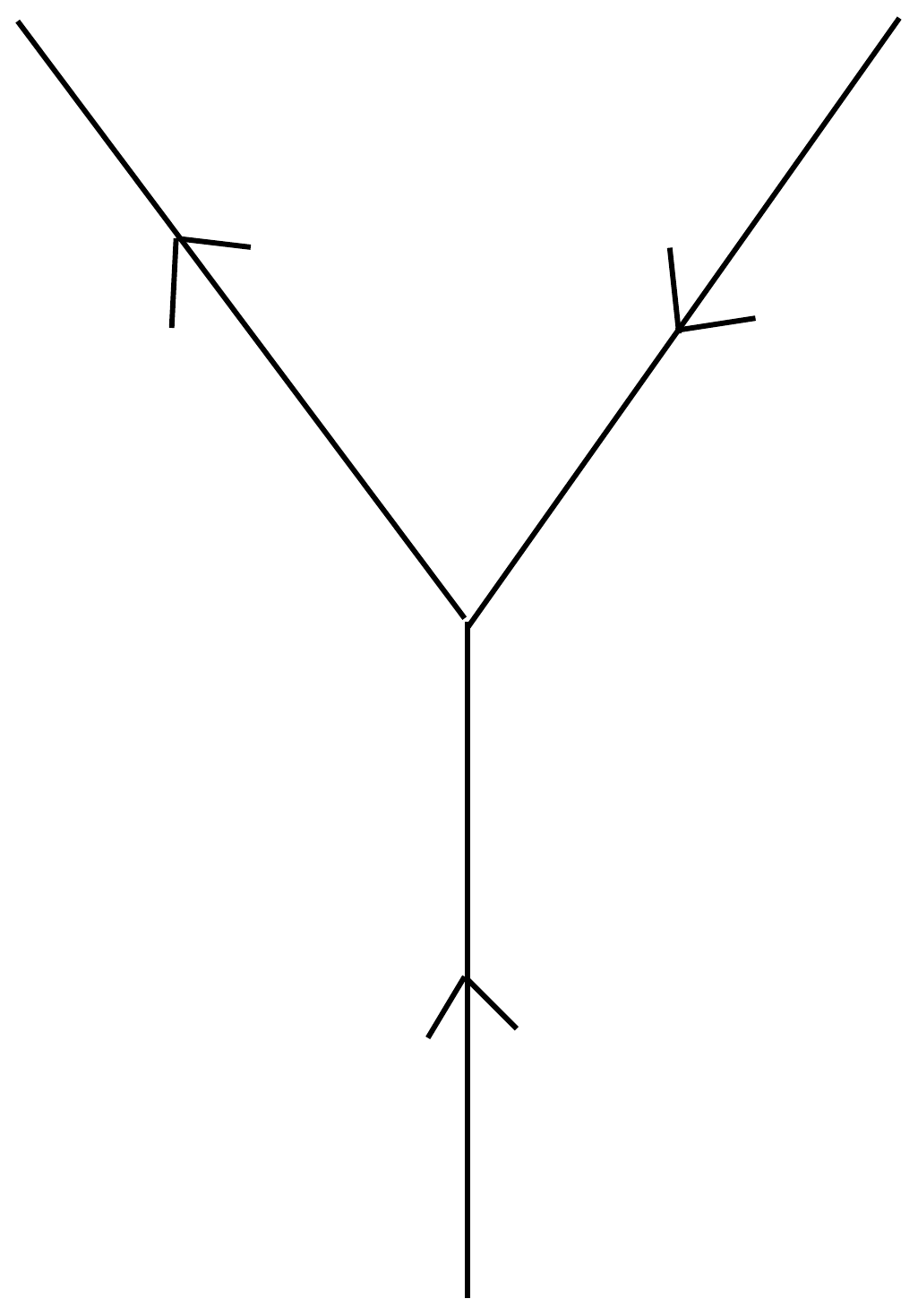}}
\]
\caption{A right $R5$ move on a $Y$-vertex with two up-arcs}\label{fig:YUDU_right}
\end{figure}

Next we consider an $R5$ move with a twist between the left upper edge and the lower edge incident with a $Y$-vertex with two up-arcs (see Fig.~\ref{fig:YUDU_left}).

\begin{figure}[ht]
\[
\raisebox{-.5cm}{\includegraphics[height=1.775cm]{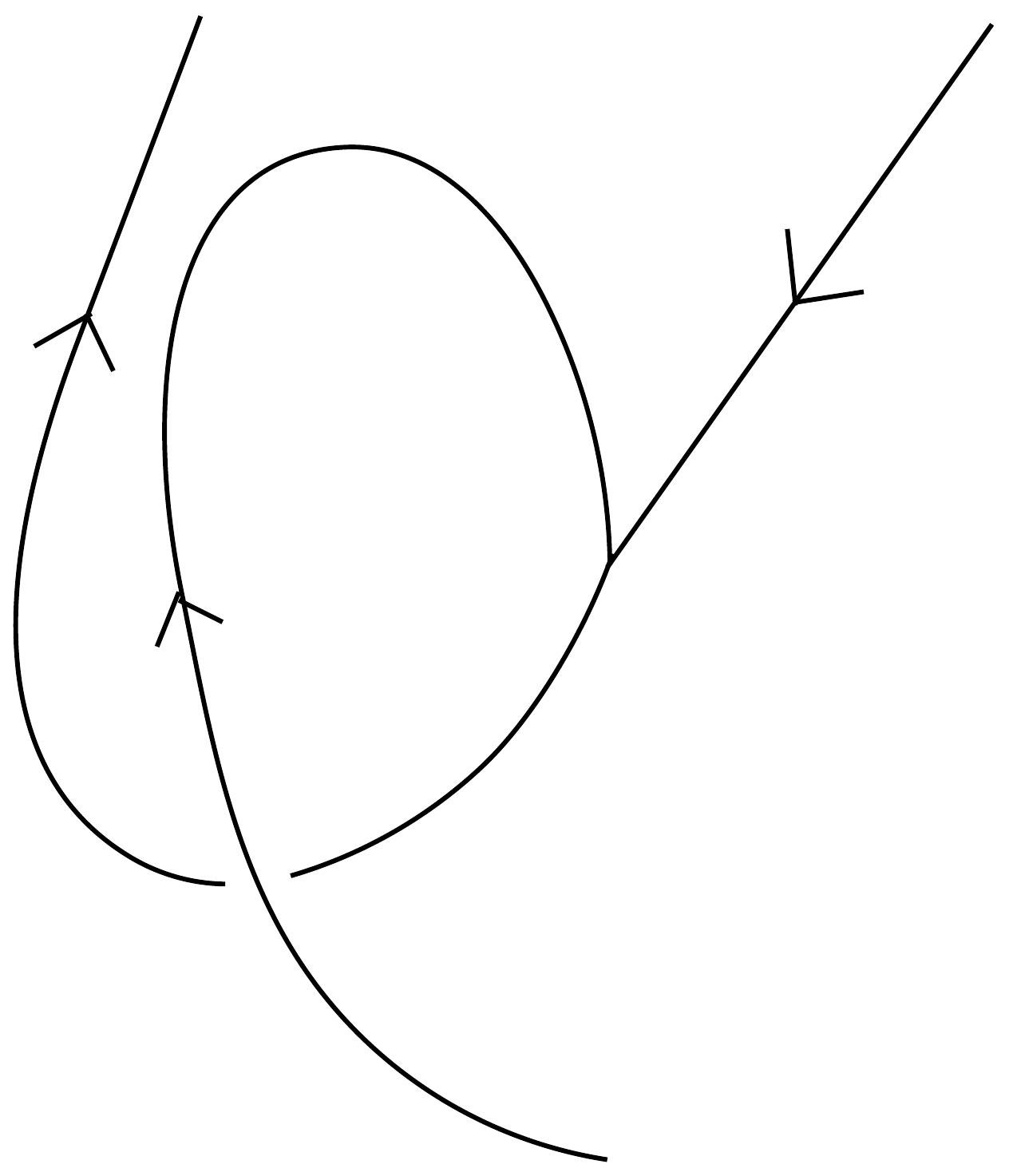}}  \hspace{.05in}
\stackrel{\text{R4}}{\longleftrightarrow} \hspace{.05in}
\raisebox{-.5cm}{\includegraphics[height=1.775cm]{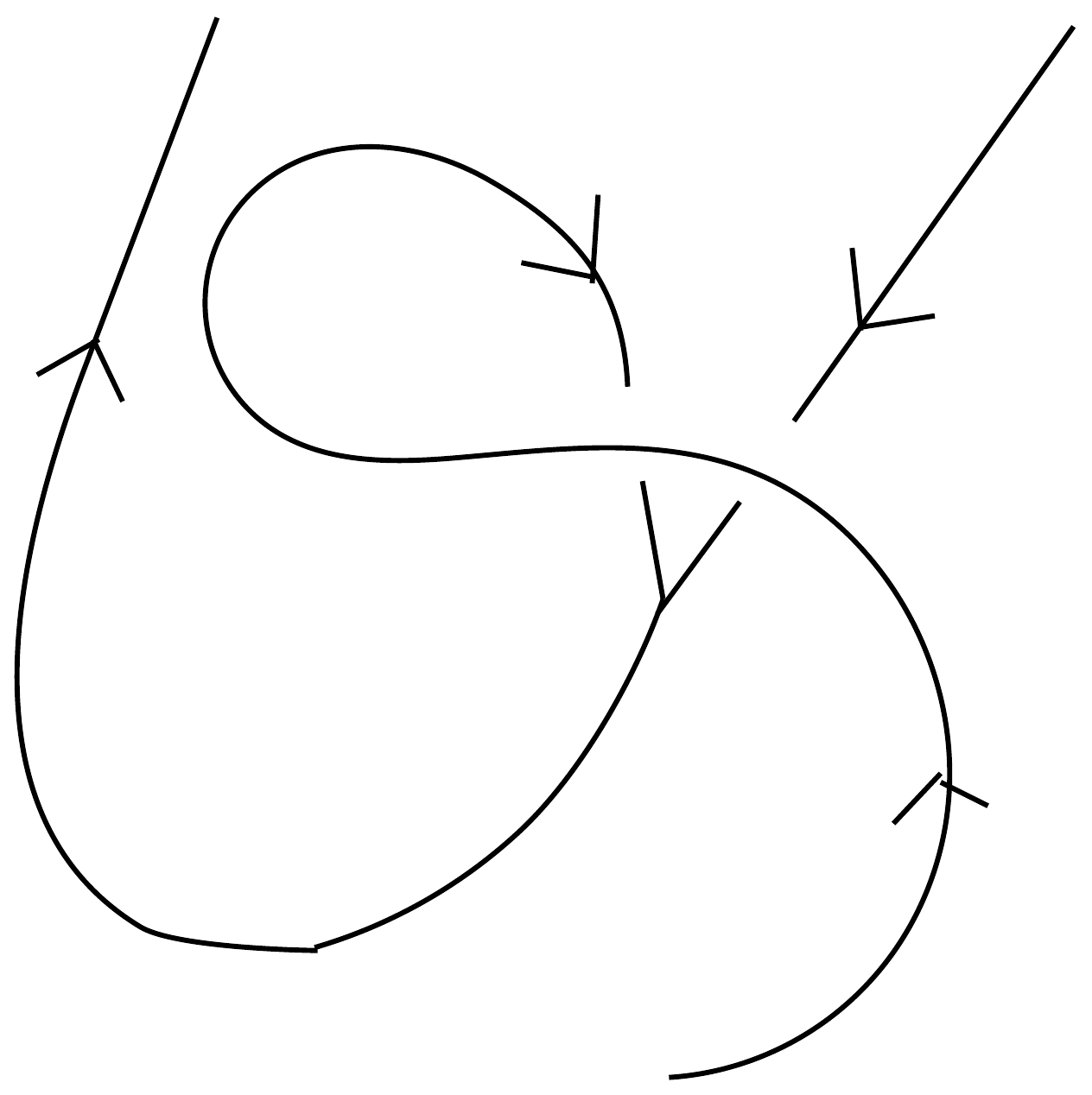}}  \hspace{.05in} \stackrel{\text{R1}}{\longleftrightarrow} \hspace{.05in}
\raisebox{-.5cm}{\includegraphics[height=1.775cm]{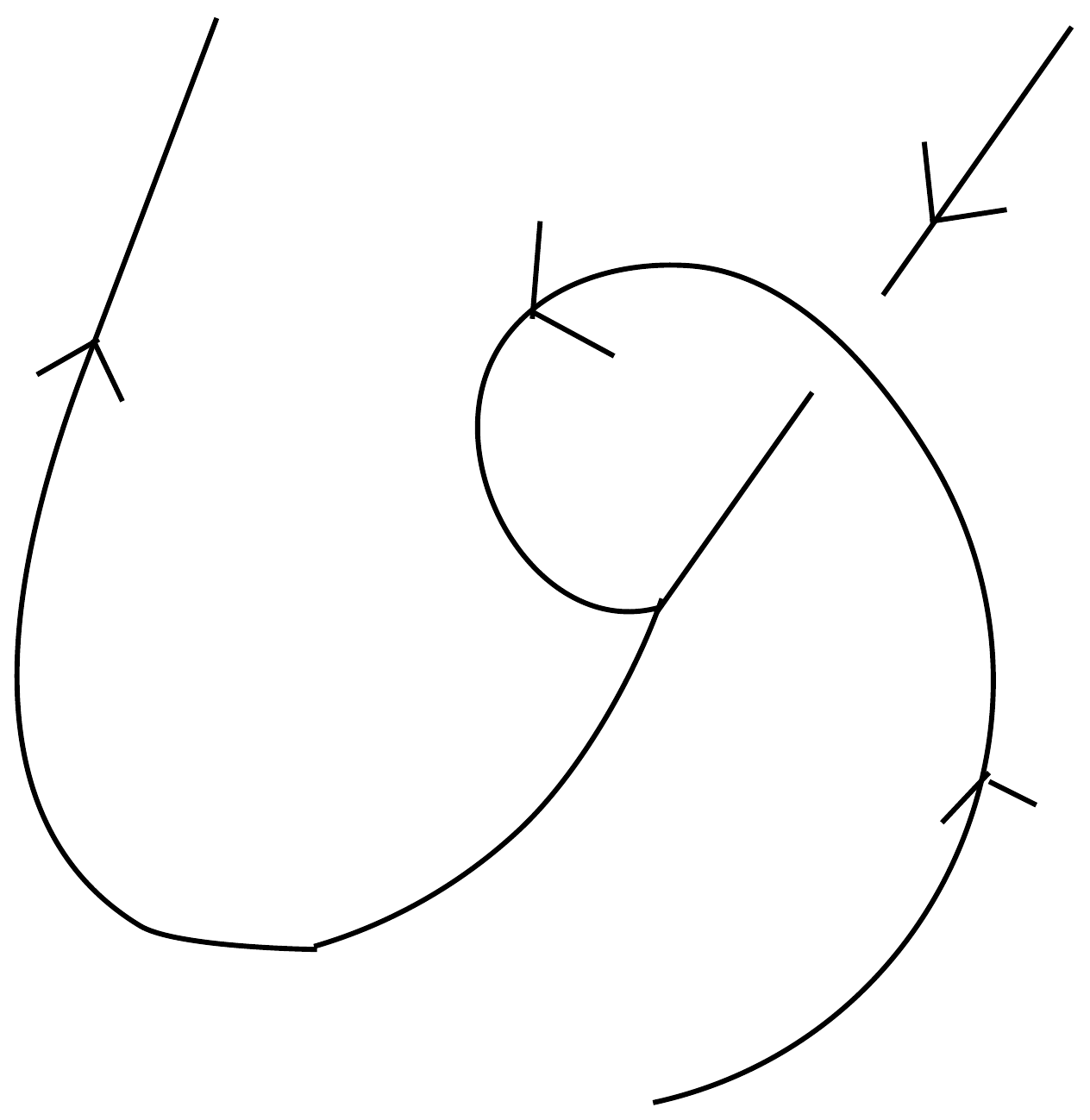}}  \hspace{.05in} \stackrel{\text{RP}}{\longleftarrow} \hspace{.05in}
\raisebox{-.5cm}{\includegraphics[height=1.775cm]{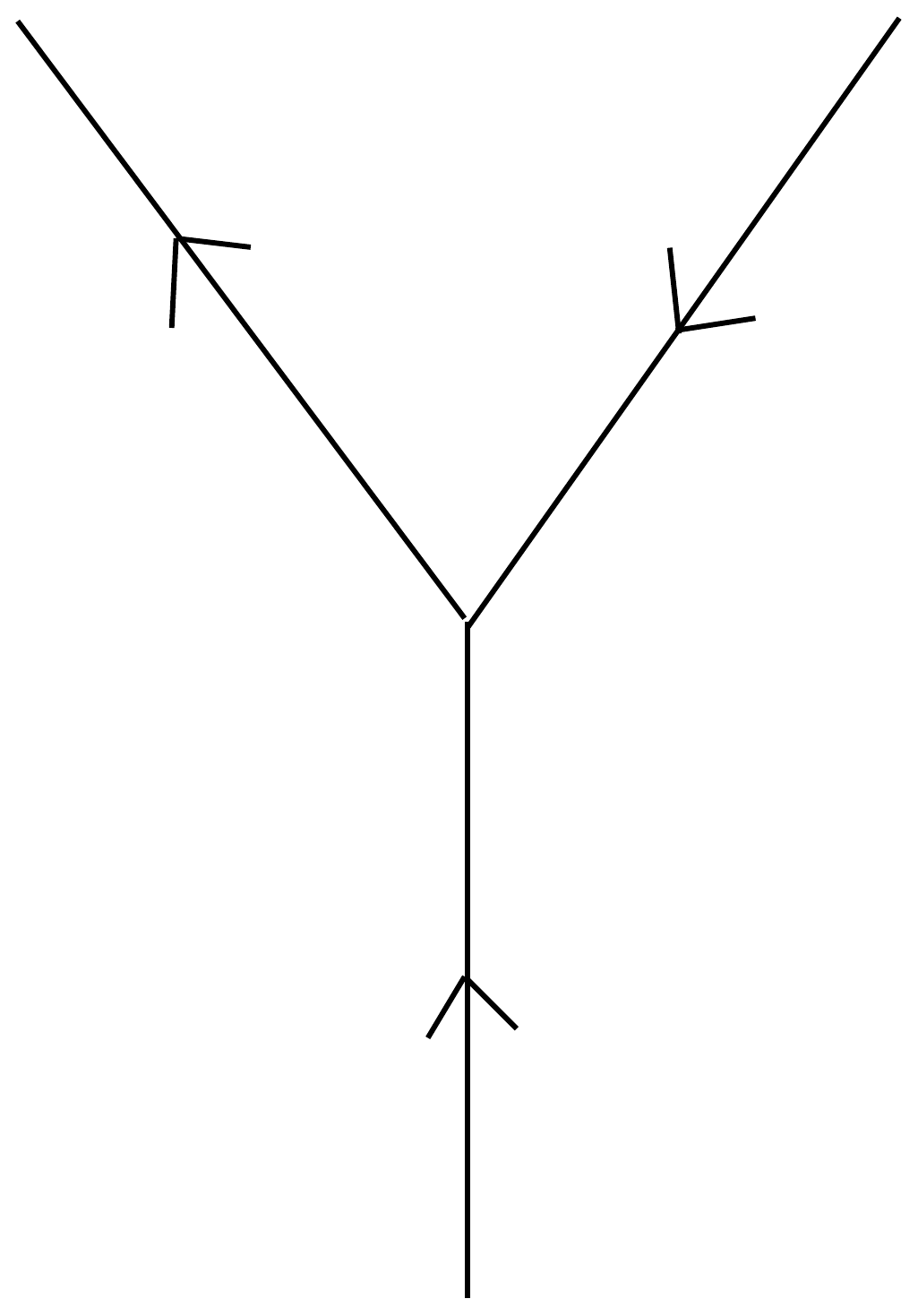}}
\]
\caption{A left $R5$ move on a $Y$-vertex with two up-arcs}\label{fig:YUDU_left}
\end{figure}

All other cases involving a $Y$-vertex are checked in a similar way to the ones shown here, except for the case where all arcs are oriented upward, but this case is similar to the case of a $\lambda$-vertex with three up-arcs, which is shown in Fig.~\ref{fig:LUUU_down}. 

\begin{figure}[ht]
\[
\raisebox{-.5cm}{\includegraphics[height=1.775cm]{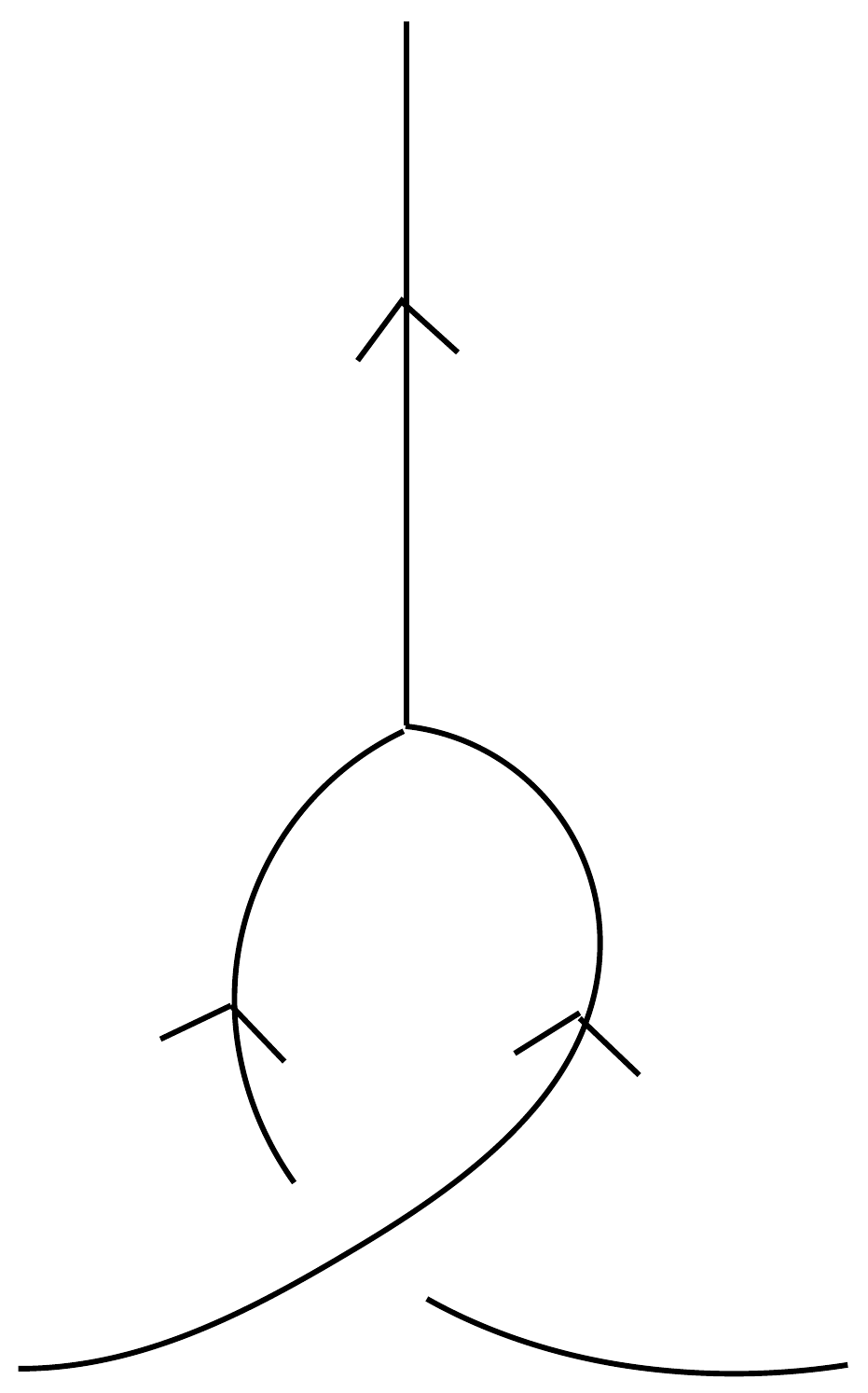}}  \hspace{.03in} \stackrel{\text{RP}}{\longrightarrow} \hspace{.03in}
\raisebox{-.5cm}{\includegraphics[height=1.775cm]{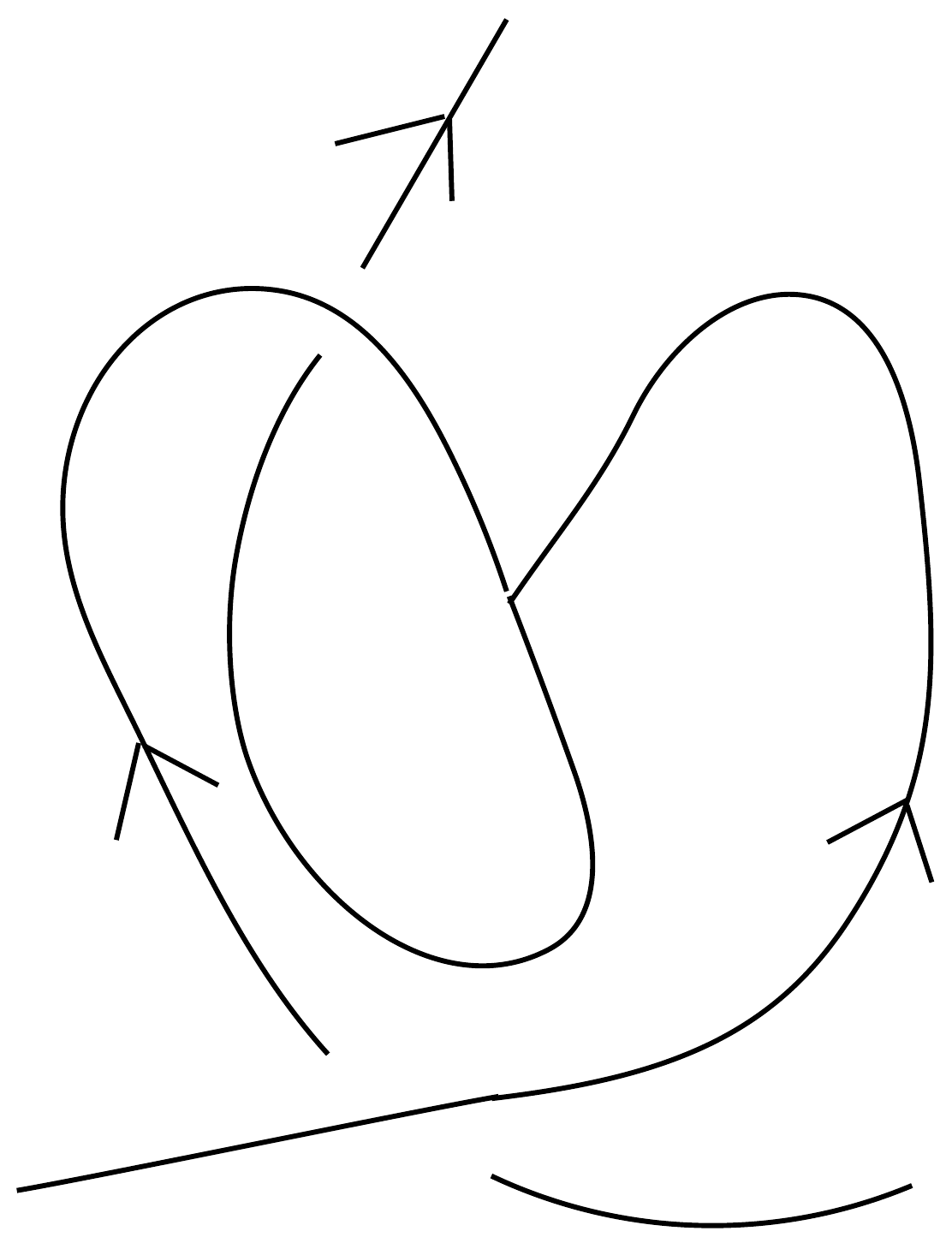}}  \hspace{.03in} \stackrel{\text{R2, R3}}{\longleftrightarrow} \hspace{.03in}
\raisebox{-.5cm}{\includegraphics[height=1.775cm]{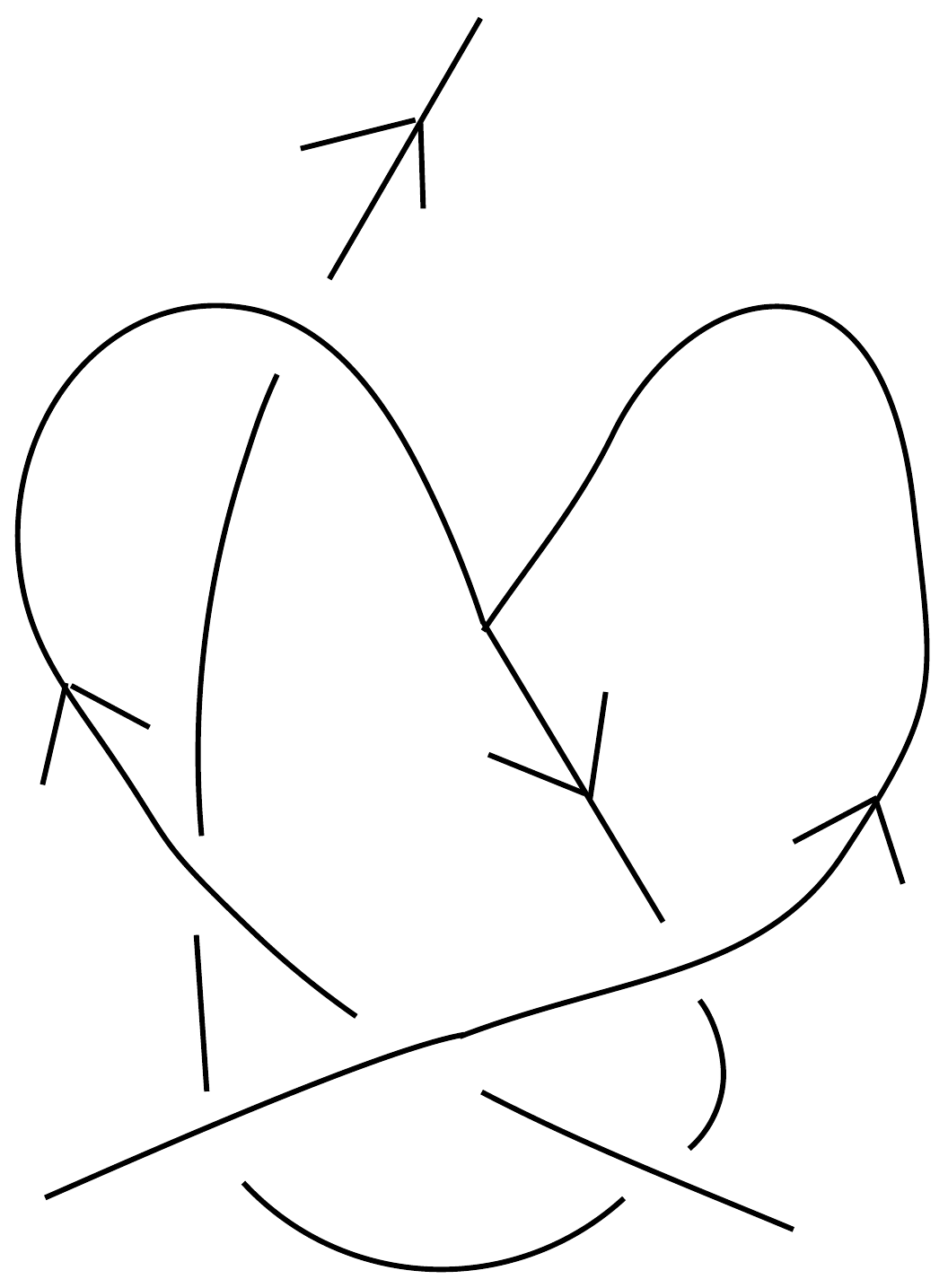}}  \hspace{.03in} \stackrel{\text{R2}}{\longleftrightarrow} \hspace{.03in}
\raisebox{-.5cm}{\includegraphics[height=1.775cm]{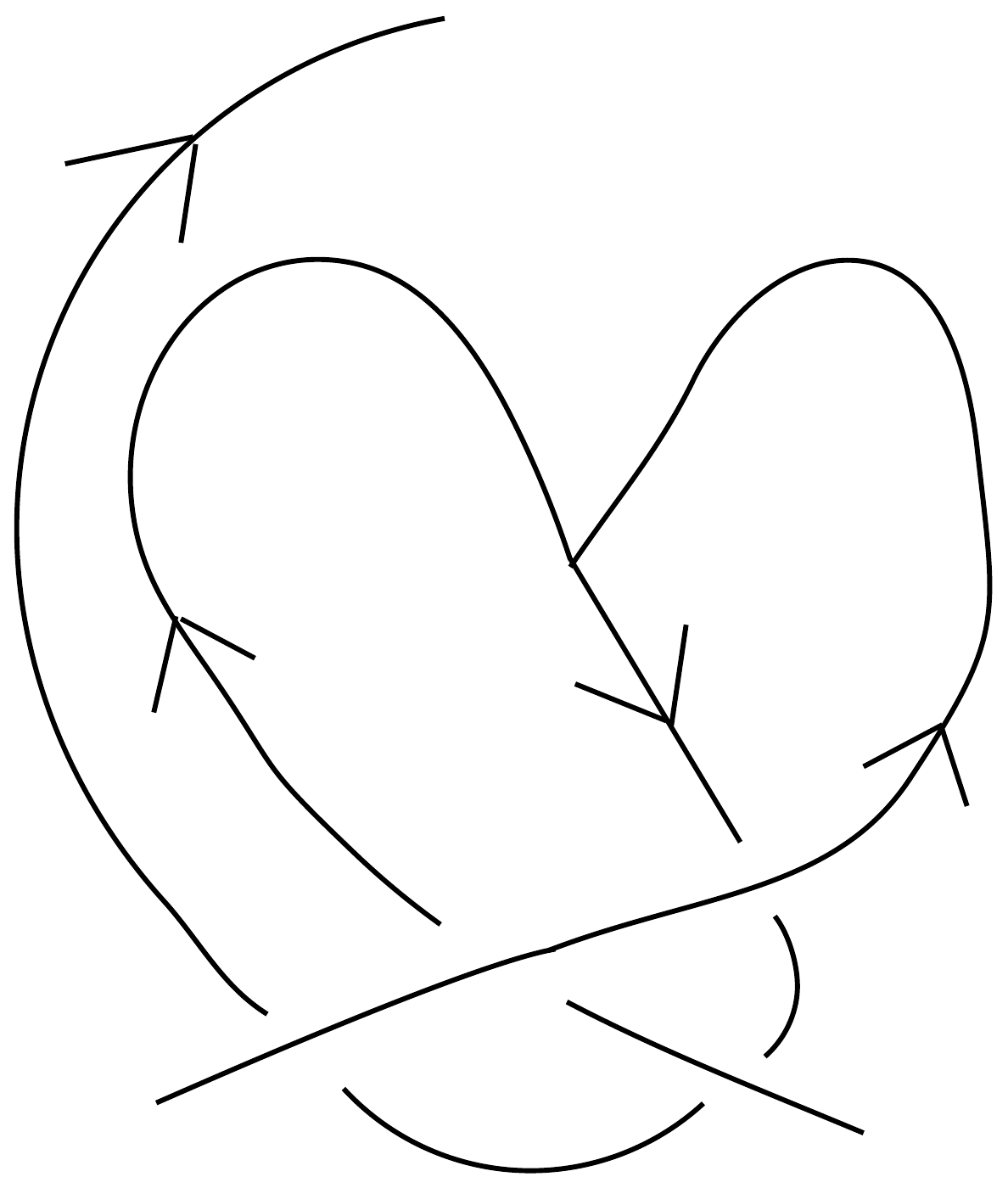}}  \hspace{.03in} \stackrel{\text{R4}}{\longleftrightarrow} \hspace{.03in}
\raisebox{-.5cm}{\includegraphics[height=1.775cm]{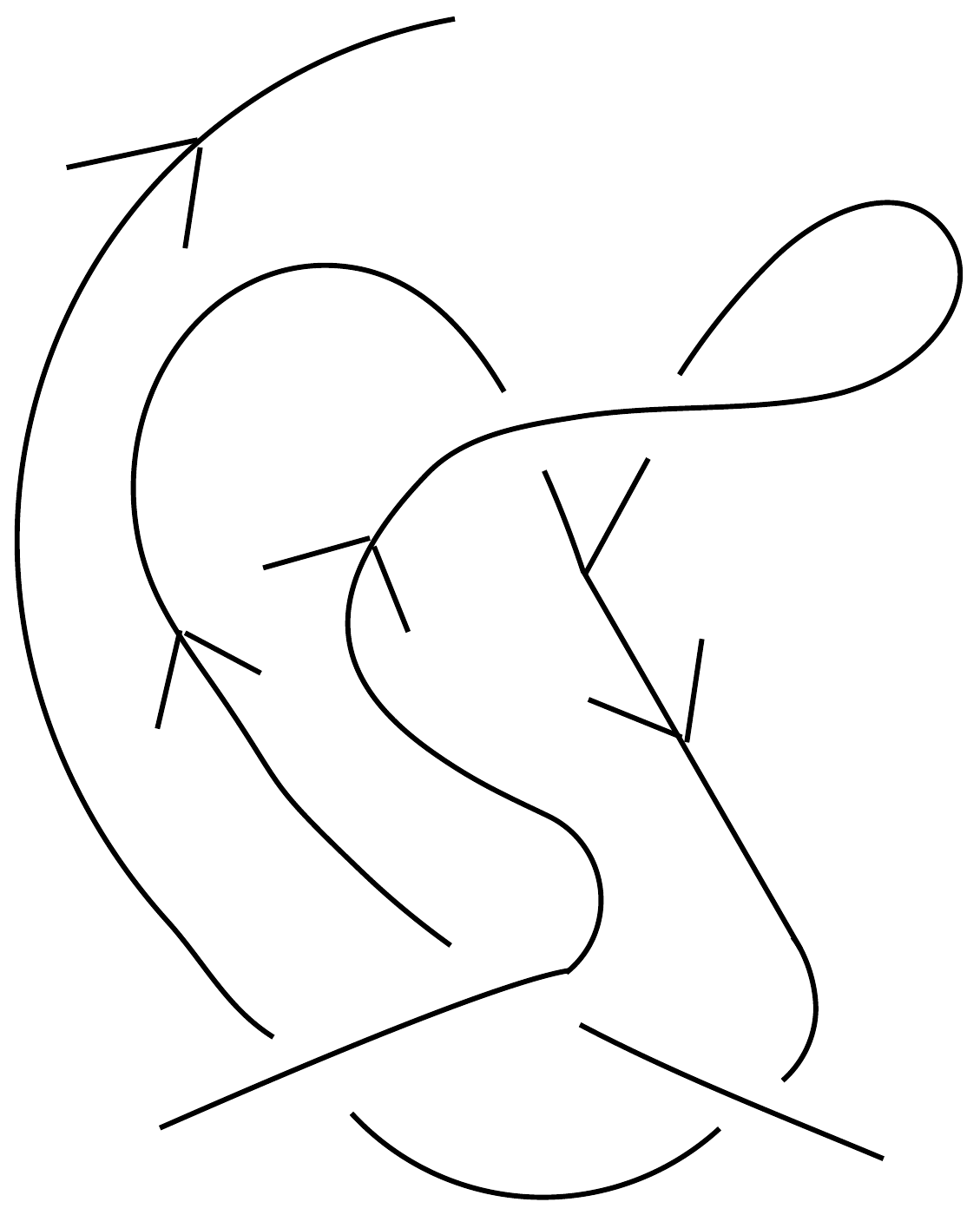}}  \hspace{.03in}  \underset{\text{swing move}} {\overset{\text{$R2$, $R1$}}{\longleftrightarrow}} \hspace{.05in}
\]
\vspace{.3cm}
\[
\raisebox{-.5cm}{\includegraphics[height=1.775cm]{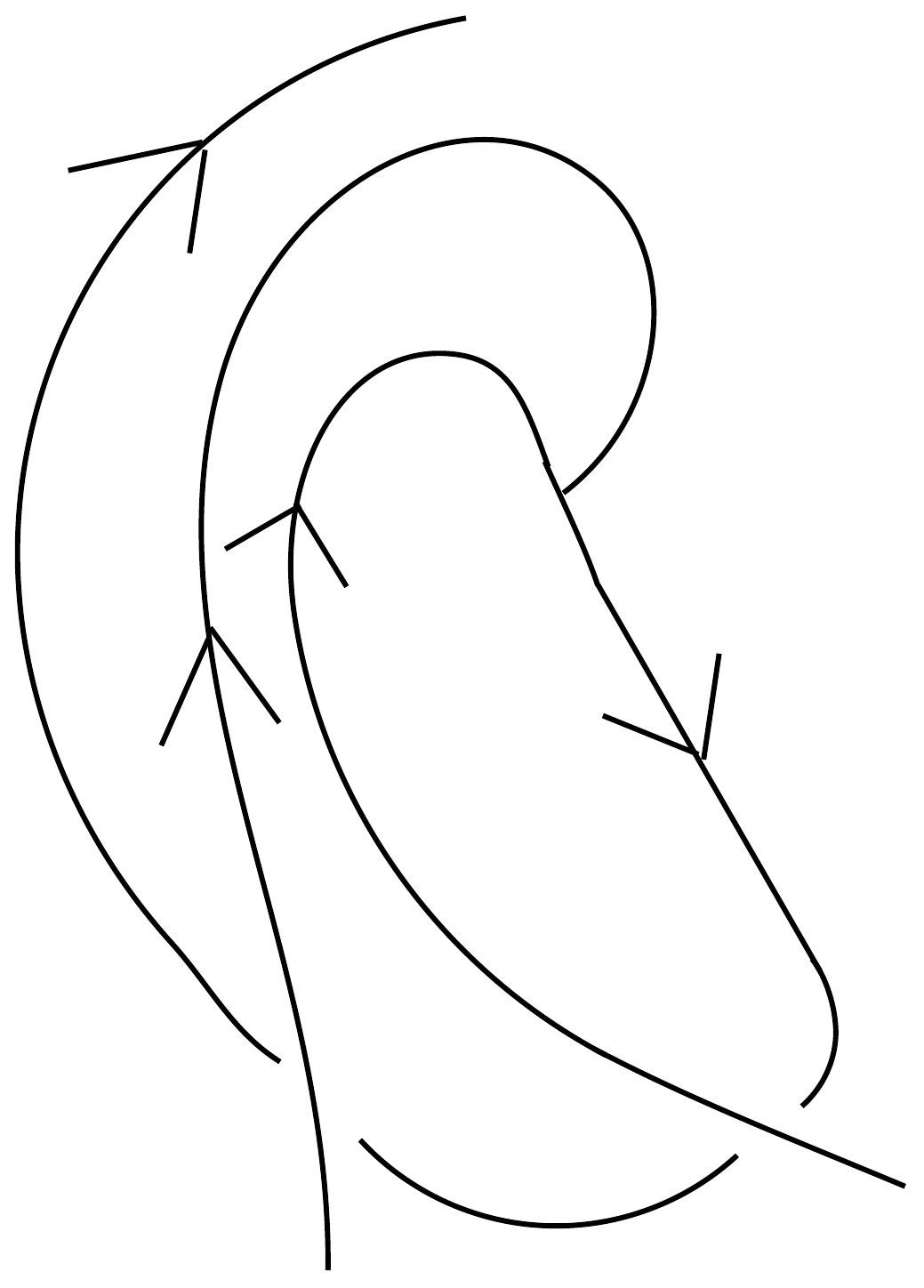}}  \hspace{.03in}
\stackrel{\text{$R4$}}{{\longleftrightarrow}}
\hspace{.03in}
\raisebox{-.5cm}{\includegraphics[height=1.775cm]{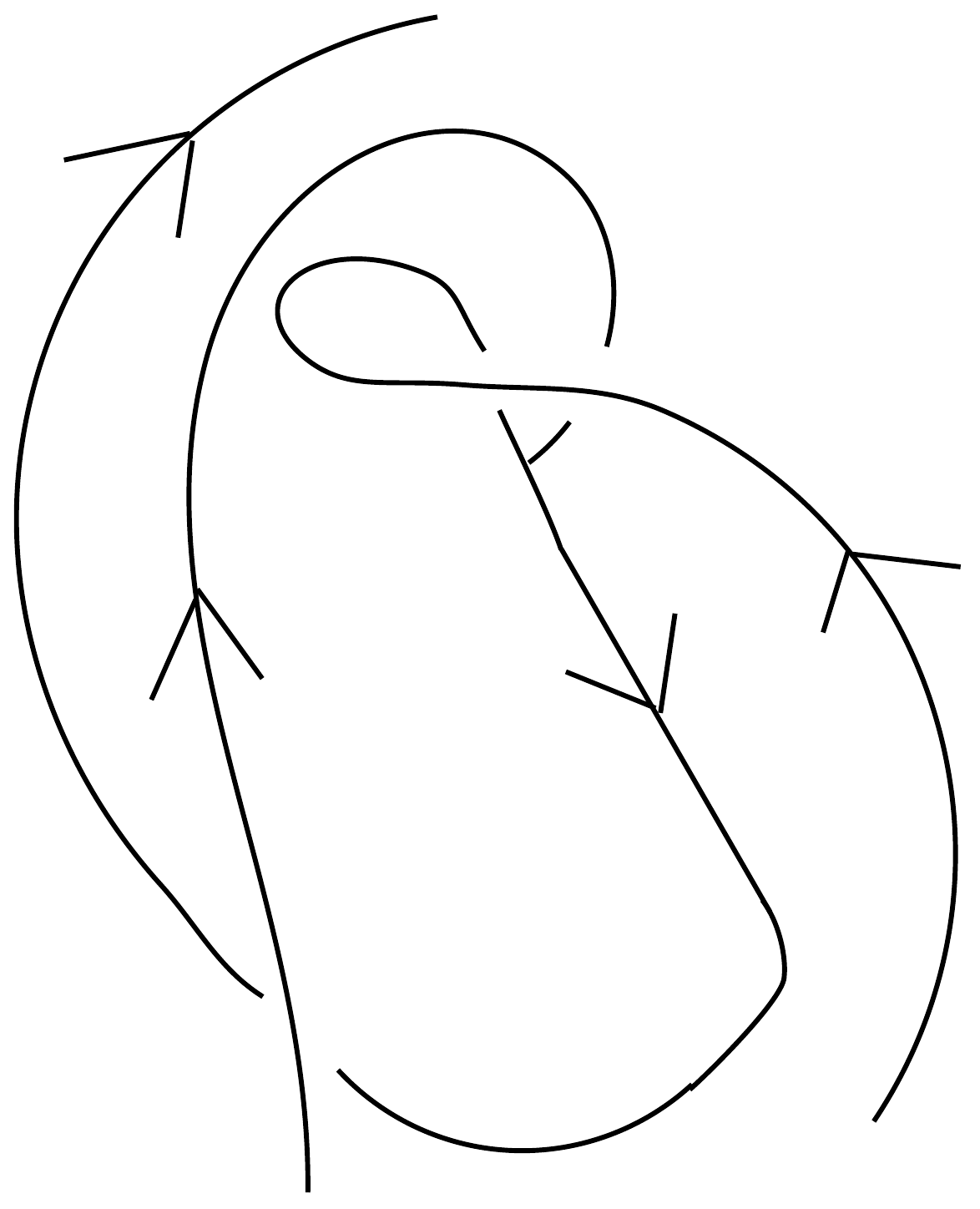}}  \hspace{.03in} \stackrel{\text{$R1$}}{\longleftrightarrow} \hspace{.03in}
\raisebox{-.5cm}{\includegraphics[height=1.775cm]{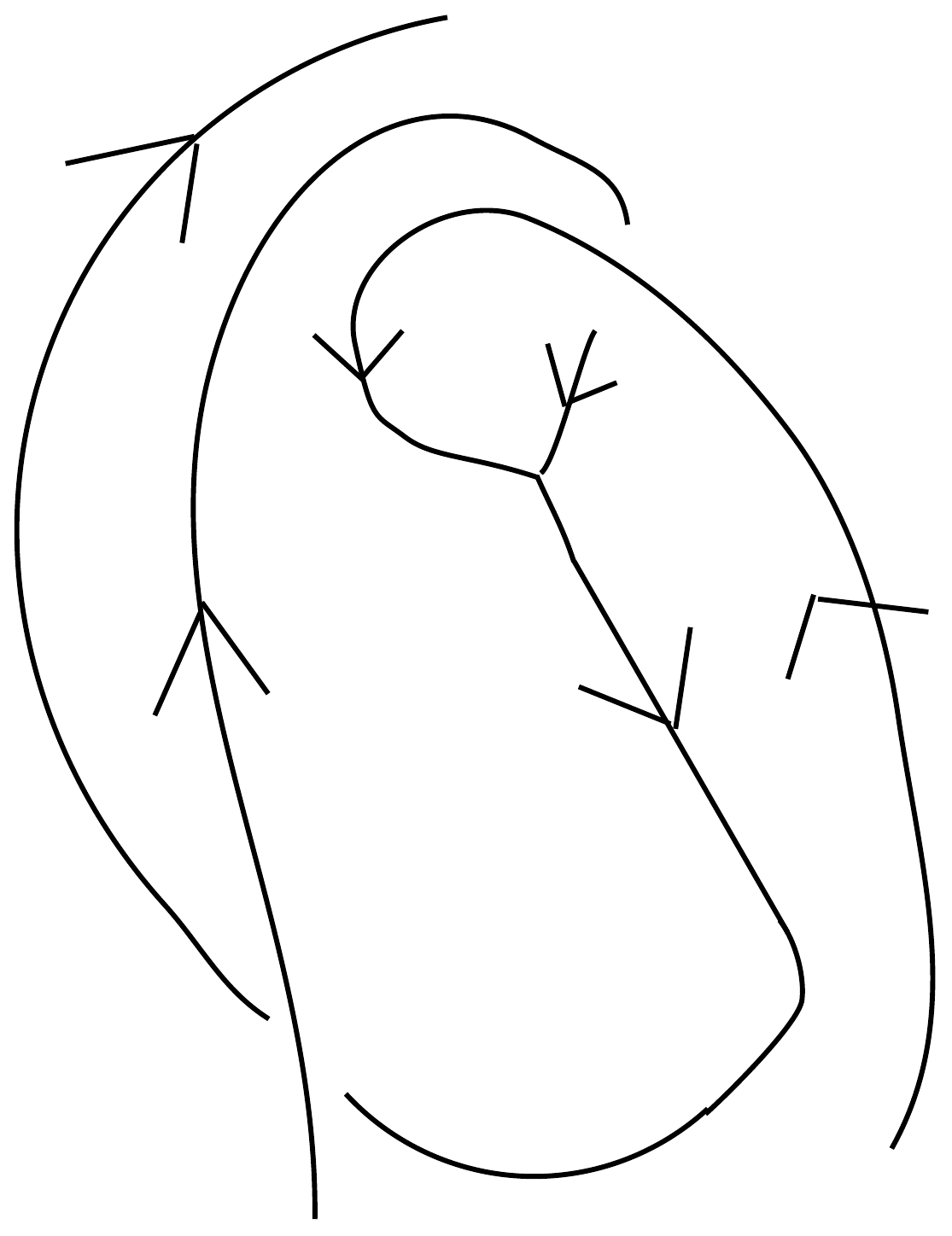}}  \hspace{.03in} \underset{\text{$R5$}}{\overset{\text{braid}}{\longleftrightarrow}} \hspace{.05in}
\raisebox{-.5cm}{\includegraphics[height=1.775cm]{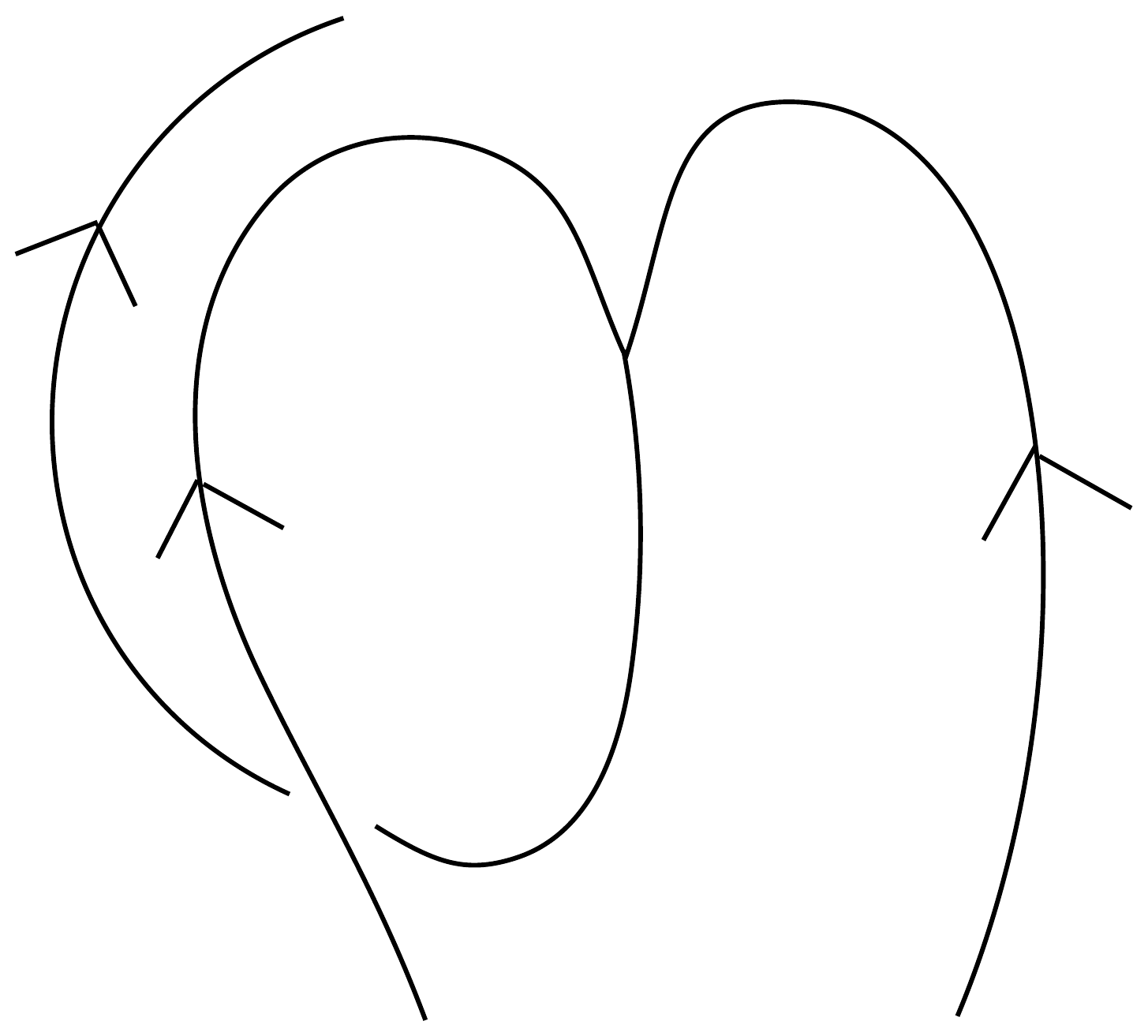}}  \hspace{.03in} \stackrel{\text{RP}}{\longleftarrow} \hspace{.03in}
\raisebox{-.5cm}{\includegraphics[height=1.775cm]{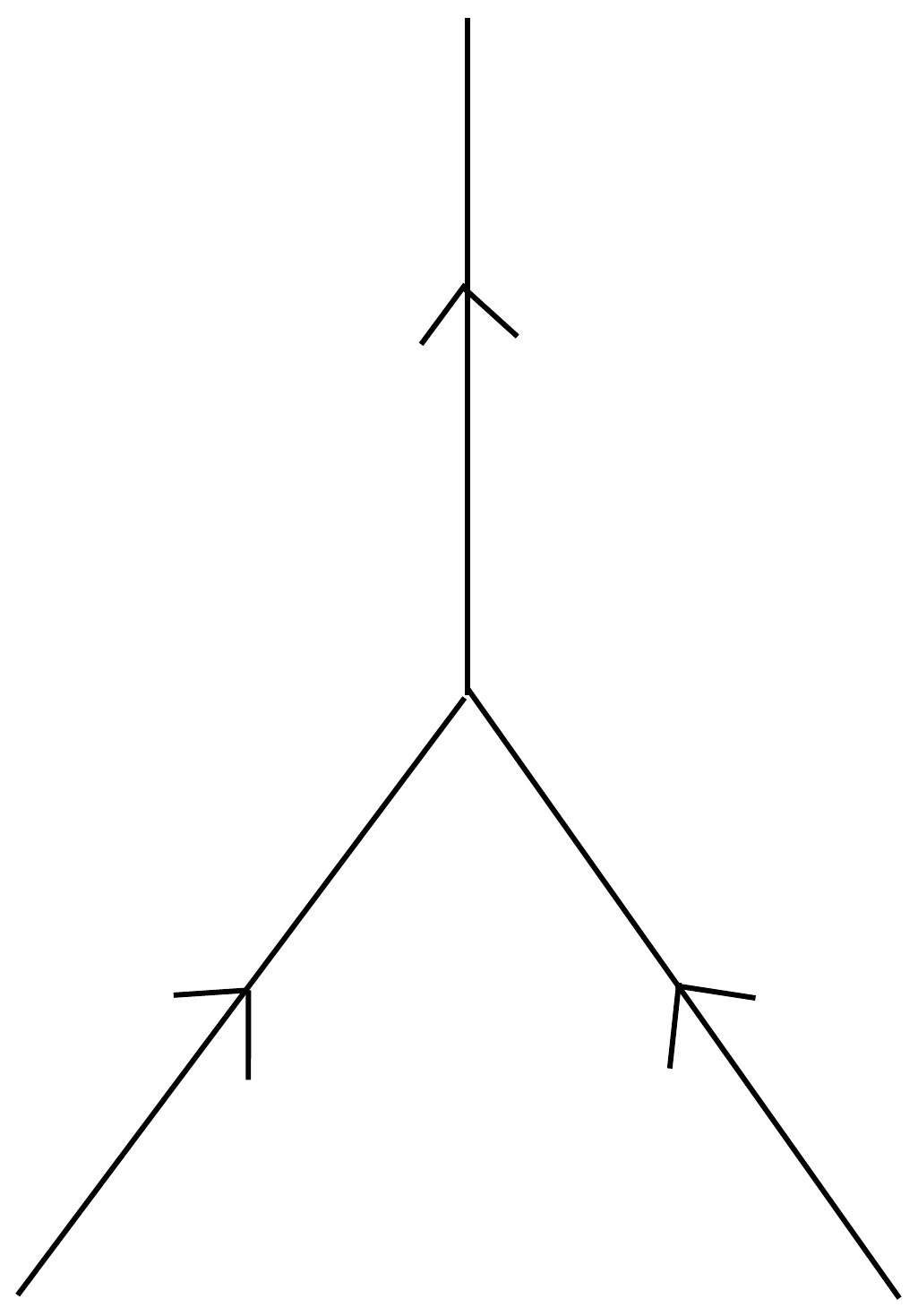}}  
\]
\caption{A down $R5$ move on a $\lambda$-vertex with three up-arcs}\label{fig:LUUU_down}
\end{figure}

Figure~\ref{fig:UDDL} covers the case of the $R5$ move on a $\lambda$-vertex with two up-arcs; the twist is applied between the two lower edges meeting at the vertex.

\begin{figure}[ht]
\[
\raisebox{-.5cm}{\includegraphics[height=1.775cm]{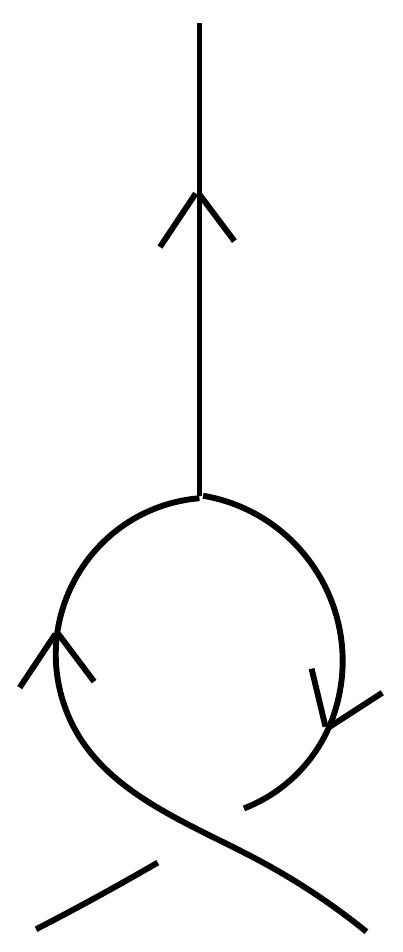}}  \hspace{.05in} \stackrel{\text{RP}}{\longrightarrow} \hspace{.05in}
\raisebox{-.5cm}{\includegraphics[height=1.775cm]{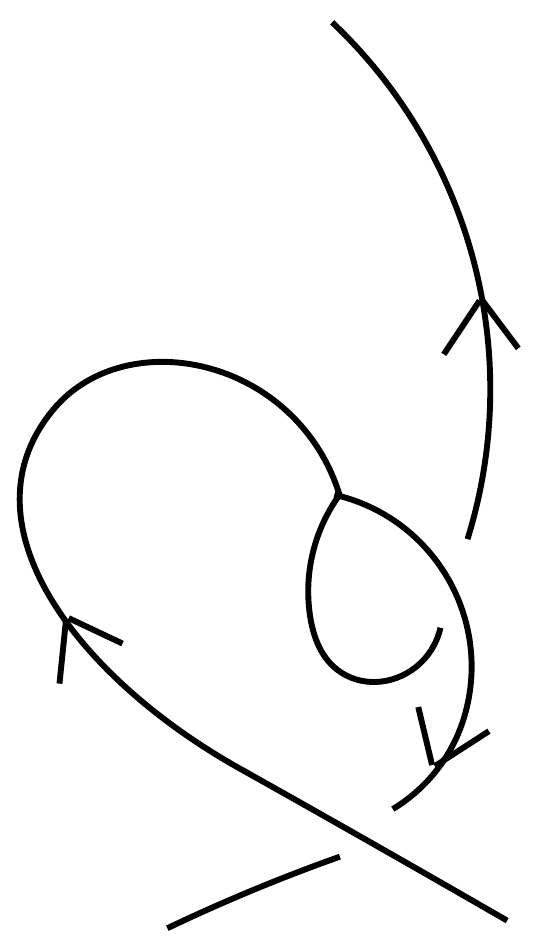}}  \hspace{.05in} 
\underset{\text{$R5$}}{\overset{\text{braid}}{\longleftrightarrow}}
\raisebox{-.5cm}{\includegraphics[height=1.775cm]{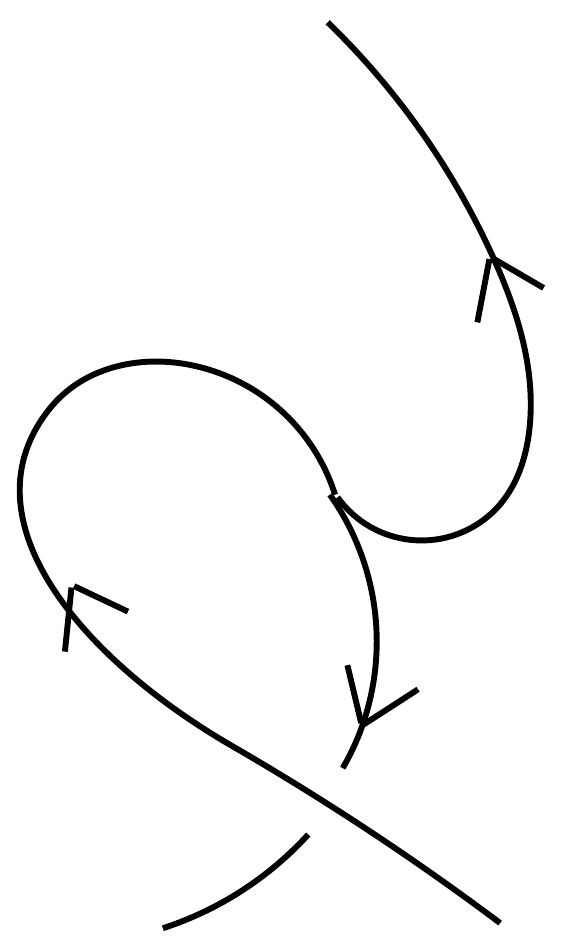}}  \hspace{.05in} \stackrel{\text{$R4$}}{\longleftrightarrow} \hspace{.05in}
\raisebox{-.5cm}{\includegraphics[height=1.775cm]{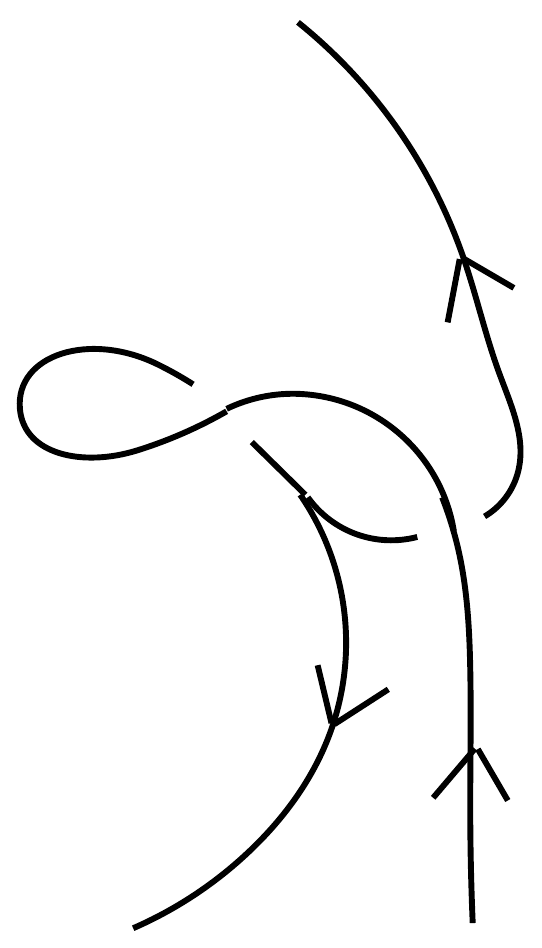}}  \hspace{.05in} 
\]
\[
\stackrel{\text{$R1$}}{\longleftrightarrow} \hspace{.05in}
\raisebox{-.5cm}{\includegraphics[height=1.775cm]{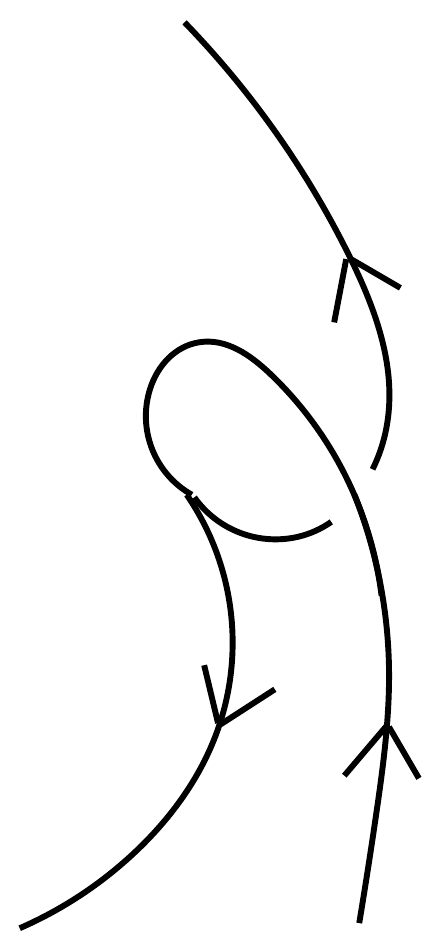}}  \hspace{.05in} \stackrel{\text{$R2$, $R4$}}{\longleftrightarrow} \hspace{.05in}
\raisebox{-.5cm}{\includegraphics[height=1.775cm]{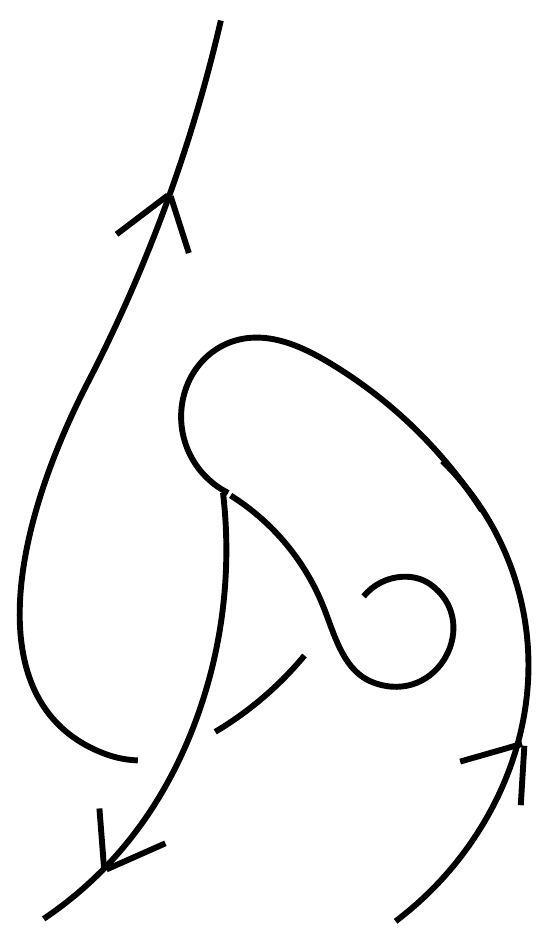}}
\stackrel{\text{$R1$}}{\longleftrightarrow} \hspace{.05in}
\raisebox{-.5cm}{\includegraphics[height=1.775cm]{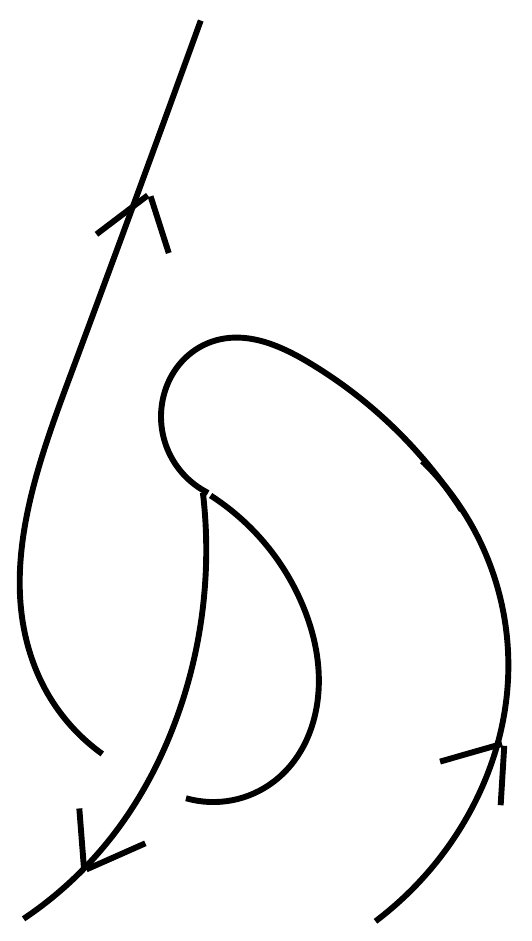}}
\stackrel{\text{RP}}{\longleftarrow} \hspace{.05in}
\raisebox{-.5cm}{\includegraphics[height=1.775cm]{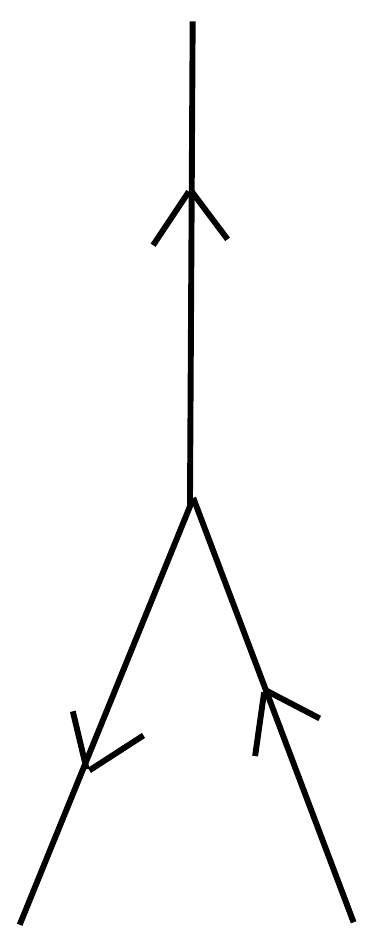}}
\]
\caption{A down $R5$ move on a $\lambda$-vertex with two up-arcs} \label{fig:UDDL}
\end{figure}

In Fig.~\ref{fig:DUDL}, we verify an $R5$ move applied to a $\lambda$-vertex with one up-arc, where the twist is between the right lower edge and the upper edge of the $\lambda$-vertex. 

\begin{figure}[ht]
\[
\raisebox{-.5cm}{\includegraphics[height=1.775cm]{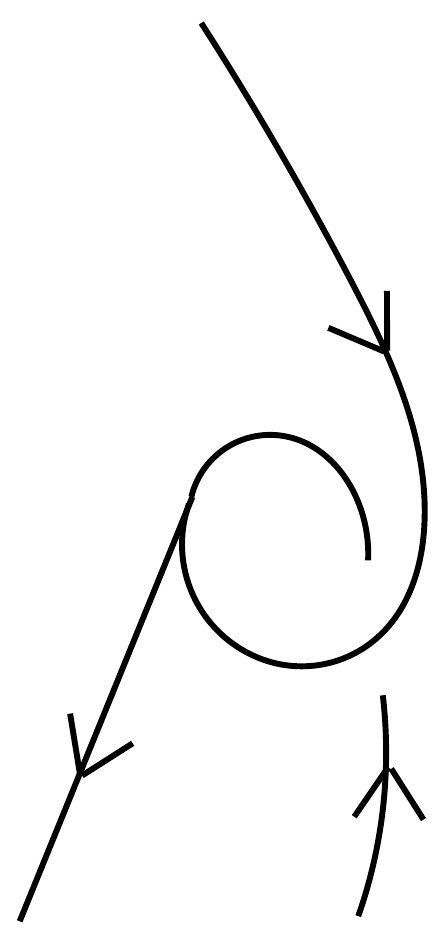}}  \hspace{.05in} \stackrel{\text{RP}}{\longrightarrow} \hspace{.05in}
\raisebox{-.5cm}{\includegraphics[height=1.775cm]{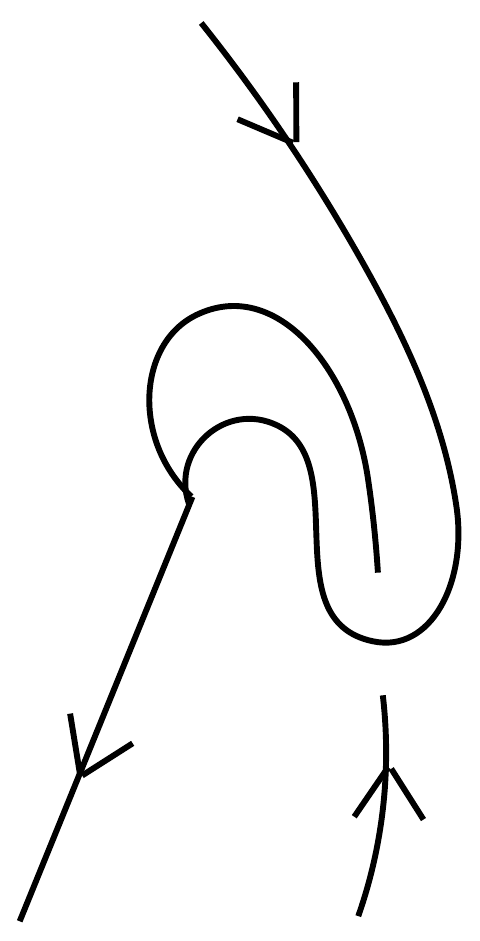}}  \hspace{.05in} 
\underset{\text{$R5$}}{\overset{\text{braid}}{\longleftrightarrow}}
\raisebox{-.5cm}{\includegraphics[height=1.775cm]{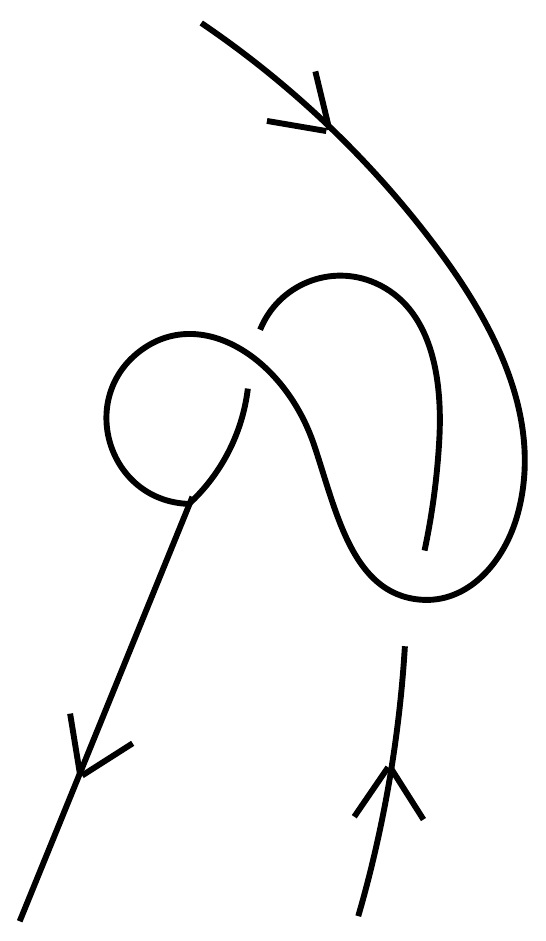}}  \hspace{.05in} \stackrel{\text{$R2$}}{\longleftrightarrow} \hspace{.05in}
\raisebox{-.5cm}{\includegraphics[height=1.775cm]{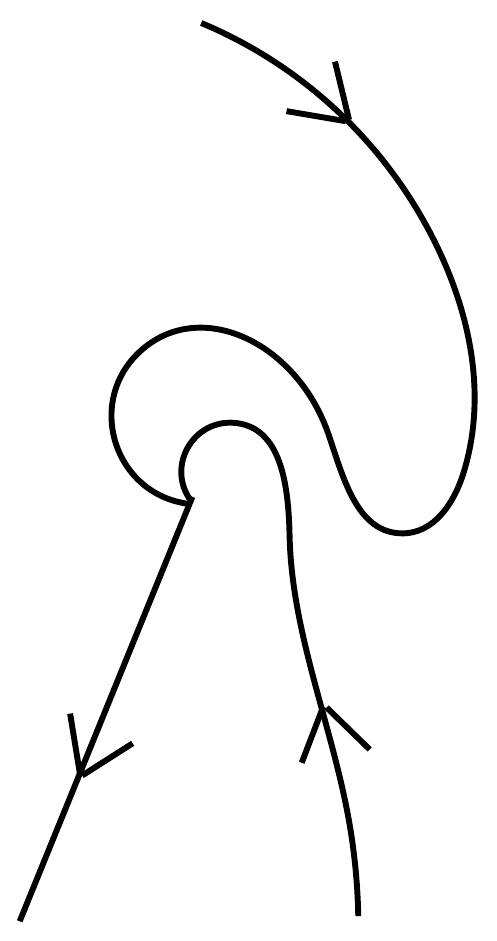}}  \hspace{.05in} 
\underset{\text{isotopy}}{\overset{\text{planar}}{\longleftrightarrow}}
\raisebox{-.5cm}{\includegraphics[height=1.775cm]{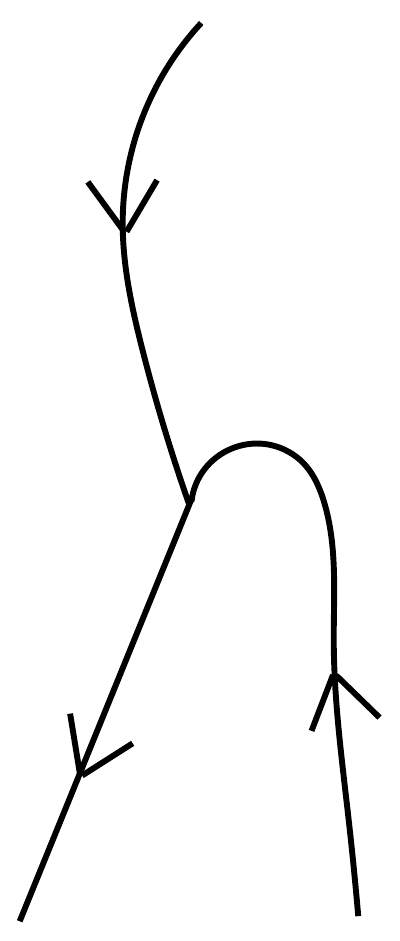}}  \hspace{.05in} \stackrel{\text{RP}}{\longleftarrow} \hspace{.05in}
\raisebox{-.5cm}{\includegraphics[height=1.775cm]{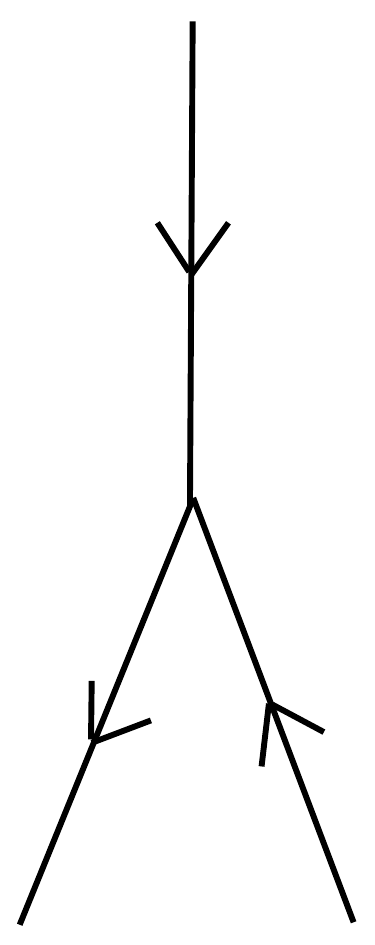}}  \hspace{.05in}
\]
\caption{A right $R5$ move on a $\lambda$-vertex with one up-arc} \label{fig:DUDL}
\end{figure}

Finally, we consider an $R5$ move on a $\lambda$-vertex with all edges oriented downward and with the twist between the left lower edge and upper edge meeting at the vertex. This case is demonstrated in Fig.~\ref{fig:DDDL}. 

\begin{figure}[ht]
\[
\raisebox{-.5cm}{\includegraphics[height=1.775cm]{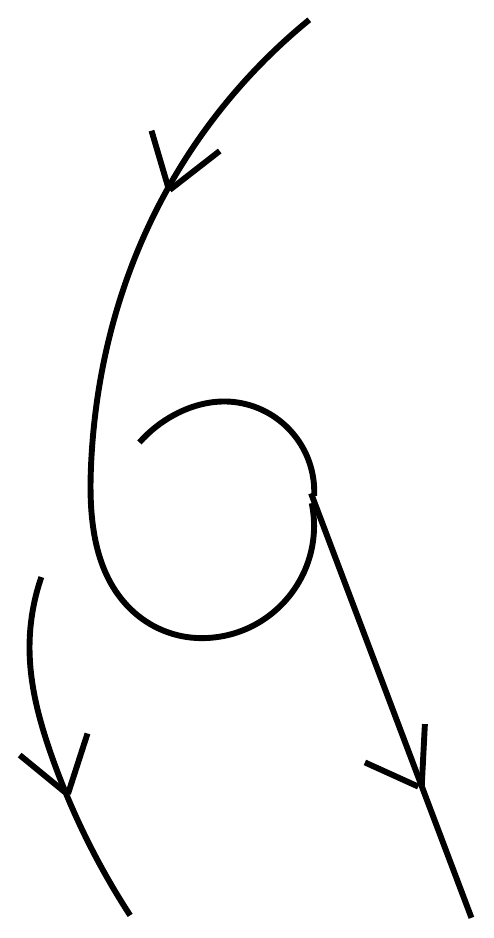}}  \hspace{.05in} \stackrel{\text{RP}}{\longrightarrow} \hspace{.05in}
\raisebox{-.5cm}{\includegraphics[height=1.775cm]{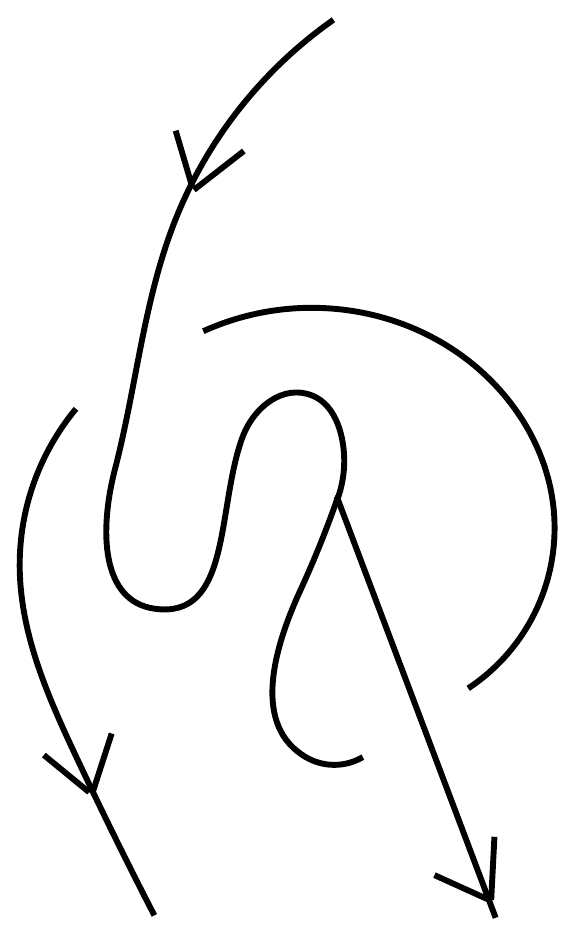}}  \hspace{.05in} 
\underset{\text{isotopy}}{\overset{\text{planar}}{\longleftrightarrow}}
\raisebox{-.5cm}{\includegraphics[height=1.775cm]{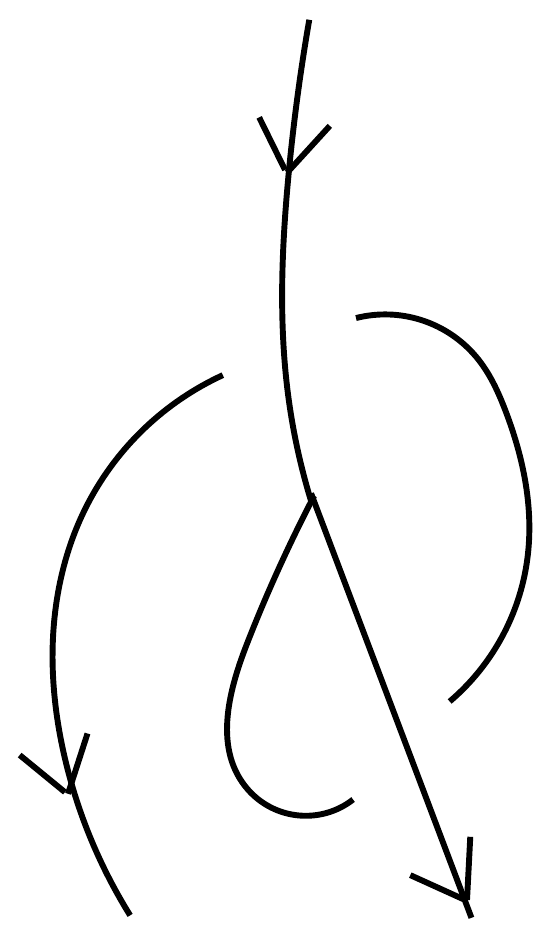}}  \hspace{.05in} \stackrel{\text{$R4$}}{\longleftrightarrow} \hspace{.05in}
\raisebox{-.5cm}{\includegraphics[height=1.775cm]{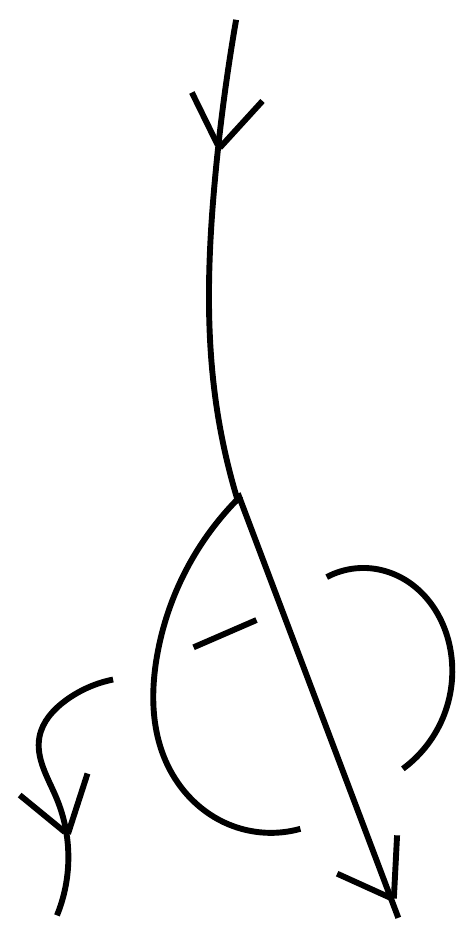}}  \hspace{.05in} 
\stackrel{\text{$R2$}}{\longleftrightarrow} \hspace{.05in}
\raisebox{-.5cm}{\includegraphics[height=1.775cm]{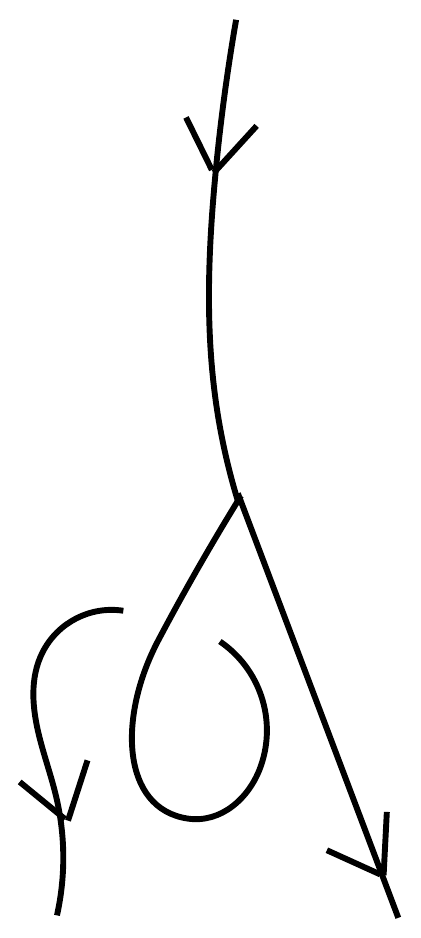}}  \hspace{.05in} \stackrel{\text{$R1$}}{\longleftrightarrow} \hspace{.05in}
\raisebox{-.5cm}{\includegraphics[height=1.775cm]{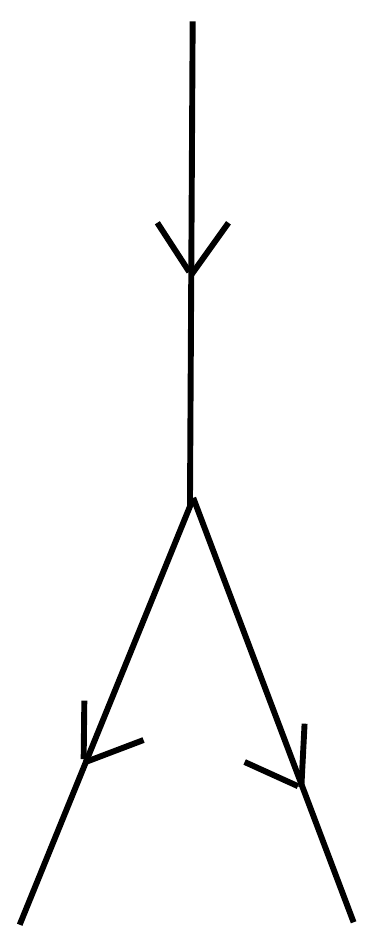}}  \hspace{.05in}
\]
\caption{A left $R5$ move on a $\lambda$-vertex with three down-arcs} \label{fig:DDDL}
\end{figure}
All of the other versions of the $R5$ move on a $\lambda$-vertex are verified similarly. This completes the proof of the theorem.
\end{proof}

\begin{remark}
Part III of Theorem~\ref{Markov-typeThrm} could be shortened if one relies on a minimal set of moves that generates all of the oriented versions of the extended Reidemeister moves for well-oriented STG diagrams. With such a minimal set of moves at hand, one would only need to verify that those particular moves yield $TL_v$-equivalent braids upon our braiding algorithm. We are not aware of a paper in the literature providing such a generating set of moves for STG diagrams. Nonetheless, that could be an interesting and useful small project.
\end{remark}
\subsection{Algebraic Markov-type theorem} \label{ssec:MarkovThm2}

In this section, we provide an algebraic Markov-type theorem for virtual trivalent braids and virtual spatial trivalent graphs.

Generally, we regard $\alpha\in VTB_{n}^m$ as an element of $VTB_{n+1}^{m+1}$  by adding an identity strand on the right of $\alpha$.  (We will not introduce any new notation when we regard $\alpha\in VTB_{n+1}^{m+1}$.) Using this operation of adding a single identity strand on the right of a braid, we can think of $VTB_{n}^m$ as a subset of $VTB_{n+1}^{m+1}$, and we define $VTB:=\cup_{m,n=1}^{\infty}VTB_{n}^m$. In what follows, we also allow the addition of an identity strand at the left of $\alpha\in VTB_{n}^m$ and we denote this braid in $VTB_{n+1}^{m+1}$ by $i(\alpha)$.

\begin{theorem}[Algebraic Markov-type Theorem] \label{thm:AMarkov}
Two well-oriented virtual spatial trivalent graph diagrams are isotopic if and only if any two of their corresponding virtual trivalent braids differ by a finite sequence of braid relations in $VTB$ and the following algebraic moves:
\begin{enumerate}[i.]
\item Virtual and real conjugation (see Fig.~\ref{Conj}): 
\[v_i\alpha  \sim \alpha v_i\, , \,  \sigma_i \alpha  \sim \alpha \sigma_i \,, \,\, \text{where} \, \, \alpha \in VTB_{n}^{n} \,\, \text{and} \,\, 1\leq i \leq n-1\]

\item Right virtual and real stabilization (see Fig.~\ref{Stab}):
\[ \alpha v_n \beta \sim \alpha \beta \sim  \alpha \sigma_n^{\pm 1}\beta, \, \,  \text{where} \,\, \alpha, \beta \in VTB^n_n \]

\item Algebraic right and left under-threading (see Fig.~\ref{UndThd}):  
\[ \alpha \beta \sim  \alpha \sigma_n^{-1} v_{n-1} \sigma_n \beta \,\, , \,\,  \alpha \beta \sim  i(\alpha)\sigma_{1} v_2 \sigma_{1}^{-1} i(\beta), \,  \text{where} \,\,  \alpha, \beta \in VTB^n_n\]

\item Algebraic right trivalent relation (see Fig.~\ref{AlgTr}):
\[ \alpha \lambda_n v_{n-1} \sigma_n \beta \sim  \alpha \lambda_n v_{n-1} \sigma_n^{-1} \beta, \, \, \text{where} \, \, \alpha \in VTB^{n}_{n-1} \,  \text{and} \,\, \beta \in VTB^n_n.\]

\end{enumerate}
\end{theorem}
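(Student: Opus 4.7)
The plan is to derive Theorem~\ref{thm:AMarkov} from the already established Theorem~\ref{Markov-typeThrm} by proving that the algebraic equivalence generated by (i)--(iv) together with the braid relations in $VTB$ coincides with $TL_v$-equivalence. Since Theorem~\ref{Markov-typeThrm} already identifies isotopy of well-oriented virtual STGs with $TL_v$-equivalence of their braid representatives, it suffices to establish this coincidence of equivalence relations on $VTB$.

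For the direction from the algebraic moves to $TL_v$-equivalence, I would verify that each of the moves (i)--(iv) is itself an instance of, or an immediate consequence of, a move in the $TL_v$-equivalence of Section~\ref{ssec:TL_v}. The moves in (i) are literally the virtual and real conjugations appearing in the definition of $TL_v$-equivalence. The stabilization moves in (ii) are precisely the right $vL_v$-move and right real $L_v$-move applied to the right-hand arc of the braid box (with the extending strand drawn along the rightmost vertical). The two relations in (iii) are the right and left under-threaded $L_v$-moves placed at the rightmost, resp.\ leftmost, strand; the notation $i(\alpha)$ is tailored precisely to record the left-boundary insertion. Finally, (iv) is the algebraic rewriting of the right $TL_v$-move pictured in Fig.~\ref{right TL}, with the $\lambda$-vertex realized as $\lambda_n$ and the thread as $v_{n-1}\sigma_n^{\pm 1}$.

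For the converse direction, I need to generate every move of the $TL_v$-equivalence from (i)--(iv) together with braid relations. The key technique, standard since the classical Markov theorem, is to use conjugation to relocate any ``local'' $L_v$-type modification at an interior position of the braid to the rightmost (or leftmost) strand, where it becomes one of the algebraic moves (ii)--(iv). Concretely, if a right real or right virtual $L_v$-move is applied between strands $i$ and $i+1$ of a braid $\omega$, then writing $\omega = \gamma \delta$ where $\gamma$ contains everything to the left of the cut arc and using virtual conjugation plus the commuting braid relations, I can shift the newly-created pair of strands past every $\sigma_j, v_j, y_j, \lambda_j$ with $j \ne i,i+1$, eventually pushing it to the rightmost position. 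The same argument, applied to the left boundary and using the insertion operator $i(\cdot)$, reduces any left under-threaded $L_v$-move to the form in (iii). For a right $TL_v$-move at a $\lambda$-vertex located at position $(i,i+1)$, I would first use braid isotopy to rearrange the braid so that the vertex appears as $\lambda_n$ with the threading strand realized as $v_{n-1}\sigma_n^{\pm 1}$, and then apply (iv). The left $TL_v$-move does not need to be treated separately, since we already proved that it follows from the right $TL_v$-move, $L_v$-moves, and braid isotopy.

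The main obstacle will be making the ``shift to the right'' rigorous in the presence of obstructing crossings, virtual crossings, and vertices between the $L_v$-modification and the rightmost strand; in particular, one must avoid triggering any of the forbidden moves $F1$, $F2$, or $F3$ when commuting the extending strands past a vertex. Following the approach of~\cite[Lemmas~1,~2,~4,~8]{KauLamb} and imitating the case analysis used in Part III of the proof of Theorem~\ref{Markov-typeThrm}, the obstruction is overcome by combining the detour move (which is derivable from virtual conjugation and right $vL_v$-moves already present in (i)--(ii)) with the commuting relations listed just before Fig.~\ref{fig:tbrelns}, together with the observation that the threaded moves of (iii) are exactly what is needed when the strand being shifted encounters a crossing it must pass under. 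Once all $L_v$-moves and $TL_v$-moves have been accounted for, the two equivalence relations agree, and Theorem~\ref{Markov-typeThrm} then yields the stated algebraic characterization of isotopy.
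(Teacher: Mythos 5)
Your proposal follows essentially the same route as the paper: both reduce the theorem to Theorem~\ref{Markov-typeThrm} by showing that the algebraic equivalence generated by (i)--(iv) coincides with the $TL_v$-equivalence, and both directions are handled by the same mechanism --- using virtual conjugation, braid detour moves, and the commuting relations to relocate a local $L_v$- or $TL_v$-modification to the rightmost (or leftmost) strand, where it becomes one of the algebraic moves, and conversely reading those derivations backwards. The paper carries this out via explicit diagrammatic derivations (Figs.~\ref{rLv-RS}--\ref{AlgUT}), but the underlying argument is the one you describe.
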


\begin{figure}[ht]
\[
\raisebox{-35pt}{\includegraphics[height=1.1in]{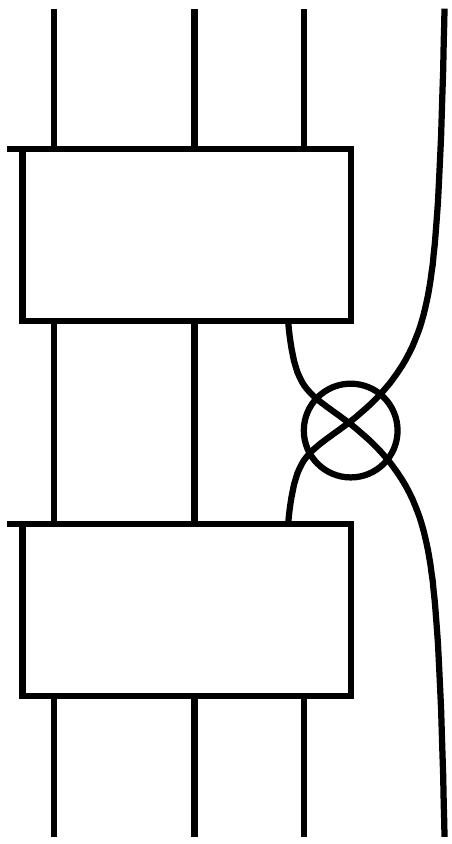}}
\hspace{.2in}
{\sim}
\hspace{.2in}
\raisebox{-35pt}{\includegraphics[height=1.1in]{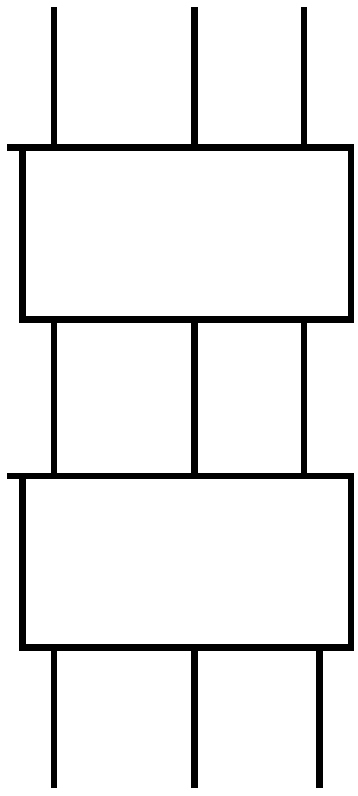}}
\hspace{.2in}
{\sim}
\hspace{.2in}
\raisebox{-35pt}{\includegraphics[height=1.1in]{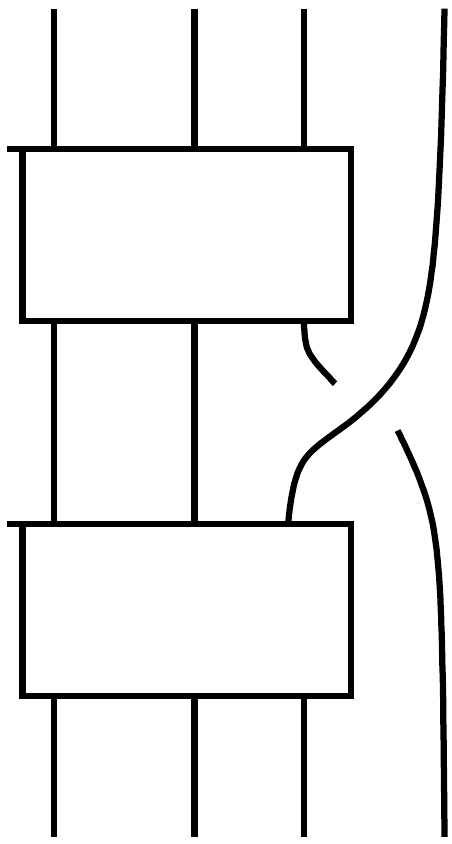}}
 \put(-178, 20){\fontsize{9}{9}$\alpha$}
 \put(-97,20){\fontsize{9}{9}$\alpha$}
 \put(-30,20){\fontsize{9}{9}$\alpha$}
  \put(-178, -15){\fontsize{9}{9}$\beta$}
 \put(-97, -15){\fontsize{9}{9}$\beta$}
 \put(-30, -15){\fontsize{9}{9}$\beta$}
\]
\caption{Right virtual and real stabilization}\label{Stab}
\end{figure}

\begin{figure}[ht]
\[
\reflectbox{\raisebox{-35pt}{\includegraphics[height=1.1in]{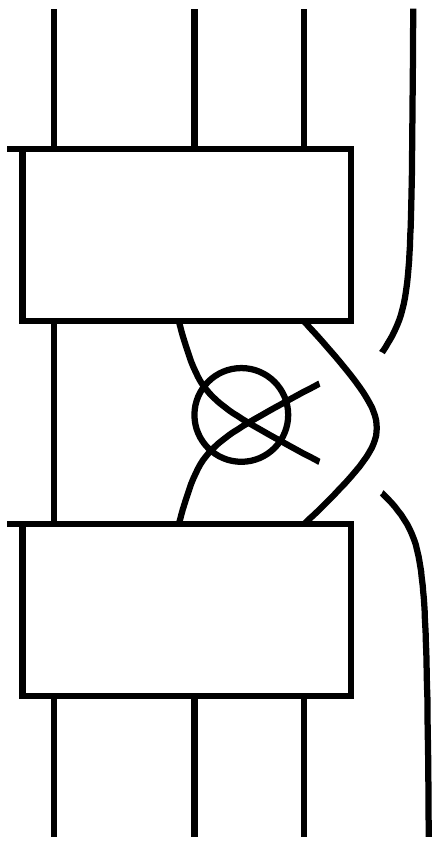}}}
\hspace{.2in}
\sim
\hspace{.2in}
\raisebox{-35pt}{\includegraphics[height=1.1in]{Rstab5-new}}
\hspace{.2in}
\sim
\hspace{.2in}
\raisebox{-35pt}{\includegraphics[height=1.1in]{AlgUnThrdRight-new}}
 \put(-180, 20){\fontsize{9}{9}$\alpha$}
 \put(-105,20){\fontsize{9}{9}$\alpha$}
 \put(-28,20){\fontsize{9}{9}$\alpha$}
  \put(-180, -15){\fontsize{9}{9}$\beta$}
 \put(-103, -15){\fontsize{9}{9}$\beta$}
 \put(-27, -15){\fontsize{9}{9}$\beta$}
\]
\caption{Algebraic left and right under-threading} \label{UndThd}
\end{figure}

\begin{figure}[ht]
\[
\raisebox{-40pt}{\includegraphics[height=1.2in]{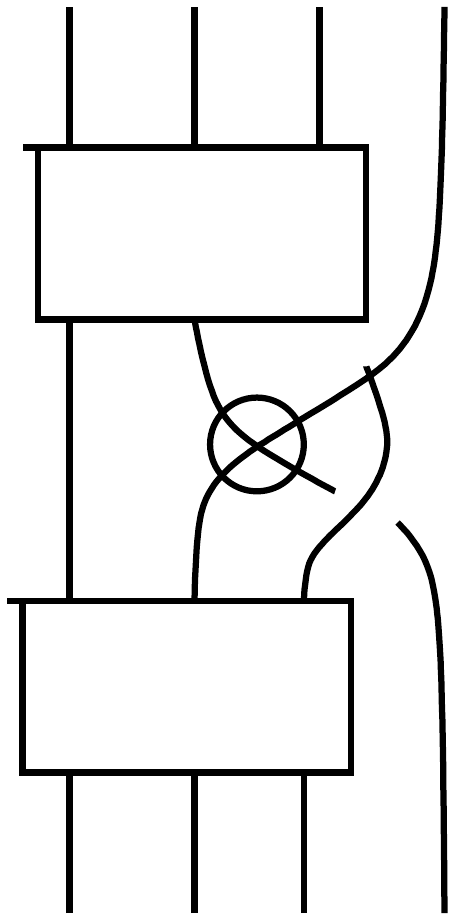}}
\hspace{.2in}
\sim
\hspace{.2in}
\raisebox{-40pt}{\includegraphics[height=1.2in]{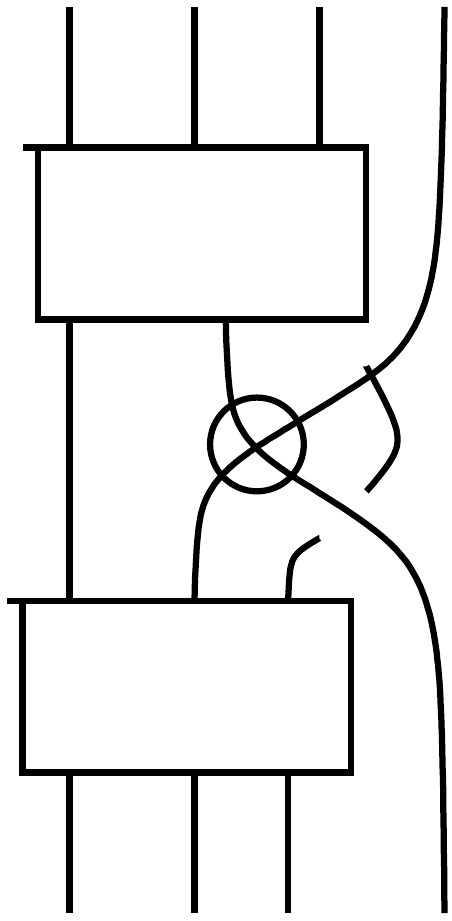}}
\put(-113, 23){\fontsize{9}{9}$\alpha$}
 \put(-28,23){\fontsize{9}{9}$\alpha$}
 \put(-113, -20){\fontsize{9}{9}$\beta$}
 \put(-28,-20){\fontsize{9}{9}$\beta$}
\]
\caption{Algebraic right trivalent relation} \label{AlgTr}
\end{figure}

The braid relations together with the algebraic moves (i)--(iv) given in Theorem~\ref{thm:AMarkov} introduce an equivalence relation on the set of virtual trivalent braids, called \textit{algebraic equivalence}.  We say that two braids belonging to the same algebraic equivalence class are \textit{algebraically-equivalent}.

\begin{proof}
[Proof of Theorem~\ref{thm:AMarkov}]

It is easy to see that algebraically-equivalent braids have isotopic closures. Using Theorem~\ref{Markov-typeThrm}, it is then sufficient to show that the algebraic moves (i)--(iv) follow from the $TL_v$-equivalence. We will prove a stronger statement, namely that the algebraic equivalence implies the $TL_v$-equivalence, and vice versa. Note first that braid isotopy and real conjugation are part of both the algebraic equivalence and $TL_v$-equivalence. Recall also that virtual conjugation follows from $L_v$-moves and braid isotopy (see~\cite[Fig. 17]{KauLamb}).

We show first that the moves comprising the $TL_v$-equivalence follow from the algebraic moves. In Fig.~\ref{rLv-RS} we show that the right real $L_v$-move follows from right real stabilization, virtual conjugation, and braid isotopy. (A similar proof was given in~\cite[Fig. 37]{KauLamb}.) It can be similarly demonstrated that the right virtual $L_v$-move follows from right virtual stabilization, virtual conjugation, and braid isotopy.

\begin{figure}
\[
\hspace{-.03in}
\raisebox{-35pt}{\includegraphics[height=1.1in]{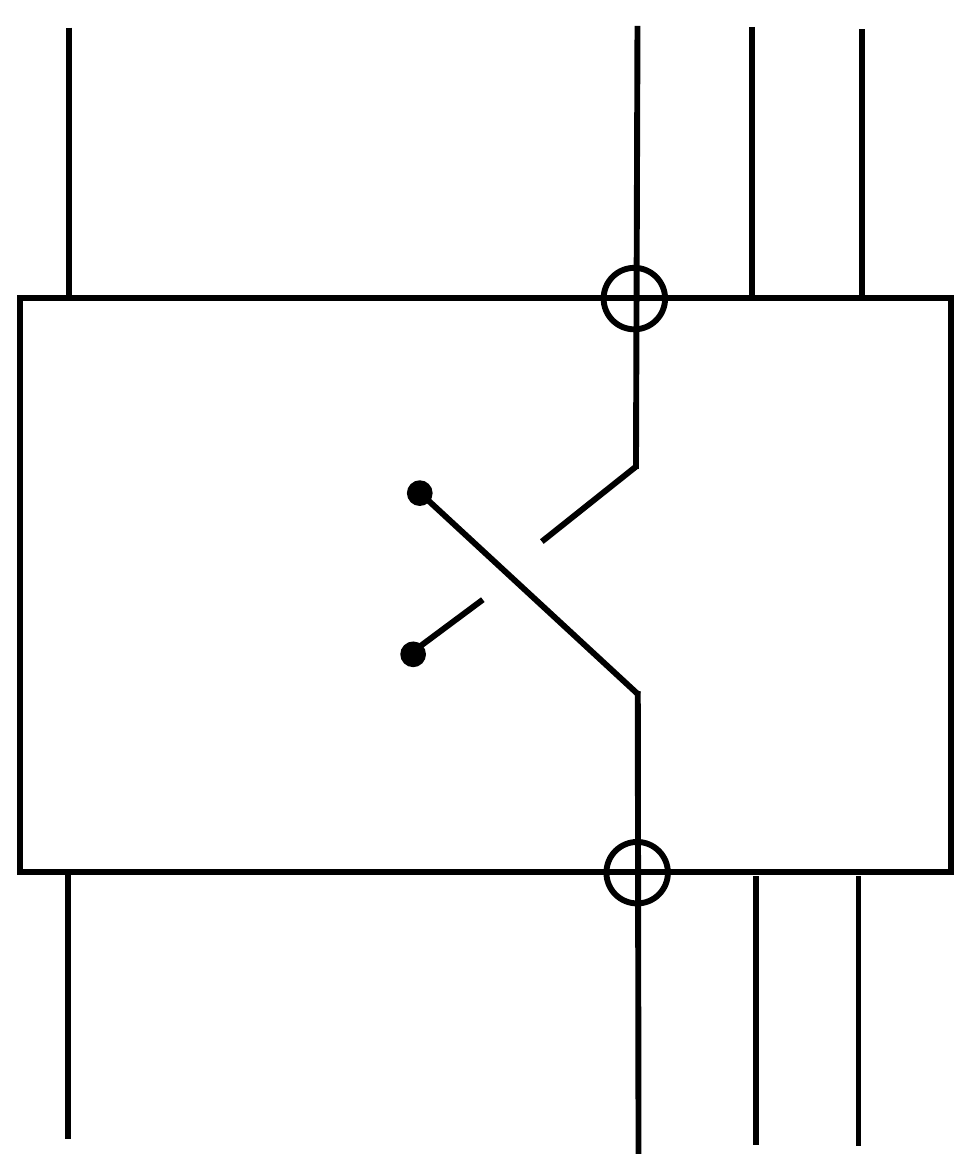}}
\hspace{.07in}
\underset{\text{detour}}{\overset{\text{braid}}{\longleftrightarrow}}
\hspace{.1in}
\raisebox{-35pt}{\includegraphics[height=1.1in]{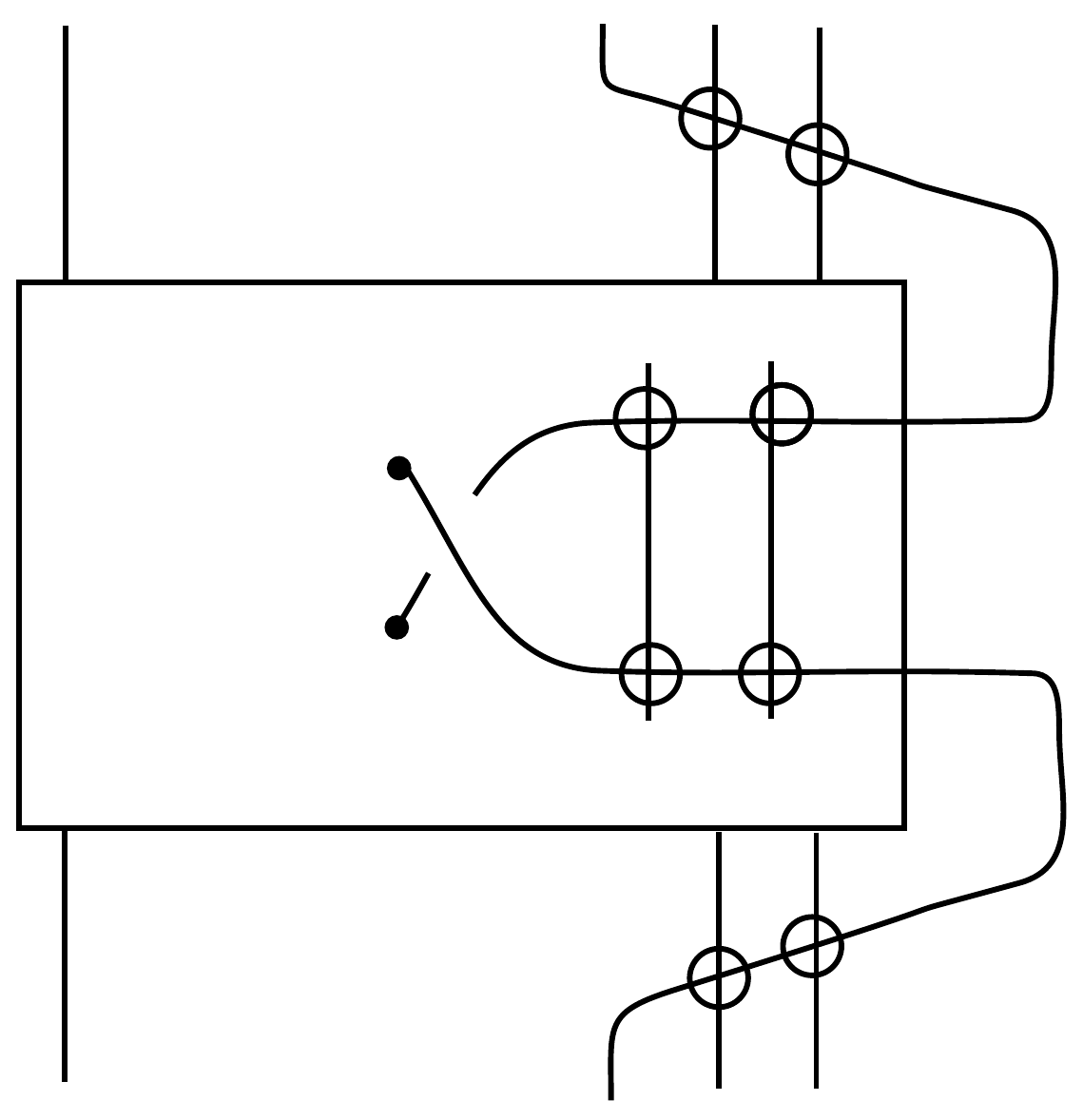}}
\hspace{.1in}
\underset{\text{detour move}}{\overset{\text{virtual conj.}}{\longleftrightarrow}}
\hspace{.1in}
\]
\[
\raisebox{-35pt}{\includegraphics[height=1.1in]{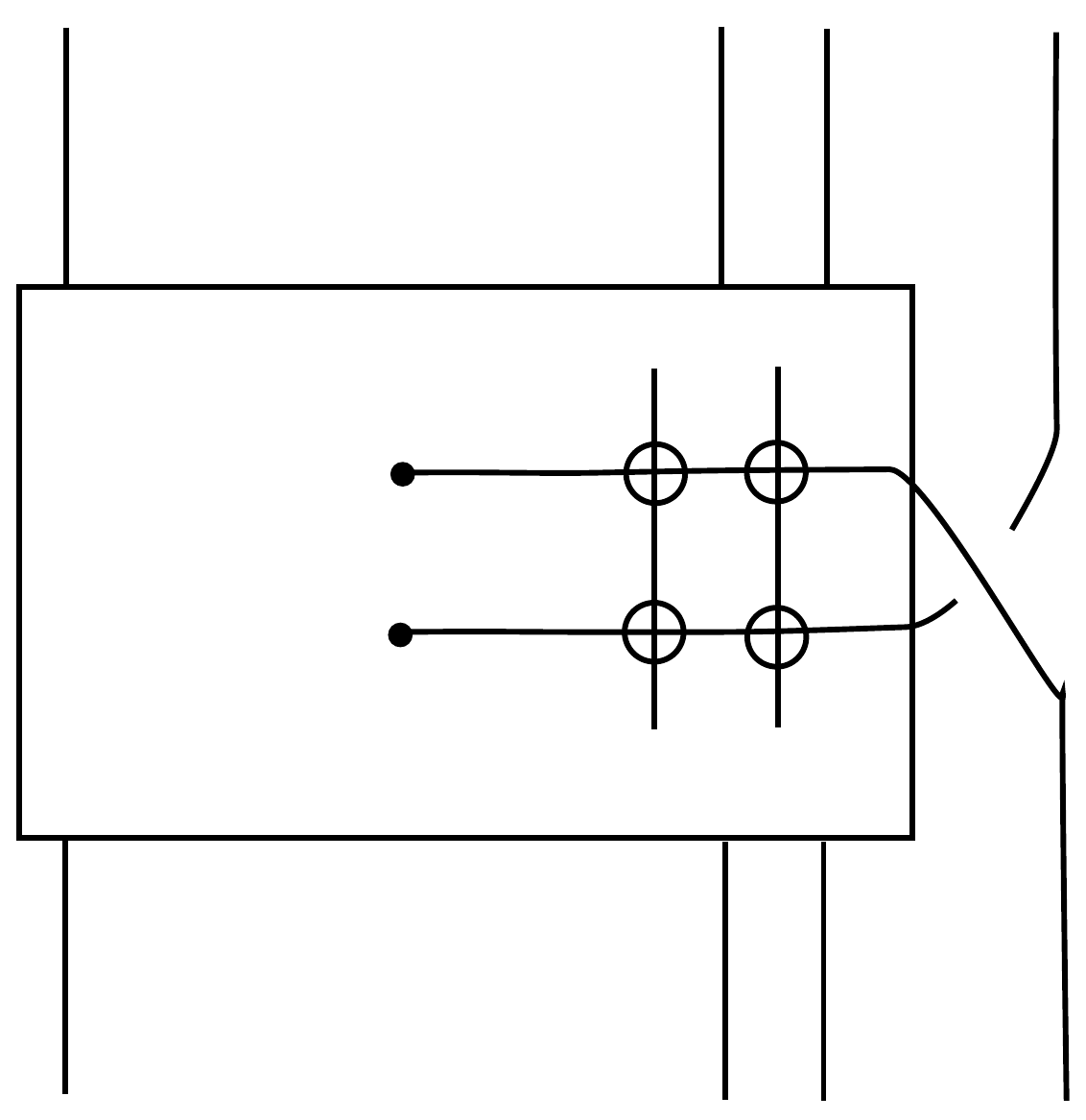}}
\underset{\text{stabilization}}{\overset{\text{right real}}{\longleftrightarrow}}
\hspace{.1in}
\raisebox{-35pt}{\includegraphics[height=1.1in]{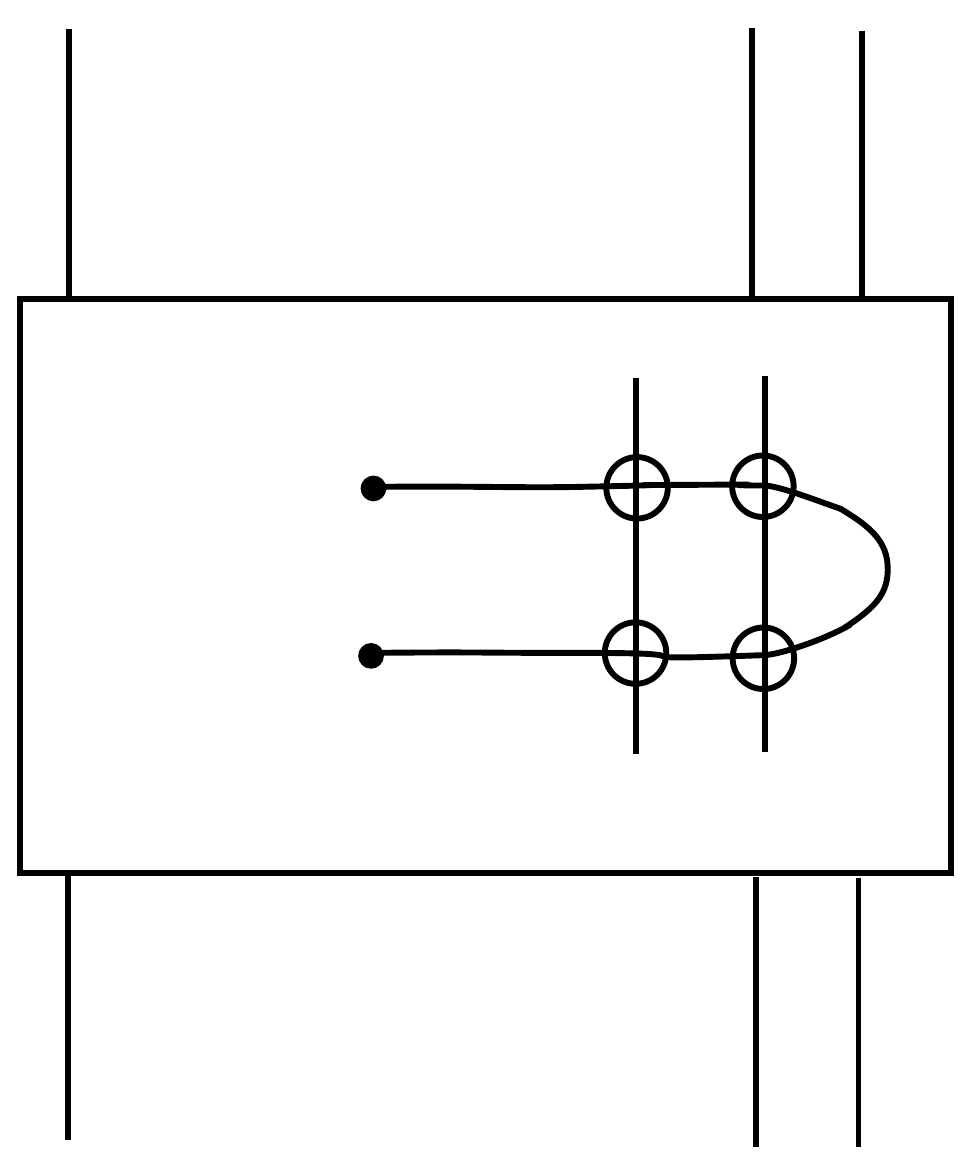}} \hspace{.1in}
{\overset{\text{V2}}{\longleftrightarrow}}
\hspace{.1in}
\raisebox{-35pt}{\includegraphics[height=1.1in]{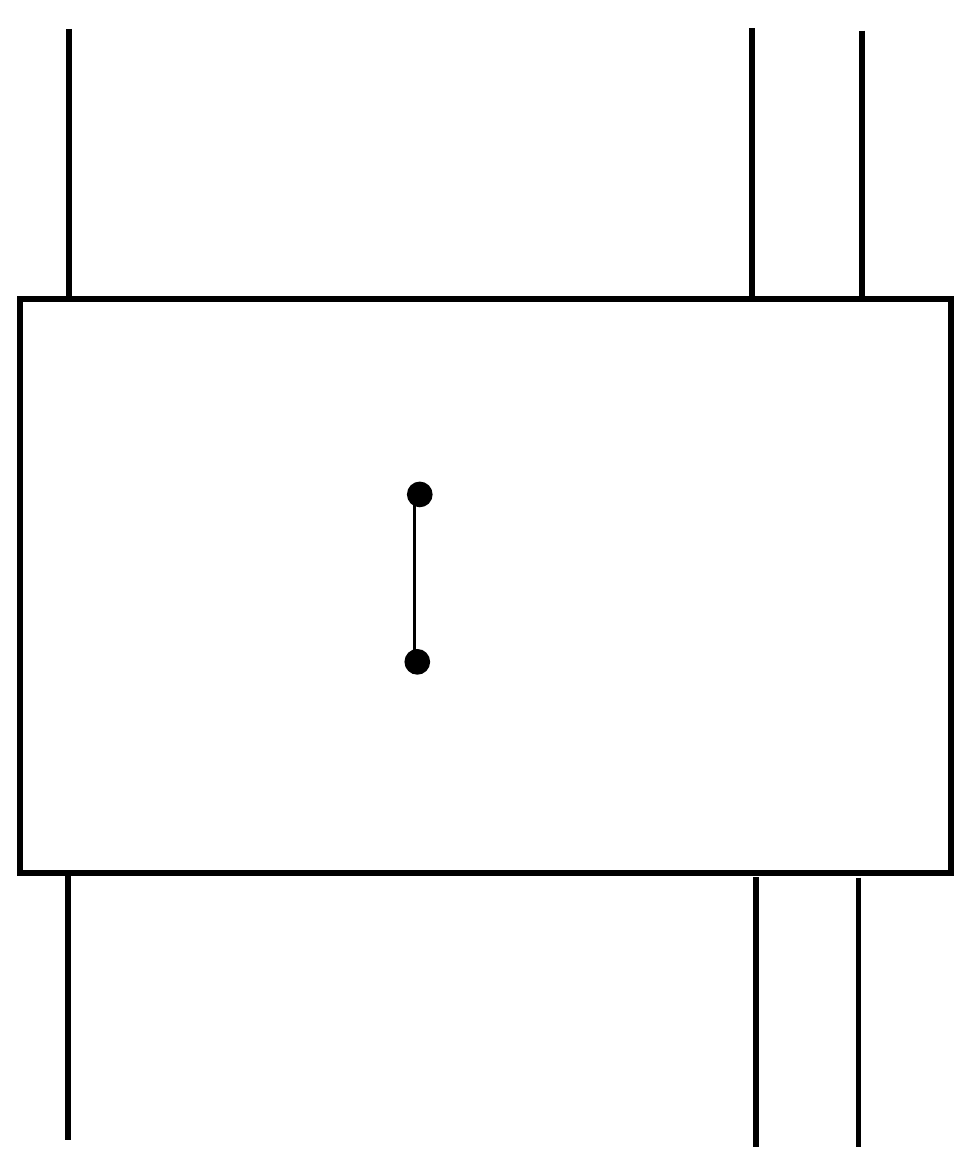}}
\]
\caption{Right real $L_v$-move derived from the algebraic equivalence}\label{rLv-RS}
\end{figure}

 The right trivalent $L_v$-move follows from (virtual trivalent) braid isotopy, virtual conjugation, and algebraic right trivalent relation, as shown in  Fig.~\ref{AlgTL}.

\begin{figure}[ht]
\[
\hspace{-.03in}
\raisebox{-35pt}{\includegraphics[height=1.1in]{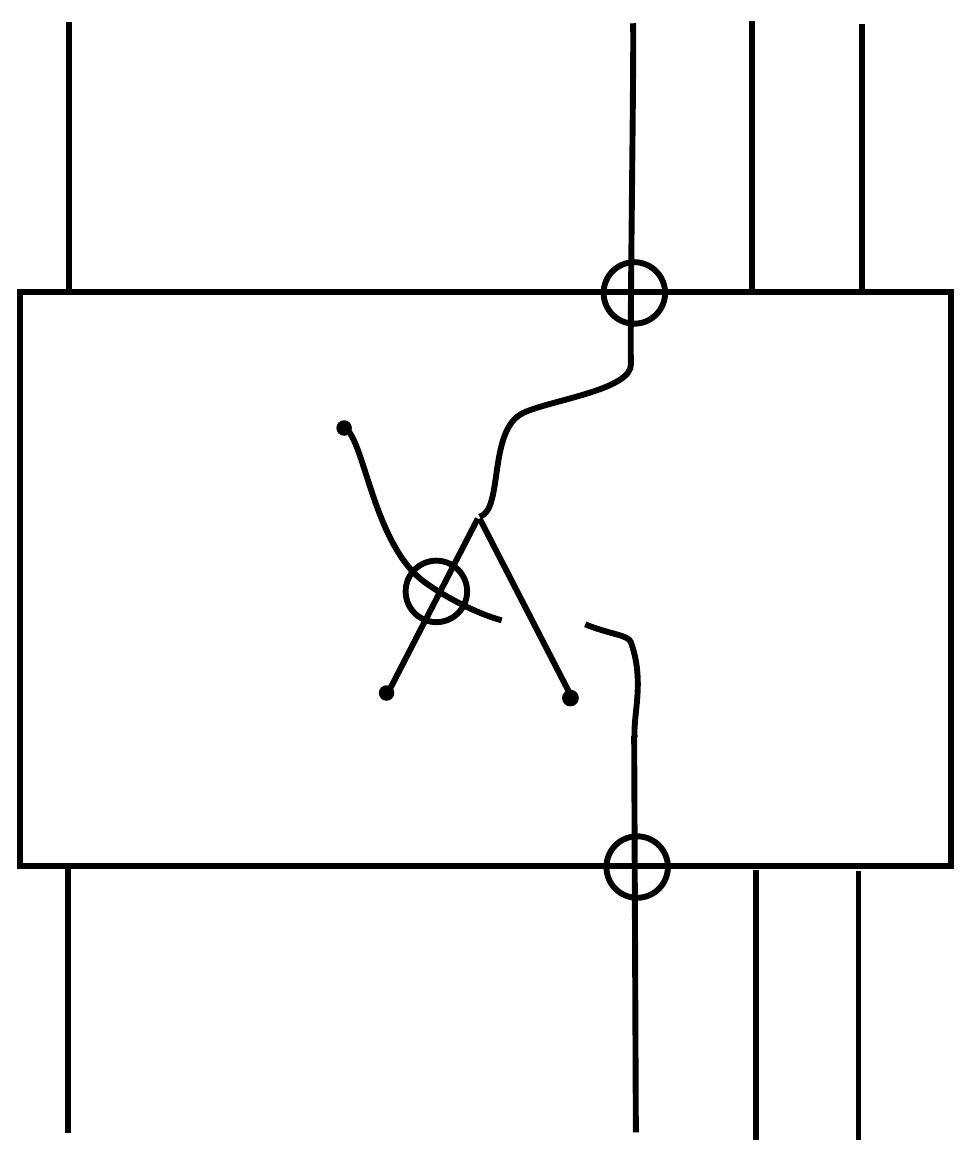}}
\hspace{.07in}
\underset{\text{detour}}{\overset{\text{braid}}{\longleftrightarrow}}
\hspace{.1in}
\raisebox{-35pt}{\includegraphics[height=1.1in]{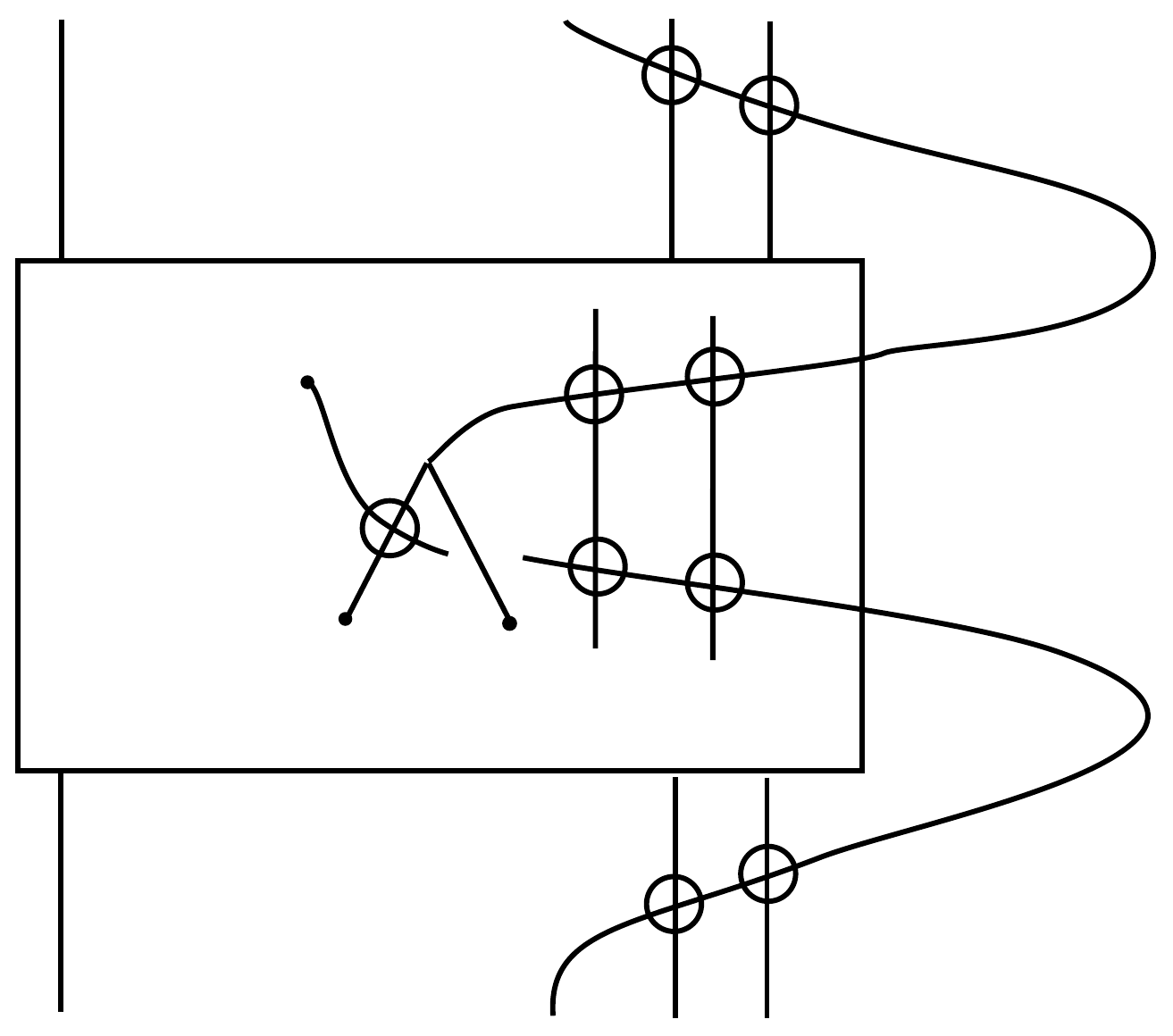}}
\hspace{.1in}
\underset{\text{conj.}}{\overset{\text{virtual}}{\longleftrightarrow}}
\hspace{.1in}
\raisebox{-35pt}{\includegraphics[height=1.1in]{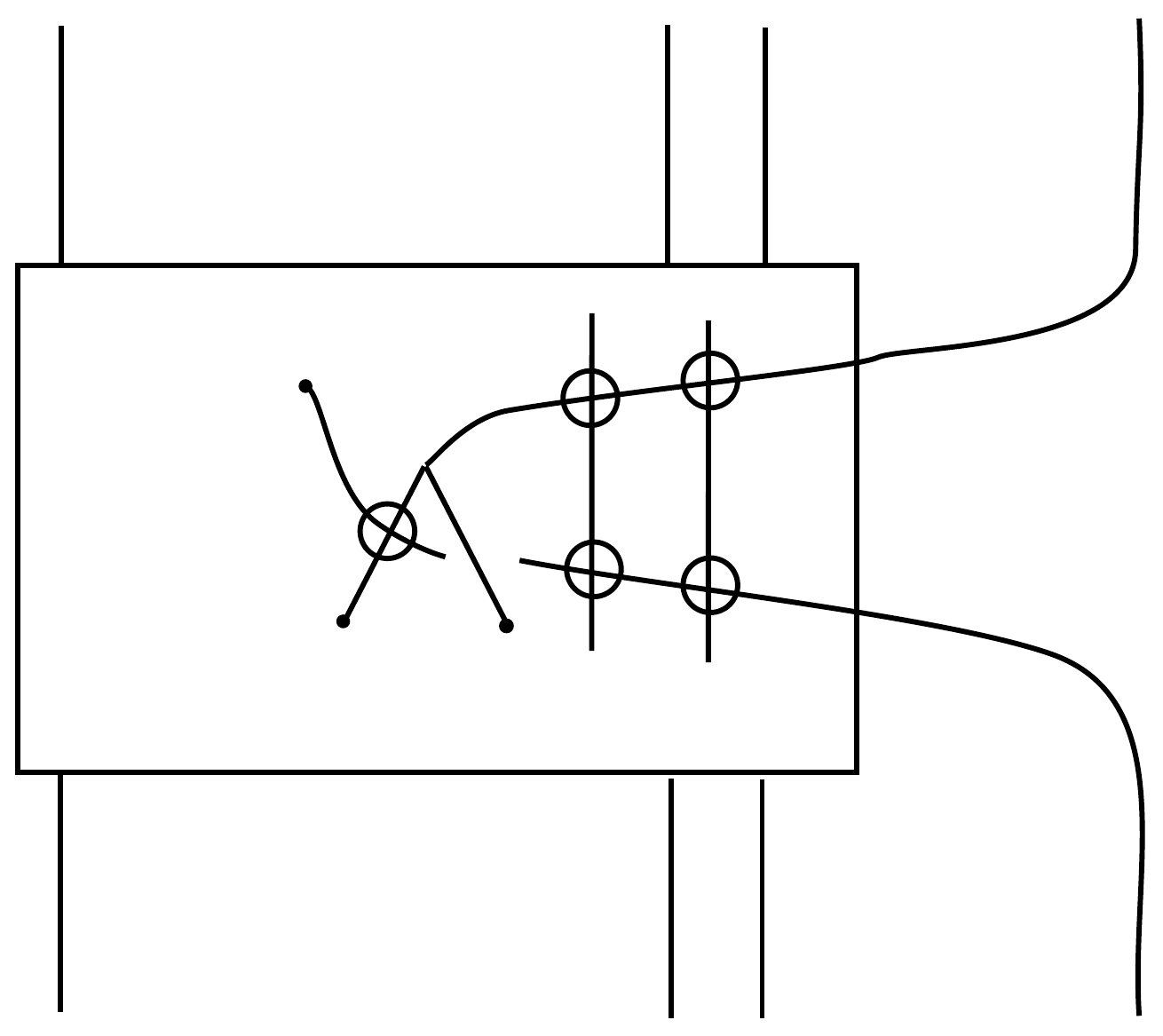}}
\]
\[
\underset{\text{move}}{\overset{\text{detour}}{\longleftrightarrow}}
\hspace{.1in}
\raisebox{-45pt}{\includegraphics[height=1.7in]{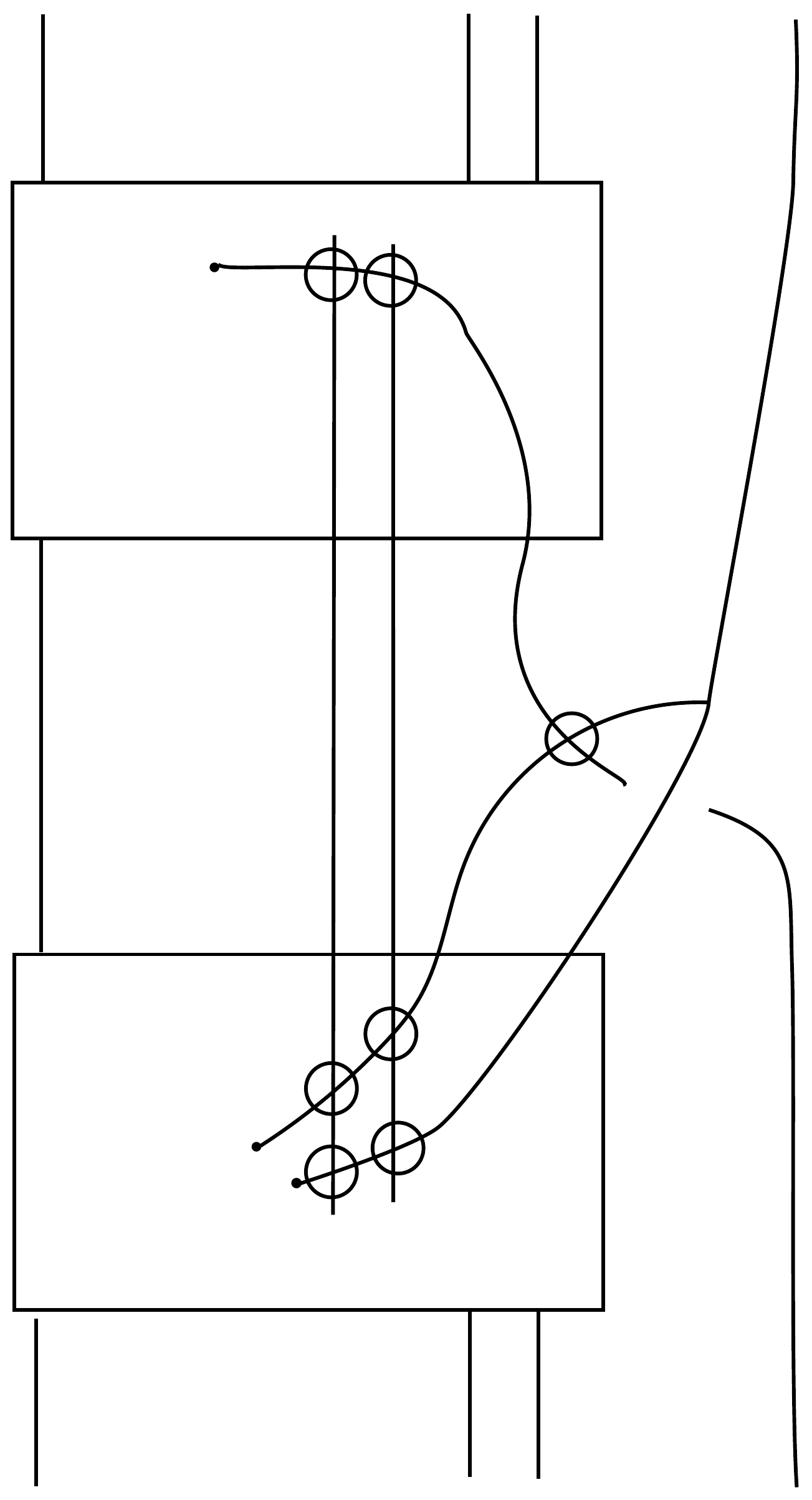}}
\hspace{.1in}
\underset{\text{trival. rel.}} {\overset{\text{alg. right}}
{\longleftrightarrow }}
\hspace{.1in}
\raisebox{-45pt}{\includegraphics[height=1.7in]{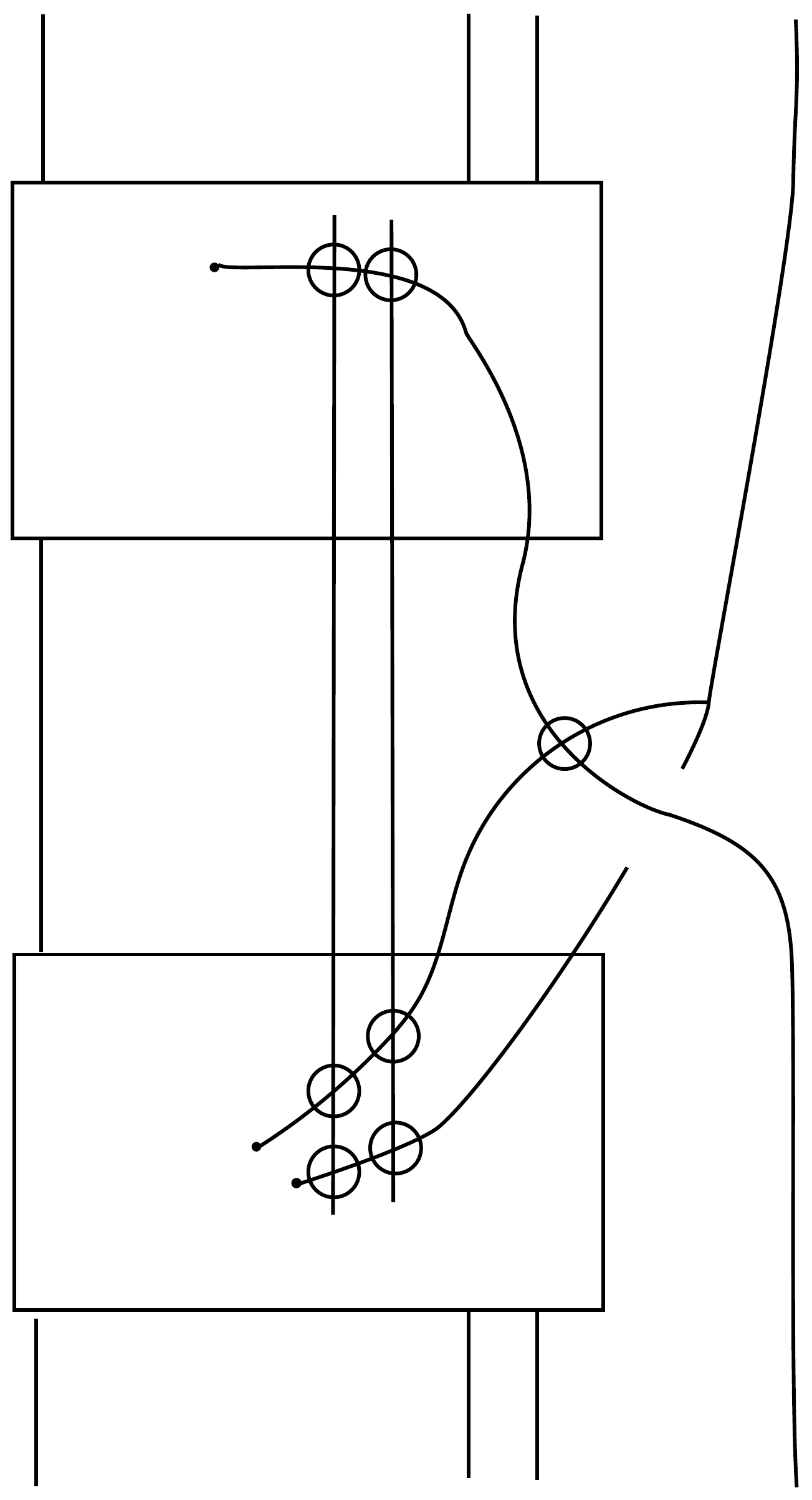}}
\hspace{.1in}
\underset{\text{move}}{\overset{\text{detour}}{\longleftrightarrow}}
\hspace{.1in}
\]
\[
\hspace{-.03in}
\raisebox{-35pt}{\includegraphics[height=1.1in]{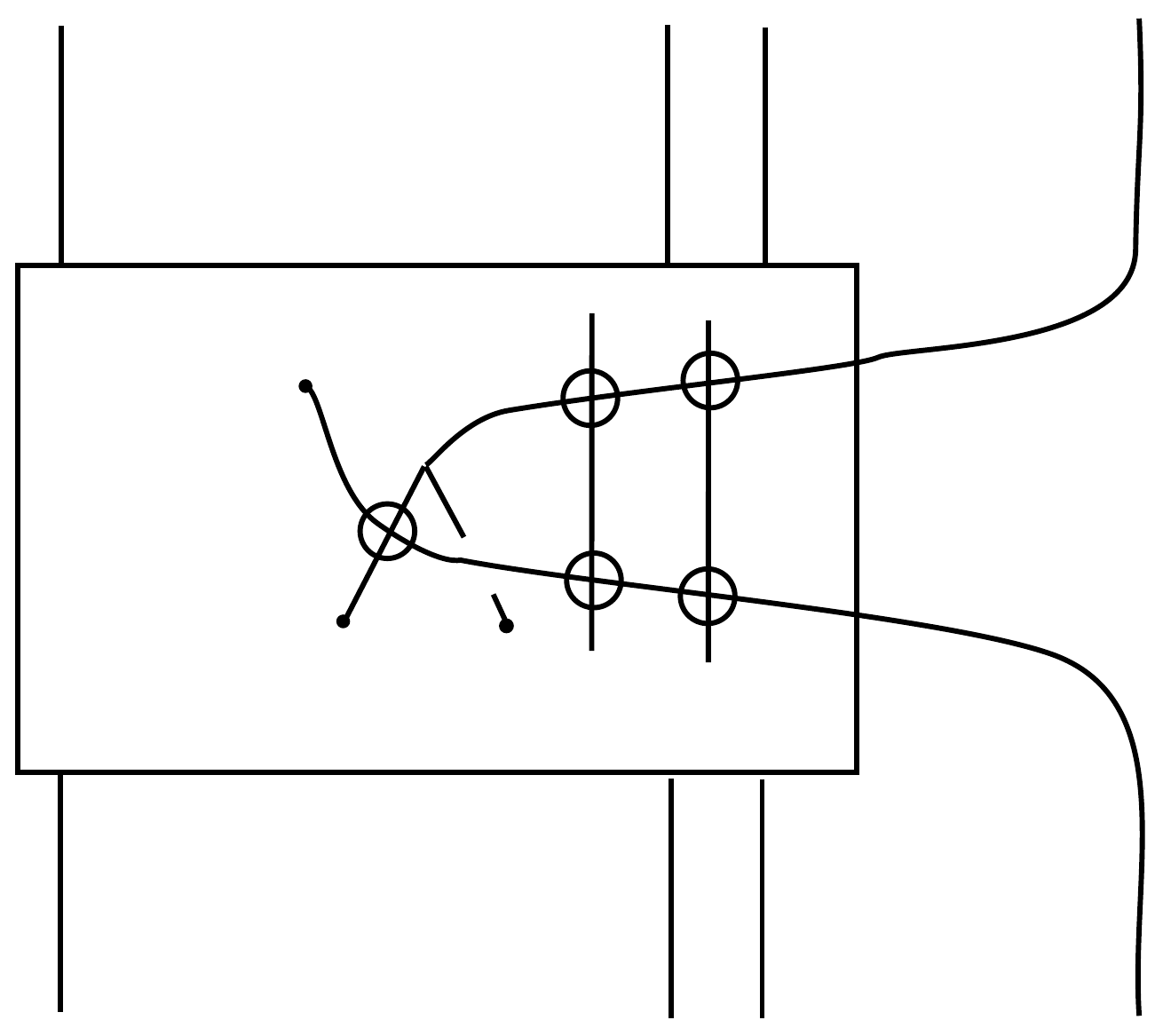}}
\hspace{.07in}
\underset{\text{conj.}}{\overset{\text{virtual}}{\longleftrightarrow}}
\hspace{.1in}
\raisebox{-35pt}{\includegraphics[height=1.1in]{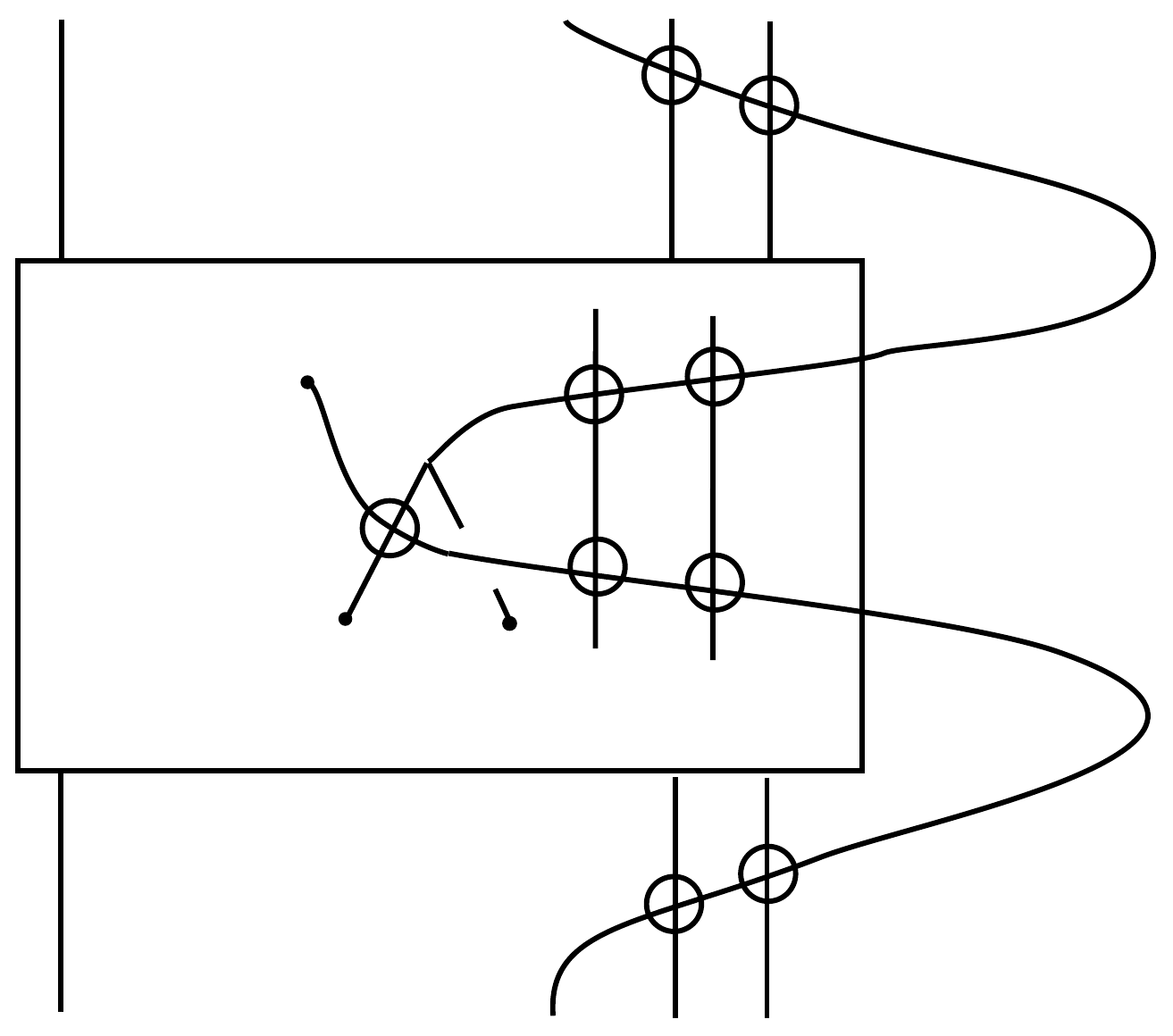}}
\hspace{.1in}
\underset{\text{detour}}{\overset{\text{braid}}{\longleftrightarrow}}
\hspace{.1in}
\raisebox{-35pt}{\includegraphics[height=1.1in]{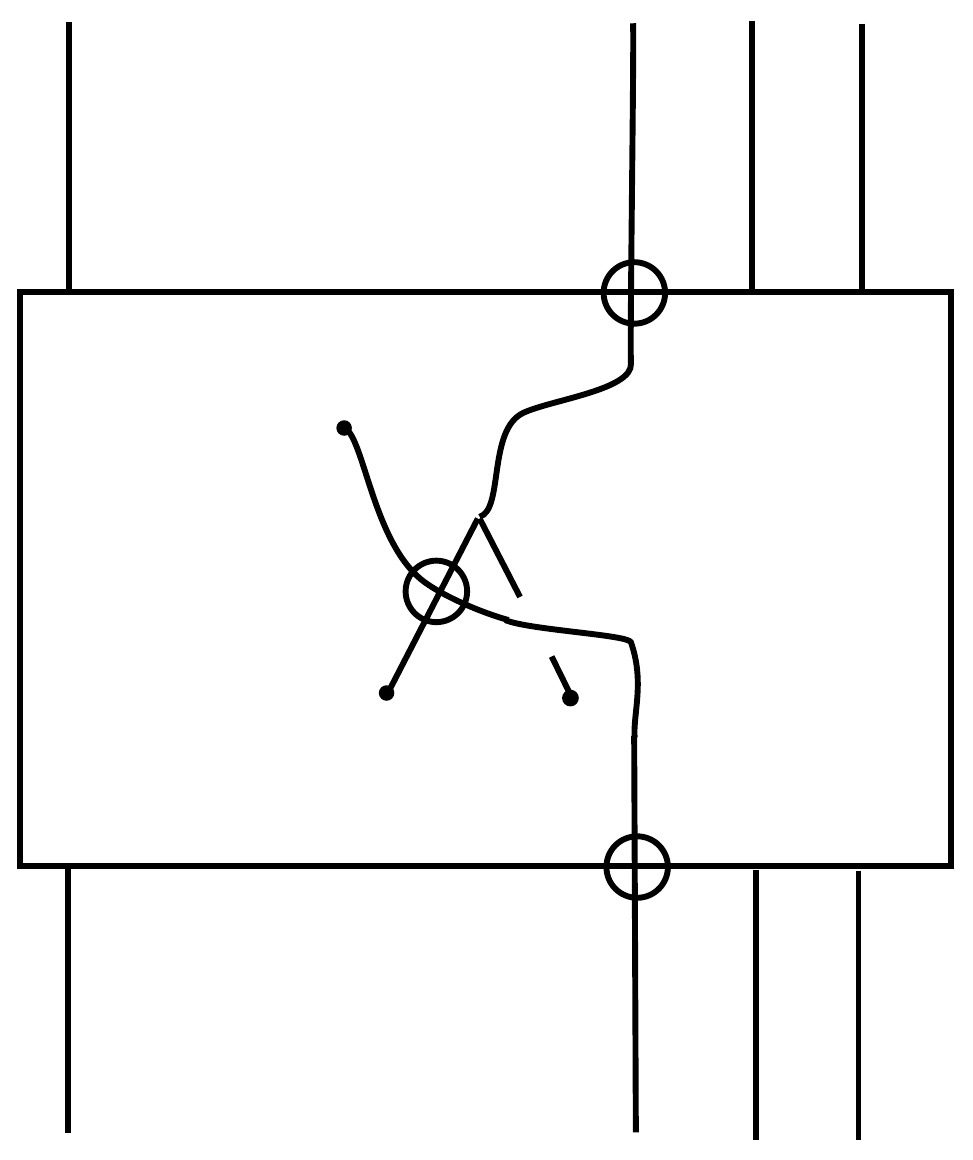}}
\]
\caption{Right trivalent $L_v$-move derived from the algebraic equivalence} \label{AlgTL}
\end{figure}

The left under-threaded $L_v$-move follows from braid isotopy, virtual conjugation, and algebraic left under-threading. This is shown in Fig.~\ref{fig:threadingproof}. Similar steps can also be used to show that the right under-threaded $L_v$-move follows from the algebraic equivalence.

\begin{figure}[ht]
\[ 
\reflectbox{\raisebox{-27pt}{\includegraphics[height=.8in]{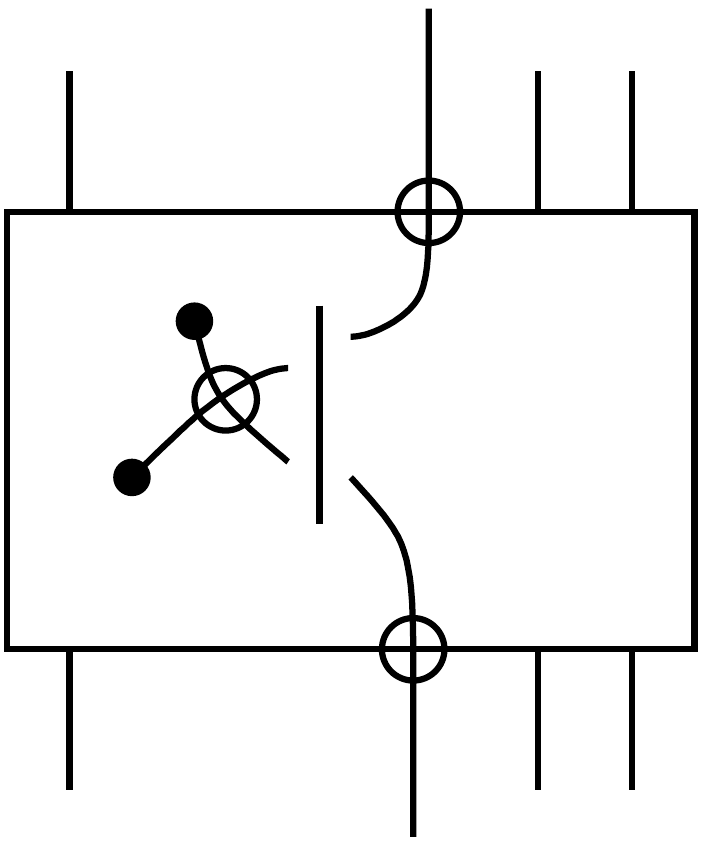}}} \,\, \overset{\text{braid}}{\underset{\text{detour}}{\longleftrightarrow}} \,\, \reflectbox{\raisebox{-30pt}{\includegraphics[height=.87in]{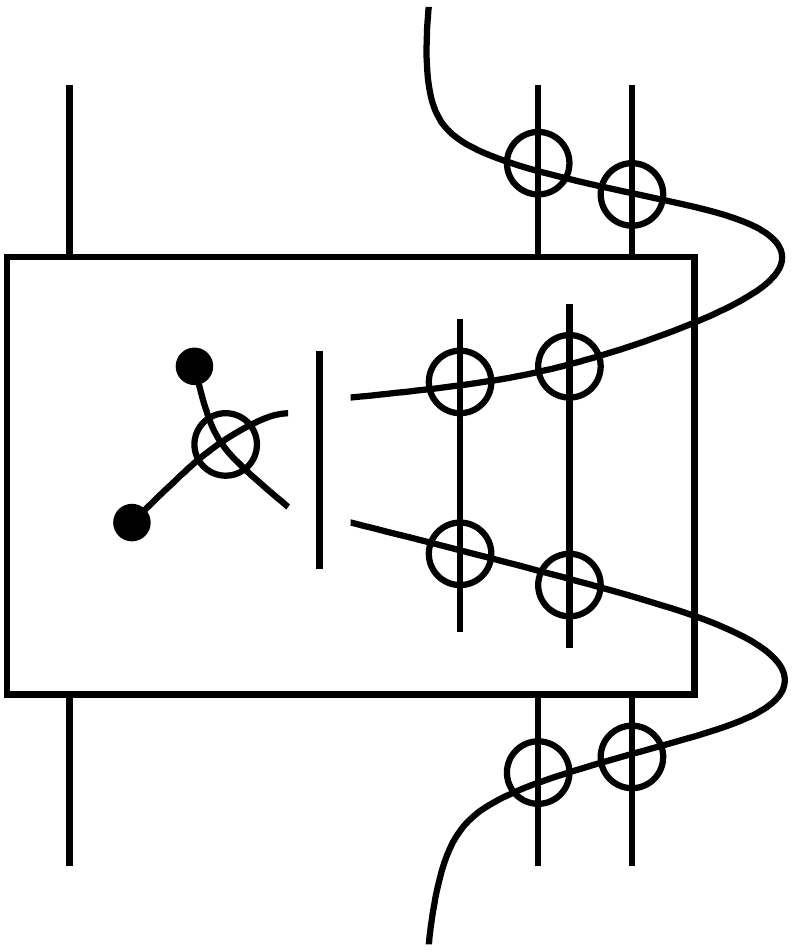}}} \,\, \overset{\text{virtual}}{\underset{\text{conj.}}{\longleftrightarrow}}
\reflectbox{\raisebox{-25pt}{\includegraphics[height=0.72in]{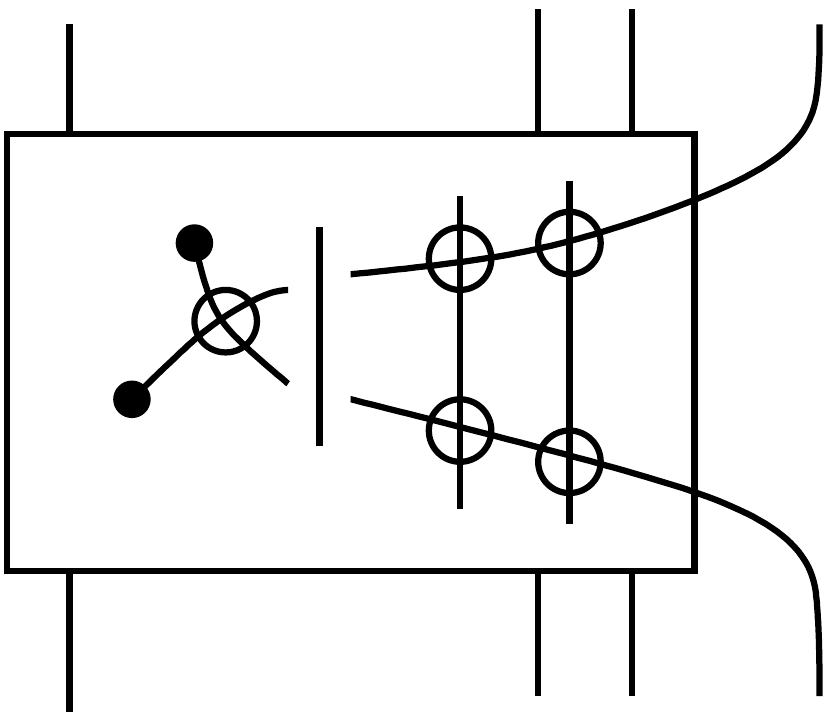}}} \,\, \overset{\text{detour}}{\underset{\text{virt. threads}} {\longleftrightarrow}}
\]
\[ 
\reflectbox{\raisebox{-25pt}{\includegraphics[height=0.72in]{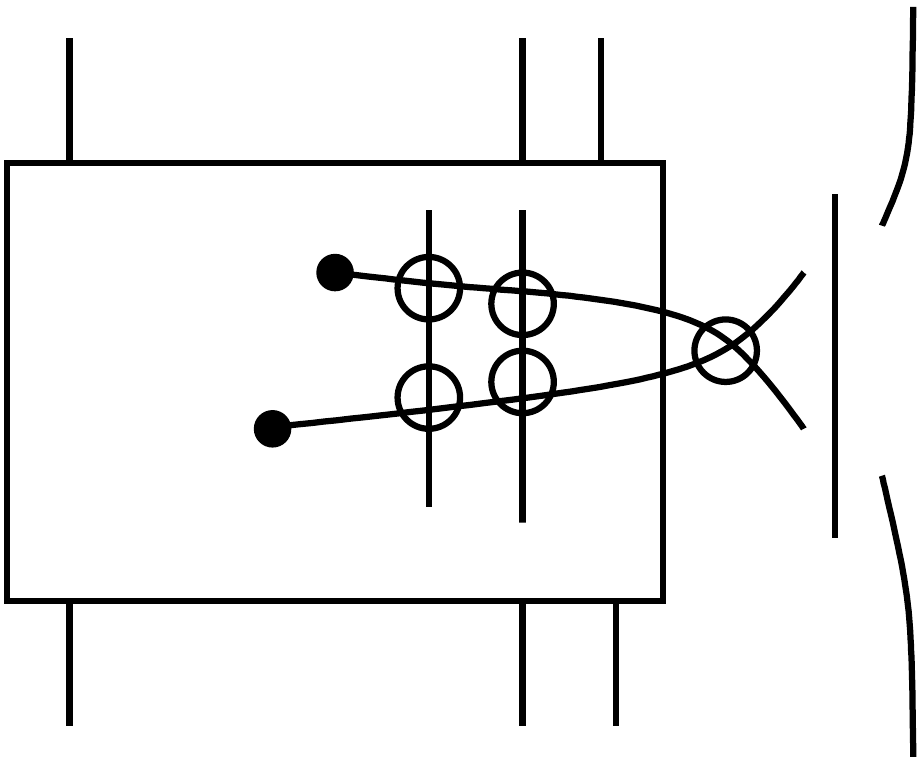}}} \,\, \overset{\text{alg. under-}}{\underset{\text{threading}}{\longleftrightarrow}} \,\,\reflectbox{\raisebox{-25pt}{\includegraphics[height=.72in]{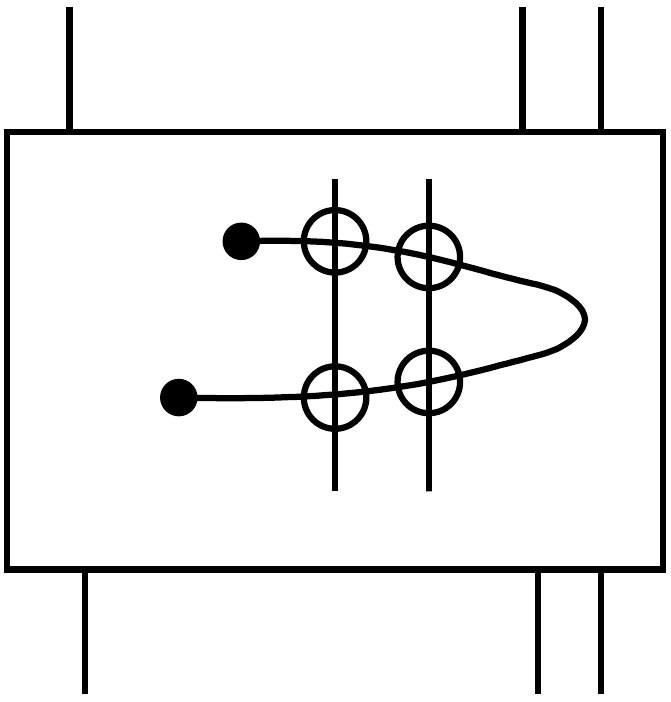}}} \,\,
\stackrel{V2}{\longleftrightarrow} \,\,
\reflectbox{\raisebox{-25pt}{\includegraphics[height=.67in]{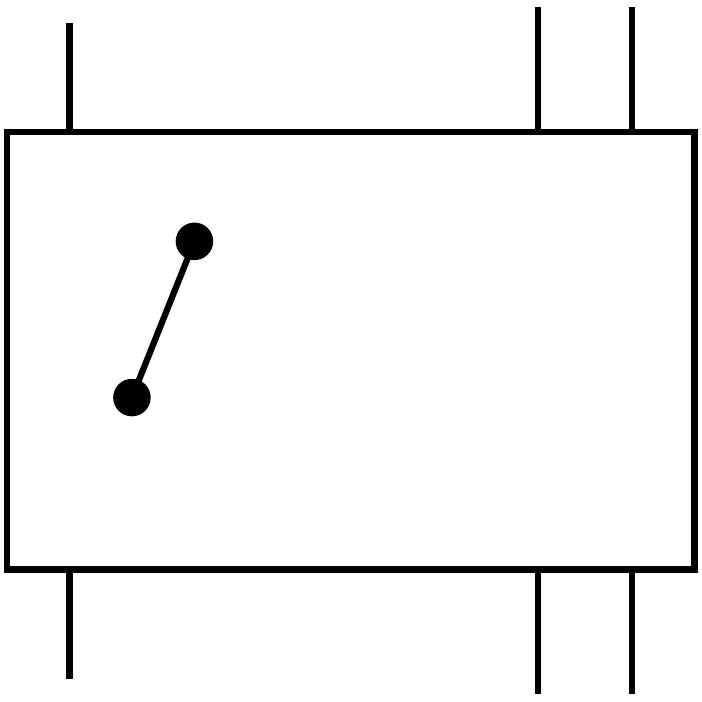}}}
\]
\caption{Left under-threaded $L_v$-move derived from algebraic equivalence}\label{fig:threadingproof}
\end{figure}

Therefore, $TL_v$-equivalence follows from algebraic equivalence. We show now the converse. The right virtual stabilization follows from virtual conjugation and right virtual $L_v$-move, as shown in Fig.~\ref{RightStab}. Right real stabilization follows similarly, with the main different that the right virtual $L_v$-move is replaced by a right real $L_v$-move.

\begin{figure}[ht]
\[
\raisebox{-35pt}{\includegraphics[height=1in]{Rstab1-new}}
\hspace{.1in}
\underset{\text{conj.}}{\overset{\text{virtual}}{\longleftrightarrow}}
\hspace{.1in}
\raisebox{-45pt}{\includegraphics[height=1.3in]{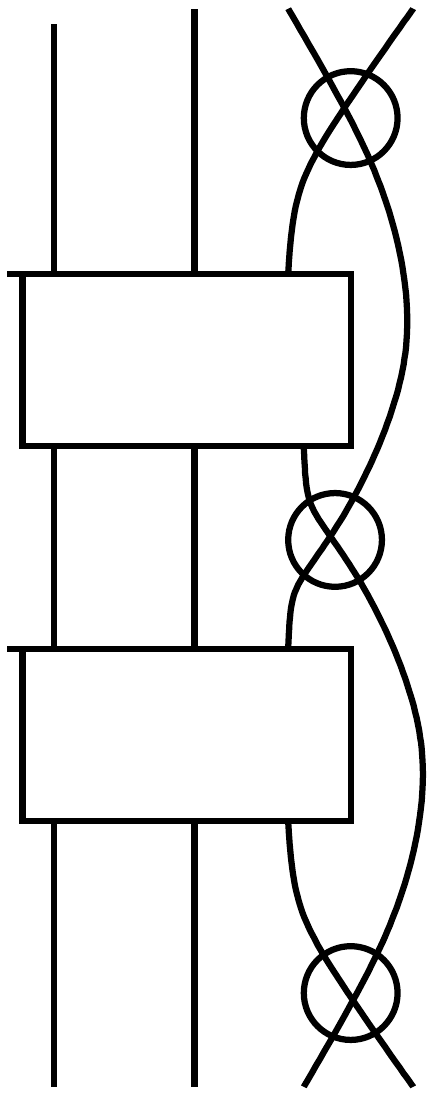}}
\hspace{.1in}
\underset{\text{$vL_v$-move}}{\overset{\text{right}}{\longleftrightarrow}}
\hspace{.1in}
\raisebox{-35pt}{\includegraphics[height=1in]{Rstab5-new}}
\]
\caption{Right virtual stabilization derived from $TL_v$-equivalence} \label{RightStab}
\end{figure}

Left and right algebraic under-threading follow from virtual conjugation and under-threaded virtual $L_v$-move. The case of the left algebraic under-threading is shown in Fig.~\ref{AlgUT}. The case of the right algebraic under-threading follows similarly.

\begin{figure}[ht]
\[
\reflectbox{\raisebox{-35pt}{\includegraphics[height=1.1in]{AlgUnThrdRight-new}}}
\hspace{.05in}
\underset{\text{conj.}}{\overset{\text{virtual}}{\longleftrightarrow}}
\hspace{.05in}
\reflectbox{\raisebox{-40pt}{\includegraphics[height=1.3in]{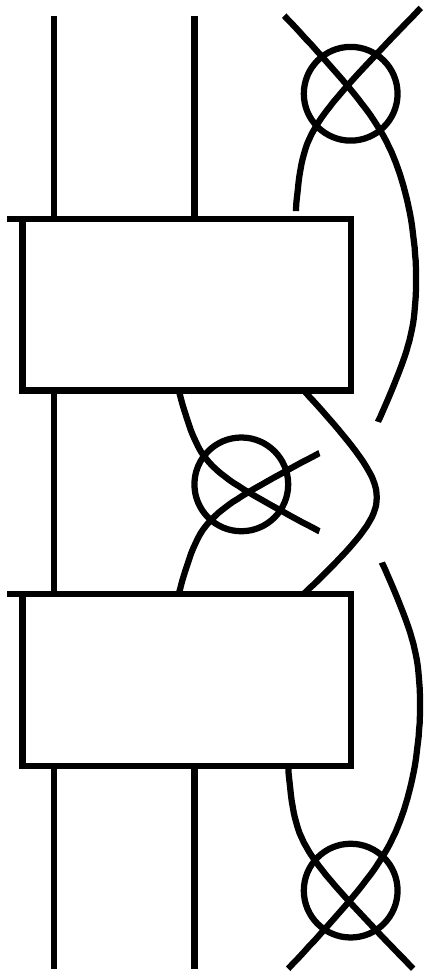}}}
\hspace{.05in}
\underset{\text{$vL_v$-move}}{\overset{\text{under-threaded}}{\longleftrightarrow}}
\hspace{.05in}
\raisebox{-35pt}{\includegraphics[height=1.1in]{Rstab5-new}}
\]
\caption{Left algebraic under-threading derived from $TL_v$-equivalence} \label{AlgUT}
\end{figure}

Because all steps in Fig.~\ref{AlgTL} may be applied in either direction, we can easily see (by following the proof starting from the center diagrams and reading to the right and to the left) that the algebraic right trivalent relation follows from the (geometric) right trivalent $L_v$-move, together with virtual conjugation and braid isotopy. Similarly, the steps in Fig.~\ref{fig:threadingproof} taken in a different order reveal that the algebraic left/right under-threading follows from the left/right under-threaded $TL_v$-move and other moves comprising the $TL_v$-equivalence. 

Therefore, algebraic equivalence follows from $TL_v$-equivalence. This completes the proof of Theorem~\ref{thm:AMarkov}.
\end{proof}

\textit{Final comments.} The theorems proved in this paper shed light on the relationship between virtual trivalent braids and virtual spatial trivalent graphs. The algebraic Markov-type theorem can be used to construct invariants for virtual spatial trivalent graphs.
\medskip

\textbf{Acknowledgments.}  We gratefully acknowledge support from the NSF Grant DMS--1460151 through the \textit{Research Experience for Undergraduates} (REU) Program. The first author was also partially supported by Simons Foundation collaboration grant $\#$ 355640. The second author received partial support from the NSF Grant HRD--1302873 through the \textit{California State University-Louis Stokes Alliance for Minority Participation} (CSU-LSAMP) Program. 



\begin{thebibliography}{999}
\bibitem{A} J.W. Alexander, A lemma on systems of knotted curves, \textit{Proc. Nat. Acad. Sci. USA} \textbf{9} (1923), 93-95.

\bibitem{CPM} C. Caprau, A. De la Pena, S. McGahan, Virtual singular braids and links, \textit{Manuscripta Math.} \textbf{151}, No 1 (2016), 147-175.

\bibitem{CCD} C. Caprau, G. Coloma, M. Davis, The L-move and Markov theorems for trivalent braids; preprint at \textit{arXiv:1807.08095[math.GT]}.

\bibitem{TM} T. Fleming and B. Mellor, Virtual spatial graphs, \textit{Kobe J. Math.} \textbf{24}, No. 2, (2007), pp.67-85.

\bibitem{Is} A. Ishii, The Markov theorem for spatial graphs and handlebody-knots with Y-orientations, \textit{International J. Math.} \textbf{26}, No. 14 (2015) 1550116 (23 pages).

\bibitem{KaTa}K. Kanno and K. Taniyama, Braid Presentation of Spatial Graphs, \textit{Tokyo J. Math.} \textbf{33} (2010), 509-522. 

\bibitem {K} S. Kamada, Braid presentation of virtual knots and welded knots, \textit{Osaka J. Math.} \textbf{44} (2007), 441-458.

\bibitem {Ka1} L.H. Kauffman, Invariants of graphs in three-space,  \textit{Trans. Amer. Math. Soc.} \textbf{311} (1989), 697-710. 

\bibitem {Ka2} L.H. Kauffman, Virtual knot theory, \textit{European. J. Combin.} \textbf{20} (1999), 663-691.

\bibitem{KauLamb} L.H. Kauffman and S. Lambropoulou, Virtual Braids and the L-move,  \textit{J. Knot Theory and Ramifications} \textbf{15}, No. 6 (2006), 773-811.

\bibitem {L} S. Lambropoulou, A study of braids in 3-manifolds, Ph.D. thesis, Warwick Univ. (1993).

\bibitem{LambRour} S. Lambropoulou and C.P. Rourke, Markov's Theorem in Three-Manifolds, \textit{Topol. Appl.} \textbf{78} (1997), 95-122. 

\bibitem {M} A.A. Markov, \"{U}ber die freie \"{A}quivalenz geschlossener Z\"{o}pfe, \textit{Recueil Math\'{e}matique Moscou} \textbf{1}, (1935), 73-78.



\end{thebibliography}
\end{document}